\newcommand\Fzrh{F_{(0,r)}}
\newcommand\Fzr{F_{(0,r]}}
\newcommand\Fr{F_{[r,r(K,\partial)]}}
\newcommand\Frr{F_{[r,r]}}
\newcommand\Fjrr{F_{j,[r,r]}}
\newcommand\Fjrj{F_{j,[r_j,r_j]}}
\newcommand\Frp{F_{[r,r(K',\partial)]}}
\newcommand\Frs{F_{sp,[r,r]}}
\newcommand\FrJs{F_{sp,[r_J,r_J]}}
\newcommand\Fp{F_+}
\newcommand\Fm{F_-}
\newcommand\Fpim{F_{\pi}}
\newcommand\Fpip{F^{\pi}}
\newcommand{\rK}{r(K,\partial)}
\newcommand{\rKJ}{r(K,\partial_J)}
\newcommand{\rKp}{r(K',\partial)}
\newcommand\V{\mathbf{V}}
\newcommand\MM{\mathbf{M}}
\newcommand\PPhi{\varphi}
\newcommand\PPsi{\psi}
\newcommand\KT{K\langle T\rangle}
\newcommand\KpT{K'\langle T\rangle}
\newcommand\KX{K[[X]]}
\newcommand\KXT{K[[X]]\langle T\rangle}
\newcommand\Tat{K\langle X/t\rangle}
\newcommand\Tas{K\langle X/s\rangle}
\newcommand\Tar{K\langle X/s\rangle}
\newcommand\KXrz{K[[X/r]]_0}
\newcommand\KXrzT{K[[X/r]]_0\langle T\rangle}
\newcommand\KXrKz{K[[X/r(K,\partial)]]_0}
\newcommand\KXrKzT{K[[X/r(K,\partial)]]_0\langle T\rangle}
\newcommand\KXrK{K\{X/r(K,\partial)\}}
\newcommand\KXr{K\{X/r\}}
\newcommand\KXrT{K\{X/r\}\langle T\rangle}
\newcommand\KpXr{K'\{X/r\}}
\newcommand\KpXrT{K'\{X/r\}\langle T\rangle}
\newcommand\KXt{K\{X/t\}}
\newcommand\KXrp{K\{X/r+\}}
\newcommand\KXrpT{K\{X/r+\}\langle T\rangle}
\newcommand\KTJ{K\langle T_J\rangle}
\newcommand\KTj{K\langle \mathbf{T}_j\rangle}
\newcommand\KXTJ{K[[X]]\langle T_{j,J}\rangle}
\newcommand\KXTT{K[[X]]\langle \mathbf{T}\rangle}
\newcommand\KXrTT{K\{X/r\}\langle \mathbf{T}\rangle}
\newcommand\KXrpTT{K\{X/r+\}\langle \mathbf{T}\rangle}
\newcommand\KXrTj{K\{X/r\}\langle \mathbf{T}\rangle}
\newcommand\KXrpTj{K\{X/r+\}\langle \mathbf{T}\rangle}
\newcommand\KXrpTJ{K\{X/r+\}\langle T_{j,J}\rangle}
\newcommand\KXrTJ{K\{X/r\}\langle T_{j,J}\rangle}
\newcommand\RT{R\langle T\rangle}
\newcommand\RTJ{R\langle T_J\rangle}
\newcommand{\supp}{\mathrm{supp}\ }
\newcommand\C{Mod(K\langle T\rangle)}
\newcommand\Cf{Mod^f(K\langle T\rangle)}
\newcommand\CC{Mod(K\langle T\rangle,Mod(K\langle T\rangle))}
\newcommand\CJ{Mod(K\langle T_J\rangle)}
\newcommand\CJf{Mod^f(K\langle T_J\rangle)}
\newcommand\Cj{Mod(K\langle \mathbf{T}_{j}\rangle)}
\newcommand\Cjf{Mod^f(K\langle \mathbf{T}_j\rangle)}
\newcommand\CKXrT{Mod(K\{X/r\}\langle T\rangle)}
\newcommand\CKXT{Mod(K[[X]]\langle T\rangle)}
\newcommand\CKXrzT{Mod(K[[X/r]]_0\langle T\rangle)}
\newcommand\CKXrKzT{Mod(K[[X/r(K,\partial)]]_0\langle T\rangle)}
\newcommand\CKXrpT{Mod(K\{X/r+\}\langle T\rangle)}
\newcommand\CRT{Mod(R\langle T\rangle)}
\newcommand\CRTJ{Mod(R\langle T_J\rangle)}
\newcommand\A{\mathcal{A}b}
\newcommand\fC{\mathcal{C}}
\newcommand\fD{\mathcal{D}}
\newcommand\D{\mathcal{D}(R,\partial)}
\newcommand\DJJ{\mathcal{D}(R,\partial_J)}
\renewcommand\H{\mathrm{Hom}}
\newcommand\Ex{\mathrm{Ext}}
\newcommand\E{\mathrm{End}}
\newcommand\rhozt{\tilde{\rho}_0}%{\rho_0^{~}}
\newcommand\rhot{\tilde{\rho}}%{\rho^{~}}
\newcommand\rhos{\rho^{\#}}
\newcommand\TTFAE{The following are equivalent.}
\newcommand\TFAE{the following are equivalent.}
\newcommand\R{\mathbb{R}}
\newcommand\Q{\mathbb{Q}}
\newcommand\Z{\mathbb{Z}}
\newcommand\N{\mathbb{N}}
\newcommand\f{\mathbf{f}}
\renewcommand\L{\mathbf{L}}
\newcommand\eps{\varepsilon}
\theoremstyle{plain}
\newtheorem{theorem}{Theorem}[section]
\newtheorem{proposition}[theorem]{Proposition}
\newtheorem{lemma}[theorem]{Lemma}
\newtheorem{corollary}[theorem]{Corollary}
\theoremstyle{definition}
\newtheorem{definition}[theorem]{Definition}
\begin{document}

\title{Duality for differential modules over \\ complete non-archimedean valuation fields of characteristic zero}
\author{Shun Ohkubo
\footnote{
Graduate School of Mathematics, Nagoya University, Furocho, Chikusaku, Nagoya 464-8602, Japan. E-mail address: shun.ohkubo@gmail.com}
}
\maketitle

\begin{abstract}
Let $K$ be a complete non-archimedean valuation field of characteristic $0$, with non-trivial valuation, equipped with (possibly multiple) commuting bounded derivations. We prove a decomposition theorem for finite differential modules over $K$, where decompositions regarding the extrinsic subsidiary $\partial$-generic radii of convergence in the sense of Kedlaya-Xiao. Our result is a refinement of a previous decomposition theorem due to Kedlaya and Xiao. As a key step in the proof, we prove a decomposition theorem in a stronger form in the case where $K$ is equipped with a single derivation. To achieve this goal, we construct an object $f_{0*}L_0$ representing the usual dual functor and study some filtrations of $f_{0*}L_0$, which is used to construct the direct summands appearing in our decomposition theorem.
\end{abstract}

\tableofcontents

\section{Introduction}
%\input{00}%1
%\section{Introduction}

In this paper, we prove a decomposition theorem of finite differential modules over a complete non-archimedean valuation field $K$ of characteristic $0$, with non-trivial valuation, equipped with multiple commuting derivations, which are non-zero and bounded with respect to the given valuation. Precisely speaking, our decomposition is regarding the extrinsic subsidiary $\partial$-generic radii of convergence in the sense of \cite{kx}, which is an invariant generalizing the generic radii of convergence introduced by Dwork.

Let us state the main result of this paper. Let $K$ be as above, $||$ denote the valuation of $K$, $\partial_j$ for $j\in J$ with $J\neq\emptyset$ the derivations on $K$, which are commutative each other. Let $p(K)$ denote the characteristic of the residue field of $K$. We set $\omega(K)=|p(K)|^{1/(p(K)-1)}$ if $p(K)>0$ and $\omega(K)=1$ if $p(K)=0$. We assume that the derivations $\partial_j$ are bounded, i.e., the action of $\partial_j$ on $K$ has a finite operator norm. Let $r(K,\partial_j)$ denote the ratio $\omega(K)/|\partial_j|_{sp,K}$, where $|\partial_j|_{sp,K}$ denotes the spectral norm of the action of $\partial_j$ on $K$. A finite differential module over $K$ is a finite dimensional vector space over $K$ equipped with a family of commuting differential operators relative to $\partial_j$ for $j\in J$. For each $j\in J$, we can define the extrinsic subsidiary $\partial_j$-generic radii of convergence, by encoding the spectral norms of the differential operator relative to $\partial_j$, which is a set of $dim V$ real numbers $r$ with multiplicity taking values in $(0,r(K,\partial_j)]$. 
For each $J$-tuple $r_J=(r_j)\in \Pi_{j\in J}(0,r(K,\partial_j)]$, there exists a maximum subspace $V_{r_J}$ of $V$ such that for $j\in J$, $V_{r_J}$ is stable under the action of $\partial_j$ and the extrinsic subsidiary $\partial_j$-generic radii of convergence of $V_{r_J}$ consists only of $r_j$'s.

\begin{theorem}\label{main theorem}%[label:main theorem]
Let $V$ be a finite differential module over $K$.

1. There exist only finitely many $r_J\in \Pi_{j\in J}(0,r(K,\partial_j)]$ such that $V_{r_J}\neq 0$. 

2. The obvious morphism $\oplus_{r_J\in \Pi_{j\in J}(0,r(K,\partial_j)]}V_{r_J}\to V$ is an isomorphism.
\end{theorem}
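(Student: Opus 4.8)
The plan is to reduce Theorem~\ref{main theorem} to its single-derivation case $|J|=1$ — which is the substantive input, established later in the paper through the dual object $f_{0*}L_0$ — the passage from one derivation to several then being essentially formal. Throughout I use two standard facts about extrinsic subsidiary radii (cf.\ \cite{kx}): for a finite differential module these radii form a finite multiset of cardinality $\dim_K V$, and they are additive in short exact sequences, so the multiset attached to a submodule or to a quotient of $V$ is a sub-multiset of the one attached to $V$. A preliminary remark is that $V_{r_J}$ is well defined: if $W_1,W_2\subseteq V$ are each stable under all the $\partial_j$ and have all extrinsic subsidiary $\partial_j$-radii equal to $r_j$, then $W_1+W_2$ is a quotient of $W_1\oplus W_2$, hence has the same property, so the subspaces with this property admit a largest one.

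The core of the reduction is a rigidity property of the single-derivation decomposition. Assume the $|J|=1$ result in the form ``$V$ is the direct sum of its maximal $\partial_{j_0}$-submodules $V^{(j_0)}_{\rho}$ of constant $\partial_{j_0}$-radius $\rho$''; then each $W:=V^{(j_0)}_{\rho}$ is automatically stable under every $\partial_j$, $j\ne j_0$. Indeed, let $N\supseteq W$ be the $\partial$-submodule of $V$ generated by $W$, filtered by $N_n:=$ the $K$-span of $\{\partial_j^{k}(w): w\in W,\ k\le n\}$. Each $N_n$ is $\partial_{j_0}$-stable since $\partial_{j_0}$ commutes with $\partial_j$ and preserves $W$; one has $N_{n+1}=N_n+K\!\cdot\!\partial_j(N_n)$, and modulo $N_n$ the operator $\partial_j$ induces a $K$-linear surjection $N_n\twoheadrightarrow N_{n+1}/N_n$ commuting with $\partial_{j_0}$, i.e.\ a surjection of $\partial_{j_0}$-modules; hence $N_{n+1}/N_n$ has constant $\partial_{j_0}$-radius $\rho$ (or vanishes), so by additivity and induction on $n$ each $N_n$, and thus $N=N_n$ for $n\gg0$, has constant $\partial_{j_0}$-radius $\rho$, and maximality of $W$ forces $N=W$, i.e.\ $\partial_j(W)\subseteq W$. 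Consequently each $W$ is a finite differential module over $(K,(\partial_j)_{j\in J\setminus\{j_0\}})$. For finite $J$ one now induces on $|J|$: fix $j_0$, decompose $V=\bigoplus_\rho V^{(j_0)}_\rho$, decompose each summand by the inductive hypothesis, and regroup to obtain a finite direct sum $V=\bigoplus_{r_J}W_{r_J}$. That $W_{r_J}=V_{r_J}$ follows by maximality in both directions: $W_{r_J}\subseteq V^{(j_0)}_{r_{j_0}}$ has radius $r_{j_0}$ for $\partial_{j_0}$ and radius $r_j$ for $\partial_j$, $j\ne j_0$, so $W_{r_J}\subseteq V_{r_J}$; conversely $V_{r_J}$, having constant $\partial_{j_0}$-radius $r_{j_0}$, lies in $V^{(j_0)}_{r_{j_0}}$, and then in $W_{r_J}$ by the inductive maximality inside $V^{(j_0)}_{r_{j_0}}$. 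This proves part 2 for finite $J$; part 1 follows because only finitely many $r_J$ occur in this decomposition, and since the $W_{r_J}=V_{r_J}$ already exhaust $V$ and the projections onto them are $\partial$-module maps, every other $V_{r_J}$ must vanish. For infinite $J$ one observes that for finite $J'\subseteq J''\subseteq J$ the $J''$-decomposition refines the $J'$-decomposition (each $J''$-piece has prescribed $\partial_j$-radius for all $j\in J''\supseteq J'$, hence lies in a $J'$-piece), so this directed family of decompositions of the finite-dimensional space $V$ stabilizes at some finite $J_0$; each $J_0$-piece is then stable under all $\partial_j$ and has constant $\partial_j$-radius for every $j\in J$, whence it equals the corresponding $V_{r_J}$, and $V_{r_J}=0$ for all other $r_J$.

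It remains to prove the single-derivation case, which carries the real difficulty and is the main obstacle. The decomposition of $V$ according to its $\partial$-generic radii at the generic point — read off from the spectral-norm data of $\partial$ acting on the finite-dimensional space $V$ — is available in the Kedlaya--Xiao theory \cite{kx} and gives submodules with prescribed extrinsic subsidiary $\partial$-radius; what must be added, and what the paper's machinery is built for, is that these pieces are the \emph{maximal} such subobjects and depend canonically on $V$, equivalently that the idempotent projectors onto them lie in a suitable completed differential operator algebra. The strategy announced in the abstract is to construct the object $f_{0*}L_0$ representing the usual $K$-linear dual functor on finite differential modules, to equip it with the natural decreasing filtrations indexed by the radii, and to extract from the graded pieces of these filtrations the canonical idempotents cutting out the $V_r$; arranging that these filtrations and idempotents behave as needed is the crux of the argument. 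Granting the single-derivation decomposition in this reinforced ``maximal pieces'' form, the rigidity lemma and induction of the preceding paragraph yield Theorem~\ref{main theorem} in general, and in the single-derivation case part 1 is again immediate, since $V_r\ne0$ forces $r$ to be one of the finitely many extrinsic subsidiary $\partial$-radii of $V$.
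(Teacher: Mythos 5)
Your reduction from several derivations to one is correct, and it is a genuinely more elementary argument than the paper's. To get $\partial_{j'}$-stability of the single-derivation pieces, Section 7 of the paper introduces the internal-Hom characterization of $F_{j,[r,r]}M$ coming from the dual object $f_{0*}L_0$ and conjugates test maps by the operator $\Delta_{j'}(s)(x)=T_{j'}\cdot s(x)-s(T_{j'}\cdot x)$ (Lemmas \ref{internal} and \ref{restriction}); this is exactly the ``stronger form'' of the $\#J=1$ theorem that the introduction announces as necessary for $\#J>1$. Your rigidity lemma bypasses all of that: it uses only the maximality of $V^{(j_0)}_\rho$ and the additivity of the radius multiset in short exact sequences. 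Indeed the filtration $N_n=\mathrm{span}_K\{\partial_j^k(w):w\in W,\ k\le n\}$ is $\partial_{j_0}$-stable, the induced map $N_n\to N_{n+1}/N_n$ is a $K$-linear $\partial_{j_0}$-equivariant surjection (the non-$K$-linearity of $\partial_j$ disappears modulo $N_n$), so $N_{n+1}/N_n$ is a quotient of the constant-radius module $N_n$, hence of constant radius $\rho$, hence so is $N$ by additivity, and maximality forces $N=W$. I checked this carefully (together with the induction on $\#J$ and the stabilization step for infinite $J$) and it is sound. It shows that the lift to $\#J>1$ does not actually require the Hom-theoretic description of $F^f_{[r,r]}$, only the single-derivation decomposition with maximal pieces, and would appreciably shorten Section 7.

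That said, you have not proved Theorem \ref{main theorem}; you have reduced it to the case $\#J=1$. The single-derivation decomposition for an arbitrary complete non-archimedean $K$ of characteristic $0$, not merely for $K$ of rational type, is the paper's main technical contribution and occupies Sections 3--6, through the construction of $f_{0*}L_0$, the functors $F_{(0,r]}$ and $F_{[r,r(K,\partial)]}$, and the injectivity of $f_{r*}L_r$; your last paragraph only gestures at this. Relatedly, your remark that ``the decomposition of $V$ according to its $\partial$-generic radii ... is available in the Kedlaya--Xiao theory'' is not accurate: Kedlaya--Xiao establish it only for $K$ of rational type, and removing that hypothesis is a genuinely new existence statement, not a matter of upgrading an existing decomposition to a canonical one. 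That is precisely why the paper needs the machinery of $f_{0*}L_0$ in the first place.
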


Besides the above result, we also prove a base change property and a rationality result in an appropriate sense.

The decomposition in Theorem \ref{main theorem} is a refinement of the one stated in \cite[Theorem 1.5.6]{kx}, where $\#J<+\infty$ is assumed and the direct summands $V_r$ are parametrized by $r\in (0,1]$. Hence our decomposition can be applied to differential modules, which cannot be decomposed by previous decomposition results. Even in the case $\#J=1$ and $p(K)>0$, Theorem \ref{main theorem} is known only in the case where $K$ is of rational type in the sense of \cite{kx} such as the completion of $\Q_p(X)$ with respect of some Gauss valuation equipped with derivation given by $d/dX$.

Decompositions as in Theorem \ref{main theorem} in the case of $p(K)>0$ are studied in a context of $p$-adic differential equations by Dwork-Robba, Christol-Dwork, Kedlaya-Xiao, and Poineau-Pulita and so on (\cite{dr,cd,kx,pp}). Beside applications to $p$-adic differential equations, variations of such decompositions plays a fundamental role in the study of differential Swan conductors due to Kedlaya, Xiao (\cite{ked,xiao}). Even in the case of $p(K)=0$, related decompositions appear in the study of Hukuhara-Levelt-Turrittin type decompositions in the sense of Kedlaya (\cite{good formalII}).

\subsection*{Technical aspects}

In the course of the proof of Theorem \ref{main theorem}, we introduce an essential idea on a duality of differential modules over $K$ in the case of $\#J=1$.

Let us start with noting the subtlety in the proof of Theorem \ref{main theorem} occurring only in the case of $\#J>1$. For a non-zero finite differential module over $K$, even if we have a nice decomposition $V=\oplus V_r$ regarding a single derivation $\partial_j$, it is not clear that the direct summands $V_r$ are stable under the action of the other $\partial_{j'}$'s as they are not $K$-linear.

To overcome this difficulty, we prove Theorem \ref{main theorem} in the case of $\#J=1$ in a stronger form so that we can prove Theorem \ref{main theorem}. In the rest of the introduction, we consider only in the case of $\#J=1$. We consider the category $\C$ of left modules over the ring $\KT$ of twisted polynomials over $K$ instead of the category of differential modules over $(K,\partial)$, which are isomorphic to each other. We have a contravariant endofunctor $D_0$ on $\C$ called a dual functor, which is constructed in an obvious way. The key ingredient in this paper is the object $f_{0*}L_0$ in the category $\CC$ of a left $\KT$-objects in $\C$, which ``represents'' $D_0$. The construction of $f_{0*}L_0$ can be done by using a variant $f_0$ of the morphism $f_{gen}^*$ introduced by Kedlaya-Xiao. We define a new dual functor $D$ by $\H(-,f_{0*}L_0)$. The underlying abelian group of $f_{0*}L_0$ is the direct product $K_+^{\N}$ of $\N$-copies of the underlying abelian group $K_+$ of $K$. Hence we can construct subobjects $M_{\lambda}$ of $f_{0*}L_0$ by considering some convergence conditions on the formal power series $\sum_{i=0}^{+\infty}a_iX^i$ associated to $(a_i)\in K_+^{\N}$. This part can be regarded as a generalization of the theory of Dwork-Robba. Then we study the differential modules of the form $\H(M,M_{\lambda})$ for $M\in \C$, where we exploit a techiniques developed by Dwork-Robba, Christol, and Poineau-Pulita (\cite{dr,chr,pp}). Then we regard $\H(M,M_{\lambda})$ and $\H(DM,M_{\lambda})$ as submodules of $\H(M,f_{0*}L_0)=DM$ and $\H(DM,f_{0*}L_0)=DDM$ respectively. As in the case of vector spaces, we can define submodules $F_-M$ and $F_+M$ of $M$ corresponding to the above two submodules respectively. By varying $M_{\lambda}$, we obtain submodules $F_{(0,r]}M$ and $F_{[r,\rK]}M$ of $M$ for $r\in (0,\rK]$. We define $F_{[r,r]}M:=F_{(0,r]}M\cap F_{[r,\rK]}M$ and prove the desired decomposition $M\cong\oplus F_{[r,r]}M$.

By using the results explained as above, we prove Theorem \ref{main theorem} for all $K$ satisfying the conditions in the beginning of the paper. However, we do not know, at this point, how to prove our rationality result by our method only. To prove our rationality result, we exploit a consequence of a theory of Christol-Dwork (\cite{cd}) after embedding $K$ into another $K'$ of rational type in the sense of \cite{kx}, where the theory of Christol-Dwork is applicavle.

\subsection*{Acknowledgement}

The author thanks Kiran Kedlaya for correspondence on the paper \cite{kx}. This work is supported by JSPS KAKENHI Grant-in-Aid for Scientific Research (C) 22K03227 and Grant-in-Aid for Young Scientists (B) 17K14161.

\subsection*{Notation and convention}

In this paper, a ring is an associative ring with unit. When $X$ is a ring, a field, or a left $R$-module for some ring $R$, unless otherwise is mentioned, we denote by $X_+$ the underlying abelian group of $X$ in an obvious sense. For an abelian group $A$ and a set $S$, we denote by $A^S$ (resp. $A^{(S)}$) the abelian group of families $(a_i)_i$ in $A$ indexed by $S$ (resp. such that $a_i=0$ for all but finitely many $i\in S$), where $(a_i)_i$ is denoted by $(a_i)$ for simplicity. When $X$ is an abelian group, a ring, a field, or a left $R$-module $M$, to denote an element $x$ of the underlying set of $M$, we write $x\in M$ for simplicity.

When $X$ is an abelian group, a ring, a field, or a left $R$-module for some ring $R$, $X_u$ denote the underlying set of $X$. Furthermore, an ultrametric function $||:X\to\mathbb{R}_{\ge 0}$ refers to a morphism $||:X_u\to \mathbb{R}_{\ge 0}$ of sets such that $|x-y|\le\max\{|x|,|y|\}$ and $|0|=0$ for $x,y\in X$. Assume $||^{-1}(\{0\})=\{0\}$ and $X\neq 0$. Let $f:X_u\to X_u$ be a map such that there exists $C\in\R$ such that $|f(x)|\le C|x|$ for $x\in X$. We define the operator norm of $f$ by $|f|_{op}=\sup\{|f(x)|/|x|;x\in X_u,x\neq 0\}$. Moreover we define the spectral norm of $f$ by $|f|_{sp}=\inf\{|f^i|^{1/i}_{op};i\in\N_{>0}\}$. When would like to specify $X$, we denote $|f|_{op},|f|_{sp}$ by $|f|_{op,X},|f|_{sp,X}$ respectively.

Although Theorem \ref{main theorem} concerns finite differential modules when the base field $K$ is equipped with (possibly) multiple derivations $\{\partial_j\}_{j\in J}$, except the last section, we restrict to the case $\#J=1$.

\section{Preliminaries}

In this section, we recall basic notions on categories and basic definitions and results on the category $\D$ of differential modules over differential rings $R$ equipped with a single derivation $\partial$.

\subsection{The category $Mod(A,Mod(B))$ and $Mod(A)$-valued bifunctor}\label{cat}

We recall basic definitions on categories and modules. See \cite{mit} for details.

In this paper, unless otherwise is mentioned, a category is assumed to be locally small. Let $\A$ denote the category of abelian groups.  Let $A$ be a ring and $\fC$ an abelian category. We define the category $Mod(A,\fC)$ of left $A$-objects in the category $\fC$. An object is a pair $(X,\rho)$, where $X\in \fC$ and $\rho:A\to \E_{\fC}(X)$ is a ring homomorphism. A morphism $(X',\rho')\to (X,\rho)$ is a morphism $\alpha:X'\to X$ such that $\alpha\circ (\rho'(a))=(\rho(a))\circ\alpha$ for $a\in A$. We denote an object $(X,\rho)$ of $Mod(A,\fC)$ by $X$ when no confusion arises. The category $Mod(A,\fC)$ is an abelian category. We define the category $Mod(A)$ of left $A$-modules by $Mod(A,\A)$, which coincides with the usual one. We will study the category of the form $Mod(A,Mod(B))$ with $B$ a ring. In terms of \cite[II,\S 1.14]{bourbaki}, an object of $Mod(A,Mod(B))$ is a (left) $((A,B),())$-multimodule. For an abelian category $\fD$ and a functor $F:\fD\to Mod(A)$, we denote by $F_+$ the functor given by the composition $()_+F$.

Let $\fC,\fD$ be abelian categories and $F:\fC\to \fD$ a covariant functor. We define the functor $Mod(A,F):Mod(A,\fC)\to Mod(A,\fD)$ associated to $F$, which is denoted by $F$ if no confusion arises, by defining $Mod(A,F)(X,\rho)=(FX,F\rho)$ for $(X,\rho)\in Mod(A,\fC)$, where we define $(F\rho)(a)=F(\rho(a))$ for $a\in A$, and defining $Mod(A,F)\alpha=F\alpha$ for $\alpha:(X',\rho')\to (X,\rho)$ a morphism in $Mod(A,\fC)$. Let $T:\fD\times\fC\to\A$ be an additive bifunctor which is contravariant in the first variable and covariant in the second one. Let $(X,\rho)\in Mod(A,\fC)$. Then $T(Y,X)$ for $Y\in\fD$ is regarded as a left $A$-module via the ring homomorphism $A\to\E(T(Y,X));a\mapsto T(Y,\rho(a))$. For morphisms $\alpha:X'\to X$ in $\fC$ and $\beta:Y'\to Y$ in $\fD$, $T(\beta,\alpha)$ defines a morphism $T(Y',X)\to T(Y,X')$ in $Mod(A)$. In this way, $T$ induces a bifunctor $\fD\times Mod(A,\fC)\to Mod(A)$, which is denoted by $T$ if no confusion arises. In this paper, we mainly consider the case where $\fC=\fD=Mod(B)$ for some ring $B$ and $T=\H$. In this case, $\H:Mod(B)\times Mod(A,Mod(B))\to Mod(A)$ is additive and left exact as the forgetful functor $Mod(A)\to \A$ is faithful.

For a ring homomorphism $f:R\to S$, we denote by $f^*:Mod(R)\to Mod(S),f_*:Mod(S)\to Mod(R)$ the pull-back and push-out functors respectively: for a left $S$-module $N=(N_+,\rho)$, $f_*N$ is the left $R$-module given by $(N_+,f_*\rho)$, where we define $f_*\rho=\rho\circ f$.  Let $\eta:f^*f_*\to id_{Mod(S)},\eps:id_{Mod(S)}\to f_*f^*$ denote the counit and unit for the adjoint functor $(f^*,f_*)$, which are natural transformations defined in an obvious way (\cite[II,5.2]{bourbaki}).

Finally we recall some terminology on natural transformations. 

Let $\fC$ be an abelian category. An endofunctor on $\fC$ is a (covariant or contravariant) functor whose domain and codomain are $\fC$. Let $F,G,H:\fC\to \fC$ be endofunctors on $\fC$, and $T:F\to G,S:G\to H$ natural transformations. We define the natural transformation $S\cdot T:F\to H$ by $(S\cdot T)_X=S_X\circ T_X$ for $X\in \fC$. We call $S\cdot T$ the vertical composition of $T$ followed by $S$.

Let $\fC$ be an abelian category. For $F,G:\fC\to \fC$ endofunctors on $\fC$, and $T:F\to G,S:F\to G$ natural transformations, we define the natural transformation $S+T:F\to G$ by $(S+T)_X=S_X+T_X$ for $X\in \fC$. Let $\fC$ be an abelian category, $F:\fC\to \fC$ an endofunctor, which is assumed to be representable. By Yoneda's lemma, the class of natural transformations $\E(F)$ forms a ring where the addition and multiplication given by $+,\cdot$ respectively defined as above.

%\subsection{A base change lemma}
%\input{22}%3

\subsection{A base change lemma}

\begin{definition}
Let $A,B,C,D$ be rings, $f:C\to D,g:A\to B,i:A\to C,j:B\to D$ ring homomorphism such that $j\circ g=f\circ i$. Let $\eta:id_{Mod(B)}\to j_*j^*$ be the unit for the adjoint functor $(j^*,j_*)$, and $\epsilon:i^*i_*\to id_{Mod(C)}$ the counit for the adjoint functor $(i^*,i_*)$, and $\varphi:j^*g^*\to f^*i^*$ the natural isomorphism defined by the vertical composition $j^*g^*\to (j\circ g)^*\to (f\circ i)^*\to f^*i^*$, where each arrow is an obvious one. We define the natural transformation $\delta:g^*i_*\to j_*f^*$ as the vertical composition $(j_*f^*\epsilon)\cdot (j_*\varphi i_*)\cdot (\eta g^*i_*):g^*i_*\to j_*j^*g^*i_*\to j_*f^*i^*i_*\to j_*f^*$. For $M\in Mod(C)$, we have $\delta_M:g^*i_*M\to j_*f^*M$ coincides with the unique morphism satisfying, for $x\in M$, $\delta_M(1\otimes x)=1\otimes x$. We consider left $B$-modules $B\otimes C,D$, where we regard $B$ as a $(B,A)$-bimodule via $id_B,g$ respectively, $C$ as a left $A$-module via $i$, and $D$ as a left $B$-module via $j$. We define the morphism $\delta_0:B\otimes C\to D$ in $Mod(B)$ as the unique morphism satisfying, for $b\in B,c\in C$, $\delta_0(b\otimes c)=j(b)f(c)$.
\end{definition}

\begin{lemma}\label{base change}%[label:base change]
Let notation be as above. Assume that $\delta_0$ is an isomorphism. Then $\delta:g^*i_*\to j_*f^*$ is a natural isomorphism.
\end{lemma}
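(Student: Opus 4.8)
The plan is to identify, for each $M \in Mod(C)$, the component $\delta_M$ with a base change of the map $\delta_0$, and then invoke the hypothesis together with the fact that any functor preserves isomorphisms.

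First I would unwind the definitions. For $M \in Mod(C)$, the object $i_*M$ is $M$ with the left $A$-module structure obtained by restriction along $i$, so $g^*i_*M = B \otimes_A M$ with $A$ acting on $M$ through $i$; and $j_*f^*M = D \otimes_C M$ with the left $B$-module structure obtained by restricting the left $D$-action along $j$. Since the left $A$-action on $M$ factors through $i \colon A \to C$, there is a canonical isomorphism of left $B$-modules
\[ B \otimes_A M \;\xrightarrow{\ \sim\ }\; (B \otimes_A C) \otimes_C M, \qquad b \otimes x \mapsto (b \otimes 1_C) \otimes x, \]
where $B \otimes_A C$ carries its evident $(B,C)$-bimodule structure ($B$ acting on the left factor, $C$ on the right factor).

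Next I would check that $\delta_0 \colon B \otimes_A C \to D$ is a homomorphism of $(B,C)$-bimodules: it is left $B$-linear by its definition, and right $C$-linearity is the identity $\delta_0((b \otimes c)c') = j(b)f(cc') = \bigl(j(b)f(c)\bigr)f(c') = \delta_0(b \otimes c)\cdot c'$, the right $C$-action on $D$ being the one through $f$. (Well-definedness of $\delta_0$ over $A$ is precisely the hypothesis $j \circ g = f \circ i$.) Applying the functor $(-)\otimes_C M$ from $(B,C)$-bimodules to left $B$-modules gives a morphism of left $B$-modules
\[ \delta_0 \otimes_C \mathrm{id}_M \colon (B \otimes_A C) \otimes_C M \longrightarrow D \otimes_C M, \]
and precomposing with the isomorphism above yields a morphism $B \otimes_A M \to D \otimes_C M$ in $Mod(B)$ sending $1 \otimes x$ to $\delta_0(1 \otimes 1_C) \otimes x = 1 \otimes x$. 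By the Definition preceding the Lemma, $\delta_M$ is the unique morphism in $Mod(B)$ with $\delta_M(1 \otimes x) = 1 \otimes x$; since the elements $1 \otimes x$ generate $B \otimes_A M$ as a left $B$-module, the two morphisms agree, i.e. $\delta_M$ is, up to the canonical identification, $\delta_0 \otimes_C \mathrm{id}_M$.

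Finally, since $\delta_0$ is an isomorphism in $Mod(B)$ it is an isomorphism of $(B,C)$-bimodules (its inverse is automatically right $C$-linear), so the functor $(-)\otimes_C M$ carries it to an isomorphism; hence $\delta_M$ is an isomorphism for every $M \in Mod(C)$. A natural transformation all of whose components are isomorphisms is a natural isomorphism, which is the assertion. I do not expect a genuine obstacle here: the only delicate point is the bookkeeping that makes the identification $\delta_M = \delta_0 \otimes_C \mathrm{id}_M$ rigorous — matching up the various (bi)module structures and checking that $\delta_0 \otimes_C \mathrm{id}_M$ is well defined and $B$-linear — after which everything is formal.
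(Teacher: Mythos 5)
Your proof is correct, but it takes a genuinely different route from the paper's. The paper first checks that $\delta_C$ is an isomorphism (factoring it through $\delta_0$ and an obvious isomorphism $D\to j_*f^*C$), then passes to free modules using the fact that $g^*i_*$ and $j_*f^*$ preserve direct sums, and finally handles an arbitrary $M$ by choosing a free resolution $P_1\to P_0\to M\to 0$ and invoking the right exactness of both composites to conclude $\delta_M$ is an isomorphism. Your argument instead realizes the component $\delta_M$ explicitly, via the canonical associativity isomorphism $B\otimes_A M\cong (B\otimes_A C)\otimes_C M$, as $\delta_0\otimes_C\mathrm{id}_M$, after observing that $\delta_0$ is a $(B,C)$-bimodule map (with right $C$-linearity verified from $\delta_0(b\otimes c)=j(b)f(c)$). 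Since a bijective bimodule map has an inverse which is automatically a bimodule map, applying $(-)\otimes_C M$ carries $\delta_0$ to an isomorphism, and the two $B$-linear maps $B\otimes_A M\to D\otimes_C M$ agree on the generators $1\otimes x$, hence coincide with $\delta_M$. The upshot: the paper's argument is the standard homological reduction-to-free-modules technique and requires knowing that the two composite functors are right exact; your argument is purely element-and-tensor bookkeeping, avoids resolutions and exactness entirely, and gives a sharper output — an explicit natural identification of $\delta$ with $\delta_0\otimes_C(-)$, rather than just the statement that each $\delta_M$ is invertible. Both are perfectly valid, and yours is arguably more transparent for readers comfortable with bimodules.
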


\begin{proof}
We prove $\delta_C$ is an isomorphism. Let $\alpha:D\to j_*f^*C$ be the isomorphism defined by $\alpha(d)=d\otimes 1$. Since we have $g^*i_*C=B\otimes C$ by definition and $\delta_C=\alpha\circ\delta_0$, $\delta_C$ is an isomorphism.

Let $M\in Mod(C)$ be arbitrary. Let $\dots\to P_1\to P_0\to M$ be a free resolution of $M$. Since $g^*i_*$ and $j_*f^*$ commute with direct sum, $\delta_{P_1},\delta_{P_2}$ are isomorphisms. By the right exactness of $g^*i_*$ and $j_*f^*$, $\delta_M$ is an isomorphism.
\end{proof}

%\subsection{The categories of $\D$ and $\CRT$}
%\input{23}%

\subsection{The categories $\D$ and $\CRT$}\label{def}

We recall basic definitions and results on differential modules over differential rings in a point of view of categories. See \cite{pde,kx} for detail.

A differential ring is a commutative ring $R$ equipped with a derivation, i.e., a morphism $\partial:R_+\to R_+$ in $\A$ satisfying $\partial(r'\cdot r)=\partial(r')\cdot r+r'\cdot\partial(r)$ for $r',r\in R$. The category of differential modules over $R$, which is denoted by $\D$ in this paper, is as follows. An object of $\D$ is an $R$-module $V$ equipped a morphism $\partial:V_+\to V_+$ in $\A$ satisfying $\partial (r\cdot v)=\partial(r)\cdot v+r\cdot\partial(v)$ for $r\in R,v\in V$; a morphism of $\D$ is a morphism $\alpha:V'\to V$ in $Mod(R)$ such that $\alpha(\partial(v'))=\partial(\alpha(v'))$ for $v'\in V$. The category $\D$ is an abelian category, where the addition $\H(V',V)\times \H(V',V)\to \H(V',V)$ is given by $(\alpha_1+\alpha_2)(v')=\alpha_1(v')+\alpha_2(v')$ for $\alpha_1,\alpha_2\in \H(V',V),v'\in V'$. We define the forgetful functor $()_+:\D\to \A$ by, for $V\in \D$, defining $V_+$ as the underlying abelian group of $V$ (we forget the derivation), and, for a morphism $\alpha:V'\to V$, defining $(\alpha)_+$ as $\alpha$ itself. The forgetful functor $()_+$ is obviously faithful and exact.

\begin{definition}
%We construct a projective generator $G(R,\partial)$, denoted by $G$ for simplicity, in $\D$. 
We define the object $G(R,\partial)\in \D$, denoted by $G$ if no confusion arises, as the left $R$-module given by $(R_+^{(\N)},\xi:R\to \E_{\A}(R_+^{(\N)});c\mapsto ((q_i)_i\mapsto (cq_i)_i))$ equipped with the differential operator given by $\partial((q_i)_i)=(\partial(q_i)+q_{i-1})_i$, where we set $q_{-1}=0$. Let $\H(G(R,\partial),-):\D\to \A$ be the covariant morphism functor given by $G(R,\partial)$. Let $e=(1,0,0,\dots)\in G(R,\partial)_+$. We define the natural transformations $\PPhi:\H(G(R,\partial),-)\to (-)_+,\PPsi:(-)_+\to \H(G(R,\partial),-)$. We define $\PPhi$ as the natural transformation corresponding to $e\in G(R,\partial)_+$ in the sense of Yoneda's lemma. Precisely speaking, $\PPhi_V$ for $V\in\D$ coincides with $\PPhi_V:\H(G(R,\partial),V)\to V_+;s\mapsto s(e)$. We define $\PPsi_V$ for $V\in\D$ by $\PPsi_V(x)((q_i))=\sum_{i=0}^{+\infty}q_i\partial^i(x)$ for $x\in V,(q_i)\in G(R,\partial)$. We can easily verify that $\PPhi$ and $\PPsi$ are inverse each other. Consequently, $G(R,\partial)$ is a projective generator for $\D$ (\cite[Proposition 15.3]{mit}).
\end{definition}

%\subsection{twisted polynomial ring}

\begin{definition}
We have the isomorphisms $\PPsi_G:G_+\to \H(G,G),\PPhi_G:\H(G,G)\to G_+$ with inverse each other. Let $\cdot:G_+\times G_+\to G_+$ denote the bilinear map given by $x\cdot y=\PPhi_G(\PPsi_G(y)\circ\PPsi_G(x))$. Then $\PPsi_G,\PPhi_G$ define ring isomorphisms $(G_+,\cdot)\to \H(G,G)^{op},\H(G,G)^{op}\to (G_+,\cdot)$ respectively inverse each other. By a straightforward calculation, we have $(q'_i)_i\cdot (q_i)_i=(\sum_{j=0}^i\sum_{h\ge j}q'_h\binom{h}{j}\partial^{h-j}(q_{i-j}))_i$ for $(q'_i)_i,(q_i)_i\in G_+$. Hence the ring $(G_+,\cdot)$ is nothing but the ring of twisted polynomials associated to the differential ring $R$ in the sense of \cite[Definition 5.5.1]{pde}. Hence we call the ring $(G_+,\cdot)$ the ring of twisted polynomials associated to $(R,\partial)$, which is denoted by $\RT$. Thus we denote $(G_+,\cdot)$ by $\RT$. We use the twisted polynomial notation: we set $T=(0,1,0,\dots,)\in \RT$. We define the ring homomorphism $i:R\to \RT$ by $i(q)=(q,0,\dots,)$ for $q\in R$. Then, for $j\in \N$, we have $i(q)T^j=(0,\dots,0,q,0,\dots)$, where $q$ sits at the $j$-th entry, and we can express $(q_i)\in \RT$ as $x=\sum_{i=0}^ni(q_i)T^i$, where $n\in\N$ satisfies the condition that $q_j=0$ for $j>n$. Consequently, $\RT$ is generated by the subset $i(R)\cup\{T\}$ as a ring.
\end{definition}

%\subsection{matrix}

\begin{definition}
Let $M\in \CRT$. Assume that $i_*M$ is finite free. For simplicity, $e_1,\dots,e_m\in M$ is a basis of $M$ if $e_1,\dots,e_m$ is a basis of $i_*M$. 
%Let $e_1,\dots,e_m$ be a basis. which forms a basis of $i_*M$. 
An arbitrary $x\in M$ is uniquely expressed as $\sum_{i=0}^mi(c_j)\cdot e_j$ with $c_1,\dots,c_m\in R$. For $k\in\N$, we define the matrix $G_k=(g_{k,ij})\in M_m(R)$ by $T^k\cdot e_k=\sum_{h=1}^mi(g_{k,ij})\cdot e_h$ for all $k$. 
We repeat a similar construction for $M'\in \CRT$ such that $i_*M'$ is finite free with a basis $e'_1,\dots,e'_n\in M'$. Let $\alpha:M'\to M$ be a morphism. We define the matrix $X\in M_{mn}(R)$ by $\alpha(e_k)=\sum_{h=1}^ni(x_{kh})\cdot e'_h$ for all $k$. Then we have $XG'_1+\partial(X)=G_1X$.
\end{definition}

%\subsection{Construction of isom.'s of categories}

\begin{definition}
Since the forgetful functor $()_+:\D\to\A$ is representable by $G$, the class $\E(()_+)$ of natural transformations $()_+\to ()_+$ is a ring by Yoneda's lemma. We define the ring homomorphism $\rho:\RT\to \E(()_+)$ by the vertical composition $\rho(x)=\PPhi\cdot\H(\PPsi_G(x),-)\cdot\PPsi:()_+\to ()_+$ for $x\in \RT$. For $V\in\D$, we define the ring homomorphism $\rho_V:\RT\to \E_{\A}(V_+)$ by $\rho_V(x)=\rho(x)_V$ for $x\in\RT$. At this point, we can prove Lemma \ref{universality} below, which asserts the ring $\RT$ satisfies a universal property.

We define the functor $\MM:\D\to \CRT$ by $\MM V=(V_+,\rho_V)$ for $V\in \D$ and $M\alpha=\alpha$ for a morphism $\alpha:V'\to V$. We define the functor $\V:\C\to \D$ by $\V M=i_*M\in Mod(R)$ for $M\in Mod(\RT)$ equipped with the differential operator given by $\rho(T)\in\E_{\A}((i_*M)_+)=\E_{\A}(M_+)$ and $\V\alpha=i_*\alpha$ for a morphism $\alpha:M'\to M$. We have $\V\circ \MM=id_{\D}$ and $\MM\circ \V=id_{\C}$ by using Lemma \ref{universality}. Let $V\in \D$ and $\xi:R\to\E_{\A}(V_+)$ denote the left $R$-module structure of $V$. Then, in $\E_{\A}((\MM V)_+)=\E_{\A}(V_+)$, we have $\xi(c)=\rho_V(i(c))$ for $c\in R$ and $\rho(T)\in \E_{\A}((\MM V)_+)=\E_{\A}(V_+)$ coincides with the differential operator of $V$.
\end{definition}

%\subsection{universality of $\RT$}

\begin{lemma}\label{universality}%[label:universality]
Let $R$ be a differential ring. We consider the data $(U,\mu,u)$ where $U$ is a ring, $\mu:R\to U$ is a ring homomorphism, $u\in U$ such that $u\cdot\mu(r)=\mu(r)\cdot u+\mu(\partial(r))$ for $r\in R$. Then there exists a unique ring homomorphism $f:\RT\to U$ such that $\mu=f\circ i$ and $f(T)=u$. Moreover, we have $f((q_i))=\sum_i\mu(q_i)u^i$ for $(q_i)\in \RT$.

The ring homomorphism $f:\RT\to U$ is called the ring homomorphism corresponding to the data $(U,\mu,u)$.
\end{lemma}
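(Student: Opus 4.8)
The plan is to prove uniqueness first and then existence, the latter resting on a single Leibniz-type identity.

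\emph{Uniqueness.} As recorded above, $\RT=(G_+,\cdot)$ is generated as a ring by the subset $i(R)\cup\{T\}$, so any ring homomorphism $f$ with $f\circ i=\mu$ and $f(T)=u$ is determined by these values. Concretely, applying $f$ to the expression $(q_i)=\sum_i i(q_i)\cdot T^i$ forces $f((q_i))=\sum_i\mu(q_i)u^i$, which also proves the final formula in the statement.

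\emph{Existence.} Define $f\colon\RT\to U$ by $f((q_i))=\sum_i\mu(q_i)u^i$; this is a finite sum since $q_i=0$ for $i\gg 0$. Additivity is immediate and $f$ sends the unit $i(1)=(1,0,\dots)$ of $\RT$ to $1$, so the only real issue is multiplicativity. Using the explicit product formula recorded above, one computes directly, for $x=(q_i)\in\RT$ and $s\in R$,
\[
x\cdot T=(q_{i-1})_i,\qquad x\cdot i(s)=\Bigl(\textstyle\sum_{h\ge i}\binom{h}{i}\partial^{h-i}(s)\,q_h\Bigr)_i .
\]
The first identity gives $f(x\cdot T)=\sum_i\mu(q_{i-1})u^i=\bigl(\sum_k\mu(q_k)u^k\bigr)u=f(x)u$ immediately. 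For the second, applying $f$ and collecting terms by $h$ reduces $f(x\cdot i(s))=f(x)\mu(s)$ to the identity
\[
u^h\mu(s)=\sum_{i=0}^{h}\binom{h}{i}\mu\bigl(\partial^{h-i}(s)\bigr)u^i\qquad(h\in\N,\ s\in R),
\]
which I would prove by induction on $h$: the case $h=0$ is trivial, and the inductive step follows from the hypothesis $u\cdot\mu(r)=\mu(r)\cdot u+\mu(\partial(r))$ together with Pascal's rule $\binom{h}{i-1}+\binom{h}{i}=\binom{h+1}{i}$.

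Finally, for arbitrary $x,y\in\RT$, write $y=\sum_l i(s_l)\cdot T^l$; then $x\cdot y=\sum_l(x\cdot i(s_l))\cdot T^l$ by distributivity and associativity in $\RT$, and iterating the two displayed identities (first $l$-fold for the factor $T^l$, then once for $i(s_l)$) yields $f\bigl((x\cdot i(s_l))\cdot T^l\bigr)=f(x)\mu(s_l)u^l$; summing over $l$ gives $f(x\cdot y)=f(x)f(y)$. Hence $f$ is a ring homomorphism with the required properties. The only nontrivial point is multiplicativity, and within it the Leibniz identity above; the rest is bookkeeping (including the harmless observation that images under $\mu$ of elements of the commutative ring $R$ commute with one another in $U$).
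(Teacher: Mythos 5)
Your proof is correct, but it takes a genuinely different route from the paper's. The paper avoids the explicit verification of multiplicativity by an indirect argument: it constructs a differential module $MU'=\mu_*U\in\D$ whose differential operator is left multiplication by $u$ (the compatibility relation $u\mu(r)=\mu(r)u+\mu(\partial(r))$ is precisely the Leibniz condition needed for this to be a valid object of $\D$), takes the structure homomorphism $\rho:\RT\to\E_{\A}(U_+)$ of the associated $\RT$-module $\MM MU'$, and factors $\rho$ through the injective left-regular representation $\tau:U\to\E_{\A}(U_+)$ using that $\rho(i(R)\cup\{T\})\subseteq\tau(U)$ and $\tau(U)$ is a subring. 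Multiplicativity of $f$ is then inherited for free from $\rho$ and $\tau$ being ring homomorphisms. What the paper's route buys is brevity and alignment with the categorical machinery already set up ($\D\cong\C$ via $\MM$ and $\V$); what your route buys is a self-contained, elementary verification with no appeal to that machinery, at the cost of an explicit computation with the twisted product formula and the iterated-Leibniz identity $u^h\mu(s)=\sum_{i=0}^h\binom{h}{i}\mu(\partial^{h-i}(s))u^i$, which you correctly isolate as the one nontrivial lemma and prove by induction with Pascal's rule. Your bookkeeping — including the reduction of general multiplicativity to the cases $x\cdot T$ and $x\cdot i(s)$ and the remark that $\mu(R)$ is a commutative subset of $U$ — is sound.
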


\begin{proof}
The uniqueness follows from the fact that $\RT$ is generated by the subset $i(R)\cup\{T\}$ as a ring. We define the object $MU'\in\D$ by $\mu_*U$ equipped with the differential operator given by the left multiplication by $u\in U$. Let $\rho:\RT\to\E_{\A}(U_+)$ denote the structure morphism of $MU'$. We define the ring homomorphism $\tau:U\to \E_{\A}(U_+)$ by $\tau(x)(y)=x\cdot y$ for $x,y\in U_+$. Then $\tau$ is an injection by $\tau(x)(1)=x$ for $x\in U_+$. We have $\tau\circ \mu=\rho\circ i,\rho(T)=\tau(u)$ by the definition of the functor $M$. Since $\RT$ is generated by the subset $i(R)\cup\{T\}$ as a ring, we have $\rho(\RT)\subset \tau(U)$. Hence there exists a unique ring homomorphism $f$ such that $\rho=\tau\circ f$. We can easily see that $f$ satisfies the desired condition.
\end{proof}

%\subsection{cyclic vector thm}

\begin{lemma}\label{cyclic}%[label:cyclic]
Let notation be as above. Assume $R$ is a field.

\begin{enumerate}
\item The ring $\RT$ is an integral domain.
\item The ring admits a left (resp. right) division theorem. Consequently, any left (resp. right) ideal of $\RT$ is principal. For any non-zero left (resp. right) ideal of $\RT$, there exists uniquely a generator $P\in \RT$ of the form $P=T^i+\sum_{j<i}q_jT^j$ with $i\in\N$.
%2. A left (resp. right) division theorem holds in $\RT$. Consequently, any left (resp. right) ideal of $\RT$ is principal. For any non-zero left (resp. right) ideal of $\RT$, there exists uniquely a generator $P\in \RT$ of the form $P=T^i+\sum_{j<i}q_jT^j$ with $i\in\N$.

\item Let $P\in \RT$ be non-zero. We write $P=q_iT^i+\sum_{j<i}q_jT^j$ with $i\in\N$ and $q_i\in R^{\times}$. Then $dim i_*(\RT/\RT\cdot P)=i$. 

\item Let $M\in\CRT$. Then $M$ is of finite length if and only if $i_*M$ is of finite dimension.

\item Let $M\in\CRT$ be of finite length. Then there exists an exact sequence of the form $0\to \RT\to \RT\to M\to 0$. As a consequence of part 1, the second morphism is given by the multiplication by some element $P\in \RT$ by right.
\end{enumerate}

%Let notation be as above. Assume $R$ is a field.

%1. The ring $\RT$ is an integral domain.

%2. The ring admits a left (resp. right) division theorem. Consequently, any left (resp. right) ideal of $\RT$ is principal. For any non-zero left (resp. right) ideal of $\RT$, there exists uniquely a generator $P\in \RT$ of the form $P=T^i+\sum_{j<i}q_jT^j$ with $i\in\N$.
%%2. A left (resp. right) division theorem holds in $\RT$. Consequently, any left (resp. right) ideal of $\RT$ is principal. For any non-zero left (resp. right) ideal of $\RT$, there exists uniquely a generator $P\in \RT$ of the form $P=T^i+\sum_{j<i}q_jT^j$ with $i\in\N$.

%3. Let $P\in \RT$ be non-zero. We write $P=q_iT^i+\sum_{j<i}q_jT^j$ with $i\in\N$ and $q_i\in R^{\times}$. Then $dim i_*(\RT/\RT\cdot P)=i$. 

%4. Let $M\in\CRT$. Then $M$ is of finite length if and only if $i_*M$ is of finite dimension.

%5. Let $M\in\CRT$ be of finite length. Then there exists an exact sequence of the form $0\to \RT\to \RT\to M\to 0$. As a consequence of part 1, the second morphism is given by the multiplication by some element $P\in \RT$ by right.
\end{lemma}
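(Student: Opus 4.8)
The plan is to prove the five assertions in order, since each later part leans on the earlier ones, and to reduce everything to classical facts about the twisted polynomial ring once the dictionary between $\RT$ and differential modules is in place.

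\textbf{Part 1.} I would argue that $\RT=(G_+,\cdot)$ is an integral domain by a leading-term (degree) argument. Every non-zero $x=(q_i)\in\RT$ has a well-defined degree $\deg x=\max\{i:q_i\neq 0\}$. From the explicit multiplication formula $(q'_i)\cdot(q_i)=(\sum_{j\le i}\sum_{h\ge j}q'_h\binom{h}{j}\partial^{h-j}(q_{i-j}))_i$ one checks that the top-degree coefficient of $x'\cdot x$ is $q'_{\deg x'}\cdot q_{\deg x}$, so $\deg(x'\cdot x)=\deg x'+\deg x$ whenever $R$ is a field (hence has no zero divisors). In particular $x'\cdot x\neq 0$, giving part 1 and, as a byproduct, the additivity of degree that is used throughout.

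\textbf{Part 2.} For the left division theorem: given $A,B\in\RT$ with $B\neq 0$, write $B=q\,T^d+\cdots$ with $q\in R^\times$ (here I use that $R$ is a field, so the leading coefficient is a unit). If $\deg A\ge d$, subtract an appropriate left multiple $i(c)T^{e}\cdot B$ to lower the degree of $A$, and iterate; this terminates and yields $A=Q\cdot B+S$ with $\deg S<d$. The right division theorem is symmetric (one subtracts right multiples $B\cdot i(c)T^e$, using the symmetric leading-term computation). Given a left division algorithm, any non-zero left ideal $I$ is generated by any element of minimal degree in it, and dividing shows uniqueness once we normalize the leading coefficient to $1$, i.e. $P=T^i+\sum_{j<i}q_jT^j$; similarly on the right. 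The main subtlety to get right is simply that left division uses the leading coefficient of the \emph{divisor} being a unit, which holds because $R$ is a field.

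\textbf{Parts 3--5.} For part 3, writing $P=q_iT^i+\sum_{j<i}q_jT^j$ with $q_i\in R^\times$, I claim $e,Te,\dots,T^{i-1}e$ (the images of $1,T,\dots,T^{i-1}$) form an $R$-basis of $i_*(\RT/\RT\cdot P)$: they span because $T^i\equiv -q_i^{-1}\sum_{j<i}q_jT^j$ modulo $\RT\cdot P$ and one induces downward on degree using $\partial$-twisting to rewrite $T^k$ for $k\ge i$; they are independent because a non-trivial $R$-relation among them would give a non-zero element of $\RT\cdot P$ of degree $<i=\deg P$, contradicting additivity of degree (part 1). For part 4, if $i_*M$ is finite-dimensional then every descending chain of $\RT$-submodules is a descending chain of $R$-subspaces, hence stabilizes, and similarly for ascending chains, so $M$ has finite length; conversely, a simple object $\RT/\mathfrak{m}$ with $\mathfrak{m}$ a maximal left ideal is $\RT/\RT\cdot P$ for some $P$ by part 2, hence finite-dimensional over $R$ by part 3, and finite length then gives finite dimension by additivity of dimension along a composition series. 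For part 5, since $i_*M$ is finite-dimensional, pick $v\in M$ generating a submodule of maximal $R$-dimension; a dimension count plus the structure of $\RT$ (every proper quotient in the relevant chain drops dimension) shows such $v$ generates $M$, giving a surjection $\RT\to M$, $1\mapsto v$; its kernel is a non-zero left ideal, hence $\RT\cdot P$ for a unique monic $P$ by part 2, yielding $0\to\RT\xrightarrow{\cdot P}\RT\to M\to 0$ with the first map right multiplication by $P$ (injective by part 1). The main obstacle I anticipate is part 5: proving that a single cyclic vector exists is the genuine content (the cyclic vector theorem for differential modules over a field), and the cleanest route is to combine Lemma \ref{cyclic}(3) with a maximality/dimension argument rather than to exhibit a cyclic vector explicitly.
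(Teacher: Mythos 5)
Your arguments for parts~1--4 are essentially the standard ones and run parallel to what the paper delegates to \cite{pde}: additivity of degree gives integrality, the leading-coefficient-is-a-unit computation gives division and principality, and $\{1,T,\dots,T^{i-1}\}$ modulo $\RT\cdot P$ gives the dimension count. One small omission in part~4: when you pass from a simple module $\RT/\mathfrak m$ to $\RT/\RT\cdot P$, you must justify that $P\neq 0$ before part~3 applies, which is exactly the paper's observation that $\RT$ is not a simple left $\RT$-module because $0\subsetneq\RT\cdot T\subsetneq\RT$.

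The genuine gap is in part~5. Choosing $v\in M$ whose cyclic submodule has maximal $R$-dimension and asserting that this submodule must equal $M$ is not a proof of the cyclic vector theorem. Nothing in your sketch uses the derivation in an essential way, and the conclusion is false when $\partial=0$: over $\RT=R[T]$, take $M=R^2$ with $T$ acting by zero; every cyclic submodule is one-dimensional, so a maximal-dimension cyclic vector fails to generate. The existence of a cyclic vector is a nontrivial theorem whose known proofs (Katz's deformation argument, Deligne's geometric proof, and so on) genuinely use $\partial\neq 0$ together with an appropriate hypothesis on the constants or the characteristic, and it does not reduce to a dimension count. The paper avoids this entirely by citing \cite{pde} for parts 1, 2, and 5. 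You correctly flagged part~5 as the hard step, but the proposed maximality argument does not supply the missing input.
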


\begin{proof}
See \cite[Proposition 5.2, Theorem 5.3,Remark 5.5]{pde} for parts 1,2,5. Part 3 is proved by using the left division theorem  To see part 4, note that $\RT$ is not irreducible as a left $\RT$-module by $0\subsetneq\RT\cdot T\subsetneq\RT$.
%See \cite[\S 4]{pde} for parts 1,2,3,5. To see part 4, note that $\RT$ is not irreducible as a left $\RT$-module by $0\subsetneq\RT\cdot T\subsetneq\RT$.
\end{proof}

\begin{definition}
\begin{enumerate}
\item We regard the $R$-module $R$ as a left $R$-object in $Mod(R)$ via the ring homomomorphism $R\to\E(R_+);x\mapsto (y\mapsto x\cdot y)$ given by the multiplication on $R$. Let $M\in \CRT$. Let $\xi:R\to\E_{\A}(\H(i_*M,R))$ be the ring homomoprhism defined by the left $R$-structure on $R$. Let $u\in \E_{\A}(\H(i_*M,R))$ be the endomorphism defined by $u(\chi)(m)=\partial(\chi(m))-\chi(T\cdot m)$ for $\chi\in \H(i_*M,R)$ and $m\in M$. Then the data $(\E(\H(i_*M,R)),\xi,u)$ satisfies a compatibility condition as in \ref{universality}. We define $D_0M$ as the left $\RT$-module given by $\H(i_*M,R)\in\A$ equipped with the ring homomorphism $\RT\to\E_{\A}(\H(i_*M,R))$ corresponding to the above data obtained in this way. We can see that for a morphism $\alpha:M'\to M$, the morphisn $\H(i_*\alpha,R)$ defines a morphism $D_0M\to D_0M'$. By definition, $(D_0M)_+=\H(i_*M,R)$ and we have $(i(r)\cdot\chi)(m)=r\cdot \chi(m)$ and $(T\cdot \chi)(m)=\partial(\chi(m))-\chi(T\cdot m)$ for $q\in R$ and $\chi\in \H(i_*M,R)$ and $m\in M$.
\item For $M\in \CRT$, we define the morphism $c_{0,M}:M\to D_0D_0M$ by $c_{0,M}(x)(\chi)=\chi(x)$ for $\chi\in D_0M$ and $x\in M$.
\end{enumerate}
\end{definition}

\begin{definition}
We define the object $L(R,\partial)\in \CRT$ as $\MM R$, where $R$ is regarded as an object of $\D$ in an obvious way. Note that we have an exact sequence $0\to \RT\to \RT\to L(R,\partial)\to 0$, where the second morphism is given by the right multiplication by $T$, the third one is given by the unique morphism sending $1\in \RT$ to $1\in L(R,\partial)_+=R_+$.
\end{definition}

%\subsection{More constructions}
%\input{24}%3.8

\subsection{More constructions}

Let $K$ be a complete non-archimedean valuation field of characteristic $0$ with non-trivial valuation equipped with a bounded non-zero derivation. We define $p(K),\omega(K)$ as in the introduction.

We say that $M\in\C$ is of finite dimension if $i_*M$ is of finite dimension $Mod(K)$. Let $\Cf$ denote the full subcategory of $\C$ consisting of objects of finite dimension. The category $\Cf$ is an abelian subcategory of $\C$

%\subsection{generic radii}

\begin{definition}
For $M\in \Cf$, we define the map $m(M):(0,r(K,\partial)]\to\N$. When $M=0$, we define $m(M)(r)=0$ for all $r$. When $M$ is irreducible, we choose an ultrametric function $|\ |:M\to \R_{\ge 0}$ which gives a norm on $i_*M$. Then we define $m(M)(r)=\dim M$ if $r=\omega(K)/|T\cdot|_{sp}$, where $T\cdot:M_+\to M_+$ denotes the endomorphism on $M_+$ given by the left multiplication by $T$, 
%$r=1/\inf\{|T^i/i!|^{1/i}_{op};i\in\N_{>0}\}$ 
and $m(M)(r)=0$ otherwise. When $M$ is arbitrary, let $\{M_j\}_{j\in S}$ be a finite family of objects in $\Cf$ such that the isomorphic class of $\{M_j\}_{j\in S}$ gives a Jordan-Holder factors of $M$ with multiplicity. We define $m(M)(r)=\sum_{j\in S}m(M_j)(r)$ for all $r$. Note that $m(M)(r)$ coincides with the multiplicity of $r$ (resp. $r/r(K,\partial)$) in the extrinsic (resp. intrinsic) subsidiary  generic $\partial$-radii of convergence (\cite[Definition 1.2.8]{kx}). Hence $m(M)$ is a function possessing the same information as the extrinsic subsidiary generic $\partial$-radii of convergence. It is known that the function $m(M)$ is independent of the choices of the $\{M_j\}$ and the ultrametric function. We define the support of $m(M)$ as $\supp m(M)=\{r\in (0,r(K,\partial)];m(M)(r)\neq 0\}$. 
\end{definition}

%The category $\Cf$ is an abelian subcategory of $\C$

%\subsection{Property}

\begin{lemma}\label{multiplicity property}%[label:multiplicity property]
Let $M\in \Cf$.

1. The support $\supp m(M)$ is a finite set and $\# \supp m(M)\le \dim M$.

2. The function $M\mapsto m(M)$ is additive on $\Cf$, where the addition of functions $m_1,m_2:[0,\rK)\to\N_{\ge 0}$ are given by $(m_1+m_2)(r)=m_1(r)+m_2(r)$.

3. We have $\sum_{r\in (0,r(K,\partial)]}m(M)(r)=\dim M$.

4. We have $m(D_0M)=m(M)$.
\end{lemma}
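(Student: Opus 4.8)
The plan is to treat the four parts in order, leaning on the additivity of $m$ built into its definition and on the behaviour of $D_0$ under short exact sequences.

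For part 1, I would first reduce to the irreducible case: choose a Jordan--H\"older filtration of $M$, so that $\supp m(M) = \bigcup_{j\in S}\supp m(M_j)$ and $\#\supp m(M)\le\sum_{j}\#\supp m(M_j)$. For $M_j$ irreducible and non-zero, $\supp m(M_j)$ is a single point $\{\omega(K)/|T\cdot|_{sp}\}$ by the definition of $m$, so $\#\supp m(M_j)\le 1\le\dim M_j$; summing over $j$ and using $\sum_j\dim M_j=\dim M$ (which is part 3, proved below, or simply additivity of dimension on $\Cf$) gives $\#\supp m(M)\le\dim M$. Finiteness is immediate since $S$ is finite.

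For part 2, additivity of $M\mapsto m(M)$ on $\Cf$ is essentially a formal consequence of the construction: given a short exact sequence $0\to M'\to M\to M''\to 0$ in $\Cf$, concatenating Jordan--H\"older filtrations of $M'$ and $M''$ yields a Jordan--H\"older filtration of $M$ whose factors (with multiplicity) are the disjoint union of those of $M'$ and $M''$; since $m(M)$ is defined as the sum over Jordan--H\"older factors of the irreducible contributions, and this is independent of the chosen filtration, we get $m(M)=m(M')+m(M'')$ pointwise. The one subtlety to address is that the definition already asserts independence of the choice of $\{M_j\}$, so I would just invoke that and the Jordan--H\"older theorem for $\Cf$ (finite length by Lemma \ref{cyclic}(4)). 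Part 3 then follows by additivity and the irreducible case: for $M_j$ irreducible, $\sum_r m(M_j)(r)=\dim M_j$ by definition of $m$ on irreducibles, and summing over the Jordan--H\"older factors gives $\sum_r m(M)(r)=\sum_j\dim M_j=\dim M$.

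Part 4 is where the real work is, and I expect it to be the main obstacle. The cleanest route is again to reduce to the irreducible case using additivity (part 2) together with the fact that $D_0$ is exact and sends a Jordan--H\"older filtration of $M$ to one of $D_0M$ with the same factors dualized: indeed $D_0$ is a contravariant exact functor (it is $\H(i_*(-),R)$ on a field, hence exact, and one checks it preserves the $\RT$-module structure), and $c_{0,M}$ identifies $D_0D_0M$ with $M$, so $D_0$ is an anti-equivalence on $\Cf$ preserving length; hence it suffices to show $m(D_0M)=m(M)$ for $M$ irreducible, and for this it suffices to show that $D_0$ preserves irreducibility and that $|T\cdot|_{sp,M} = |T\cdot|_{sp,D_0M}$. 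Irreducibility is preserved because $D_0$ is an anti-equivalence. For the spectral norm, I would equip $D_0M = \H(i_*M,R)$ with the operator norm dual to a chosen norm on $i_*M$ (using that $R=K$ is a valuation field); then the formula $(T\cdot\chi)(m)=\partial(\chi(m))-\chi(T\cdot m)$ expresses the action of $T$ on the dual in terms of the transpose of $-(T\cdot)$ plus a derivation term, and the standard fact that a bounded operator and (a twist of) its transpose have equal spectral norms on a non-archimedean Banach space — this is exactly the kind of computation done by Dwork--Robba and recalled in \cite{pde,kx} for the dual of a differential module — gives $|T\cdot|_{sp,D_0M}=|T\cdot|_{sp,M}$, whence $m(D_0M)=m(M)$ on irreducibles. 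The delicate point to get right is the norm bookkeeping on the dual space and the verification that passing to the transpose does not change the spectral norm (only the operator norm may change), which is where I would cite the relevant statement from \cite{pde} rather than redo it.
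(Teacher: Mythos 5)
Your proposal is correct, and it is substantially more detailed than the paper's own proof, which for this lemma simply declares parts 1 and 3 obvious and cites \cite[Lemma 1.2.9 (a),(b)]{kx} for the remaining assertions (the ``1,4'' in the paper's text is almost certainly a typo for ``2,4''). Parts 1--3 are handled exactly as the definition suggests: reduce to irreducibles via a Jordan--H\"older filtration (valid since objects of $\Cf$ have finite length by Lemma \ref{cyclic}), use that the support of an irreducible is a singleton, and use the invariance of $m(M)$ under the choice of filtration, which the definition already asserts.

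For part 4 your route is the right one, but the ``twist of the transpose has the same spectral norm'' step deserves to be spelled out, because it is not quite a formal fact about transposes: the action of $T$ on $D_0M$ is $\chi\mapsto\partial\circ\chi-\chi\circ(T\cdot)$, and the derivation term genuinely enters the estimate. Expanding $(T\cdot)^n$ on $D_0M$ by the Leibniz-type binomial identity gives $|T\cdot|_{sp,D_0M}\le\max(|\partial|_{sp,K},|T\cdot|_{sp,M})$, and only after running the same inequality through $D_0D_0M\cong M$ (valid on $\Cf$ by Lemma \ref{duality}) and invoking the a priori lower bound $|T\cdot|_{sp}\ge|\partial|_{sp,K}$ for any nonzero finite differential module over $K$ does one conclude equality of spectral norms. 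Your sketch correctly identifies this as the delicate point and points to \cite{pde,kx} for it, which is exactly where the paper defers as well, so the two proofs agree in substance; yours just makes the reduction to irreducibles via exactness of $D_0$ explicit, which the paper leaves implicit in the citation.
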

\begin{proof}
Parts 1,3 are obvious. Parts 1,4 are \cite[Lemma 1.2.9 (a),(b)]{kx}
\end{proof}

%\subsection{Def. of $F_{sp,Y}$}

\begin{definition}
Let $Y$ be a subset of $(0,r(K,\partial)]$. Let $M\in \Cf$. We consider the subset $S(M;Y)$ of submodules of $M$ consisting of $M'$ such that $\supp m(M')\subset Y$, which is equipped with the partial order with respect to the inclusion relation. For $M',M''\in S(M;Y)$, the submodule $M'+M''$ of $M$ generated by $M'$ and $M''$ belongs to $S(M;Y)$ as $M'+M''\cong (M'\oplus M'')/M'\cap M''$ and the additivity of $m(-)$. Since $M$ is Artinian, there exists a maximal element $N$ in $S(M;Y)$. For $M'\in S(M;Y)$, we have $M'\subset N$ by $N\subset M'+N$ and $M'+N\in S(M;Y)$. Hence $N$ is the maximum element in $S(M;Y)$, which is denoted by $F_{sp,Y}M$. We have the canonical injection $F_{sp,Y}M\to M$ by definition, which is denoted by $I(Y)_M$.

Let $\alpha:M'\to M$ be a morphism in $\Cf$. By $\alpha(F_{sp,Y}M')\subset F_{sp,Y}M$, we can define a morphism $F_{sp,Y}\alpha:F_{sp,Y}M'\to F_{sp,Y}M$ in an obvious way. The function $F_{sp,Y}$ defines an endofunctor on $\Cf$ and $I(Y)$ defines a natural transformation $F_{sp,Y}\to id_{\Cf}$. 
\end{definition}

Let $X=\{X_{\lambda}\}_{\lambda\in\Lambda}$ be a partition of $(0,r(K,\partial)]$. For $M\in\Cf$, there exists finitely many $\lambda\in\Lambda$ such that $F_{sp,X_{\lambda}}M\neq 0$ by Lemma \ref{multiplicity property}. Hence we can define the direct sum endofunctor $\oplus_{\lambda\in\Lambda}F_{sp,X_{\lambda}}$ on $\Cf$ and the natural transformation $I(X):\oplus_{\lambda\in\Lambda}F_{sp,X_{\lambda}}\to id_{\Cf}$ defined by the $I(X_{\lambda})$'s in an obvious way.

\begin{lemma}\label{multiplicity lemma}%[label:multiplicity lemma]
Let $X=\{X_{\lambda}\}_{\lambda\in\Lambda}$ be as above.
\begin{enumerate}
\item The natural transformation $I(X)$ is a pointwise monomorphism, that is, $I(X)_M$ is a monomorphism for $M\in \Cf$.
\item For $M\in \Cf$, \TFAE
\begin{enumerate}
\item The morphism $I(X)_M$ is an isomorphism.

\item For $\lambda\in \Lambda$, $\dim F_{sp,X_{\lambda}}M\ge \sum_{r\in X_{\lambda}}m(M)(r)$.
\end{enumerate}
\end{enumerate}
\end{lemma}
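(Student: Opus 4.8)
The plan is to prove both parts simultaneously by reducing everything to counting dimensions, using that the $F_{sp,X_\lambda}M$ are submodules of $M$ and that $m(-)$ is additive with $\sum_r m(M)(r)=\dim M$ (Lemma \ref{multiplicity property}). First I would establish part 1: since $\{X_\lambda\}$ is a partition, distinct summands $F_{sp,X_\lambda}M$ have disjoint supports, so for $\lambda\neq\mu$ a submodule contained in both $F_{sp,X_\lambda}M$ and $F_{sp,X_\mu}M$ has support contained in $X_\lambda\cap X_\mu=\emptyset$, hence is $0$; more generally, given a finite set of indices $\lambda_1,\dots,\lambda_k$, the submodule $F_{sp,X_{\lambda_1}}M+\dots+F_{sp,X_{\lambda_k}}M$ has support in $X_{\lambda_1}\cup\dots\cup X_{\lambda_k}$, and by induction one checks $F_{sp,X_{\lambda_1}}M\cap(F_{sp,X_{\lambda_2}}M+\dots+F_{sp,X_{\lambda_k}}M)$ has support in $X_{\lambda_1}\cap(X_{\lambda_2}\cup\dots)=\emptyset$, so the sum of the finitely many nonzero summands is direct inside $M$. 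This says precisely that $I(X)_M$ is a monomorphism.

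Next, for part 2, I would compute $\dim(\mathrm{im}\,I(X)_M)=\sum_\lambda\dim F_{sp,X_\lambda}M$ (the sum being finite and direct by part 1). The key auxiliary observation is that $\dim F_{sp,X_\lambda}M\le\sum_{r\in X_\lambda}m(M)(r)$ \emph{always}: indeed $F_{sp,X_\lambda}M$ is a submodule of $M$ with $\supp m(F_{sp,X_\lambda}M)\subset X_\lambda$, so by additivity of $m(-)$ on the relevant subquotients we get $\dim F_{sp,X_\lambda}M=\sum_{r\in X_\lambda}m(F_{sp,X_\lambda}M)(r)\le\sum_{r\in X_\lambda}m(M)(r)$, the inequality because $m(M')(r)\le m(M)(r)$ for any submodule $M'\subset M$ (again from additivity on the short exact sequence $0\to M'\to M\to M/M'\to 0$ and nonnegativity of $m$). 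Summing over $\lambda$ and using Lemma \ref{multiplicity property}(3),
\[
\dim(\mathrm{im}\,I(X)_M)=\sum_\lambda\dim F_{sp,X_\lambda}M\le\sum_\lambda\sum_{r\in X_\lambda}m(M)(r)=\sum_{r\in(0,\rK]}m(M)(r)=\dim M.
\]

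Now the equivalence is a dimension count. Since $I(X)_M$ is a monomorphism between finite-dimensional objects, it is an isomorphism if and only if $\dim(\mathrm{im}\,I(X)_M)=\dim M$, i.e.\ if and only if equality holds throughout the displayed chain. Because the partition pieces $X_\lambda$ are disjoint and cover $(0,\rK]$, equality in the sum $\sum_\lambda\dim F_{sp,X_\lambda}M\le\sum_\lambda\sum_{r\in X_\lambda}m(M)(r)$ forces equality in each term, i.e.\ $\dim F_{sp,X_\lambda}M=\sum_{r\in X_\lambda}m(M)(r)$ for every $\lambda$; conversely, term-by-term equality immediately gives equality of the sums and hence that $I(X)_M$ is an isomorphism. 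Since $\dim F_{sp,X_\lambda}M\le\sum_{r\in X_\lambda}m(M)(r)$ holds unconditionally, the condition ``$\dim F_{sp,X_\lambda}M\ge\sum_{r\in X_\lambda}m(M)(r)$ for all $\lambda$'' is equivalent to term-by-term equality, which is condition (a). I expect the only mildly delicate point to be bookkeeping the additivity of $m(-)$ across the several short exact sequences (submodule, quotient, and the internal direct sum), together with making sure the ``finitely many nonzero summands'' caveat from Lemma \ref{multiplicity property}(1) is invoked so that all sums in sight are genuinely finite; none of this should present a real obstacle.
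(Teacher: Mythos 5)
Your proof is correct and follows essentially the same route as the paper's: part~1 by disjointness of supports of the summands, part~2 by dimension counting via additivity of $m(-)$ and Lemma~\ref{multiplicity property}(3). The one organizational difference is that you first establish the unconditional bound $\dim F_{sp,X_\lambda}M\le\sum_{r\in X_\lambda}m(M)(r)$ and deduce both directions of the equivalence from the resulting sandwich, whereas the paper argues the two implications separately (proving actual equality for $(a)\Rightarrow(b)$, and using $\sum_\lambda\dim F_{sp,X_\lambda}M\ge\dim M$ plus part~1 for $(b)\Rightarrow(a)$); this is a cosmetic rearrangement rather than a different idea.
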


\begin{proof}

1. Let $M\in \Cf$. Let $\lambda_1,\dots,\lambda_n$ be distinct elements of $\Lambda$. Then $F_{sp,X_{\lambda_1}}M\cap (F_{sp,X_{\lambda_2}}M+\dots+F_{sp,X_{\lambda_n}}M)=0$ since $\supp m(F_{sp,X_{\lambda_1}}M\cap (F_{sp,X_{\lambda_2}}M+\dots+F_{sp,X_{\lambda_n}}M))\subset \supp m(F_{sp,X_{\lambda_1}}M) \cup \supp m(F_{sp,X_{\lambda_2}}M+\dots+F_{sp,X_{\lambda_n}}M)\subset X_{\lambda_1}\cap (X_{\lambda_2}\cup \dots\cup X_{\lambda_n})=\emptyset$.

2. Assume (a) holds. We have $m(M)=\sum_{\lambda\in \Lambda}m(F_{sp,X_{\lambda}}M)$. For $\lambda\in \Lambda$, we have $m(M)(r)=m(F_{sp,X_{\lambda}}M)(r)$ for $r\in X_{\lambda}$. Hence, $\dim F_{sp,X_{\lambda}}M=\sum_{r\in X_{\lambda}}m(F_{sp,X_{\lambda}}M)(r)=\sum_{r\in X_{\lambda}}m(M)(r)$.

Assume (b) holds. We have $\sum_{\lambda\in\Lambda}\dim F_{sp,X_{\lambda}}M\ge\sum_{\lambda\in\Lambda}\sum_{r\in X_{\lambda}}m(M)(r)=\sum_{r\in \cup_{\lambda\in\Lambda}X_{\lambda}}m(M)(r)=\sum_{r\in (0,r(K,\partial)]}m(M)(r)=\dim M$. Hence $I(X)_M$ is an isomorphism by part 1.
\end{proof}

We define the natural transformation $I_{sp}:\oplus_{r\in (0,r(K,\partial)]}\Frs\to id_{\Cf}$ by $I(X)$ for the family $X=\{[r,r]\}_{r\in (0,r(K,\partial)]}$. Note that, by Lemma \ref{multiplicity lemma}, $I_{sp}$ is a pointwise monomorphism. Also note that for $M\in\Cf$, $I_{sp,M}$ is an isomorphism if and only if $\dim \Frs M\ge m(M)(r)$ for all $r\in (0,r(K,\partial)]$.

\section{Duality}

In this section, we construct a contravariant endofunctor $D$ on $\C$, which is represented by an object $f_{0*}L_0$ of $\CC$ and is naturally isomorphic to the endofunctor $D_0$. We also construct a natural transformation $c:id_{\C}\to DD$. We give calculations on $D$ and $c$ used in the rest of the paper.

\noindent Convention. In the rest of the paper, except the last section \S \ref{decJ}, let $K$ be a complete non-archimedean valuation field of characteristic $0$ equipped with a derivation, where  the valuation $|\ |$ is non-trivial and the derivation is non-zero and bounded with respect to the valuations $|\ |$.

%\subsection{Subrings of the ring of formal power series}%{Definition of rings}

%\input{31}

\subsection{Subrings of the ring of formal power series}

We recall the definition of various subrings defined in \cite{pde}.

We define the ring $\KX$ of formal power series over $K$ as the abelian group $K_+^{\N}$ equipped with the multiplication given by the convolution. 
We use the notation $(a_i)_i$, $(a_i)$ for simplicity, to denote an element of $\KX$ rather than the power series notation $\sum_ia_iX^i$. 
%We adapt this definition since we will use the notation $(a_i)_i$, $(a_i)$ for simplicity, to denote an element of $\KX$ rather than the power series notation $\sum_ia_iX^i$. 
%Let $\KX$ be the ring of formal power series over $K$. We write an element of $\KX$ as $\sum_ia_iX^i$ with $a_i\in K$ for $i\in \N$ or as $(a_i)_i$ or $(a_i)$ for simplicity. 
Let $r\in (0,+\infty)$. Let $\KXrz$ denote the subring of $\KX$ consisting of $(a_i)$ satisfying the condition $sup_i |a_i|r^i<+\infty$. Let $\KXr$ be the subring of $\KX$ defind by $\cap_{s\in (0,r)}K[[X/s]]_0$. Let $\KXrp$ be the subring of $\KX$ defind by $\cup_{s\in (r,+\infty)}K[[X/s]]_0$. We have $\KXrz\subset \KXr\subset \KXrp$ and, for $s\le r$, $\KXrz\subset K[[X/s]]_0,\KXr\subset K\{X/s\},\KXrp\subset K\{X/s+\}$. We define the $r$-Gauss valuation $||_r$ as the ultrametric function $|\ |_r:\KXrz\to\R_{\ge 0}$ given by $|(a_i)|_r=sup_i|a_i|r^i$; the multiplicativity can be seen by $|(a_i)\cdot (b_i)|_r=lim_{s\to r-0}|(a_i)\cdot (b_i)|_s=lim_{s\to r-0}(|(a_i)|_s|(b_i)|_s)=lim_{s\to r-0}|(a_i)|_s|lim_{s\to r-0}|(b_i)|_s$, 
%$|(a_i)\cdot (b_i)|_r=lim_{s\to r-0}|(a_i)\cdot (b_i)|_s=lim_{s\to r-0}(|(a_i)|_s|(b_i)|_s)=lim_{s\to r-0}|(a_i)|_s|lim_{s\to r-0}|(b_i)|_s$, 
where we regard $\KXrz$ as a subring of the subring $\Tas$ of $\KX$ consisting of $(a_i)\in K_+^{\N}$ such that $|a_i|r^i\to 0\ (i\to+\infty)$. Recall that a non-archimedean analogue of Hadamard formula for the radius of convergence holds, that is, for $(a_i)\in \KX$, we have $(a_i)\in \KXr$ if and only if $1/limsup_{i\in\N_{\ge 1}} |a_i|^{1/i}$, where we set $|0|^{1/i}=0$ for $i\in\N_{\ge 1}$ and $1/+\infty=0$ and $1/0=+\infty$ (see \cite[Chapter 6, \S 1, Proposition 1]{robert}: although $K$ is assumed to be a subfield of $\mathbb{C}_p$ in the reference, the proof is valid for all $K$).

Let $\partial_X$ be the derivation on $\KX$ defined by $\partial_X((a_i))=((i+1)a_{i+1})$. The derivation $\partial_X$ induces derivations on $\KXrz,\KXr,\KXrp$, which we denote by $\partial_X$ for simplicity.

We denote the left $\KXT$-module $L(\KX,\partial_X)$ by $L_0$. Similarly we denote the left $\KXrT$-module $L(\KXr,\partial_X)$ (resp. $\KXrzT$-module $L(\KXrz,\partial_X)$, $\KXrpT$-module $L(\KXrp,\partial_X)$) by $L_r$ (resp. $L_{r,bd},L_{r+}$).

%\subsection{Kedlaya-Xiao morphism}
%\input{32}%6

\subsection{Kedlaya-Xiao morphism}

We recall a ring homomorphism defined by Kedlaya-Xiao in \cite{kx} and give its variants.

We define the ring homomorphism $g_0:K\to \KX$ by $g_0(c)=(\partial^i(c)/i!)_i$. 

\begin{lemma}\label{kx}%[label:kx]
\begin{enumerate}
\item For $r\in (0,+\infty)$, we have $\KXrz^{\times}=\KXr^{\times}$.
\item For $c\in K$, we have $g_0(c)\in \KXrKz$.
\item For all $r\in (0,\rK]$ and $c\in K$, we have $|c|=|(\partial^i(c)/i!)|_r$.\item For $i\in\N$, we have $|\partial^i/i!|_{op,K}\le 1/r(K,\partial)^i$.
\end{enumerate}
\end{lemma}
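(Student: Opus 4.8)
The plan is to prove the four assertions of Lemma~\ref{kx} in the order $1, 4, 3, 2$, since assertion~$3$ will depend on $4$ and assertion~$2$ will be most cleanly deduced from $3$.

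\textbf{Part 1.} The inclusion $\KXrz\subset\KXr$ is already recorded, so a unit of $\KXrz$ is automatically a unit of $\KXr$; conversely, suppose $u=(a_i)\in\KXrz$ has an inverse $v=(b_i)$ in $\KXr$. The point is that $\KXr=\cap_{s\in(0,r)}K[[X/s]]_0$, so $v\in K[[X/s]]_0$ for every $s<r$. First I would note $a_0\in K^{\times}$ (from $a_0 b_0=1$), so after dividing by $a_0$ we may assume $u=1-Xw$ with $w\in\KXrz$, hence $|w|_s\le |w|_r<+\infty$ for $s\le r$; for $s$ small enough that $s|w|_s<1$ the geometric series $\sum_k (Xw)^k$ converges in $K[[X/s]]_0$, giving $v\in K[[X/s]]_0$ with $|v|_s$ bounded — but I actually want the bound at $s=r$. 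The cleaner route: since $v\in K[[X/s]]_0$ for all $s<r$ and $uv=1$, multiplicativity of $|\ |_s$ on $\KXrz$ gives $|v|_s=1/|u|_s$, and $|u|_s\to|u|_r>0$ as $s\to r-0$ (here $|u|_r>0$ because $u\neq 0$ and $|\ |_r$ has trivial kernel); hence $\sup_{s<r}|v|_s<+\infty$, which forces $\sup_i|b_i|r^i<+\infty$, i.e. $v\in\KXrz$. So $u\in\KXrz^{\times}$.

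\textbf{Part 4.} This is the boundedness statement. By definition of the spectral norm, $|\partial|_{sp,K}=\inf_i|\partial^i|_{op,K}^{1/i}$, and $r(K,\partial)=\omega(K)/|\partial|_{sp,K}$. The standard estimate (see \cite{pde}, the chapter on spectral norms of derivations) is that for a derivation $\partial$ on a valued field of residue characteristic $p(K)$ one has $|\partial^i/i!|_{op}\le |\partial|_{sp}^{i}/\omega(K)^{i}$; equivalently $|\partial^i/i!|_{op}^{1/i}\le |\partial|_{sp}/\omega(K)=1/r(K,\partial)$. The subtlety is the factor $|i!|^{-1}$: when $p(K)=0$ we have $|i!|=1$ and the inequality is immediate from submultiplicativity of $|\ |_{op}$; when $p(K)>0$ one combines submultiplicativity with the bound $|i!|\ge\omega(K)^i$, which is exactly the classical estimate $v_p(i!)\le i/(p-1)$ rewritten multiplicatively. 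I would cite this as a known fact about bounded derivations rather than reprove it.

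\textbf{Parts 3 and 2.} For part~$3$, fix $r\le r(K,\partial)$ and $c\in K$. By part~$4$, $|\partial^i(c)/i!|\le |c|/r(K,\partial)^i\le |c|/r^i$, so $|\partial^i(c)/i!|\,r^i\le |c|$ for all $i$, giving $|g_0(c)|_r=\sup_i|\partial^i(c)/i!|r^i\le|c|$; and the $i=0$ term is exactly $|c|$, so equality holds. For part~$2$, apply part~$3$ with $r=r(K,\partial)$: then $\sup_i|\partial^i(c)/i!|\,r(K,\partial)^i=|c|<+\infty$, which is precisely the condition defining $\KXrKz=K[[X/r(K,\partial)]]_0$, so $g_0(c)\in\KXrKz$. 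The main obstacle is really just pinning down the normalization in part~$4$ and making sure the $\omega(K)$ convention matches; everything else is a direct unwinding of definitions.
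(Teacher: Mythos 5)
Your parts 1, 3, 2 are fine, and part 1 is essentially the paper's own argument (push multiplicativity of $|\ |_s$ through the limit $s\to r^-$). The real issue is your part 4 and, with it, the overall logical architecture. The paper proves the four assertions in the order 1\,$\to$\,2\,$\to$\,3\,$\to$\,4: the non-archimedean Hadamard formula gives $g_0(c)\in\KXrK$ (a $\limsup$ estimate), part 1 upgrades this to $g_0(c)\in\KXrKz$, the ring-homomorphism property of $g_0$ plus multiplicativity of $|\ |_r$ gives $|g_0(c)|_r=|c|$ (part 3), and part 4 is then read off term-by-term. You have inverted the dependency chain, taking part 4 as the external input and deducing 3 and 2 from it. That inversion is legitimate \emph{if} part 4 is granted from the literature, but your explanation of why part 4 holds is wrong, and the error is exactly the point the paper's proof is designed to get around.

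Concretely: submultiplicativity of the operator norm gives $|\partial^i|_{op,K}\le |\partial|_{op,K}^i$, \emph{not} $|\partial^i|_{op,K}\le |\partial|_{sp,K}^i$. Since $|\partial|_{op,K}\ge|\partial|_{sp,K}$ can be a strict inequality (for instance $K$ the completion of $\Q_p(X)$ at the $1$-Gauss valuation with $\partial=d/dX$, where $|\partial|_{op,K}=1$ but $|\partial|_{sp,K}=\omega(K)<1$), the claim that the bound ``is immediate from submultiplicativity'' when $p(K)=0$, or follows from submultiplicativity plus $|i!|\ge\omega(K)^i$ when $p(K)>0$, does not hold. In fact, on the face of it the spectral-norm definition only gives the \emph{opposite} inequality $|\partial^i|_{op,K}\ge |\partial|_{sp,K}^i$, so in residue characteristic $0$ part 4 asserts the nontrivial equality $|\partial^i|_{op,K}=|\partial|_{sp,K}^i$. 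The content of [KX, Lemma 1.2.12 / Cor.\ 1.2.13] is precisely this upgrade from an asymptotic ($\limsup$) estimate to a uniform one, and the mechanism --- $g_0$ is a ring homomorphism, so $|g_0(c)|_r\,|g_0(c^{-1})|_r=1$ while both factors are $\ge$ the respective $|c^{\pm 1}|$, forcing equality --- is what your write-up is missing. If you want to keep your order of proof, you should cite part 4 without the faulty justification; if you want to be self-contained, you should follow the paper and derive part 4 from parts 1--3.
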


\begin{proof}
Part 1 is a basic fact. For parts 2,3,4, see \cite[Lemma 1.2.12, Corollary 1.2.13]{kx}. We give quick proofs.

1. Let $y\in (\KXr)^{\times}$. We have $sup_{s\in (0,r)}|y|_s=sup_{s\in [r/2,r)}|y|_s=sup_{s\in [r/2,r)} |y^{-1}|^{-1}_s=sup_{s\in [r/2,r)} |y^{-1}|_s^{-1}\le |y^{-1}|_{r/2}^{-1}$. Hence $y\in \KXrz$.

2. For $x\in K^{\times}$, we have $g_0(x)\in \KXrK$ by non-archimedean Hadamard formula, which implies $x\in \KXrKz$ by part 1.

3. For $c\in K^{\times}$, $|c^{\pm}|\le |g_{0}(c^{\pm})|_r$. Hence $|c|\ge |g_{0}(c)|_r\ge |c|$. 

%Let $g_{r,bd}:K\to \KXrz$ be the ring homomorphism induced by $g_0$. 

4. It follows from part 3 with $r=\rK$.
\end{proof}

The ring homomorphism $g_0$ induces ring hom.'s $K\to \KXrz,K\to \KXr$ for $r\in (0,r(K,\partial)]$ and $K\to \KXrp$ for $r\in (0,r(K,\partial))$, which is denoted by $g_{r,bd},g_r,g_{r+}$ respectively. Since these ring homomorphisms commute with the $\partial$ and $\partial_X$, we define the ring homomorphisms $f_0:\KT\to \KXT,f_{r,bd}:\KT\to \KXrzT,f_r:\KT\to \KXrT,f_{r+}:\KT\to \KXrpT$ for $r\in (0,r(K,\partial)]$ and  for $r\in (0,r(K,\partial))$ as the ones associated to $g_0,g_{r,bd},g_r,g_{r+}$ respectively. Thus we obtain the left $\KT$-modules $f_{0*}L_0,f_{r,bd*}L_{r,bd},f_{r*}L_r$ for $r\in (0,r(K,\partial)]$ and $f_{r+*}L_{r+}$ for $r\in (0,r(K,\partial))$. Note that we have, as subobjects of $f_{r*}L_r=\cap_{t\in (0,r)}f_{t,bd*}L_{t,bd}$ for $r\in (0,r(K,\partial)]$ and $f_{r+*}L_{r+}=\cup_{t\in (r,\rK)}f_{t,bd*}L_{t,bd}=\cup_{t\in (r,\rK)}f_{t*}L_{t}$ for $r\in (0,r(K,\partial))$.

We write $f_{0*}L_0=(K_+^{\N},\rho:\KT\to \E_{\A}(K_+^{\N}))$. Then $\rho(i(c))((a_i)_i)=(\sum_{i=j+k}(\partial^k(c)/k!)a_j)_i$ for $c\in K$ and $\rho(T)((a_i)_i)=((i+1)a_{i+1})_i$. By definition, the left $\KT$-modules $f_{r,bd*}L_{r,bd},f_{r*}L_r$ for $r\in (0,r(K,\partial)]$ and $f_{r+*}L_{r+}$ for $r\in (0,r(K,\partial))$ are submodules of $f_{0*}L_0$, where the underlying abelian groups coincide with those of $\KXrz,\KXr,\KXrp$ respectively.

\noindent Convention. In the rest of the paper except \S \ref{decJ}, when we consider $\KXr,\KXrz$, we tacitly assume that $r\in (0,\rK]$; when we consider $\KXrp$, we tacitly assume that $r\in (0,\rK)$.

%\subsection{The representability of the dual functor by $f_{0*}L_0$}

%\input{33}%7

\subsection{The representability of the dual functor by $f_{0*}L_0$}

\begin{lemma}\label{dual lemma}
\begin{enumerate}
\item For a morphism $\alpha:M'\to M$ and $x\in\KT$, the diagram in $\A$ below commutes.
$$
\xymatrix{
\H(i_*M,R)\ar[rr]^{\H(i_*\alpha,R)}\ar[d]^{\rho_{D_0M}(x)}&&\H(i_*M',R)\ar[d]^{\rho_{D_0M}(x)}\\
\H(i_*M,R)\ar[rr]^{\H(i_*\alpha,R)}&&\H(i_*M',R).
}
$$
\item For $M\in\C$ and $x\in \KT$, the diagram in $\C$ below commutes.
$$
\xymatrix{
M\ar[r]^{c_{0,M}}\ar[d]^{\rho_M(x)}&D_0D_0M\ar[d]^{\rho_{D_0D_0M}(x)}\\
M\ar[r]^{c_{0,M}}&D_0D_0M.
}
$$
\item For a morphism $\alpha:M'\to M$ and $x\in\KT$, the diagram in $\C$ below commutes.
$$
\xymatrix{
M'\ar[r]^{c_{0,M'}}\ar[d]^{\alpha}&D_0D_0M'\ar[d]^{D_0D_0\alpha}\\
M\ar[r]^{c_{0,M}}&D_0D_0M.
}
$$
\item Let $M\in\C,\chi\in D_{0}M$. The morphism $\Psi_M(\chi):M_+\to K_+^{\N};x\mapsto (\chi(\frac{1}{i!}T^i\cdot x))_i$ in $\A$ defines a morphism $M\to f_{0*}L_0$ in $\C$.
\end{enumerate}
\end{lemma}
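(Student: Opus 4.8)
The goal is to show that the additive map $\Psi_M(\chi)\colon M_+\to K_+^{\N}$, $x\mapsto(\chi(\tfrac{1}{i!}T^i\cdot x))_i$, is a morphism in $\C$, i.e.\ that it is $\KT$-linear when the target carries the $\KT$-module structure $\rho$ of $f_{0*}L_0$ recorded above. Since $\KT$ is generated as a ring by $i(K)\cup\{T\}$, it suffices to check compatibility with the two families of operators $\rho_M(i(c))$ and $\rho_M(T)$, i.e.\ to verify
\[
\Psi_M(\chi)(c\cdot x)=\rho(i(c))\bigl(\Psi_M(\chi)(x)\bigr),\qquad
\Psi_M(\chi)(T\cdot x)=\rho(T)\bigl(\Psi_M(\chi)(x)\bigr)
\]
for all $c\in K$, $x\in M$. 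First I would treat the $T$-compatibility: the $j$-th entry of $\Psi_M(\chi)(T\cdot x)$ is $\chi(\tfrac{1}{j!}T^{j+1}\cdot x)=(j+1)\cdot\chi(\tfrac{1}{(j+1)!}T^{j+1}\cdot x)$, which is exactly the $j$-th entry of $\rho(T)\bigl(\Psi_M(\chi)(x)\bigr)=((i+1)a_{i+1})_i$ with $a_i=\chi(\tfrac{1}{i!}T^i\cdot x)$. This is immediate and requires no use of the differential structure of $\chi$.

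The substantive step is the $i(c)$-compatibility, and this is where the defining relation of $D_0M$ — namely $(T\cdot\chi)(m)=\partial(\chi(m))-\chi(T\cdot m)$, equivalently $\chi(T\cdot m)=\partial(\chi(m))-(T\cdot\chi)(m)$, together with $K$-linearity $\chi$ as an element of $\H(i_*M,K)$ — has to be invoked. The plan is to compute, for fixed $j\in\N$, the quantity $\chi(\tfrac{1}{j!}T^j\cdot(c\cdot x))$ and show it equals $\sum_{j=a+b}(\partial^b(c)/b!)\,\chi(\tfrac{1}{a!}T^a\cdot x)$, matching the formula $\rho(i(c))((a_i))=(\sum_{i=a+b}(\partial^b(c)/b!)a_a)_i$. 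The natural route is induction on $j$. The key algebraic fact is the twisted commutation $T^j\cdot i(c)=\sum_{b=0}^{j}\binom{j}{b} i(\partial^b(c))\cdot T^{j-b}$ inside $\KT$, which follows from $T\cdot i(c)=i(c)\cdot T+i(\partial(c))$ by an easy induction (or directly from Lemma~\ref{universality} applied to the Leibniz rule). Granting this identity, one has in $M$
\[
\tfrac{1}{j!}T^j\cdot(c\cdot x)=\sum_{b=0}^{j}\tfrac{1}{b!\,(j-b)!}\,\partial^b(c)\cdot\bigl(T^{j-b}\cdot x\bigr),
\]
and applying the $K$-linear functional $\chi$ term by term yields precisely $\sum_{j=a+b}(\partial^b(c)/b!)\,\chi(\tfrac{1}{a!}T^a\cdot x)$, as desired.

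I expect the main obstacle to be purely bookkeeping: getting the binomial coefficients and factorials to line up, and making sure the twisted-polynomial identity $T^j\cdot i(c)=\sum_b\binom{j}{b}i(\partial^b(c))T^{j-b}$ is invoked cleanly (it is implicit in the formula for the multiplication on $\RT=(G_+,\cdot)$ recorded in the Definition of the ring of twisted polynomials, or can be re-derived from Lemma~\ref{universality}). There is no real conceptual difficulty: once both generator-compatibilities are verified, $\KT$-linearity on all of $\KT$ follows since $\rho$ and $\rho_M$ are ring homomorphisms and $\Psi_M(\chi)$ respects a generating set, so it respects every product and sum of generators. I would close by remarking that $\Psi_M(\chi)$ is visibly additive in $x$, hence a morphism of $\KT$-modules, i.e.\ a morphism $M\to f_{0*}L_0$ in $\C$.
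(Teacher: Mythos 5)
Your argument for part~4 is correct and matches the paper's proof: reduce $\KT$-linearity to compatibility with the generators $i(c)$ and $T$, handle $T$ directly, and handle $i(c)$ by commuting $T^i$ past $i(c)$ via $T^i\cdot i(c)=\sum_{i=j+k}\binom{i}{j}\,i(\partial^k(c))\,T^j$ and then applying $K$-linearity of $\chi$. Two small remarks: you leave parts~1--3 untreated (the paper likewise dismisses them as routine reductions to generators, resp.\ obvious), and your suggestion that the defining relation $(T\cdot\chi)(m)=\partial(\chi(m))-\chi(T\cdot m)$ of $D_0M$ must be invoked is a red herring --- as your own computation shows, the $i(c)$-compatibility uses only $K$-linearity of $\chi\in\H(i_*M,K)$ and the twisted-polynomial commutation in $\KT$, not the $\KT$-module structure on $D_0M$.
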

\begin{proof}
To prove parts 1,2, we have only to prove the assertion for $x\in i(K)\cup\{T\}$, in which case the assertion follows from a straightforward calculation. Part 3 is obvious. 

We prove part 4. We have only to prove that for $x\in M$ and $c\in i(K)\cup\{T\}$, we have $\Psi_M(\chi)(c\cdot x)=c\cdot (\Psi_M(\chi)(x))$; if this is the case then $\Psi_M(\chi)(c\cdot x)=c\cdot (\Psi_M(\chi)(x))$ for $x\in M$ and $c\in \KT$. Let $c\in K$. We have $\Psi_M(\chi)(i(c)\cdot x)=(\chi(\frac{1}{i!}T^i\cdot (i(c)\cdot x)))$. For $i\in\N$, we have $\chi(\frac{1}{i!}T^i\cdot (i(c)\cdot x))=\chi((\frac{1}{i!}T^i\cdot i(c))\cdot x)=\chi(\sum_{i=j+k}(\frac{1}{i!}\binom{i}{j}i(\partial^k(c))T^j)\cdot x)=\chi(\sum_{i=j+k}(i(\frac{1}{k!}\partial^k(c))\cdot (\frac{1}{j!}T^j)\cdot x))=\sum_{i=j+k}\frac{1}{k!}\partial^k(c)\cdot \chi((\frac{1}{j!}T^j)\cdot x)$. Hence $\Psi_M(\chi)(i(c)\cdot x)=i(c)\cdot \Psi_M(\chi)(x)$. We have $\Psi_M(\chi)(T\cdot x)=(\chi(\frac{1}{i!}T^i\cdot (T\cdot x)))=(\chi(\frac{i+1}{(i+1)!}T^{i+1}\cdot x))=((i+1)\chi(\frac{1}{(i+1)!}T^{i+1}\cdot x))=T\cdot (\Psi_M(\chi)(x))$.
%Part 4 is a consequence of part 1.
\end{proof}

\begin{definition}
The function $D_0$ defines a contravariant endofunctor on $\C$ by Lemma \ref{dual lemma}. Note that the composition $D_0D_0$ of $D_0$ followed by $D_0$ defines the covariant endofunctor on $\C$. By Lemma \ref{dual lemma} again, the function $c_0$ defines a natural transformation $id_{\C}\to D_0D_0$.
\end{definition}

\begin{definition}
\begin{enumerate}
\item  We define the functor $D_+:\C\to\A$ by $D_+=\H(-,f_{0*}L_0)$. We define the morphism $\theta:i_{0*}f_{0*}L_0\to K$ in $Mod(K)$ by the morphism $K_+^{\N}=(i_{0*}f_{0*}L_0)_+\to K_+;(a_i)\mapsto a_0$ in $\A$. 

\item We define the natural transformation $\Phi:D_+\to (D_0)_+$ as the following vertical composition $\Phi=\H(i_*-,\theta)\cdot i_*:\H(-,f_{0*}L_0)\to \H(i_*-,i_*f_{0*}L_0)\to \H(i_*-,K)$. 

\item We define the natural transformation $\Psi:(D_0)_+\to D_+$ by the function as in Part 4 of Lemma \ref{dual lemma}, which forms a natural transformation as we can see easily.
\end{enumerate}
\end{definition}

\begin{lemma}\label{abelian natural isom}%[label:abelian natural isom]
The natural transformation $\Phi,\Psi$ are natural isomorphisms with $\Phi^{-1}=\Psi$.
\end{lemma}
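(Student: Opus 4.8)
The plan is to show that $\Phi$ and $\Psi$ are mutually inverse by checking, for each $M\in\C$, that the maps $\Phi_M\colon\H(M,f_{0*}L_0)\to\H(i_*M,K)=(D_0M)_+$ and $\Psi_M\colon(D_0M)_+\to\H(M,f_{0*}L_0)$ compose to the identity in both orders. Since both are already known to be well-defined morphisms in $\A$ (indeed natural transformations, by the remarks preceding the statement and by Part 4 of Lemma \ref{dual lemma}), the only thing left is a pointwise computation, so the proof is essentially bookkeeping with the explicit formulas.

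First I would unwind the definitions. For $s\in\H(M,f_{0*}L_0)$, write $s(x)=(s(x)_i)_{i\in\N}\in K_+^{\N}$ for $x\in M$. By definition $\Phi_M(s)$ is the composite $i_*M\xrightarrow{i_*s}i_*f_{0*}L_0\xrightarrow{\theta}K$, hence $\Phi_M(s)(x)=\theta(s(x))=s(x)_0$, the zeroth coordinate. Conversely, given $\chi\in\H(i_*M,K)=(D_0M)_+$, Part 4 of Lemma \ref{dual lemma} gives $\Psi_M(\chi)(x)=(\chi(\tfrac{1}{i!}T^i\cdot x))_i$.

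Next I would verify $\Phi_M\circ\Psi_M=\mathrm{id}$: for $\chi\in(D_0M)_+$ and $x\in M$,
\[
(\Phi_M\Psi_M(\chi))(x)=\Psi_M(\chi)(x)_0=\chi(\tfrac{1}{0!}T^0\cdot x)=\chi(x),
\]
so $\Phi_M\Psi_M(\chi)=\chi$. For the other direction, $\Psi_M\circ\Phi_M=\mathrm{id}$: take $s\in\H(M,f_{0*}L_0)$; I must show $s(x)_i=\Phi_M(s)(\tfrac{1}{i!}T^i\cdot x)=(s(\tfrac{1}{i!}T^i\cdot x))_0$ for all $i\in\N$ and $x\in M$. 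Here I use that $s$ is $\KT$-linear: $s(\tfrac{1}{i!}T^i\cdot x)=\tfrac{1}{i!}(\rho(T))^i(s(x))$, where $\rho(T)$ is the action of $T$ on $f_{0*}L_0$, which by the explicit formula recalled after the definition of $f_{0*}L_0$ is $\rho(T)((a_j)_j)=((j+1)a_{j+1})_j$, i.e. $\rho(T)=\partial_X$ on $\KX_+$. Iterating, $(\rho(T))^i((a_j)_j)$ has zeroth coordinate $\tfrac{i!}{0!}\,a_i=i!\,a_i$ (the general coordinate being $\tfrac{(j+i)!}{j!}a_{j+i}$); hence $(s(\tfrac{1}{i!}T^i\cdot x))_0=\tfrac{1}{i!}\cdot i!\cdot s(x)_i=s(x)_i$, as required. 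This shows $\Psi_M\Phi_M(s)=s$.

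\textbf{Main obstacle.} There is no deep obstacle here; the one point demanding care is the combinatorial identity $(\partial_X^i((a_j)_j))_0=i!\,a_i$ and, more precisely, that one is allowed to move the scalars $\tfrac1{i!}$ and the binomial coefficients around inside $\chi$ in the $\Psi$-direction — but this was already discharged inside the proof of Part 4 of Lemma \ref{dual lemma}, so I would simply cite it. The only thing to be vigilant about is keeping the indexing straight (the operator $T^i$ acting in $M$ versus the operator $\rho(T)^i=\partial_X^i$ acting on the target $K_+^{\N}$, linked by $\KT$-linearity of $s$). Once that bijection of coordinates is written out the claim $\Phi^{-1}=\Psi$ is immediate, and naturality having been established beforehand, $\Phi$ and $\Psi$ are natural isomorphisms.
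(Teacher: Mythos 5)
Your proposal is correct and follows essentially the same route as the paper: unwind $\Phi_M$ via $\theta$ and $\Psi_M$ via the formula in part 4 of Lemma \ref{dual lemma}, check $\Phi_M\Psi_M=\mathrm{id}$ by specializing to $i=0$, and check $\Psi_M\Phi_M=\mathrm{id}$ by using $\KT$-linearity of $s$ to move $\tfrac1{i!}T^i$ inside and then the identity $\theta(\tfrac1{i!}T^i\cdot(a_j)_j)=a_i$. The only difference is that you spell out the factorial bookkeeping $(\rho(T)^i(a_j)_j)_0=i!\,a_i$ that the paper leaves implicit in the step $(\theta(\tfrac1{i!}T^i\cdot s(x)))_i=s(x)$.
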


\begin{proof}
Let $M\in \C$. Let $\chi\in (D_0)_+M$. Let $x\in M$. Then $(\Phi\cdot\Psi)_M(\chi)(x)=(\Phi_M\circ \Psi_M)(\chi)(x)=\Phi_M (\Psi_M(\chi))(x)=\theta(\Psi_M(\chi)(x))=\chi(\frac{1}{0!}T^0\cdot x)=\chi(x)$. Hence $\Phi_M\circ \Psi_M=id_{D_0M}$.

Let $s\in D_+M$. Let $x\in M$. Then $(\Psi\cdot\Phi)_M(s)(x)=(\Psi_M\circ \Phi_M)(s)(x)=\Psi_M (\Phi_M(s))(x)=(\Phi_M(s)(\frac{1}{i!}T^i\cdot x))=(\theta(s(\frac{1}{i!}T^i\cdot x)))=(\theta(\frac{1}{i!}T^i\cdot (s(x))))=s(x)$. Hence $\Psi_M\circ \Phi_M=id_{DM}$. 
\end{proof}

\begin{corollary}\label{injective cogenerator}%[label:injective cogenerator]
The functor $(D_{0})_+$ is represented by $f_{0*}L_0$. In particular, the left $\KT$-module $f_{0*}L_0$ is an injective cogenerator in $\C$.

\end{corollary}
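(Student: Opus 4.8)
The claim to prove is Corollary \ref{injective cogenerator}: the functor $(D_0)_+$ is represented by $f_{0*}L_0$, and hence $f_{0*}L_0$ is an injective cogenerator in $\C$. The first half is essentially immediate from what precedes it: Lemma \ref{abelian natural isom} provides a natural isomorphism $\Psi\colon (D_0)_+\to D_+=\H(-,f_{0*}L_0)$, so by definition $(D_0)_+$ is represented by the object $f_{0*}L_0$. There is nothing more to say there; I would just cite the previous lemma.

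For the second half I would argue as follows. First, injectivity of $f_{0*}L_0$: the functor $\H(-,f_{0*}L_0)$ is naturally isomorphic (via $\Psi$, i.e.\ via $\Phi^{-1}$) to $(D_0)_+ = ()_+\circ D_0$. Now $D_0\colon\C\to\C$ is a contravariant functor which is \emph{exact} — this is where I expect the only real content to lie. Indeed $D_0M$ has underlying group $\H(i_*M,R)$ with $R=K$ a field, so $i_*$ composed with the $K$-linear dual is exact on $\C$ (short exact sequences in $\C$ are sent by $i_*$ to short exact sequences of $K$-vector spaces, and $\H_K(-,K)$ is exact on $Mod(K)$); checking that the twisted-polynomial module structures glue correctly along these exact sequences is routine from the explicit formula $(T\cdot\chi)(m)=\partial(\chi(m))-\chi(T\cdot m)$. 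Since $()_+\colon\C\to\A$ is faithful and exact, $()_+\circ D_0$ is exact, hence $\H(-,f_{0*}L_0)$ is exact, which is precisely the statement that $f_{0*}L_0$ is injective in $\C$.

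Next, the cogenerator property: I must show $\H(M,f_{0*}L_0)\neq 0$ for every nonzero $M\in\C$, equivalently (by $\Psi$) that $(D_0)_+M = \H(i_*M,K)\neq 0$ for $M\neq 0$. But if $M\neq 0$ then $i_*M$ is a nonzero $K$-vector space, so its $K$-linear dual is nonzero. Slightly more carefully, for a cogenerator one wants: for every nonzero morphism there is a test morphism into $f_{0*}L_0$ detecting it; equivalently every nonzero $M$ admits a nonzero map to $f_{0*}L_0$. Given $0\neq x\in M$, the submodule $\KT\cdot x$ is a nonzero quotient of $\KT$, so $i_*(\KT\cdot x)\neq 0$; pick a $K$-linear functional $\lambda$ on $i_*(\KT\cdot x)$ with $\lambda\neq 0$, extend it $K$-linearly to a functional $\chi\in\H(i_*M,K)=(D_0M)_+$, and then $\Psi_M(\chi)\colon M\to f_{0*}L_0$ is a nonzero morphism in $\C$ by Lemma \ref{dual lemma}(4), since its first coordinate sends $x$ to $\chi(x)=\lambda(x)$ which can be arranged nonzero. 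This shows $f_{0*}L_0$ is a cogenerator, and combined with injectivity it is an injective cogenerator.

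**Main obstacle.** The one genuinely substantive point is the exactness of $D_0$ as a functor on $\C$ — more precisely, verifying that the $K$-linear dual construction, together with the prescribed $\KT$-action, is compatible with short exact sequences. Everything else (representability, the cogenerator test, passing exactness through the faithful exact forgetful functor) is formal. I expect the exactness verification to reduce to the observation that $i_*$ is exact on $\C$ and $\H_K(-,K)$ is exact on vector spaces, plus a short check that the connecting maps respect the $T$-action via the formula $(T\cdot\chi)(m)=\partial(\chi(m))-\chi(T\cdot m)$; no estimates or convergence considerations enter here.
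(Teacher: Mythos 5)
Your proof is correct and follows exactly what the paper leaves implicit here (the paper states the corollary without proof, as a formal consequence of Lemma \ref{abelian natural isom}): representability is Lemma \ref{abelian natural isom} verbatim, injectivity comes from the exactness of $(D_0)_+M=\H_K(i_*M,K)$ as a composite of the exact functors $i_*$ and $\H_K(-,K)$, and the cogenerator property comes from $\H_K(i_*M,K)\neq 0$ for $M\neq 0$. (A minor remark: for the cogenerator part you do not actually need the explicit element construction with $\lambda$ and $\Psi_M(\chi)$; once you know $\H(M,f_{0*}L_0)\cong\H_K(i_*M,K)\neq 0$ for $M\neq 0$, injectivity plus nonvanishing of $\H(-,f_{0*}L_0)$ on nonzero objects already gives the cogenerator property.)
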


\begin{definition}
Let $\E((D_0)_+),\E(D_+)$ denote the classes of natural transformations of $(D_0)_+,D_+$ respectively. By Lemma \ref{abelian natural isom} and Yoneda's lemma, $\E((D_0)_+),\E(D_+)$ form rings, where the multiplications are given by vertical composition. By Lemma \ref{dual lemma}, the map $\rhozt:\KT\to\E((D_0)_+);x\mapsto (M\mapsto (\rho_{D_0M}(x):(D_0)_+M\to (D_0)_+M))$ is a ring homomorphism. We define the ring homomorphism $\rhot:\KT\to\E(f_{0*}L_0)$ by $\rhot(c)=\Psi\cdot\rhozt(c)\cdot\Phi$ for $c\in\KT$. We define the ring homomorphism $\rhos:\KT\to\E(f_{0*}L_0)$ by $\rhos(c)=\rhot(c)_{f_{0*}L_0}(id_{f_{0*}L_0})$ for $c\in\KT$. Then $(f_{0*}L_0,\rhos)$ is an object of $\CC$, which is denoted by $f_{0*}L_0$ for simplicity. We define the contravariant endofunctor $D$ on $\C$ by $D(-)=\H(-,f_{0*}L_0)$.
\end{definition}

\begin{lemma}
For $(a_i)_i\in K_+^{\N}$, we have $\rhos(i(c))((a_i))=(ca_i)$ for $c\in K$ and $\rhos(T)((a_i))=(\partial(a_i)-(i+1)a_{i+1})$.
\end{lemma}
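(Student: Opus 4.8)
The plan is to compute the two ring homomorphisms $\rhozt$ and the twistings that define $\rhot$, and then evaluate at the identity to get $\rhos$. The key point is that for $x\in\KT$ and $M\in\C$, the natural transformation $\rhot(x)_M:D_+M\to D_+M$ is, under the isomorphisms $\Phi,\Psi$ of Lemma \ref{abelian natural isom}, conjugate to the action of $\rho_{D_0M}(x)$ on $(D_0)_+M=\H(i_*M,K)$ already described in the definition of $D_0$: namely $(i(c)\cdot\chi)(m)=c\cdot\chi(m)$ and $(T\cdot\chi)(m)=\partial(\chi(m))-\chi(T\cdot m)$. First I would specialize to $M=f_{0*}L_0$ itself and chase $id_{f_{0*}L_0}$ through the composite $\Psi\cdot\rhozt(x)\cdot\Phi$, for $x=i(c)$ and $x=T$ separately; since $\KT$ is generated by $i(K)\cup\{T\}$ as a ring, this determines $\rhos$ on all of $\KT$, and $\rhos$ being a ring homomorphism it suffices to check these generators.

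For the concrete computation I would first apply $\Phi_{f_{0*}L_0}$ to $id_{f_{0*}L_0}\in D_+(f_{0*}L_0)=\H(f_{0*}L_0,f_{0*}L_0)$. By the formula $\Phi=\H(i_*-,\theta)\cdot i_*$ and the definition of $\theta$ as $(a_i)\mapsto a_0$, the element $\Phi_{f_{0*}L_0}(id)\in\H(i_*f_{0*}L_0,K)$ is the functional $\chi_0:(a_i)\mapsto a_0$. Next I would apply $\rhozt(i(c))$, which by definition of the $D_0$-module structure sends $\chi_0$ to the functional $(a_i)\mapsto c\cdot a_0$; then applying $\Psi_{f_{0*}L_0}$ and using the formula $\Psi_M(\chi)(x)=(\chi(\tfrac{1}{i!}T^i\cdot x))_i$ from Lemma \ref{dual lemma}(4), together with the already-recorded action $\rho(T)((a_i)_i)=((i+1)a_{i+1})_i$ on $f_{0*}L_0$, I would get that the resulting endomorphism of $f_{0*}L_0$ sends $(a_i)_i$ to $\big(c\cdot(\tfrac{1}{i!}\rho(T)^i(a_j)_j)_0\big)_i=(c\,a_i)_i$. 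Similarly, for $x=T$, applying $\rhozt(T)$ to $\chi_0$ gives the functional $(a_j)\mapsto \partial\big(((a_j))_0\big)-\big(\rho(T)((a_j))\big)_0=\partial(a_0)-a_1$; feeding this through $\Psi_{f_{0*}L_0}$ yields the $i$-th component $\partial\big((\tfrac{1}{i!}\rho(T)^i(a_j))_0\big)-(\tfrac{1}{i!}\rho(T)^i(a_j))_1=\partial(a_i)-(i+1)a_{i+1}$, which is exactly the claimed formula.

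I expect the main obstacle to be bookkeeping rather than anything conceptual: one must be careful that the $D_0$-action used in $\rhozt$ is the one on $(D_0)_+M$ for $M=f_{0*}L_0$ with $i_*$ applied, so the ``$T\cdot m$'' appearing in $(T\cdot\chi)(m)=\partial(\chi(m))-\chi(T\cdot m)$ refers to $\rho(T)$ on $K_+^{\N}$, and that the $\tfrac{1}{i!}T^i$ twisting in $\Psi$ interacts correctly with $\partial$ on $K$ (which acts diagonally on $K_+^{\N}$ through the formula for $\rho(i(c))$, but note $\partial$ of a scalar is evaluated in $K$, so no extra terms appear in the $i(c)$ case). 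A clean way to organize this is to verify directly that the two endomorphisms $E_c:(a_i)\mapsto (ca_i)$ and $E_T:(a_i)\mapsto(\partial(a_i)-(i+1)a_{i+1})$ of $f_{0*}L_0$ lie in the image of $\rhos$ by checking, via naturality of $\Phi$ and $\Psi$, that the induced natural transformations agree with $\rhot(i(c))$ and $\rhot(T)$ pointwise on every $M\in\C$; this avoids any need to invert $\Psi$ explicitly and reduces everything to the two generator computations sketched above.
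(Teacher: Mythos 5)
Your proposal is correct and follows essentially the same route as the paper: both evaluate $\rhos$ on the generators $i(c)$ and $T$ by first noting $\Phi_{f_{0*}L_0}(id_{f_{0*}L_0})=\theta$, then applying $\rhozt$, then expanding via the formula $\Psi_M(\chi)(x)=(\chi(\tfrac{1}{i!}T^i\cdot x))_i$ using the push-out action $\rho(T)((a_i))=((i+1)a_{i+1})$. The only difference is cosmetic — you compute the intermediate functional explicitly before applying $\Psi$, while the paper keeps the composition symbolic and expands at the end.
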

\begin{proof}
For $x\in\KT$, we have $\rhos(x)=\Psi_{f_{0*}L_0}\circ\rhozt(x)_{f_{0*}L_0}\circ\Phi_{f_{0*}L_0}(id_{f_{0*}L_0})$. Let $c\in K$. We have $\rhos(i(c))((a_i))=\Psi_{f_{0*}L_0}\circ\rhozt(i(c))_{f_{0*}L_0}\circ\Phi_{f_{0*}L_0}(id_{f_{0*}L_0})((a_i))=(\rhozt(i(c))_{f_{0*}L_0}(\Phi_{f_{0*}L_0}(id_{f_{0*}L_0}))(\frac{1}{i!}T^i\cdot (a_i)))=(\rhozt(i(c))_{f_{0*}L_0}(\theta)(\frac{1}{i!}T^i\cdot (a_i)))=(c\cdot (\theta(\frac{1}{i!}T^i\cdot (a_i))))=(c\cdot a_i)$. We have $\rhos(T)((a_i))=\Psi_{f_{0*}L_0}\circ\rhozt(T)_{f_{0*}L_0}\circ\Phi_{f_{0*}L_0}(id_{f_{0*}L_0})((a_i))=(\rhozt(T)_{f_{0*}L_0}(\Phi_{f_{0*}L_0}(id_{f_{0*}L_0}))(\frac{1}{i!}T^i\cdot (a_i)))=(\rhozt(T)_{f_{0*}L_0}(\theta)(\frac{1}{i!}T^i\cdot (a_i)))=(\partial(\theta(\frac{1}{i!}T^i\cdot (a_i))-\theta(T\cdot\frac{1}{i!}T^i\cdot (a_i)))=(\partial(\theta(\frac{1}{i!}T^i\cdot (a_i))-\theta(\frac{i+1}{(i+1)!}T^{i+1}\cdot (a_i)))=(\partial(a_i)-(i+1)a_{i+1})$.
\end{proof}

\begin{lemma}\label{natural isom}%[label:natural isom]
For $M\in \C$, the isomorphisms $\Phi_M:D_+M\to (D_0)_+M,\Psi_M:(D_0)_+M\to D_+M$ define morphisms  $DM\to D_0M,D_0M\to DM$ respectively, which are denoted by $\Phi_M$ and $\Psi_M$ respectively. Furthermore $\Phi_M:DM\to D_0M$ and $\Psi_M:D_0M\to DM$ are isomorphisms and inverse to each other. The functions $\Phi$ and $\Psi$ defines natural isomorphisms $\Phi:D\to D_0$ and $\Psi:D_0\to D$ respectively.
\end{lemma}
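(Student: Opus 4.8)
The plan is to reduce the lemma to Lemma \ref{abelian natural isom} and to the way the ring homomorphisms $\rhozt$, $\rhot$, $\rhos$ were set up. Lemma \ref{abelian natural isom} already gives, for every $M\in\C$, mutually inverse bijections $\Phi_M:D_+M\to(D_0)_+M$ and $\Psi_M:(D_0)_+M\to D_+M$ of abelian groups, natural in $M$; so the only genuine content is that each $\Phi_M$ is $\KT$-linear for the module structures defining $DM$ and $D_0M$. Once that is known, $\Psi_M=\Phi_M^{-1}$ is $\KT$-linear automatically, so $\Phi_M$ and $\Psi_M$ are mutually inverse isomorphisms in $\C$; and since $()_+\circ D=D_+$ and $()_+\circ D_0=(D_0)_+$ as functors $\C\to\A$ and the forgetful functor $()_+$ is faithful, the naturality squares of Lemma \ref{abelian natural isom} are already commutative squares in $\C$, giving natural isomorphisms $\Phi:D\to D_0$ and $\Psi:D_0\to D$.

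To prove the $\KT$-linearity of $\Phi_M$, I would first identify the two relevant actions. The action of $c\in\KT$ on $(D_0)_+M$ is $\rhozt(c)_M$, by the definition of $\rhozt$. The action of $c$ on $DM=\H(M,f_{0*}L_0)$ is, by the construction of \S\ref{cat} with $T=\H$ applied to the object $f_{0*}L_0$ of $\CC$, the map $s\mapsto\rhos(c)\circ s$; the point is that this equals $\rhot(c)_M$. Indeed, $D_+$ is represented by $f_{0*}L_0$ (Corollary \ref{injective cogenerator}), so Yoneda's lemma, applied to the natural transformation $\rhot(c)$ of $D_+$ and to an arbitrary $s\in D_+M=\H(M,f_{0*}L_0)$ regarded as a morphism $s:M\to f_{0*}L_0$, gives $\rhot(c)_M(s)=\rhot(c)_{f_{0*}L_0}(id_{f_{0*}L_0})\circ s=\rhos(c)\circ s$, the last equality being the definition of $\rhos(c)$.

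Granting this, the defining relation $\rhot(c)=\Psi\cdot\rhozt(c)\cdot\Phi$ together with $\Phi\cdot\Psi=id_{(D_0)_+}$ (Lemma \ref{abelian natural isom}) yields the equality of natural transformations
\[
\Phi\cdot\rhot(c)=\Phi\cdot\Psi\cdot\rhozt(c)\cdot\Phi=\rhozt(c)\cdot\Phi ,
\]
which, evaluated at $M$ and read through the previous paragraph, says exactly that $\Phi_M$ intertwines the action of $c$ on $DM$ with the action of $c$ on $D_0M$. As this holds for all $c\in\KT$, $\Phi_M$ is a morphism $DM\to D_0M$ in $\C$, and the rest follows as indicated in the first paragraph. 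The step I expect to be the main obstacle is the Yoneda bookkeeping of the second paragraph: checking carefully that the internal $\CC$-structure $\rhos$ on $f_{0*}L_0$ induces on $DM$ precisely the $\KT$-action encoded by the natural transformation $\rhot(c)$. Everything else is a formal consequence of Lemma \ref{abelian natural isom} and the faithfulness of $()_+$.
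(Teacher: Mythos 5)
Your proof is correct and is essentially the same as the paper's, which simply says ``It follows from the construction''; you have just carried out in full the unwinding of the definitions of $\rhos$, $\rhot$, $\rhozt$ that the paper leaves implicit, in particular the Yoneda computation $\rhot(c)_M(s)=\rhos(c)\circ s$ and the cancellation $\Phi\cdot\rhot(c)=\rhozt(c)\cdot\Phi$.
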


\begin{proof}
It follows from the construction.
%We define the ring homomorphism $\gamma:\KT\to\E(D_+);\lambda\mapsto (M\mapsto (\H(M,\rho(x)):D_+M\to D_+M))$. For $\lambda\in\KT$, we have $\gamma(\lambda)=\Psi\cdot\rho_0(x)\cdot\Phi$ by $\gamma(\lambda)_{f_{0*}L_0}(id_{f_{0*}L_0})=\rho(\lambda)=(\Psi\cdot\rho_0(\lambda)\cdot\Phi)_{f_{0*}L_0}(id_{f_{0*}L_0})$ and Yoneda's lemma. Hence we have $\Phi\cdot \gamma(\lambda)=\rho_0(x)\cdot\Phi$ for $\lambda\in\KT$, which implies that $\Phi_M$ defines a morphism $DM\to D_0M$. The rest of assertion can be seen easily.
%This follows from the definition of $\rho$.
\end{proof}

%\subsection{The natural transformation $c:id_{\C} \to DD$}

%\input{34}

\subsection{The natural transformation $c:id_{\C} \to DD$}

\begin{definition}
Let $M\in\C$. We define the morphisms $(\Phi\circ\Psi)_M:DDM\to D_0D_0M,(\Psi\circ\Phi)_M:D_0D_0M\to DDM$ in $\C$ by $(\Phi\circ\Psi)_M=\Phi_{D_0M}\circ D\Psi_M=D_0\Psi_M\circ\Phi_{DM},(\Psi\circ\Phi)_M=\Psi_{DM}\circ D_0\Phi_M=D\Phi_M\circ\Psi_{D_0M}$ in $\C$. Then the above functions define the natural transformations $\Phi\circ\Psi:DD\to D_0D_0,\Psi\circ\Phi:D_0D_0\to DD$, which are inverse each other.
\end{definition}

\noindent Notation. For a left $\KT$-module $M$ and $s\in DM,x\in M$, we denote $s(x)\in M'$ by $\langle s,x\rangle=(\langle s,x\rangle_i)_i$.

%Notation. For a left $\KT$-module $M',M$ and $s\in \H(M',M),x\in M$, we denote $s(x)\in M'$ by $\langle s,x\rangle$. For $x=(a_i)\in f_{0*}L_0$, we denote $a_i$ by $x_i$ for simplicity.

\begin{lemma}\label{explicit}%[label:explicit]
\begin{enumerate}
\item Let $M\in\C,\chi\in D_0M,x\in M,i\in\N$. Then we have $(\frac{1}{i!}T^i\cdot\chi)(x)=\sum_{i=j+k}\frac{(-1)^j}{k!}\partial^k(\chi(\frac{1}{j!}T^j\cdot x))$.

\item Let $M\in \C$. Let $x\in M,s\in DM$. We have $\langle c_M(x),s\rangle=(\sum_{i=j+k}\frac{(-1)^j}{k!}\partial^k(\langle s,x\rangle_j))_i$.
\end{enumerate}
\end{lemma}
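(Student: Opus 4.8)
The plan is to establish part 1 as a clean algebraic identity and then feed it into the definition of $c$ to obtain part 2.

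For part 1, I would view the operator $\chi\mapsto T\cdot\chi$ on $D_0M$ as the restriction of an operator on the larger abelian group $N$ of \emph{all} additive maps $M_+\to K_+$. On $N$, let $D\in\E_{\A}(N)$ be post-composition with $\partial$, so $D(\psi)(m)=\partial(\psi(m))$, and let $S\in\E_{\A}(N)$ be pre-composition with left multiplication by $T$, so $S(\psi)(m)=\psi(T\cdot m)$. These two commute, since both $(D\cdot S)(\psi)$ and $(S\cdot D)(\psi)$ send $m$ to $\partial(\psi(T\cdot m))$. By the formula $(T\cdot\chi)(m)=\partial(\chi(m))-\chi(T\cdot m)$ recorded in the definition of $D_0$, we have $T\cdot\chi=(D-S)(\chi)$ for $\chi\in D_0M$; since $T\cdot\chi$ again lies in $D_0M$, induction on $i$ gives $T^i\cdot\chi=(D-S)^i(\chi)$ for all $i$. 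Because $D$ and $S$ commute, the binomial theorem gives $(D-S)^i=\sum_{j+k=i}\binom{i}{j}(-1)^jD^kS^j$, and trivial inductions show $S^j(\chi)(x)=\chi(T^j\cdot x)$ and $D^k(\psi)(x)=\partial^k(\psi(x))$. Hence $(T^i\cdot\chi)(x)=\sum_{j+k=i}\binom{i}{j}(-1)^j\partial^k(\chi(T^j\cdot x))$; dividing by $i!$ and using $\binom{i}{j}/i!=1/(j!\,k!)$ together with the $K$-linearity of $\chi$ yields the asserted identity. (Equivalently, one can argue by a direct induction on $i$ from $\frac{1}{(i+1)!}T^{i+1}\cdot\chi=\frac{1}{i+1}T\cdot(\frac{1}{i!}T^i\cdot\chi)$ and the same recursion, which reduces to a routine regrouping of two sums.)

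For part 2, recall that $c$ is the vertical composition $(\Psi\circ\Phi)\cdot c_0$, so $c_M(x)=\Psi_{DM}\bigl(D_0\Phi_M(c_{0,M}(x))\bigr)$ via the identity $(\Psi\circ\Phi)_M=\Psi_{DM}\circ D_0\Phi_M$. First I would evaluate the inner term: for $t\in DM$, the contravariance of $D_0$ gives $\bigl(D_0\Phi_M(c_{0,M}(x))\bigr)(t)=c_{0,M}(x)(\Phi_M(t))=\Phi_M(t)(x)$, and since $\Phi_M(t)=\theta\circ i_*t$ and $\theta((a_i))=a_0$, this equals $\langle t,x\rangle_0$. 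Plugging this into $\Psi_{DM}(\zeta)(s)=(\zeta(\frac{1}{i!}T^i\cdot s))_i$ gives $\langle c_M(x),s\rangle_i=\langle\frac{1}{i!}T^i\cdot s,\,x\rangle_0$, where $\frac{1}{i!}T^i$ acts through the $\KT$-module structure on $DM$. By Lemma \ref{natural isom}, $\Phi_M\colon DM\to D_0M$ is an isomorphism of $\KT$-modules, so $\Phi_M(\frac{1}{i!}T^i\cdot s)=\frac{1}{i!}T^i\cdot\Phi_M(s)$; combined with $\Phi_M(t)(x)=\langle t,x\rangle_0$ this yields $\langle\frac{1}{i!}T^i\cdot s,x\rangle_0=(\frac{1}{i!}T^i\cdot\Phi_M(s))(x)$. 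Now apply part 1 with $\chi=\Phi_M(s)$, noting that $\langle s,x\rangle_j=\Phi_M(s)(\frac{1}{j!}T^j\cdot x)$, which follows from $s=\Psi_M(\Phi_M(s))$ and the defining formula $\Psi_M(\chi)(x)=(\chi(\frac{1}{i!}T^i\cdot x))_i$ from part 4 of Lemma \ref{dual lemma}. Substituting gives $\langle c_M(x),s\rangle_i=\sum_{j+k=i}\frac{(-1)^j}{k!}\partial^k(\langle s,x\rangle_j)$.

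I expect the only real obstacle to be the bookkeeping in part 2: correctly threading the contravariance of $D_0$ and the identifications $\Phi$, $\Psi$, $\theta$, $c_0$ through the composite defining $c_M$ so as to reach the compact formula $\langle c_M(x),s\rangle_i=\langle\frac{1}{i!}T^i\cdot s,x\rangle_0$. Once that is in place, part 2 is an immediate consequence of part 1, and part 1 itself is the routine binomial computation above.
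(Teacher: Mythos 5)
Your proof is correct. For part 1, the paper proceeds by a direct induction on $i$, carrying out the regrouping of sums by hand in the inductive step; you instead identify $T\cdot$ on $D_0M$ as $D-S$ for the commuting operators $D$ (postcompose with $\partial$) and $S$ (precompose with $T\cdot$) acting on all additive maps $M_+\to K_+$, and invoke the binomial theorem for commuting operators. This is a cleaner conceptual route to the same identity and buys you a shorter argument, at the cost of introducing the ambient operator algebra; the paper's induction is more self-contained but computationally heavier. For part 2, the two arguments are essentially the same: the paper unwinds $c_M$ via the decomposition $(\Psi\circ\Phi)_M=D\Phi_M\circ\Psi_{D_0M}$, landing directly on $(\tfrac{1}{i!}T^i\cdot\Phi_M(s))(x)$, while you use the equivalent decomposition $\Psi_{DM}\circ D_0\Phi_M$ and pass through the intermediate identity $\langle c_M(x),s\rangle_i=\langle\tfrac{1}{i!}T^i\cdot s,x\rangle_0$ before transferring $T^i$ across $\Phi_M$; from there both proofs finish by the same appeal to part 1 and the reading-off of $\langle s,x\rangle_j$. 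One small point you gloss over but handle correctly is the $K$-linearity of $\chi$ (elements of $D_0M$ are $K$-linear by definition, as $\H(i_*M,K)$ is taken in $Mod(K)$), together with $\partial(\Q)=0$, which is what lets you redistribute the factorials between $\partial^k$ and $\chi$.
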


\begin{proof}
1. We prove by induction on $i$. In the base case $i=0$, we have nothing to prove.  In the case $i=1$, we have $(\frac{1}{1!}T\cdot\chi)(x)=\partial(\chi(x))-\chi(T\cdot x)$ by the definition of $D_0$. In the induction step, 

$(\frac{1}{(i+1)!}T^{i+1}\cdot\chi)(x)=\frac{1}{i+1}(\frac{1}{i!}T^i\cdot(T\cdot\chi))(x)=\frac{1}{i+1}\sum_{i=j+k}\frac{(-1)^j}{k!}\partial^k((T\cdot\chi)(\frac{1}{j!}T^j\cdot x))=\frac{1}{i+1}\sum_{i=j+k}\frac{(-1)^j}{k!}\partial^k(\{\partial(\chi(\frac{1}{j!}T^j\cdot x))-\chi(T\cdot \frac{1}{j!}T^j\cdot x)\})=\frac{1}{i+1}\sum_{i=j+k}\frac{(-1)^j(k+1)}{(k+1)!}\partial^{k+1}(\chi(\frac{1}{j!}T^j\cdot x)-\frac{1}{i+1}\sum_{i=j+k}\frac{(-1)^j}{k!}\partial^{k}(\chi(\frac{j+1}{(j+1)!}T^{j+1}\cdot x))=\frac{1}{i+1}\sum_{i+1=j+k}\frac{(-1)^jk}{k!}\partial^{k}(\chi(\frac{1}{j!}T^j\cdot x))-\frac{1}{i+1}\sum_{i+1=j+k}\frac{(-1)^{j-1}j}{k!}\partial^{k}(\chi(\frac{1}{j!}T^{j}\cdot x))=\sum_{i+1=j+k}\frac{(-1)^j}{k!}\partial^{k}(\chi(\frac{1}{j!}T^j\cdot x))$.

2. We have $\langle c_{M}(x),s\rangle=\langle (\Psi\circ\Phi)_M\circ (c_{0,M}(x)),s\rangle=\langle (D\Phi_M\circ\Psi_{D_0M})(c_{0,M}(x)),s\rangle=\langle D\Phi_M(\Psi_{D_0M}(c_{0,M}(x))),s\rangle=\langle \Psi_{D_0M}(c_{0,M}(x)),\Phi_{M}(s)\rangle=(c_{0,M}(x)(\frac{1}{i!}T^i\cdot (\Phi_M(s))))_i$.

Let $i\in\N$. We have $c_{0,M}(x)(\frac{1}{i!}T^i\cdot (\Phi_M(s)))=(\frac{1}{i!}T^i\cdot (\Phi_M(s)))(x)=\sum_{i=j+k}\frac{(-1)^j}{k!}\partial^k(\Phi_M(s)(\frac{1}{j!}T^j\cdot x))=\sum_{i=j+k}\frac{(-1)^j}{k!}\partial^k(\theta(s(\frac{1}{j!}T^j\cdot x))=\sum_{i=j+k}\frac{(-1)^j}{k!}\partial^k(\theta(\frac{1}{j!}T^j\cdot (s(x))))=\sum_{i=j+k}\frac{(-1)^j}{k!}\partial^k(\langle s,x\rangle_j)$.
\end{proof}

\begin{lemma}\label{switch}%[label:switch]
\begin{enumerate}
\item For $M\in \C,x\in M,s\in DM$, we have $\langle s,x\rangle=\langle c_{DM}(s),c_M(x)\rangle$.

\item For $M\in \C,x\in M,s\in DM$, \TFAE

\begin{enumerate}
\item We have $\langle s,x\rangle=0$.

\item We have $\langle c_M(x),s\rangle=0$.
\end{enumerate}

\end{enumerate}
\end{lemma}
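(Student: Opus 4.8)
The plan is to obtain both parts from the explicit formula in part~2 of Lemma~\ref{explicit}, applied twice: once to the base module $M$ and once to $DM$ (which again lies in $\C$ since $D$ is an endofunctor of $\C$). Conceptually, part~1 is the biduality triangle identity $D c_M\circ c_{DM}=\mathrm{id}_{DM}$ expressed through the pairing $\langle-,-\rangle$; I will not prove that identity abstractly --- note that the unit $c$ here is a ``twisted'' biduality map, not the naive evaluation --- but rather verify part~1 by a direct computation, after which part~2 follows quickly.

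For part~1, fix $M\in\C$, $x\in M$, $s\in DM$. Applying part~2 of Lemma~\ref{explicit} to the module $DM$, the element $s\in DM$, and $c_M(x)\in DDM=D(DM)$ gives
\[
\langle c_{DM}(s),c_M(x)\rangle_i=\sum_{i=j+k}\frac{(-1)^j}{k!}\,\partial^{k}\!\bigl(\langle c_M(x),s\rangle_j\bigr)\qquad(i\in\N),
\]
while applying it to $M$, $x$, $s$ gives $\langle c_M(x),s\rangle_j=\sum_{j=j'+k'}\frac{(-1)^{j'}}{k'!}\,\partial^{k'}(\langle s,x\rangle_{j'})$. Substituting the second into the first and grouping, for each $m$, the terms carrying $\langle s,x\rangle_m$: the constraints $j'=m$, $j=j'+k'$, $i=j+k$ force the $\partial$-order to be $k+k'=i-m$ and the sign to be $(-1)^{j+j'}=(-1)^{k'}$, so the coefficient of $\partial^{\,i-m}(\langle s,x\rangle_m)$ is
\[
\sum_{\substack{k+k'=i-m\\ k,k'\ge 0}}\frac{(-1)^{k'}}{k!\,k'!}
=\frac{1}{(i-m)!}\sum_{k'=0}^{i-m}\binom{i-m}{k'}(-1)^{k'}
=\frac{(1-1)^{\,i-m}}{(i-m)!},
\]
equal to $1$ if $m=i$ and $0$ otherwise. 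Hence $\langle c_{DM}(s),c_M(x)\rangle_i=\langle s,x\rangle_i$ for all $i$, proving part~1.

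For part~2, the implication (a)$\Rightarrow$(b) is immediate from part~2 of Lemma~\ref{explicit}: if $\langle s,x\rangle_j=0$ for all $j$, then every summand of $\langle c_M(x),s\rangle_i=\sum_{i=j+k}\frac{(-1)^j}{k!}\partial^{k}(\langle s,x\rangle_j)$ vanishes. For (b)$\Rightarrow$(a), suppose $\langle c_M(x),s\rangle=0$. Then the first displayed identity above (part~2 of Lemma~\ref{explicit} applied to $DM$, $s$, $c_M(x)$) shows that $\langle c_{DM}(s),c_M(x)\rangle_i=\sum_{i=j+k}\frac{(-1)^j}{k!}\partial^{k}(\langle c_M(x),s\rangle_j)=0$ for every $i$; by part~1 this quantity equals $\langle s,x\rangle$, hence $\langle s,x\rangle=0$.

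Since Lemma~\ref{explicit} is already established, no computational obstacle remains; the only point requiring care is the bookkeeping of which module each pairing symbol refers to when Lemma~\ref{explicit} is invoked with $DM$ in place of $M$, and I do not anticipate any genuine difficulty beyond that.
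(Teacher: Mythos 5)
Your proof is correct and follows essentially the same route as the paper: you apply part~2 of Lemma~\ref{explicit} once to $M$ and once to $DM$, substitute, and identify the coefficient of $\partial^{\,i-m}(\langle s,x\rangle_m)$ with $(1-1)^{i-m}/(i-m)!$ via the binomial theorem, which is precisely the paper's $0^f/f!$ manipulation in different notation. The reduction of (b)$\Rightarrow$(a) to (a)$\Rightarrow$(b) applied to $DM$ together with part~1 is likewise identical to the paper's argument.
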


\begin{proof}
1. We have $\langle c_{DM}(s),c_M(x)\rangle=(\sum_{i=j+k}\frac{(-1)^j}{k!}\partial^k(\langle c_M(x),s\rangle_j))$.

Let $i\in\N$. We have $\sum_{i=j+k}\frac{(-1)^j}{k!}\partial^k(\langle c_M(x),s\rangle_j)=\sum_{i=j+k}\frac{(-1)^j}{k!}\partial^k(\sum_{j=g+h}\frac{(-1)^g}{h!}\partial^h(\langle s,x\rangle_g))=\sum_{i=j+k}\sum_{j=g+h}\frac{(-1)^{j+g}}{k!h!}\partial^{k+h}(\langle s,x\rangle_g)=\sum_{i=j+k}\sum_{j=g+h}(-1)^{h}\binom{k+h}{h}\frac{1}{(k+h)!}\partial^{k+h}(\langle s,x\rangle_g)=\sum_{i=f+g}\sum_{f=h+k}(-1)^{h}\binom{f}{h}\frac{1}{f!}\partial^f(\langle s,x\rangle_g)=\sum_{i=f+g}0^f\frac{1}{f!}\partial^f(\langle s,x\rangle_g)=\langle s,x\rangle_i$.

Hence $\langle c_{DM}(s),c_M(x)\rangle=\langle s,x\rangle$.

2. For any $M$, (a)$\Rightarrow$(b) holds by Lemma \ref{explicit}.

$(b)(a)$ Assume $\langle c_M(x),s\rangle=0$. By applying (a)$\Rightarrow$(b) to $DM$, we have $\langle c_{DM}(s),c_M(x)\rangle=0$. Hence $\langle s,x\rangle=\langle c_{DM}(s),c_M(x)\rangle=0$ by part 1.

\end{proof}

%\subsection{Remarks on $c_0,c,D_0,D$}

\begin{lemma}\label{duality}%[label:duality]
\begin{enumerate}
\item The natural transformations $c_0,c$ are pointwise monomorphisms.
\item The functors $D_0,D$ are faithful and exact. Moreover $D_0,D$ preserve dimension.
\item Let $D^f_0,D^f$ denote the endfunctors on $\Cf$ induced by $D_0,D$ respectively. Let $c^f_0:id_{\Cf}\to D^f_0D^f_0,c^f:id_{\Cf}\to D^fD^f$ denote the natural transformations induced by $c_0,c$ respectively. Then $c^f_0,c^f$ are natural isomorphisms. 
\item The functors $D^f,D^f_0$ preserve length. In particular, for $M\in\Cf$, $M$ is irreducible if and only if so is $D^f_0M$ or $D^fM$.
%The functors $D',D'_0$ preserve length and dimension. In particular, for $M\in\Cf$, $M$ is irreducible if and only if so is $D'_0M$ or $D'M$.
\end{enumerate}
\end{lemma}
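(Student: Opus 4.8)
The plan is to deduce everything from the two natural transformations $c_0$ and $c$ together with the abelian-level isomorphisms $\Phi,\Psi$ already in hand, reducing $D$-statements to $D_0$-statements via Lemma \ref{natural isom}. First I would prove part 1: $c_{0,M}$ is injective for every $M\in\C$. Given $0\neq x\in M$, since $i_*M$ is a vector space over the field $K$ there is a $K$-linear functional $\chi_0\colon i_*M\to K$ with $\chi_0(x)\neq 0$; I need to promote this to an element $\chi\in D_0M$ with $\chi(x)\neq 0$, which is immediate because $(D_0M)_+=\H(i_*M,K)$ as abelian groups — no compatibility with $T$ is required of the functional itself, only of the module structure on $D_0M$, which is built in. Then $c_{0,M}(x)(\chi)=\chi(x)\neq 0$, so $c_{0,M}(x)\neq 0$. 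Injectivity of $c_M$ follows from Lemma \ref{switch}(2): $\langle c_M(x),s\rangle=0$ for all $s$ iff $\langle s,x\rangle=0$ for all $s\in DM$ iff (transporting along $\Phi_M$, using $\theta$) $\chi(x)=0$ for all $\chi\in D_0M$ iff $c_{0,M}(x)=0$ iff $x=0$.

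For part 2, faithfulness of $D_0$ is formal from part 1: if $D_0\alpha=0$ for $\alpha\colon M'\to M$, then $D_0D_0\alpha=0$, and the naturality square for $c_0$ gives $c_{0,M}\circ\alpha=(D_0D_0\alpha)\circ c_{0,M'}=0$; since $c_{0,M}$ is a monomorphism, $\alpha=0$. Exactness: $D_0$ is left exact because it is $\H(i_*-,K)$ at the level of underlying groups and $i_*$ is exact, $K$ is an injective $K$-module (a field), and the forgetful functor to $\A$ is faithful exact, so it detects exactness; right exactness (equivalently, $D_0$ sends monos to epis) uses that a surjection of $K$-vector spaces splits, so $\H(-,K)$ preserves it, and one checks the splitting can be taken compatibly enough — more cleanly, any short exact sequence in $\C$ with $i_*M'$ finite-dimensional splits as a sequence of $K$-vector spaces and one applies $\H(i_*-,K)$. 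Dimension preservation is $\dim_K\H(i_*M,K)=\dim_K i_*M$. All of this transports to $D$ via the natural isomorphisms $\Phi,\Psi$ of Lemma \ref{natural isom}.

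For part 3, restrict to $\Cf$. By Lemma \ref{multiplicity property}(4), or directly from dimension preservation, $D_0^f$ sends finite-dimensional objects to finite-dimensional ones, so the restriction is well-defined; $c_0^f$ is the restriction of $c_0$, hence a pointwise monomorphism by part 1. But on $\Cf$ we also have $\dim_K D_0^f D_0^f M=\dim_K M$ by two applications of dimension preservation, so a monomorphism $c_{0,M}^f\colon M\to D_0^f D_0^f M$ between $K$-vector spaces of equal finite dimension is an isomorphism. The same argument, or transport along $\Phi,\Psi$, handles $c^f$. Part 4 is then immediate: $D_0^f$ and $D^f$ are exact (part 2) and faithful, hence reflect and preserve the zero object, and an exact faithful functor that is moreover an equivalence on $\Cf$ (being fully faithful by part 3 — $c^f$ an isomorphism makes $D^f$ a quasi-inverse up to the iso $c^f$ — and essentially surjective) preserves length; concretely, $D_0^f$ is exact and dimension-preserving, so it sends a composition series to a sequence with the right successive quotients, and by part 3 it is a bijection on subobjects, hence sends a composition series to a composition series, giving $\mathrm{length}(D_0^f M)=\mathrm{length}(M)$, and in particular $M$ irreducible $\iff D_0^f M$ irreducible; likewise for $D^f$.

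The main obstacle I anticipate is the right-exactness half of part 2: one must argue carefully that applying $\H(i_*-,K)$ to a short exact sequence $0\to M'\to M\to M''\to 0$ in $\C$ — which splits as $K$-vector spaces but need not split in $\C$ — still yields a short exact sequence of $\KT$-modules, i.e. that surjectivity of $D_0M\to D_0M'$ survives the twisted-module structure. The clean way is to note the underlying-group sequence $0\to \H(i_*M'',K)\to\H(i_*M,K)\to\H(i_*M',K)\to0$ is exact by injectivity of $K\in Mod(K)$, and that the $\KT$-actions are defined on these groups compatibly with the maps (Lemma \ref{dual lemma}(1)), so exactness in $\A$ plus faithful-exactness of the forgetful functor $Mod(\KT)\to\A$ gives exactness in $\C$; I would spell this out rather than invoke splitting.
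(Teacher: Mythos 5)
Your proposal is correct and follows the same route the paper sketches: reduce the $D,c$-statements to $D_0,c_0$-statements via the natural isomorphisms $\Phi,\Psi$ (equivalently, the identity $c=(\Psi\circ\Phi)\cdot c_0$), and then verify the $D_0,c_0$-statements by passing through the forgetful functor to $Mod(K)$, where $K$ is an injective cogenerator and dimension arguments apply. The only extra care you add — spelling out that exactness of $\H(i_*-,K)$ in $\A$ plus compatibility of the $\KT$-actions (Lemma \ref{dual lemma}(1)) and faithful-exactness of the forgetful functor gives exactness in $\C$ — is exactly the detail the paper leaves to the reader.
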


\begin{proof}
Since $D_0,D$ are naturally isomorphic and we have $c=(\Psi\circ\Phi)\cdot c_0$ and $\Psi\circ\Phi$ is a natural isomorphism, we have only to prove the assertion for $D_0,c_0$. Then one can prove easily by using the forgetful functor $\C\to Mod(K)$ appropriately.
\end{proof}

\begin{lemma}\label{functoriality lemma}%[label:functoriality lemma]
Let $\alpha:M'\to M$ be a morphism in $\C$, $x'\in M',s\in DM$. Then we have $\langle c_M(\alpha(x')),s\rangle=\langle c_{M'}(x'),s\circ\alpha\rangle$.
\end{lemma}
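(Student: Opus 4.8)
The plan is to reduce the identity to the explicit formula recorded in Lemma \ref{explicit}(2) together with the bare definition of the pairing $\langle-,-\rangle$. First observe that $s\circ\alpha$ is a morphism $M'\to f_{0*}L_0$ in $\C$, hence $s\circ\alpha\in DM'$; in fact $s\circ\alpha=D\alpha(s)$ since $D=\H(-,f_{0*}L_0)$ is contravariant. By the very definition of the pairing we have, for each $x'\in M'$, the equality $\langle s,\alpha(x')\rangle=s(\alpha(x'))=(s\circ\alpha)(x')=\langle s\circ\alpha,x'\rangle$ as elements of $f_{0*}L_0$, and therefore $\langle s,\alpha(x')\rangle_j=\langle s\circ\alpha,x'\rangle_j$ for all $j\in\N$.

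Next I would apply Lemma \ref{explicit}(2) twice. Applied to $M$, to $x=\alpha(x')\in M$, and to $s\in DM$, it gives $\langle c_M(\alpha(x')),s\rangle_i=\sum_{i=j+k}\frac{(-1)^j}{k!}\partial^k(\langle s,\alpha(x')\rangle_j)$ for every $i\in\N$; applied to $M'$, to $x'\in M'$, and to $s\circ\alpha\in DM'$, it gives $\langle c_{M'}(x'),s\circ\alpha\rangle_i=\sum_{i=j+k}\frac{(-1)^j}{k!}\partial^k(\langle s\circ\alpha,x'\rangle_j)$. Substituting the term-by-term equality of the previous paragraph, the two right-hand sides coincide for every $i$, whence $\langle c_M(\alpha(x')),s\rangle=\langle c_{M'}(x'),s\circ\alpha\rangle$. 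There is essentially no obstacle here: the statement is a purely formal consequence of the explicit description of $c$ already established, and the only point requiring a moment's care is keeping track of the contravariance of $D$ when identifying $D\alpha(s)$ with $s\circ\alpha$.

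Alternatively — and I would perhaps remark on this in passing — one can argue without computation: $c=(\Psi\circ\Phi)\cdot c_0$ is a vertical composition of natural transformations (by Lemma \ref{dual lemma}(3) and the construction of $\Psi\circ\Phi:D_0D_0\to DD$), hence $c:id_{\C}\to DD$ is natural. The naturality square for $\alpha:M'\to M$ reads $c_M\circ\alpha=DD\alpha\circ c_{M'}$, so $c_M(\alpha(x'))=DD\alpha(c_{M'}(x'))$, and then $\langle c_M(\alpha(x')),s\rangle=\bigl(DD\alpha(c_{M'}(x'))\bigr)(s)=c_{M'}(x')\bigl(D\alpha(s)\bigr)=c_{M'}(x')(s\circ\alpha)=\langle c_{M'}(x'),s\circ\alpha\rangle$. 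I would put the first, self-contained route in the body of the proof.
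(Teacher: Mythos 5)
Your proposal is correct, and in fact contains two complete arguments. Your primary route is genuinely different from the paper's: you invoke the explicit componentwise formula $\langle c_M(x),s\rangle_i=\sum_{i=j+k}\frac{(-1)^j}{k!}\partial^k(\langle s,x\rangle_j)$ from Lemma~\ref{explicit}(2) twice, once for $(M,\alpha(x'),s)$ and once for $(M',x',s\circ\alpha)$, and then reduce to the tautology $\langle s,\alpha(x')\rangle=s(\alpha(x'))=(s\circ\alpha)(x')=\langle s\circ\alpha,x'\rangle$. This is valid and self-contained. The paper instead argues purely structurally: since $c=(\Psi\circ\Phi)\cdot c_0$ is a vertical composition of natural transformations, the naturality square $DD\alpha\circ c_{M'}=c_M\circ\alpha$ gives the identity directly, using only $\langle DD\alpha(c_{M'}(x')),s\rangle=\langle c_{M'}(x'),D\alpha(s)\rangle=\langle c_{M'}(x'),s\circ\alpha\rangle$. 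That is precisely the argument you record as the ``alternative''. Both proofs are sound; the naturality argument is shorter, avoids re-entering the computation already done to prove Lemma~\ref{explicit}, and makes transparent why the statement should hold, whereas your primary route is more elementary but quietly re-derives a special case of naturality that the paper already has in hand. I would invert your choice and put the naturality argument in the body, relegating the explicit computation to a remark if you keep it at all.
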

\begin{proof}
Since $c$ is a natural transformation, $DD\alpha\circ c_{M'}=c_M\circ\alpha$. Hence the assertion follows from $\langle DD\alpha\circ c_{M'}(x'),s\rangle=\langle c_{M'}(x'),s\circ\alpha\rangle,\langle c_M\circ\alpha (x'),s\rangle=\langle c_{M}(\alpha(x')),s\rangle$.
\end{proof}

\section{The functors $F_{\pm}$}

We construct two endofunctor $F_{\pm}$ on $\C$ associated to a subobject of $f_{0*}L_0$. We give examples of such subobjects. First examples are certain $p$-adic Banach spaces, whose construction is based on an idea of Robba (\cite{robba}). Second examples are given by push-outs of differential rings, which is related to Dwork-Robba's work (\cite{dr}) Then we calculate the associated functors $\Fp,\Fm$ at some level.

%\subsection{Subobjects of $f_{0*}L_0$}

%\input{41}%8

\subsection{Subobjects of $f_{0*}L_0$}

In this section, we consider the object $M_{\lambda}$ satisfying condition (ST).

(ST) $M_{\lambda}$ is a submodule of the left $\KT$-module $f_{0*}L_0$ which is stable under the action of $\rho(x)$ for $x\in \KT$.
%(ST) $M_{\lambda}$ is a submodule of the left $\KT$-module $f_{0*}L_0$ which is stable under the action of $\rho(x)$ for $x\in \KT$.

Note that we may replace the condition $x\in \KT$ by $x\in i(K)\cup\{T\}$ since $\KT$ is generated by the subset $i(K)\cup\{T\}$ as a ring. Assume $M_{\lambda}$ satisfies (ST). Then $M_{\lambda}$ is regarded as an object of $\C$ in an obvious way. Moreover $M_{\lambda}$ is regarded as an object of $\CC$ with respect to the ring homomorphism $\KT\to\E(M_{\lambda})$ induced by $\rho$. Furthermore the canonical injection $M_{\lambda}\to f_{0*}L_0$ in $\C$ is a morphism in $\CC$, which is again a monomorphism in $\CC$. Thus we obtain a monomorphism $M_{\lambda}\to f_{0*}L_0$ in $\CC$.

%\subsection{Definition of $F_{\pm}$}\label{fil}

%\input{42}

\subsection{Definition of $F_{\pm}$}\label{fil}

\begin{definition}
\begin{enumerate}
\item For $M\in\C$, we define the object $\Fm M$ in $\C$ by $\Fm M=\{x\in M;\forall s\in \H(M,M_{\lambda}),c_M(x)(s)=0\}$. For a morphism $\alpha:M'\to M$ in $\C$, by using Lemma \ref{functoriality lemma}, $\alpha$ induces a unique morphism $\Fm M'\to \Fm M$ of subobjects of $M$, which is denoted by $\Fm \alpha$. The function $\Fm$ forms an endofunctor on $\C$. We have an obvious natural transformation $I_{-}:\Fm\to id_{\C}$ by definition, which is a pointwise monomorphism.
\item For $M\in\C$, we define the object $\Fp M$ in $\C$ by $\Fp M=\{x\in M;\forall s\in \H(M,f_{0*}L_0),c_M(x)(s)\in M_{\lambda}\}$. For a morphism $\alpha:M'\to M$ in $\C$, by using Lemma \ref{functoriality lemma}, $\alpha$ induces a unique morphism $\Fp M'\to \Fp M$ of subobjects of $M$, which is denoted by $\Fp \alpha$. The function $\Fp$ forms an endofunctor on $\C$. We have an obvious natural transformation $I_{+}:\Fp\to id_{\C}$ by definition, which is a pointwise monomorphism.
\end{enumerate}
\end{definition}

Note that for $M\in\C$, we have the exact sequences in $\C$
$$
\xymatrix{
0\ar[r]& \H(M,M_{\lambda})\ar[r]^(.7){\alpha_1}& DM\ar[r]^(.3){\alpha_2}& DM/\H(M,M_{\lambda})\ar[r]&0,
}
$$
$$
\xymatrix{
0\ar[r]& \H(DM,M_{\lambda})\ar[r]^{\beta_1}& DDM\ar[r]^(.3){\beta_2}& \H(DM,f_{0*}L_0/M_{\lambda}),
}
$$
where $\alpha_1,\beta_1$ are induced by the morphism $M_{\lambda}\to f_{0*}L_0$ in $\CC$, $\alpha_2$ is the canonical surjection, $\beta_2$ is induced by the obvious morphism $f_{0*}L_0\to f_{0*}L_0/M_{\lambda}$ in $\CC$. Moreover we have the exact sequence
$$
\xymatrix{
0\ar[r]& D(DM/\H(M,M_{\lambda}))\ar[r]^(.7){D\alpha_2}&DDM\ar[r]^{D\alpha_1}& D\H(M,M_{\lambda}).
}$$

%\subsection{basic property of $F_{\lambda},F^{\lambda}$}

\begin{proposition}\label{dimension formula}%[label:dimension formula]
\begin{enumerate}
\item For $M\in\C$, we have $\Fm M=\cap_{s\in \H(M,M_{\lambda})}\ker{s}$.

\item For $M\in\C$, there exists a unique morphism $c_{-,M}:\Fm M\to D(DM/\H(M,M_{\lambda}))$ making the square commutative in the following diagram.

$$
\xymatrix{
&F_-M\ar[d]^{c_{+,M}}\ar[r]^{I_{-,M}}&M\ar[d]^{c_M}&&\\
0\ar[r]&D(DM/\H(M,M_{\lambda}))\ar[r]^(.7){D\alpha_1}& DDM\ar[r]^(.3){D\alpha_2}&D\H(M,M_{\lambda})\ar[r]&0
}
$$

Moreover the square is a pull-back.

\item For $M\in\C$, there exists a unique morphism $c_{+,M}:\Fp M\to \H(DM,M_{\lambda})$ such that the following diagram commutative.

$$
\xymatrix{
&F_+M\ar[d]^{c_{+,M}}\ar[r]^{I_{+,M}}&M\ar[d]^{c_M}&\\
0\ar[r]&\H(DM,M_{\lambda})\ar[r]^{\beta_1}& DDM\ar[r]^(.3){\beta_2}&\H(DM,f_{0*}L_0/M_{\lambda}).
}
$$

Moreover the square is a pull-back.

\item For $M\in\Cf$, the morphisms $c_{+,M},c_{-,M}$ are isomorphisms.

\item For $M\in\Cf$, we have $dim \Fm M+dim \H(M,M_{\lambda})=dim M$ and $dim \Fp M=dim \H(DM,M_{\lambda})$.

\item Let $M\in\Cf$. Let $J_{-,M}:M\to M/\Fm M$ be the canonical surjection. Then $\H(J_{-,M},M_{\lambda}):\H(M/\Fm M,M_{\lambda})\to \H(M,M_{\lambda})$ is an isomorphism and the morphism $\H(I_{-,M},M_{\lambda}):\H(M,M_{\lambda})\to \H(\Fm M,M_{\lambda})$ is equal to $0$.

\item We have $\Fm (M/\Fm M)=0$ and $dim \H(M/\Fm M,M_{\lambda})=dim M/\Fm M$.
\end{enumerate}
\end{proposition}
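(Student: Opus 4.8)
The plan is to prove the seven assertions in order, since each feeds into the next; the only external inputs needed are Lemma~\ref{switch}(2), the exactness and dimension-preservation of $D$ (Lemma~\ref{duality}(2)), and the fact that $c^f$ is a natural isomorphism on $\Cf$ (Lemma~\ref{duality}(3)). For part~1, I would observe that for $x\in M$ and $s\in\H(M,M_\lambda)\subseteq DM$ one has $c_M(x)(s)=\langle c_M(x),s\rangle$, which by Lemma~\ref{switch}(2) vanishes exactly when $\langle s,x\rangle=s(x)=0$; hence $\Fm M=\bigcap_{s\in\H(M,M_\lambda)}\ker s$. For part~2, I would first check $D\alpha_1\circ c_M\circ I_{-,M}=0$, where $D\alpha_1\colon DDM\to D\H(M,M_\lambda)$ is obtained by applying $D$ to $\alpha_1$: for $x\in\Fm M$ and $s\in\H(M,M_\lambda)$ one has $(D\alpha_1(c_M(x)))(s)=\langle c_M(x),s\rangle=0$. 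Since $D$ is exact, $D\alpha_2\colon D(DM/\H(M,M_\lambda))\to DDM$ is a monomorphism with image $\ker D\alpha_1$, so $c_M\circ I_{-,M}$ factors uniquely through $D\alpha_2$, yielding $c_{-,M}$. Because $D\alpha_2$ is monic, the pull-back of $c_M$ along it is the subobject $\{x\in M;\ D\alpha_1(c_M(x))=0\}$ of $M$, which by the computation above is precisely $\Fm M$, so the square is a pull-back. Part~3 is the mirror argument: $\H(DM,-)$ is left exact, so $\beta_1$ is monic with image $\ker\beta_2$, and $x\in\Fp M$ iff $\beta_2(c_M(x))=0$, i.e.\ iff $c_M(x)$ factors through $M_\lambda$; the same pull-back reasoning applies with $\beta_1,\beta_2$ in place of $D\alpha_2,D\alpha_1$.

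For parts~4 and~5, I would note that when $M\in\Cf$ every object in the two squares lies in $\Cf$: $\Fm M,\Fp M$ are submodules of $M$; the maps $\alpha_1,\beta_1$ exhibit $\H(M,M_\lambda)$ and $\H(DM,M_\lambda)$ as submodules of $DM$ and $DDM$, both of dimension $\dim M$ by Lemma~\ref{duality}(2); and $DM/\H(M,M_\lambda)$ together with its $D$-dual are then finite dimensional. Since $c_M$ is an isomorphism on $\Cf$ (Lemma~\ref{duality}(3)) and a pull-back of an isomorphism is an isomorphism, $c_{-,M}$ and $c_{+,M}$ are isomorphisms, which is part~4. Part~5 then follows by counting dimensions through these isomorphisms, using once more that $D$ preserves dimension: $\dim\Fm M=\dim D(DM/\H(M,M_\lambda))=\dim M-\dim\H(M,M_\lambda)$ and $\dim\Fp M=\dim\H(DM,M_\lambda)$.

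For part~6, I would apply the contravariant left-exact functor $\H(-,M_\lambda)$ to the short exact sequence $0\to\Fm M\xrightarrow{I_{-,M}}M\xrightarrow{J_{-,M}}M/\Fm M\to 0$, obtaining exactness of $0\to\H(M/\Fm M,M_\lambda)\to\H(M,M_\lambda)\xrightarrow{\H(I_{-,M},M_\lambda)}\H(\Fm M,M_\lambda)$; by part~1 every $s\in\H(M,M_\lambda)$ restricts to $0$ on $\Fm M$, so $\H(I_{-,M},M_\lambda)=0$, whence $\H(J_{-,M},M_\lambda)$ is both injective and surjective, hence an isomorphism. Finally, for part~7, writing $N=M/\Fm M$, part~6 says every morphism $M\to M_\lambda$ has the form $t\circ J_{-,M}$ with $t\in\H(N,M_\lambda)$, so part~1 gives $\Fm M=\bigcap_t\ker(t\circ J_{-,M})=J_{-,M}^{-1}\big(\bigcap_t\ker t\big)=J_{-,M}^{-1}(\Fm N)$; since $\Fm M=\ker J_{-,M}$ and $J_{-,M}$ is surjective, this forces $\Fm N=0$. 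Then $\dim\H(M/\Fm M,M_\lambda)=\dim\H(M,M_\lambda)=\dim M-\dim\Fm M=\dim(M/\Fm M)$ by parts~6 and~5 (noting $N\in\Cf$ as a quotient of $M$).

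I expect the main obstacle to be purely organizational, concentrated in parts~2 and~3: one has to keep the contravariance of $D$ straight, remember the identifications $DM=\H(M,f_{0*}L_0)$ and $DDM=\H(DM,f_{0*}L_0)$, track which of $D\alpha_1,D\alpha_2$ (and of $\beta_1,\beta_2$) is monic and which is epic, and then verify the elementwise description of the relevant pull-backs. Once parts~1--3 are set up correctly, parts~4--7 are short diagram chases and dimension counts with no substantial difficulty.
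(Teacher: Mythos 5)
Your proposal is correct and follows essentially the same line of argument as the paper: part 1 via Lemma~\ref{switch}(2), parts 2--3 by identifying $F_{\pm}M$ as $\ker(D\alpha_1\circ c_M)$ and $\ker(\beta_2\circ c_M)$ respectively, part 4 from the pull-back of the isomorphism $c_M$, part 5 by dimension counting through $c_{\pm,M}$, part 6 from left exactness of $\H(-,M_\lambda)$ together with part 1, and part 7 from parts 5 and 6. Your element-wise derivation of $F_-(M/F_-M)=0$ in part 7 is a slight variation (the paper just counts dimensions), and you correctly use the labels $D\alpha_1,D\alpha_2$ as defined before the proposition rather than as (erroneously) swapped in the displayed diagram.
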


\begin{proof}
1. By Lemma \ref{switch}, $\Fm M=\{x\in M;\forall s\in \H(M,M_{\lambda}),<c_M(x),s>=0\}=\{x\in M;\forall s\in \H(M,M_{\lambda}),<s,x>=0\}=\cap_{s\in \H(M,M_{\lambda})}\ker{s}$.

2. The uniqueness is obvious. We can see that the underlying sets $\Fm M$ coincides with the kernel of $D\alpha_2\circ c_M$, which implies the rest of assertion.

3. The uniqueness is obvious. We can see that the underlying sets $\Fp M$ coincides with the kernel of $\beta_2\circ c_M$, which implies the rest of assertion.

4. This is a consequence of parts 2 and 3. 

5. It follows from the left exactness of the functor $\H(-,M_{\lambda})$ and part 1.

6. By applying part 3 to $M/\Fm M$ and $M$ and using part 4, we have $dim \Fm (M/\Fm M)=dim M/\Fm M-dim \H(M/\Fm M,M_{\lambda})=dim M/\Fm M-dim \H(M,M_{\lambda})=0$.

7. It follows from parts 5 and 6.
\end{proof}

%\subsection{Example 1: Robba's construction}

%\input{43}

\subsection{Example 1: Robba's construction}\label{ex1}

Let $\pi=(\pi_i)$ be a sequence of $\R_{>0}$ satisfying condition (C).

(C) The sequence $(\pi_{i+1}/\pi_i)$ is non-decreasing sequence and $sup \pi_{i+1}/\pi_i\le r(K,\partial)$.

For example, $\pi(t)=(t^i)$ for $t\in (0,r(K,\partial)]$ satisfies the condition(see \cite{robba}).

\begin{lemma}\label{inequality}%[label:inequality]
We have $\pi_i\pi_j/(r(K,\partial)^h\pi_0\pi_k)\le 1$ for $h,i,j,k\in\N$ such that $k\ge j$ and $i=h+k-j$.
\end{lemma}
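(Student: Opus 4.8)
The plan is to reformulate condition (C) in terms of the successive ratios of the $\pi_i$ and then reduce the estimate to two elementary facts about monotone sequences. Set $\rho_m := \pi_{m+1}/\pi_m$ for $m\in\N$. By (C), the sequence $(\rho_m)$ is non-decreasing and $\rho_m\le\rK$ for all $m$. Telescoping gives $\pi_b/\pi_a=\prod_{m=a}^{b-1}\rho_m$ for all $a\le b$ in $\N$ (empty product $=1$); in particular $\pi_b\le\rK^{\,b-a}\,\pi_a$ whenever $a\le b$, and hence $\pi_n\le\rK^{\,n}\pi_0$ for all $n$.

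The key observation I would isolate is a \emph{spreading} inequality: for $a\le b$ in $\N$ and $t\in\N$ with $t\le a$ one has $\pi_a\pi_b\le\pi_{a-t}\pi_{b+t}$. By induction on $t$ this reduces to the case $t=1$, and $\pi_a\pi_b\le\pi_{a-1}\pi_{b+1}$ is, after dividing by $\pi_{a-1}\pi_b>0$, just the inequality $\rho_{a-1}\le\rho_b$, which holds because $a-1\le b$ and $(\rho_m)$ is non-decreasing. In the induction step one replaces the pair $(a,b)$ by $(a-1,b+1)$, which again satisfies $a-1\le b+1$ and $t-1\le a-1$. Conceptually this just records that the log-convexity of $(\pi_i)$ forced by (C) makes the product $\pi_a\pi_b$ grow as the pair $(a,b)$ is spread apart while $a+b$ is kept fixed.

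With these in hand I would finish by a short case distinction on the size of $i$ relative to $k$. The hypotheses give $h=i+k-j\ge 0$ and $j\le k$. \emph{Case $i\le k$:} put $a=\min(i,j)$ and $b=\max(i,j)$, so $a\le b\le k$ (using $i\le k$ and $j\le k$) and $a+b=i+j=h+k$; hence $h\le a$ (equivalently $b\le k$) and $t:=a-h=k-b$ satisfies $0\le t\le a$. Applying the spreading inequality with this $t$ gives $\pi_i\pi_j=\pi_a\pi_b\le\pi_{a-t}\pi_{b+t}=\pi_h\pi_k$, and then $\pi_h\le\rK^{\,h}\pi_0$ yields $\pi_i\pi_j\le\rK^{\,h}\pi_0\pi_k$. \emph{Case $i>k$:} then $\pi_i\le\rK^{\,i-k}\pi_k$ and $\pi_j\le\rK^{\,j}\pi_0$ directly, whence $\pi_i\pi_j\le\rK^{\,i-k+j}\pi_0\pi_k=\rK^{\,h}\pi_0\pi_k$. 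In either case $\pi_i\pi_j/(\rK^{\,h}\pi_0\pi_k)\le 1$, as required.

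I do not expect a genuine obstacle: the content is entirely elementary. The only point needing a little care is that the hypothesis is not symmetric in $i$ and $j$ (it is $k\ge j$, not $k\ge\max(i,j)$), so one cannot simply normalize to $i\le j$ and must keep the case $i>k$ separate, where the spreading inequality is unavailable and one falls back on the crude bound $\pi_b\le\rK^{\,b-a}\pi_a$.
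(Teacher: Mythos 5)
Your proof is correct and rests on the same two consequences of condition (C) that the paper uses: the geometric bound $\pi_b\le\rK^{\,b-a}\pi_a$ for $a\le b$, and the log-convexity (``spreading'') of $(\pi_i)$ coming from monotonicity of the successive ratios. The only organizational difference is that the paper avoids your case distinction on $i\le k$ versus $i>k$ by a different choice of spread: it writes $\pi_i\le\rK^{\,h}\pi_{i-h}=\rK^{\,h}\pi_{k-j}$ and then spreads the pair $\{j,\,k-j\}$ (both of which are automatically $\le k$) to $\{0,\,k\}$ to get $\pi_j\pi_{k-j}\le\pi_0\pi_k$, whence $\pi_i\pi_j/\pi_k\le\rK^{\,h}\pi_0$ in one stroke; your choice of spreading to $\{h,k\}$ needs $\max(i,j)\le k$, which is what forces the second case.
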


\begin{proof}
We have $\pi_i=(\pi_i/\pi_{i-1})\dots (\pi_{i-h+1}/\pi_{i-h})\pi_{i-h}\le r(K,\partial)^h\pi_{i-h}=r(K,\partial)^h\pi_{k-j}$. If $j>0$ then we have $\pi_j/\pi_k\le \pi_{j-1}/\pi_{k-1}$ by $\pi_j/\pi_k=(\pi_{k-1}/\pi_k) (\pi_j/\pi_{k-1})\le (\pi_{k-2}/\pi_{k-1}) (\pi_{j-1}/\pi_{k-2})=\pi_{j-1}/\pi_{k-1}$. Repeating a similar argument, $\pi_j/\pi_k\le\pi_0/\pi_{k-j}$. Hence $\pi_i \pi_j/\pi_k\le r(K,\partial)^h\pi_0$. 
\end{proof}

Let $t_{\rK,bd}:\KXrKzT\to \KXT$ be the ring homomorphism associated to the obvious ring homomorphism $\KXrKz\to \KX$. Let $i_{\rK,bd}:\KXrKz\to \KXrKzT;x\mapsto xT^0$ be the ring homomorphism as before. Recall that the underlying abelian group of the left $\KT$-module $f_*L_0$ is given by  the direct product $K_+^{\N}$.

\begin{lemma}\label{stable lemma}%[label:stable lemma]
Let $B_{\pi}$ be the subgroup of $K_+^{\N}$ defined by $\{(a_i)\in K_+^{\N};sup |a_i|\pi_i<+\infty\}$. Then $B_{\pi}$ is a submodule of $t_{\rK,bd*}L_0$.
\end{lemma}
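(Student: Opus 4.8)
The plan is to make the $\KXrKzT$-module structure on $t_{\rK,bd*}L_0$ explicit and then check directly that $B_\pi$ is a subgroup of $K_+^{\N}$ closed under the generators of this module. Since $L_0=L(\KX,\partial_X)=\MM\KX$, on $L_0$ the element $i(y)$ for $y\in\KX$ acts by multiplication in $\KX$, i.e. by convolution, and $T$ acts by $\partial_X$, so $T\cdot(a_i)=((i+1)a_{i+1})$. Restricting scalars along $t_{\rK,bd}$, on $t_{\rK,bd*}L_0$ the element $i(z)$ for $z=(z_i)\in\KXrKz$ acts by convolution with $z$ (viewed in $\KX$) and $T$ still acts by $\partial_X$. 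Because $\KXrKzT$ is generated as a ring by $i(\KXrKz)\cup\{T\}$, it then suffices to prove that $B_\pi$ is a subgroup of $K_+^{\N}$ stable under convolution by every $z\in\KXrKz$ and under $\partial_X$.

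The subgroup property is immediate from the ultrametric inequality. For stability under convolution I would argue as follows: fix $(a_i)\in B_\pi$, put $C_a=\sup_i|a_i|\pi_i$, and fix $z\in\KXrKz$ with $C_z=\sup_i|z_i|\rK^i$; then for every $n$, $|(z\cdot a)_n|\pi_n\le\max_{j+k=n}|z_j|\,|a_k|\,\pi_{j+k}\le C_zC_a\max_{j+k=n}\pi_{j+k}/(\rK^j\pi_k)$, and condition (C) gives $\pi_{j+k}=(\pi_{j+k}/\pi_{j+k-1})\cdots(\pi_{k+1}/\pi_k)\,\pi_k\le\rK^j\pi_k$ (the estimate already used in the first line of the proof of Lemma \ref{inequality}), so $\sup_n|(z\cdot a)_n|\pi_n\le C_zC_a<+\infty$ and $z\cdot(a_i)\in B_\pi$. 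For stability under $\partial_X$: since $i+1\in\Z$ and the valuation is non-archimedean, $|i+1|\le 1$, so for $(a_i)\in B_\pi$ we get $|(i+1)a_{i+1}|\pi_i\le|a_{i+1}|\pi_{i+1}\cdot(\pi_i/\pi_{i+1})$, whose first factor is $\le C_a$ and whose second factor is $\le\pi_0/\pi_1$ because $(\pi_{m+1}/\pi_m)$ is non-decreasing by (C); hence $\sup_i|(i+1)a_{i+1}|\pi_i\le C_a\,\pi_0/\pi_1<+\infty$ and $\partial_X((a_i))\in B_\pi$.

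Together these give that $B_\pi$ is a $\KXrKzT$-submodule of $t_{\rK,bd*}L_0$. I do not anticipate a real obstacle: the entire content sits in the two applications of condition (C) to the convolution and shift estimates — essentially the single bound $\pi_{j+k}\le\rK^j\pi_k$ from the proof of Lemma \ref{inequality} — and the only point demanding care is reading off the module structure of $t_{\rK,bd*}L_0$ correctly before beginning the estimates.
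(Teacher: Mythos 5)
Your proof is correct and takes essentially the same route as the paper: reduce to the generators $i_{\rK,bd}(\KXrKz)\cup\{T\}$, then verify stability under convolution using the estimate $\pi_{j+k}\le \rK^j\pi_k$ from condition (C), and stability under $\partial_X$ using $|i+1|\le 1$ together with the monotonicity of $(\pi_{i+1}/\pi_i)$. The only cosmetic differences are that you spell out the module structure and the inequality $|i+1|\le 1$ explicitly, while the paper leaves them implicit.
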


\begin{proof}
Let $(a_i)\in B_{\pi}$. For $(c_i)\in \KXrKz$, $|\sum_{i=j+k}c_ka_j|\pi_i\le max_{i=j+k}|c_k||a_j|\pi_i\le max_{i=j+k}|c_k|\rK^k|a_j|\pi_j\le sup_{k\in \N}|c_k|\rK^k sup_{j\in\N}|a_j|\pi_j$. Hence $i_{\rK,bd}((c_i))\cdot (a_i)\in B_{\pi}$. We have $|(i+1)a_{i+1}|\pi_i\le |a_{i+1}|\pi_{i+1}(\pi_i/\pi_{i+1})\le |a_{i+1}|\pi_{i+1}(\pi_0/\pi_1)$. Hence $T\cdot (a_i)\in B_{\pi}$. Since $\KXrKzT$ is generated by the subset $i_{\rK,bd}(\KXrKz)\cup\{T\}$, $x\cdot (a_i)\in B_{\pi}$ for $x\in \KXrKzT$.
\end{proof}

\begin{definition}
We define $N_{\pi}\in \CKXrKzT$ as the submodule of $L_0$ given by $B_{\pi}$. The left $\KT$-module $f_{\rK,bd*}N_{\pi}$ is a submodule of $f_{0*}L_0$ since $f_{\rK,bd*}N_{\pi}$ is regarded as a submodule of $f_{\rK,bd*}t_{\rK,bd*}L_0=f_{0*}L_0$.
\end{definition}

\begin{lemma}\label{ST1}%[label:ST1]
Let $(a_i)\in f_*N_{\pi}$. We have $\rhos(x)((a_i))\in f_*N_{\pi}$ for $x\in \KT$.
\end{lemma}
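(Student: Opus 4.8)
The plan is to reduce the claim to the two generators $i(c)$ for $c\in K$ and $T$ of the ring $\KT$, exactly as in the proofs of Lemmas \ref{dual lemma}(4) and \ref{stable lemma}. Since $\KT$ is generated by $i(K)\cup\{T\}$ as a ring, it suffices to show $\rhos(i(c))((a_i))\in f_*N_{\pi}$ for $c\in K$ and $\rhos(T)((a_i))\in f_*N_{\pi}$, because then the set of $x\in\KT$ with $\rhos(x)((a_i))\in f_*N_{\pi}$ — which is a subring of $\KT$ by multiplicativity of $\rhos$ and additivity — contains all generators, hence equals $\KT$.

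First I would recall the explicit formulas $\rhos(i(c))((a_i))=(ca_i)$ and $\rhos(T)((a_i))=(\partial(a_i)-(i+1)a_{i+1})$ from the lemma preceding Lemma \ref{ST1}. For the generator $i(c)$: since $(a_i)\in f_*N_\pi=B_\pi$ means $\sup_i|a_i|\pi_i<+\infty$, we get $\sup_i|ca_i|\pi_i\le |c|\sup_i|a_i|\pi_i<+\infty$ because $|\ |$ is an ultrametric (sub)multiplicative norm on $K$; hence $(ca_i)\in B_\pi$, i.e. $\rhos(i(c))((a_i))\in f_*N_\pi$. For the generator $T$: I estimate $|\partial(a_i)-(i+1)a_{i+1}|\pi_i\le\max\{|\partial(a_i)|\pi_i,\ |(i+1)a_{i+1}|\pi_i\}$. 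The second term is bounded as in the proof of Lemma \ref{stable lemma}: $|(i+1)a_{i+1}|\pi_i\le|a_{i+1}|\pi_{i+1}(\pi_i/\pi_{i+1})\le|a_{i+1}|\pi_{i+1}(\pi_0/\pi_1)\le(\pi_0/\pi_1)\sup_j|a_j|\pi_j$, using condition (C) that $(\pi_{i+1}/\pi_i)$ is non-decreasing so $\pi_i/\pi_{i+1}\le\pi_0/\pi_1$. For the first term I would use that $\partial$ is bounded: $|\partial|_{sp,K}=\omega(K)/r(K,\partial)$, and more directly $|\partial(a_i)|\le|\partial|_{op,K}|a_i|$; since $\rK=\omega(K)/|\partial|_{sp,K}\le 1\cdot\omega(K)/\omega(K)$... — more carefully, the relevant bound is $|\partial|_{op,K}<+\infty$ by the boundedness hypothesis, so $|\partial(a_i)|\pi_i\le|\partial|_{op,K}|a_i|\pi_i\le|\partial|_{op,K}\sup_j|a_j|\pi_j$. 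Combining, $\sup_i|\partial(a_i)-(i+1)a_{i+1}|\pi_i\le\max\{|\partial|_{op,K},\,\pi_0/\pi_1\}\sup_j|a_j|\pi_j<+\infty$, so $\rhos(T)((a_i))\in B_\pi=f_*N_\pi$.

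The main (and only mildly delicate) point is handling the $\partial(a_i)$ contribution in the $T$-case: one needs that $\partial$ acting coefficient-wise does not destroy the boundedness condition defining $B_\pi$, which is exactly where the boundedness of the derivation on $K$ is used (this is why $f_{\rK,bd}$ rather than $f_0$ appears, and why $N_\pi$ was defined as a module over $\KXrKzT$ in the previous definition). Everything else is a routine repetition of the ultrametric estimates already carried out in Lemmas \ref{inequality} and \ref{stable lemma}. I would then simply remark that, having verified the condition for $i(c)$ and for $T$, the general case $x\in\KT$ follows by writing $x$ as a sum of products of such generators and using that $\rhos$ is a ring homomorphism together with the additivity of the $B_\pi$-membership condition.
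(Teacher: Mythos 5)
Your proof is correct and follows essentially the same approach as the paper: reduce to the generators $i(c)$ and $T$ using the explicit formulas $\rhos(i(c))((a_i))=(ca_i)$ and $\rhos(T)((a_i))=(\partial(a_i)-(i+1)a_{i+1})$, then obtain the bound $\sup_i|\partial(a_i)-(i+1)a_{i+1}|\pi_i\le\max\{|\partial|_{op,K},\pi_0/\pi_1\}\sup_j|a_j|\pi_j$ from condition (C) and the boundedness of $\partial$, and conclude via the ring-homomorphism property of $\rhos$. One minor imprecision worth fixing: for a fixed $(a_i)$ the set $\{x\in\KT:\rhos(x)((a_i))\in f_*N_{\pi}\}$ is not obviously closed under products (for $x_1x_2$ one needs $\rhos(x_1)$ to carry all of $f_*N_{\pi}$ back into $f_*N_{\pi}$, not merely to send the single element $(a_i)$ there); the correct formulation, which your estimates in fact establish, is that the set of $x$ such that $\rhos(x)$ stabilizes the whole submodule $f_*N_{\pi}$ is a subring of $\KT$ containing $i(K)\cup\{T\}$.
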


\begin{proof}
By definition, we have $\rhos(i(c))((a_i))=(ca_i)\in f_*N_{\pi}$ for $c\in K$ and $\rhos(T)((a_i))=(\partial(a_i)-(i+1)a_{i+1})$. For $i\in\N$, $|\partial(a_i)-(i+1)a_{i+1}|\pi_i\le max(|\partial(a_i)|,|(i+1)a_{i+1}|)\pi_i\le max(|\partial|_{op,K}|a_i|\pi_i,|a_{i+1}|\pi_i)\le max(|\partial|_{op,K}|a_i|\pi_i,|a_{i+1}|\pi_{i+1}(\pi_0/\pi_1))$. Hence we have $sup_i |\partial(a_i)-(i+1)a_{i+1}|\pi_i\le max(|\partial|_{op,K},\pi_0/\pi_1) sup_i |a_i|\pi_i<+\infty$. Therefore $\rhos(T)((a_i))\in f_*N_{\pi}$. Since $\rhos$ is a ring homomorphism and $\KT$ is generated by the subset $i(K)\cup\{T\}$, we have $\rhos(x)((a_i))\in f_*N_{\pi}$ for $x\in \KT$.
\end{proof}

%\subsection{Def of the endofunctors $F_{\pi},F^{\pi}$}

\begin{definition}
Let $\pi$ be as above. We define the endofunctors $\Fpim,\Fpip$ on $\C$ as $\Fm$ and $\Fp$ obtained by applying the result of 3.1 to $f_{\rK,bd*}N_{\pi}$.
%Let $\pi$ be as above. We define the endofunctors $F_{\pi-},F_{\pi+}$ on $\C$ as $\Fm$ and $\Fp$ obtained by applying the result of 3.1 to $f_{\rK,bd*}N_{\pi}$.
\end{definition}

%\subsection{stability of filtration under the dualizing map}

\begin{lemma}\label{switch pi}%[label:switch pi]
Let $\pi$ be as above. For $M\in \C,x\in M$ and $s\in DM$, \TFAE.

(a) We have $\langle s,x\rangle\in f_*N_{\pi}$.

(b) We have $\langle c_M(x),s\rangle\in f_*N_{\pi}$.
\end{lemma}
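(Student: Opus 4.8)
The statement to prove is Lemma~\ref{switch pi}: for $M \in \C$, $x \in M$, $s \in DM$, the condition $\langle s, x\rangle \in f_*N_\pi$ is equivalent to $\langle c_M(x), s\rangle \in f_*N_\pi$. Here $\langle s, x\rangle = (\langle s,x\rangle_i)_i \in K_+^{\N}$ and $f_*N_\pi$ consists of those sequences $(a_i)$ with $\sup_i |a_i|\pi_i < +\infty$.

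The plan is to mimic the proof of Lemma~\ref{switch} (which is the same statement with the membership condition ``$\in f_*N_\pi$'' replaced by ``$=0$''), exploiting the explicit formulas from Lemma~\ref{explicit}. By Lemma~\ref{explicit}(2), we have
$$\langle c_M(x), s\rangle_i = \sum_{i=j+k}\frac{(-1)^j}{k!}\partial^k(\langle s,x\rangle_j).$$
So the implication (a)$\Rightarrow$(b) amounts to showing: if $(\langle s,x\rangle_j)_j \in f_*N_\pi$, then the sequence with $i$-th entry $\sum_{i=j+k}\frac{(-1)^j}{k!}\partial^k(\langle s,x\rangle_j)$ also lies in $f_*N_\pi$. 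First I would estimate $|\langle c_M(x),s\rangle_i|\pi_i \le \max_{i=j+k}\frac{1}{|k!|}|\partial^k(\langle s,x\rangle_j)|\pi_i$. Using $|\partial^k/k!|_{op,K} \le 1/r(K,\partial)^k$ from Lemma~\ref{kx}(4), this is bounded by $\max_{i=j+k} r(K,\partial)^{-k}|\langle s,x\rangle_j|\pi_i$. Then Lemma~\ref{inequality} (with $h=0$, so $i = k+j$, i.e. $\pi_i/(r(K,\partial)^k\pi_j) = \pi_i\pi_0/(r(K,\partial)^k\pi_0\pi_j)\cdot(\pi_j/\pi_0)$ --- more precisely, reading Lemma~\ref{inequality} with $h=0$, $k\mapsto i$ in its notation, one gets $\pi_i/(r(K,\partial)^k\pi_j)\le \pi_0/\pi_0 \cdot$ something; I need to set its parameters as $i_{\text{lem}}=i$, $j_{\text{lem}}=j$, $k_{\text{lem}}=i$, $h_{\text{lem}}=k$, giving the needed $\pi_i\pi_j/(r(K,\partial)^k\pi_0\pi_i) \le 1$ --- wait, I must recheck the indexing) yields $r(K,\partial)^{-k}\pi_i \le \pi_j \cdot C$ for an appropriate constant, so that $|\langle c_M(x),s\rangle_i|\pi_i \le C\sup_j|\langle s,x\rangle_j|\pi_j < +\infty$. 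This gives (a)$\Rightarrow$(b). Concretely, since $B_\pi = (N_\pi)_+$ and $f_*N_\pi$ is by Lemma~\ref{ST1} stable under $\rho^{\#}(\KT)$, and since the operation $(a_j)\mapsto(\sum_{i=j+k}\frac{(-1)^j}{k!}\partial^k(a_j))_i$ is exactly the map appearing in the formula for $c_M$, I expect the cleanest route is to observe that this operation is itself (essentially) of the form ``apply $\rho^\#$ of some formal expression'' and invoke Lemma~\ref{ST1} or Lemma~\ref{stable lemma} directly, rather than redoing the estimate by hand.

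For the reverse implication (b)$\Rightarrow$(a), I would follow the trick used in the proof of Lemma~\ref{switch}(2): apply the already-established implication (a)$\Rightarrow$(b) to the module $DM$ in place of $M$, with $c_M(x) \in DDM$ in place of $x$ and $s \in DM = D(DM)$... more precisely, using Lemma~\ref{switch}(1), $\langle s,x\rangle = \langle c_{DM}(s), c_M(x)\rangle$. So if $\langle c_M(x),s\rangle \in f_*N_\pi$, then applying (a)$\Rightarrow$(b) with $M$ replaced by $DM$, $x$ replaced by $s$, and $s$ replaced by $c_M(x)$ (noting $c_M(x) \in DDM = D(DM)$ via the identification, and that $\langle c_{DM}(s), c_M(x)\rangle$ is the relevant pairing), we get $\langle c_{DM}(s), c_M(x)\rangle \in f_*N_\pi$, hence $\langle s,x\rangle \in f_*N_\pi$ by Lemma~\ref{switch}(1). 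One subtlety: I need the roles of the pairing arguments to match up correctly with the statement of Lemma~\ref{explicit}(2) --- that lemma computes $\langle c_M(x), s\rangle$ in terms of $\langle s, x\rangle$, so applied to $DM$ it computes $\langle c_{DM}(s), c_M(x)\rangle$ in terms of $\langle c_M(x), s\rangle$, which is exactly what is needed.

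The main obstacle I anticipate is bookkeeping the index conventions in Lemma~\ref{inequality} so that the estimate comes out in the correct direction --- Lemma~\ref{inequality} is stated with a specific constraint $k \ge j$, $i = h+k-j$, and I must instantiate $h, i, j, k$ correctly (here the relevant instance has $h = 0$ from the convolution, or more accurately $h$ plays the role of the derivative order $k$ in the formula, while the ``$k\ge j$'' constraint needs checking against $i = k + j \ge j$). Getting the spectral-norm bound $|\partial^k/k!|_{op} \le r(K,\partial)^{-k}$ to combine with the $\pi$-ratios correctly is the crux; everything else is formal and follows the template of Lemmas~\ref{switch} and \ref{ST1}. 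A cleaner alternative that sidesteps the explicit estimate: note that $\langle c_M(x), s\rangle = \langle \Psi_{D_0M}(c_{0,M}(x)), \Phi_M(s)\rangle$ lies in $f_*N_\pi$ iff the element $c_{0,M}(x) \in D_0D_0M$, viewed appropriately, pairs into $N_\pi$; one can then transport the condition through the natural isomorphisms $\Phi, \Psi$ and invoke stability of $f_*N_\pi$ under the $\KT$-action established in Lemma~\ref{ST1}, reducing everything to the observation that the formula for $c_M$ in Lemma~\ref{explicit}(2) expresses $\langle c_M(x), s\rangle$ as the image of $\langle s, x\rangle$ under an operator built from $\rho^\#(\KT)$-operations, which preserve $f_*N_\pi$.
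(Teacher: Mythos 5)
Your argument follows the paper's proof essentially verbatim: for (a)$\Rightarrow$(b), combine the explicit formula of Lemma~\ref{explicit}(2) with the operator bound $|\partial^k/k!|_{op,K}\le 1/r(K,\partial)^k$ (Lemma~\ref{kx}(4)) and the $\pi$-inequality (Lemma~\ref{inequality}); for (b)$\Rightarrow$(a), apply (a)$\Rightarrow$(b) to $DM$ and invoke Lemma~\ref{switch}(1) in the form $\langle s,x\rangle=\langle c_{DM}(s),c_M(x)\rangle$. Two clarifications to settle the points you flagged as uncertain. For the indexing in Lemma~\ref{inequality}: with the convolution constraint $i=j+k$, what is needed is $\pi_i/(r(K,\partial)^k\pi_j)\le 1$, and the correct instantiation of that lemma is $h_{\mathrm{lem}}=k$, $j_{\mathrm{lem}}=0$, $k_{\mathrm{lem}}=j$, $i_{\mathrm{lem}}=i$ (then $k_{\mathrm{lem}}\ge j_{\mathrm{lem}}$ reads $j\ge 0$, $i_{\mathrm{lem}}=h_{\mathrm{lem}}+k_{\mathrm{lem}}-j_{\mathrm{lem}}$ reads $i=k+j$, and the conclusion is $\pi_i\pi_0/(r(K,\partial)^k\pi_0\pi_j)\le 1$), not either of the assignments you experimented with. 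For the ``cleaner alternative'': the map $(a_j)_j\mapsto\bigl(\sum_{i=j+k}\frac{(-1)^j}{k!}\partial^k(a_j)\bigr)_i$ is \emph{not} of the form $\rho^{\#}(P)$ for any $P\in\KT$, since the $i$-th coordinate involves derivatives of unbounded order as $i$ grows whereas a twisted polynomial contributes only finitely many derivative orders; so stability under $\rho^{\#}(\KT)$ (Lemma~\ref{ST1}) does not directly give (a)$\Rightarrow$(b), and the explicit estimate is the right route, as in the paper.
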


\begin{proof}
For any $M$, we prove (a)$\Rightarrow$(b). Assume $\langle s,x\rangle\in f_*N_{\pi}$. We set $C:=sup_i |\langle s,x\rangle_i|\pi_i$. Let $i\in \N$. By Lemma \ref{kx} and \ref{explicit} and \ref{inequality}, we have $|\langle c_M(x),s\rangle_i|\pi_i\le |\sum_{i=j+k}\frac{(-1)^j}{k!}\partial^k(\langle s,x\rangle_j)|\pi_i\le max_{i=j+k}|\frac{1}{k!}\partial^k(\langle s,x\rangle_j)|\pi_i\le max_{i=j+k}\frac{1}{r(K,\partial)^k}|\langle s,x\rangle_j|\pi_i\le max_{i=j+k}|\langle s,x\rangle_j|\pi_j\le C$. Hence $sup |\langle c_M(x),s\rangle_i|\pi_i\le C$. Therefore $\langle c_M(x),s\rangle\in f_*N_{\pi}$.

(b)$\Rightarrow$(a): Assume (b) holds. By applying (a)$\Rightarrow$(b) to $DM$, $\langle c_{DM}(s),c_M(x)\rangle\in f_*N_{\pi}$. By Lemma \ref{switch}, $\langle s,x\rangle=\langle c_{DM}(s),c_M(x)\rangle\in f_*N_{\pi}$.

\end{proof}

\begin{corollary}\label{description pi}%[label:description pi]
Let $\pi$ be as above. For $M\in C$, $\Fpip M=\{x\in M;\forall s\in DM,s(x)\in f_*N_{\pi}\}$.
\end{corollary}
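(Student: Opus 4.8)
The plan is to derive Corollary~\ref{description pi} directly from the definition of $\Fp$ (specialized to $M_\lambda = f_*N_\pi$) together with Lemma~\ref{switch pi}. Recall that by definition $\Fpip M = \{x\in M; \forall s\in \H(M,f_{0*}L_0),\ c_M(x)(s)\in f_*N_\pi\}$. Using the pairing notation, $c_M(x)(s) = \langle c_M(x),s\rangle$, so $\Fpip M = \{x\in M; \forall s\in DM,\ \langle c_M(x),s\rangle\in f_*N_\pi\}$, since $DM = \H(M,f_{0*}L_0)$. The target description replaces the condition $\langle c_M(x),s\rangle\in f_*N_\pi$ by $\langle s,x\rangle\in f_*N_\pi$ (writing $s(x)$ for $\langle s,x\rangle$), so the two sets coincide precisely because Lemma~\ref{switch pi} asserts the equivalence of (a) $\langle s,x\rangle\in f_*N_\pi$ and (b) $\langle c_M(x),s\rangle\in f_*N_\pi$ for every $s\in DM$.

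First I would fix $M\in\C$ and unwind the definition of $\Fpip$: an element $x\in M$ lies in $\Fpip M$ if and only if for all $s\in\H(M,f_{0*}L_0)$ we have $c_M(x)(s)\in f_*N_\pi$, which in the bracket notation reads $\langle c_M(x),s\rangle\in f_*N_\pi$ for all $s\in DM$. Next I would invoke Lemma~\ref{switch pi}, which gives, for each individual $s\in DM$, the equivalence $\langle c_M(x),s\rangle\in f_*N_\pi \Leftrightarrow \langle s,x\rangle\in f_*N_\pi$. Quantifying over all $s\in DM$ on both sides, the condition defining $\Fpip M$ becomes ``$\langle s,x\rangle\in f_*N_\pi$ for all $s\in DM$'', i.e. $s(x)\in f_*N_\pi$ for all $s\in DM$. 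This is exactly the asserted description, so the proof closes.

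There is essentially no obstacle here: the corollary is a formal consequence of Lemma~\ref{switch pi} (whose proof in turn rests on Lemma~\ref{switch}, the Hadamard-type estimates of Lemma~\ref{kx}, the explicit formula of Lemma~\ref{explicit}, and the combinatorial inequality of Lemma~\ref{inequality}). The only thing to be slightly careful about is the direction of quantifiers --- one must apply the equivalence in Lemma~\ref{switch pi} \emph{pointwise} in $s$ before taking the universal quantifier --- but since it is a genuine ``if and only if'' for each $s$, the universally quantified statements are equivalent with no loss. I would therefore present the argument in two or three lines.

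\begin{proof}
Fix $M\in\C$ and $x\in M$. By the definition of $\Fpip$ applied to $M_\lambda = f_*N_\pi$, we have $x\in\Fpip M$ if and only if $\langle c_M(x),s\rangle = c_M(x)(s)\in f_*N_\pi$ for every $s\in\H(M,f_{0*}L_0) = DM$. By Lemma~\ref{switch pi}, for each $s\in DM$ the condition $\langle c_M(x),s\rangle\in f_*N_\pi$ is equivalent to $\langle s,x\rangle\in f_*N_\pi$, i.e.\ to $s(x)\in f_*N_\pi$. Hence $x\in\Fpip M$ if and only if $s(x)\in f_*N_\pi$ for all $s\in DM$, which is the asserted description.
\end{proof}
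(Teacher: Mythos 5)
Your proof is correct and takes exactly the approach the paper intends: the paper states Corollary~\ref{description pi} with no proof block, treating it as an immediate consequence of the definition of $\Fp$ (with $M_\lambda = f_{\rK,bd*}N_\pi$) and the pointwise-in-$s$ equivalence of Lemma~\ref{switch pi}, which is precisely what you spell out. Your care about applying the equivalence pointwise before quantifying is appropriate but, as you note, harmless since Lemma~\ref{switch pi} is a genuine biconditional for each $s$.
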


%\input{31pp}%9

%\subsection{Example 2: Subrings of the ring of formal power series}
%\input{44}

\subsection{Example 2: Subrings of the ring of formal power series}

\begin{lemma}\label{ST2}%[label:ST2]
1. For $r\in (0,r(K,\partial)]$, we have $f_{\rK,bd*}N_{\pi(r)}=f_{r,bd*}L_{r,bd}$ in $\C$.

2. Let $r\in (0,r(K,\partial)]$ and $(a_i)\in f_{r,bd*}L_{r,bd}$ (resp. $f_{r*}L_{r},f_{r+*}L_{r+}$). We have $\rhos(x)((a_i))\in f_{r,bd*}L_{r,bd}$ (resp. $f_{r*}L_{r},f_{r+*}L_{r+}$) for $x\in \KT$.
\end{lemma}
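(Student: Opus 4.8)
The plan is to prove both halves of Lemma~\ref{ST2} by reducing everything to Robba's construction (Example~1) and the generation of $\KT$ by $i(K)\cup\{T\}$. For part~1, recall that $\pi(r)=(r^i)$ satisfies condition (C) because $\pi_{i+1}/\pi_i=r$ is constant (hence non-decreasing) and $r\le\rK$. By definition $B_{\pi(r)}=\{(a_i)\in K_+^{\N};\sup_i|a_i|r^i<+\infty\}$, which is literally the underlying abelian group of $\KXrz$. Since $N_{\pi(r)}$ is $B_{\pi(r)}$ viewed as a submodule of $L_0$ and $f_{r,bd*}L_{r,bd}$ is $L(\KXrz,\partial_X)$ pushed forward along $f_{r,bd}$, the two left $\KT$-modules have the same underlying abelian group inside $K_+^{\N}$; one then checks the $\KT$-actions agree, which is immediate from the formulas $\rho(i(c))((a_i))=(\sum_{i=j+k}(\partial^k(c)/k!)a_j)_i$ and $\rho(T)((a_i))=((i+1)a_{i+1})_i$ computed earlier, since both sides are subobjects of $f_{0*}L_0$ with these same restricted operations. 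So part~1 is essentially a matter of unwinding definitions.

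For part~2, the case of $f_{r,bd*}L_{r,bd}$ follows immediately from part~1 together with Lemma~\ref{ST1}: we have $f_{r,bd*}L_{r,bd}=f_{\rK,bd*}N_{\pi(r)}$, and Lemma~\ref{ST1} (with $\pi=\pi(r)$, after noting $f_*N_\pi$ there means $f_{\rK,bd*}N_\pi$) says exactly that $\rhos(x)$ preserves this submodule for all $x\in\KT$. For the $f_{r*}L_r$ case I would use the identity, recorded in the excerpt, that $f_{r*}L_r=\cap_{t\in(0,r)}f_{t,bd*}L_{t,bd}$ as subobjects of $f_{0*}L_0$; since $\rhos(x)$ preserves each $f_{t,bd*}L_{t,bd}$ for $t\in(0,r)\subset(0,\rK]$ by the case just handled, it preserves their intersection. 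Likewise for $f_{r+*}L_{r+}=\cup_{t\in(r,\rK)}f_{t,bd*}L_{t,bd}$: each member of the union is preserved, hence so is the union. In all three cases one only needs to verify stability under $\rhos(i(c))$ for $c\in K$ and under $\rhos(T)$, because $\rhos$ is a ring homomorphism and $\KT$ is generated as a ring by $i(K)\cup\{T\}$; so even a direct argument would be short.

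The only genuinely delicate point is part~1, specifically making sure the two descriptions of the $\KT$-module structure coincide and not merely the underlying sets. Here one must be careful that $f_{r,bd*}L_{r,bd}$ carries the action induced from $L_{r,bd}=L(\KXrz,\partial_X)$ via the ring homomorphism $f_{r,bd}\colon\KT\to\KXrzT$ associated to $g_{r,bd}=g_0|^{\KXrz}$, whereas $N_{\pi(r)}$ carries the action induced from $L_0=L(\KX,\partial_X)$ via $f_{0}$; the compatibility is precisely the statement, implicit in the construction of the $f_{r,bd}$'s, that these actions are the restrictions of a common action on $f_{0*}L_0$, which is why both are submodules of $f_{0*}L_0$ to begin with. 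Once this identification is in place, everything else is formal, so I expect no real obstacle; the main work is bookkeeping of which push-forward and which ambient module each symbol refers to.
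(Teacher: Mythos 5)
Your proof is correct and follows essentially the same approach as the paper: part~1 by identifying both sides as submodules of $f_{0*}L_0$ with the same underlying set $B_{\pi(r)}$, and part~2 by reducing to Lemma~\ref{ST1} via part~1 and then passing to the intersection $f_{r*}L_r=\cap_{t<r}f_{t,bd*}L_{t,bd}$ and union $f_{r+*}L_{r+}=\cup_{t>r}f_{t,bd*}L_{t,bd}$ inside $f_{0*}L_0$. The paper's own proof is terser (it only states that the bounded case "implies the assertion in the rest of the case") but the intersection/union argument you spell out is exactly what is meant.
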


\begin{proof}
1. The both sides are submodules of $f_{0*}L_0\in \C$, whose underlying sets are given by $B_{\pi(r)}$.

2. In the case of $f_{r,bd*}L_{r,bd}$, it follows from part 1 and Lemma \ref{ST1}. This case implies the assertion in the rest of the case.
\end{proof}

%\subsection{stability of filtration under the dualizing map}

\begin{definition}
1. Let $r\in (0,r(K,\partial)]$. We define the endofunctors $\Fzrh,\Fr$ on $\C$ as $\Fm$ and $\Fp$ for $M_{\lambda}=f_{r*}L_r$.

2. Let $r\in (0,r(K,\partial))$. We define the endofunctor $\Fzr$ on $\C$ as $F_-$ for $M_{\lambda}=f_{r+*}L_{r+}$.

3. Let $r\in (0,r(K,\partial))$. We define the endofunctor $\Frr$ on $\C$ as $\Fzr\cap\Fr$.
\end{definition}

\begin{lemma}\label{limit lemma}%[label:limit lemma]
Let $M\in \Cf$. 

\begin{enumerate}
\item For $r\in (0,r(K,\partial)]$, there exists $r_0\in (0,r)$ such that for $t\in [r_0,r)$, the obvious morphism $\H(M,f_{r*}L_r)\to \H(M,f_{t,bd*}L_{t,bd})$ is an isomorphism in $\C$.

\item For $r\in (0,r(K,\partial))$, there exists $r_0\in (r,r(K,\partial))$ such that for $t\in (r,r_0]$, the obvious morphism $\H(M,f_{t*}L_t)\to \H(M,f_{r+*}L_{r+})$ is an isomorphism in $\C$.
\end{enumerate}
\end{lemma}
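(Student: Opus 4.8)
The plan is to reduce both statements to a finite-dimensionality and exhaustion argument. Recall that $f_{r*}L_r = \cap_{t\in(0,r)} f_{t,bd*}L_{t,bd}$ and $f_{r+*}L_{r+} = \cup_{t\in(r,\rK)} f_{t,bd*}L_{t,bd} = \cup_{t\in(r,\rK)} f_{t*}L_t$ as submodules of $f_{0*}L_0$, with the family $\{f_{t,bd*}L_{t,bd}\}_t$ being decreasing in $t$. For part 1, the ``obvious morphism'' $\H(M,f_{r*}L_r)\to\H(M,f_{t,bd*}L_{t,bd})$ is induced by the inclusion $f_{r*}L_r\hookrightarrow f_{t,bd*}L_{t,bd}$; since all these are subobjects of $f_{0*}L_0$ and $\H(M,-)$ is left exact, each such morphism is \emph{injective}. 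So the content is surjectivity for $t$ close enough to $r$. I would argue: $\H(M,f_{r*}L_r) = \H(M,\cap_{t<r}f_{t,bd*}L_{t,bd}) = \cap_{t<r}\H(M,f_{t,bd*}L_{t,bd})$ (left exactness commutes with this intersection of subobjects, taken inside $\H(M,f_{0*}L_0)=DM$). The spaces $\H(M,f_{t,bd*}L_{t,bd})$ form a decreasing chain of subspaces of the finite-dimensional $K$-vector space $i_*DM$ (finite-dimensional since $D$ preserves dimension by Lemma \ref{duality}, and $i_*M$ is finite-dimensional). A decreasing chain of subspaces of a finite-dimensional space stabilizes; pick $r_0\in(0,r)$ such that $\H(M,f_{t,bd*}L_{t,bd})$ is constant for $t\in[r_0,r)$. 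Then for such $t$ this constant value equals the intersection $\cap_{t'<r}\H(M,f_{t',bd*}L_{t',bd}) = \H(M,f_{r*}L_r)$, giving the desired isomorphism.

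For part 2, the roles are dual: $\H(M,f_{r+*}L_{r+}) = \H(M,\cup_{t>r}f_{t*}L_t)$, and I would show this equals $\cup_{t>r}\H(M,f_{t*}L_t)$, an \emph{increasing} union (as $t\downarrow r$) of subspaces of the finite-dimensional $i_*DM$. The point needing care is that $\H(M,-)$ commutes with this directed union of subobjects. Since $M$ is finite-dimensional, a morphism $M\to f_{r+*}L_{r+}$ is determined by the images of a finite generating set (e.g. a $K$-basis of $i_*M$), each of which lies in $f_{0*}L_0$; each image lies in some $f_{t*}L_t$, and by taking $t$ maximal among the finitely many needed (using that the family is totally ordered by $t$), the whole morphism factors through a single $f_{t*}L_t$. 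Hence $\H(M,f_{r+*}L_{r+}) = \cup_{t>r}\H(M,f_{t*}L_t)$. An increasing chain of subspaces of a finite-dimensional vector space stabilizes, so there is $r_0\in(r,\rK)$ with $\H(M,f_{t*}L_t)$ constant for $t\in(r,r_0]$, and that constant equals the union, i.e. $\H(M,f_{r+*}L_{r+})$; the inclusion $f_{t*}L_t\hookrightarrow f_{r+*}L_{r+}$ then induces the claimed isomorphism for such $t$.

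The main obstacle I anticipate is the interchange of $\H(M,-)$ with the intersection (part 1) and with the directed union (part 2) of the subobjects of $f_{0*}L_0$. For the intersection this is the standard fact that for subobjects $N_t$ of a fixed object $N$, a morphism $M\to N$ factors through $\cap_t N_t$ iff it factors through each $N_t$; this is immediate from the universal property, so part 1 is essentially bookkeeping. For the union it is genuinely using finiteness of $M$ together with the chain being totally ordered, so that finitely many generators are simultaneously captured by one member of the family — this is where the hypothesis $M\in\Cf$ is essential, and it is the step I would write out most carefully. Everything else is an elementary stabilization argument for monotone chains of subspaces inside the finite-dimensional vector space $i_*DM$, whose finiteness is guaranteed by the dimension-preservation in Lemma \ref{duality}.
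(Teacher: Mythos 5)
Your proof is correct and follows essentially the same route as the paper's: identify $\H(M,f_{r*}L_r)$ (resp.\ $\H(M,f_{r+*}L_{r+})$) with the limit (resp.\ colimit) of the one-parameter family of $\H$'s — the latter using finiteness of $M$ to pass $\H(M,-)$ through the directed union — and then invoke stabilization of a monotone chain of subspaces in the finite-dimensional $DM$. Your presentation is slightly more concrete, realizing everything as subobjects of $DM$ rather than as abstract (co)limits, but the ideas and the point where $M\in\Cf$ is genuinely used are the same.
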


\begin{proof}
The family $\{\H(M,f_{u,bd*}L_{u,bd})\}_{u\in (0,r)}$ (resp. $\{\H(M,f_{u*}L_u)\}_{u\in [r,r(K,\partial)]}$) of objects in $\C$ forms a projective system (resp. injective system) with respect to the obvious transition morphisms. The obvious morphism $\H(M,f_{r*}L_r)\to lim \{\H(M,f_{u,bd*}L_{u,bd})\}_{u\in (0,r)}$ (resp. $colim \{\H(M,f_{u*}L_u)\}_{u\in [r,r(K,\partial)]}\to \H(M,f_{r+*}L_{r+})$) is an isomorphism since $\H$ commutes with colomit (resp. since $M$ is finite).

Since the transition morphisms of the projective system (resp. injective system) are injective, there exists $r_0\in (0,r)$ (resp. $(r,r(K,\partial)]$) such that the transition morphism of the projective system $\{\H(M,f_{u,bd*}L_{u,bd})\}_{u\in [r_0,r)}$ (resp. the injective system $\{\H(M,f_{u*}L_u)\}_{u\in [r_0,r(K,\partial)]}$) are isomorphisms by $\dim \H(M,f_{u,bd*}L_{u,bd})\le \dim \H(M,f_{0*}L_0)=\dim M$ (resp. $\dim \H(M,f_{u*}L_u)\le \dim \H(M,f_{0*}L_0)=\dim M$). For any $r_0$ with this property, the obvious morphism $\H(M,f_{r*}L_r)\to \H(M,f_{t,bd*}L_{t,bd})$ for $t\in [r_0,r)$ (resp. $\H(M,f_{t*}L_t)\to \H(M,f_{r+*}L_{r+})$ for $t\in [r_0,\rK]$) coincides with the composition $\H(M,f_{r*}L_r)\to lim \{\H(M,f_{u,bd*}L_{u,bd})\}_{u\in (0,r)}\to \H(M,f_{t,bd*}L_{t,bd})$ (resp. $\H(M,f_{t*}L_t)\to colim \{\H(M,f_{u*}L_u)\}_{u\in (r,r(K,\partial)]}\to \H(M,f_{r+*}L_{r+})$, which is an isomorphism.
\end{proof}

\begin{lemma}\label{switch r}%[label:switch r]

\begin{enumerate}
\item Let $r\in (0,r(K,\partial)]$. For $M\in\C,x\in M$ and $s\in DM$, \TFAE.
\begin{enumerate}
\item We have $\langle s,x\rangle \in f_{r,bd*}L_{r,bd}$.
\item We have $\langle c_M(x),s\rangle \in f_{r,bd*}L_{r,bd}$.
\end{enumerate}
\item Let $r\in (0,r(K,\partial)]$. For $M\in\C,x\in M$ and $s\in DM$, \TFAE.
\begin{enumerate}
\item We have $\langle s,x\rangle \in f_{r*}L_r$.
\item We have $\langle c_M(x),s\rangle \in f_{r*}L_r$.
\end{enumerate}
\item Let $r\in (0,r(K,\partial))$. For $M\in\C,x\in M$ and $s\in DM$, \TFAE.
\begin{enumerate}
\item We have $\langle s,x\rangle \in f_{r+*}L_{r+}$.
\item We have $\langle c_M(x),s\rangle \in f_{r+*}L_{r+}$.
\end{enumerate}
\end{enumerate}
\end{lemma}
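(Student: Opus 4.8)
The plan is to prove all three parts in parallel, since each is an instance of the same phenomenon already established in Lemma \ref{switch pi}: the operator that sends $\langle s,x\rangle$ to $\langle c_M(x),s\rangle$ is given, coordinate by coordinate, by the formula $\langle c_M(x),s\rangle_i=\sum_{i=j+k}\frac{(-1)^j}{k!}\partial^k(\langle s,x\rangle_j)$ of Lemma \ref{explicit}, and this operator visibly preserves convergence conditions of the kind defining $f_{r,bd*}L_{r,bd}$, $f_{r*}L_r$, and $f_{r+*}L_{r+}$. So the strategy is: first prove part 1 directly by the Gauss-norm estimate, then deduce parts 2 and 3 from part 1 by the intersection/union descriptions $f_{r*}L_r=\cap_{t\in(0,r)}f_{t,bd*}L_{t,bd}$ and $f_{r+*}L_{r+}=\cup_{t\in(r,\rK)}f_{t,bd*}L_{t,bd}$.

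For part 1, fix $r\in(0,\rK]$ and assume (a), i.e. $C:=|\langle s,x\rangle|_r=\sup_i|\langle s,x\rangle_i|r^i<+\infty$. For each $i$, using Lemma \ref{explicit}(2) and the ultrametric inequality,
$$
|\langle c_M(x),s\rangle_i|r^i\le\max_{i=j+k}\Bigl|\tfrac{1}{k!}\partial^k(\langle s,x\rangle_j)\Bigr|r^i\le\max_{i=j+k}|\partial^k/k!|_{op,K}\,|\langle s,x\rangle_j|\,r^i.
$$
By Lemma \ref{kx}(4) we have $|\partial^k/k!|_{op,K}\le r(K,\partial)^{-k}\le r^{-k}$ (here $r\le\rK$ is used), so the bound becomes $\max_{i=j+k}r^{-k}|\langle s,x\rangle_j|r^{j+k}=\max_{i=j+k}|\langle s,x\rangle_j|r^j\le C$. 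Hence $|\langle c_M(x),s\rangle|_r\le C<+\infty$, which gives (b). The converse (b)$\Rightarrow$(a) follows exactly as in Lemma \ref{switch pi}: apply the implication (a)$\Rightarrow$(b) to $DM$ in place of $M$ (with $c_M(x)$ in place of $x$ and $s$ in place of $s$), obtaining $\langle c_{DM}(s),c_M(x)\rangle\in f_{r,bd*}L_{r,bd}$, and then invoke Lemma \ref{switch}(1), which identifies $\langle s,x\rangle=\langle c_{DM}(s),c_M(x)\rangle$.

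For part 2, note $\langle s,x\rangle\in f_{r*}L_r$ iff $\langle s,x\rangle\in f_{t,bd*}L_{t,bd}$ for all $t\in(0,r)$, by the description $f_{r*}L_r=\cap_{t\in(0,r)}f_{t,bd*}L_{t,bd}$; applying part 1 at each such $t$ (which is legitimate since $t<r\le\rK$) immediately yields the equivalence of (a) and (b). For part 3, similarly $\langle s,x\rangle\in f_{r+*}L_{r+}$ iff $\langle s,x\rangle\in f_{t,bd*}L_{t,bd}$ for some $t\in(r,\rK)$, using $f_{r+*}L_{r+}=\cup_{t\in(r,\rK)}f_{t,bd*}L_{t,bd}$; part 1 applied at that $t$ (again $t<\rK$, so the hypothesis of part 1 is met) gives the equivalence, but one must be slightly careful that the same witness $t$ works on both sides — it does, because the estimate in part 1 shows $\langle s,x\rangle\in f_{t,bd*}L_{t,bd}\Leftrightarrow\langle c_M(x),s\rangle\in f_{t,bd*}L_{t,bd}$ for each fixed $t$, so existence of a witness transfers directly.

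The only genuinely delicate point is the bookkeeping in part 3: since membership in $f_{r+*}L_{r+}$ is an existential statement over radii $t>r$, one cannot apply part 2's "for all $t$" trick, and must instead observe that part 1 is a per-$t$ equivalence so the witnessing radius is preserved. This is a triviality once stated correctly, so I expect no real obstacle; the proof is essentially a transcription of the argument of Lemma \ref{switch pi} combined with the standard intersection/union descriptions of the rings involved.
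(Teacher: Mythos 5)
Your proof is correct and follows essentially the same route as the paper, which simply notes that the assertion is an application of Lemma \ref{switch pi} via the identification $f_{r,bd*}L_{r,bd}=f_{\rK,bd*}N_{\pi(r)}$ from Lemma \ref{ST2} (so part 1 is literally \ref{switch pi} with $\pi=\pi(r)=(r^i)$), and parts 2 and 3 then follow from the intersection/union descriptions. Your explicit Gauss-norm estimate for part 1 just inlines the computation of Lemma \ref{switch pi} specialized to $\pi_i=r^i$, and your remark about the witnessing radius in part 3 is a correct (if slightly over-careful) observation that the per-$t$ equivalence from part 1 transfers through the union.
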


\begin{proof}
As in the proof of Lemma \ref{ST2}, we can prove the assertion as an application of Lemma \ref{switch pi}.
\end{proof}

\begin{corollary}\label{description r}%[label:description r]
For $r\in (0,r(K,\partial)]$ and $M\in \C$, $\Fr M=\{x\in M;\forall s\in DM,s(x)\in f_{r*}L_{r}\}$.
\end{corollary}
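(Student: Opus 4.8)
The plan is to unwind the definition of $\Fr$ and invoke Lemma \ref{switch r}.2; there is essentially nothing more to do. By definition $\Fr$ is the endofunctor $\Fp$ attached to the subobject $M_\lambda=f_{r*}L_r$ of $f_{0*}L_0$, so for $M\in\C$ one has
$$
\Fr M=\{x\in M;\ \forall s\in \H(M,f_{0*}L_0),\ c_M(x)(s)\in f_{r*}L_r\}.
$$
Recalling the notation $\H(M,f_{0*}L_0)=DM$ and $c_M(x)(s)=\langle c_M(x),s\rangle$, this reads $\Fr M=\{x\in M;\ \forall s\in DM,\ \langle c_M(x),s\rangle\in f_{r*}L_r\}$.

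Next I would apply the equivalence (a)$\Leftrightarrow$(b) of Lemma \ref{switch r}.2: for $M\in\C$, $x\in M$, $s\in DM$, one has $\langle s,x\rangle\in f_{r*}L_r$ if and only if $\langle c_M(x),s\rangle\in f_{r*}L_r$. Substituting this into the displayed description and using $s(x)=\langle s,x\rangle$ yields $\Fr M=\{x\in M;\ \forall s\in DM,\ s(x)\in f_{r*}L_r\}$, which is the claim. There is no genuine obstacle: the entire content sits in Lemma \ref{switch r}, exactly as Corollary \ref{description pi} is deduced from Lemma \ref{switch pi}. The only point worth flagging is that the quantifier ranges over all of $DM=\H(M,f_{0*}L_0)$ rather than over $\H(M,f_{r*}L_r)$ — this is precisely why the ``switch'' lemma is invoked here, in contrast with the left-exactness argument (Proposition \ref{dimension formula}.1) available for $\Fm$.
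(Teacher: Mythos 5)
Your proposal is correct and matches the paper's intent exactly: the corollary is stated immediately after Lemma \ref{switch r} with no separate proof precisely because the argument is this mechanical unwinding of the definition of $\Fp$ (with $M_\lambda=f_{r*}L_r$) combined with the equivalence in Lemma \ref{switch r}.2, paralleling how Corollary \ref{description pi} follows from Lemma \ref{switch pi}. Nothing is missing.
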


%\subsection{Calculation of $F_{\pi}$}
%\input{45}%10

\subsection{Calculation of $F_{\pi}$}\label{pi}

Let $\pi$ be as in Example 1 in \S \ref{ex1}. We define the ultrametric function $|\ |_{\pi}:f_{\rK,bd*}N_{\pi}\to\mathbb{R}_{\ge 0};(a_i)\mapsto \sup\{|a_i|\pi_i;i\in\N\}$. The map defines the metric topology on the underlying set of $f_{\rK,bd*}N_{\pi}$. Recall that $f_{\rK,bd*}N_{\pi}$ as a topological space is complete (\cite[2.3.3/4]{bgr}). We define the ultrametric function $||\ ||_{\pi}:\KT\to\mathbb{R}_{\ge 0};(q_i)\mapsto \sup\{|i!q_i|\pi_0/\pi_i;i\in\N\}$. The map defines the metric topology on the underlying set of $\KT$.

\begin{lemma}\label{norm property}%[label:norm property]
\begin{enumerate}
\item For $(q_i)\in\KT,(a_i)\in f_{0*}L_0$, $(q_i)\cdot (a_i)=(\sum_{i=h+k-j,k\ge j}\binom{k}{j}\frac{1}{h!}\partial^h(j!q_j)a_k)_i$.
\item For $h,i,j,k\in\N$ such that $i=h+k-j,k\ge j$, $|\binom{k}{j}\frac{1}{h!}\partial^h(j!q_j)a_k|\pi_i\le |j!q_j|(\pi_0/\pi_j)|a_k|\pi_k\le |j!q_j|(\pi_0/\pi_j)|(a_i)|_{\pi}\le ||(q_i)||_{\pi}|(a_i)|_{\pi}$.
\item For $P\in \KT$, $||P||_{\pi}=0$ if and only if $P=0$
\item For $P,Q\in \KT$, $||P\pm Q||_{\pi}\le \max\{||P||_{\pi},||Q||_{\pi}\}$.
\item For $P\in \KT$, $||P||_{\pi}=sup \{|P\cdot x|_{\pi}/|x|_{\pi};x\in f_{\rK,bd*}N_{\pi},x\neq 0\}$.
\item For $P,Q\in \KT$, $||P\cdot Q||_{\pi}\le ||P||_{\pi}\cdot ||Q||_{\pi}$.
\end{enumerate}
\end{lemma}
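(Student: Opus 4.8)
The plan is to prove the six assertions of Lemma~\ref{norm property} in the order stated, since the later parts rely on the formula in part~1 and the pointwise estimate in part~2.

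\textbf{Part 1.} First I would compute $(q_i)\cdot(a_i)$ directly from the $\KT$-action on $f_{0*}L_0$. Since $\KT$ is generated by $i(K)\cup\{T\}$ and the action is already recorded explicitly ($\rho(i(c))((a_i))=(\sum_{i=j+k}(\partial^k(c)/k!)a_j)$ and $\rho(T)((a_i))=((i+1)a_{i+1})$), one can either expand $(q_i)=\sum_j i(q_j)T^j$ and apply these formulas termwise, or — perhaps more cleanly — write $(q_i)=\sum_j i(j!q_j)\cdot\frac{1}{j!}T^j$ and use that $\frac{1}{j!}T^j$ acts by iterating $T$. Acting by $i(j!q_j)$ produces the $\partial^h(j!q_j)/h!$ factors and acting by $\frac{1}{j!}T^j$ on $(a_i)$ shifts the index and produces the binomial coefficient $\binom{k}{j}$ via the identity $\frac{1}{j!}(k)(k-1)\cdots(k-j+1)=\binom{k}{j}$. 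Reindexing with $i=h+k-j$ gives the stated formula; the constraint $k\ge j$ is automatic since a negative falling factorial vanishes.

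\textbf{Part 2.} This is a direct estimate. The first inequality follows from Lemma~\ref{inequality} applied with these $h,i,j,k$, together with Lemma~\ref{kx}(4) giving $|\partial^h/h!|_{op,K}\le r(K,\partial)^{-h}$, and the ultrametric bound $|\binom{k}{j}|\le 1$. The second inequality is just $|a_k|\pi_k\le |(a_i)|_{\pi}$ by definition of $|\ |_{\pi}$, and the third is $|j!q_j|(\pi_0/\pi_j)\le\|(q_i)\|_{\pi}$ by definition of $\|\ \|_{\pi}$.

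\textbf{Parts 3--6.} Part~3 is immediate: $\|P\|_\pi=0$ forces $|i!q_i|=0$ hence $q_i=0$ for all $i$ (here $\mathrm{char}\,K=0$ so $i!\neq0$). Part~4 follows from the ultrametric inequality on $K$ applied coefficientwise. For part~5, the inequality $\le$ follows by combining part~1 and part~2 (each summand in the formula for $P\cdot x$ has $|\ |_\pi$-norm at most $\|P\|_\pi|x|_\pi$, so the ultrametric bound on the sum gives $|P\cdot x|_\pi\le\|P\|_\pi|x|_\pi$); for the reverse inequality one exhibits, for each $j$ with $q_j\neq0$, a test element $x$ achieving the bound $|j!q_j|(\pi_0/\pi_j)$ — the natural candidate is $x=(\delta_{i,j})_i$, the standard basis vector, which lies in $f_{\rK,bd*}N_\pi$ and for which $P\cdot x$ has a controlled leading behavior; I expect $|P\cdot x|_\pi$ to equal $|j!q_j|(\pi_0/\pi_j)$ after checking that the lower-index contributions from $\partial^h(j!q_j)$ with $h\ge1$ do not dominate, using condition~(C). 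Finally part~6 is the submultiplicativity of an operator norm: once part~5 identifies $\|\cdot\|_\pi$ with the operator norm of left multiplication on $(f_{\rK,bd*}N_\pi,|\ |_\pi)$, submultiplicativity is formal, $\|PQ\|_\pi=\sup|PQx|_\pi/|x|_\pi\le\sup\|P\|_\pi|Qx|_\pi/|x|_\pi\le\|P\|_\pi\|Q\|_\pi$.

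The main obstacle is the reverse inequality in part~5: one must verify that the candidate test vector genuinely realizes the supremum, i.e.\ that no cancellation or lower-order term spoils the bottom-degree coefficient. This requires a careful look at which $(h,k)$ pairs contribute to a fixed output index and an application of Lemma~\ref{inequality} to see that the $h=0,k=j$ term is $|\ |_\pi$-dominant; all the other parts are routine ultrametric bookkeeping.
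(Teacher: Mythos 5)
Your proof is correct and follows essentially the same route as the paper's (decompose $P=\sum_j i(q_j)T^j$, apply the explicit $\KT$-action and reindex to get part 1, estimate term by term with Lemma~\ref{inequality} and Lemma~\ref{kx}(4) for part 2, test against standard basis vectors for the reverse inequality in part 5, and deduce part 6 from the operator-norm description). The one step you leave tentative --- the reverse inequality in part 5 --- is in fact simpler than you anticipate: taking $x=(\delta_{i,j_0})_i$ with $j_0$ realizing the supremum defining $\|P\|_\pi$, the degree-$0$ coefficient of $P\cdot x$ is \emph{exactly} $j_0!q_{j_0}$ (only $(h,j,k)=(0,j_0,j_0)$ produces output index $0$ in the formula of part 1), so $|P\cdot x|_\pi\ge|j_0!q_{j_0}|\pi_0=\|P\|_\pi\,|x|_\pi$ at once, with no need to rule out interference from the $h\ge 1$ terms (which land at other indices).
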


\begin{proof}
1. We have $(q_i)\cdot (a_i)=\sum_{j=0}^{+\infty}q_jT^0\cdot (j!\binom{i+j}{j}a_{i+j})_i=\sum_{j=0}^{+\infty}(\sum_{i=h-j+k,k\ge j}\frac{1}{h!}\partial^h(q_j)\cdot j!\binom{k}{j}a_k)_i=(\sum_{i=h-j+k,k\ge j}\frac{1}{h!}\partial^h(q_j)\cdot j!\binom{k}{j}a_k)_i=(\sum_{i=h-j+k,k\ge j}\binom{k}{j}\frac{1}{h!}\partial^h(j!q_j)a_k)_i$.

2. By Lemma \ref{kx} and \ref{inequality}, $|\binom{k}{j}\frac{1}{h!}\partial^h(j!q_j)a_k|\pi_i\le |\binom{k}{j}||\frac{1}{h!}\partial^h(j!q_j)||a_k|\pi_i\le |\frac{1}{h!}\partial^h(j!q_j)||a_k|\pi_i\le |j!q_j||a_k|\pi_i/r(K,\partial)^h=|j!q_j|(\pi_0/\pi_j)|a_k|\pi_k(\pi_i\pi_j)/(\pi_0\pi_k r(K,\partial)^h)\le |j!q_j|(\pi_0/\pi_j)|a_k|\pi_k$. The rest of assertion is obvious.

3,4. It is obvious.

5. Fix $(q_i)_i\in \KT$. For any $(a_i)\in f_{0*}L_0$, we have $|(q_i)\cdot (a_i)|_{\pi}\le ||(q_i)||_{\pi}|(a_i)|_{\pi}$ by parts $1$ and $2$. Let $j_0$ denote the minimum $j$ such that $||(q_i)||_{\pi}=|j!q_j|\pi_0/\pi_j$. Let $(a_i)$ denote the $j_0$-th fundamental vector in $f_{0*}L_0$. Then $||(q_i)||_{\pi}|(a_i)|_{\pi}=|j_0!q_{j_0}|\pi_0$. By part 3, $(q_i)\in\KT,(a_i)\in f_{0*}L_0$, $(q_i)\cdot (a_i)=(\sum_{i=h+j_0-j,j_0\ge j}\binom{j_0}{j}\frac{1}{h!}\partial^h(j!q_j)a_{j_0})_i$. By part 1, $|(\sum_{i=h+j_0-j,j_0>j}\binom{j_0}{j}\frac{1}{h!}\partial^h(j!q_j)a_{j_0})_i|_{\pi}<|j_0!q_{j_0}|\pi_0$. By $|(\binom{j_0}{j_0}\frac{1}{i!}\partial^i(j!q_j)a_{j_0})_i|_{\pi}\ge |\binom{j_0}{j_0}\frac{1}{0!}\partial^0(j!q_j)a_{j_0}|\pi_0=|j_0!q_{j_0}|\pi_0$, $|(q_i)\cdot (a_i)|_{\pi}\ge ||(q_i)||_{\pi}|(a_i)|_{\pi}=|j_0!q_{j_0}|\pi_0$.

6. We may assume there exists $x\in f_{\rK,bd*}N_{\pi}$ such that $Q\cdot x\neq 0$; otherwise, $||P\cdot Q||_{\pi}=0$ by part 5. By part 5, $||P\cdot Q||_{\pi}\le sup \{|(P\cdot Q)\cdot x|_{\pi}/|x|_{\pi};x\in N_{\pi},x\neq 0\}=sup \{|(P\cdot Q)\cdot x|_{\pi}/|Q\cdot x|_{\pi};x\in f_{\rK,bd*}N_{\pi},Q\cdot x\neq 0\}sup \{|Q\cdot x|_{\pi}/|x|_{\pi};x\in f_{\rK,bd*}N_{\pi},Q\cdot x\neq 0\}\le sup \{|P\cdot x|_{\pi}/|x|_{\pi};x\in f_{\rK,bd*}N_{\pi},x\neq 0\} sup \{|Q\cdot x|_{\pi}/|x|_{\pi};x\in f_{\rK,bd*}N_{\pi},x\neq 0\}=||P||_{\pi}||Q||_{\pi}$.
\end{proof}

\begin{corollary}\label{ideal}%[label:ideal]
Let $I$ be a left ideal of $\KT$. The closure $cl_{\pi}I$ of $I$ with respcet to the topology as above is a left ideal of $\KT$.
\end{corollary}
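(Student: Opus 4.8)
The statement is that for a left ideal $I$ of $\KT$, its closure $\cl I$ with respect to the topology defined by $||\ ||_{\pi}$ is again a left ideal. The plan is to verify the two defining properties of a left ideal — closure under addition and closure under left multiplication by arbitrary elements of $\KT$ — using continuity of the relevant operations, which is exactly what Lemma \ref{norm property} provides.

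First I would recall the standard topological fact that, in any topological group (here the additive group of $\KT$ with the metric topology from $||\ ||_{\pi}$, which is an ultrametric by parts 3 and 4 of Lemma \ref{norm property}), the closure of a subgroup is a subgroup: if $x,y\in\cl I$, pick sequences $x_n,y_n\in I$ with $x_n\to x$, $y_n\to y$; then $x_n\pm y_n\in I$ and $x_n\pm y_n\to x\pm y$ by continuity of addition and negation (which in the ultrametric setting is immediate from $||(x_n\pm y_n)-(x\pm y)||_{\pi}\le\max\{||x_n-x||_{\pi},||y_n-y||_{\pi}\}$ using part 4). Hence $x\pm y\in\cl I$, so $\cl I$ is an additive subgroup of $\KT$.

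Next, for the left-ideal property, fix $P\in\KT$ and consider the left-multiplication map $\mu_P:\KT\to\KT$, $Q\mapsto P\cdot Q$. By part 6 of Lemma \ref{norm property} we have $||P\cdot Q||_{\pi}\le||P||_{\pi}\,||Q||_{\pi}$, and since multiplication is bilinear, $||P\cdot Q - P\cdot Q'||_{\pi}=||P\cdot(Q-Q')||_{\pi}\le||P||_{\pi}\,||Q-Q'||_{\pi}$; thus $\mu_P$ is (Lipschitz) continuous. Now take $x\in\cl I$ and a sequence $x_n\in I$ with $x_n\to x$; since $I$ is a left ideal, $P\cdot x_n\in I$, and $P\cdot x_n\to P\cdot x$ by continuity of $\mu_P$, so $P\cdot x\in\cl I$. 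Combining this with the additive-subgroup property, $\cl I$ is a left ideal of $\KT$.

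I do not anticipate a genuine obstacle here: the entire content is packaged in Lemma \ref{norm property} (that $||\ ||_{\pi}$ is a submultiplicative ultrametric ring norm compatible with the left $\KT$-action), and the corollary is the routine observation that the closure of an ideal with respect to a ring topology is an ideal. The only point requiring the slightest care is making sure one uses the submultiplicativity (part 6) rather than only part 5, so that left multiplication by a \emph{fixed} $P$ is continuous as a map $\KT\to\KT$; part 5 alone concerns the action on $f_{\rK,bd*}N_{\pi}$ and is not what is needed for this statement.
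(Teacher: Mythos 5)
Your proof is correct and follows the same high-level approach as the paper: show that the topological closure inherits the ideal structure from continuity of the ring operations, using the ultrametric inequality (part 4 of Lemma \ref{norm property}) for the additive part and submultiplicativity (part 6) for the multiplicative part. In fact your write-up is more careful than the paper's brief sketch: the paper takes $Q,R\in\cl I$ and argues $Q_iR_i\to QR$, which only shows closure under \emph{internal} products, and cites parts 4, 5 of the lemma, whereas for the left-ideal property one really needs a \emph{fixed arbitrary} $P\in\KT$ and a sequence in $I$ converging to $R\in\cl I$, with continuity of $Q\mapsto P\cdot Q$ coming from part 6 — exactly as you point out.
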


\begin{proof}
We choose a generator $P$ of $I$. Let $Q,R\in cl_{\pi}I$. Then there exist sequences $(Q_i),(R_i)$ converging to $Q,R$ respectively. Then the sequences $(Q_i-R_i),(Q_iR_i)$ converge to $Q-R,QR$ by parts 4,5 of Lemma \ref{norm property}.
\end{proof}

\begin{lemma}\label{norm}%[label:norm]
Let $I$ be a left ideal of $\KT$. We define the ultrametric function $||\ ||_{\pi,I}:\KT/I\to \R_{\ge 0}$ by $||P+I||_{\pi,I}=\inf \{||P+R||_{\pi};R\in I\}$. For $P\in \KT,x\in \KT/I$, $||P\cdot x||_{\pi,I}\le ||P||_{\pi}|x|_{\pi,I}$.
\end{lemma}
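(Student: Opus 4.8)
The plan is to recognise $\|\cdot\|_{\pi,I}$ as the standard quotient ultrametric attached to the submultiplicative norm $\|\cdot\|_\pi$ on $\KT$, and to extract the stated inequality from Lemma \ref{norm property}(6) together with the defining property of a left ideal.

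First I would note that $\|P+I\|_{\pi,I}$ depends only on the coset $P+I$, since the infimum runs over all of $P+I$, so the function is well defined on $\KT/I$; that it is an ultrametric function follows from Lemma \ref{norm property}(3),(4), as $\|0+I\|_{\pi,I}=0$ (take the representative $0\in I$) and, for $R,S\in I$, $\|(P-Q)+(R-S)\|_\pi\le\max\{\|P+R\|_\pi,\|Q+S\|_\pi\}$ with $R-S\in I$, whence on taking infima $\|(P+I)-(Q+I)\|_{\pi,I}\le\max\{\|P+I\|_{\pi,I},\|Q+I\|_{\pi,I}\}$.

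For the submultiplicativity statement, fix $P\in\KT$ and $x\in\KT/I$, choose a representative $Q\in\KT$ of $x$, so that $P\cdot x=PQ+I$. Since $I$ is a \emph{left} ideal, $PR\in I$ for all $R\in I$, hence $\{PQ+PR;R\in I\}\subset PQ+I$ and therefore
\[
\|P\cdot x\|_{\pi,I}=\inf_{R'\in I}\|PQ+R'\|_\pi\le\inf_{R\in I}\|PQ+PR\|_\pi=\inf_{R\in I}\|P(Q+R)\|_\pi.
\]
Applying Lemma \ref{norm property}(6) to each term, $\|P(Q+R)\|_\pi\le\|P\|_\pi\|Q+R\|_\pi$, so the last infimum is at most $\|P\|_\pi\inf_{R\in I}\|Q+R\|_\pi=\|P\|_\pi\|x\|_{\pi,I}$, which is the claim.

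The argument is essentially formal; the only step that genuinely uses a hypothesis is the inclusion $\{PR;R\in I\}\subset I$, i.e. that $I$ is stable under left multiplication, which is exactly what lets the infimum over the translate $PQ+PI$ bound the infimum over the full coset $PQ+I$. No closedness or completeness of $I$ is needed, and the submultiplicativity of $\|\cdot\|_\pi$ (Lemma \ref{norm property}(6)) supplies everything else.
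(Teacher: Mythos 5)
Your proof is correct and follows essentially the same route as the paper: both identify the set of representatives of $P\cdot x$ as the coset $PQ+I$, use the left-ideal property to restrict to representatives of the form $P(Q+R)$ with $R\in I$, and then invoke the submultiplicativity of $\|\cdot\|_\pi$ from Lemma \ref{norm property}(6). The paper writes the chain of inequalities more compactly, but the content is identical.
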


\begin{proof}
By Lemma \ref{norm property}, we have $||P\cdot x||_{\pi,I}=\inf \{||R||_{\pi};R\in\KT,R+I=P\cdot x\}\le \inf \{||P\cdot Q||_{\pi};Q\in\KT,Q+I=x\}\le \inf \{||P||_{\pi}||Q||_{\pi};Q\in\KT,Q+I=x\}=||P||_{\pi}||x||_{\pi,I}$.
\end{proof}

\begin{lemma}\label{BGR}%[label:BGR]
Let $I$ be a non-zero left ideal of $\KT$, $||\ ||_{\pi,I}:\KT/I\to\R_{\ge 0}$ as above. Assume $cl_{\pi}I=I$. For an arbitrary morphism $\chi:i_*(\KT/I)\to K$ in $Mod(K)$, there exists $C$ such that $|\chi(x)|\le C||x||_{\pi,I}$ for $x\in \KT/I$.
\end{lemma}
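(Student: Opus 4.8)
The plan is to reduce the statement to the standard fact that any two norms on a finite-dimensional vector space over a complete non-archimedean field with non-trivial valuation are equivalent (see, e.g., \cite[2.3.3/5]{bgr}). The only delicate point, and the one place where the hypothesis $cl_{\pi}I=I$ is genuinely needed, is to check that $||\ ||_{\pi,I}$ is an honest norm rather than merely a seminorm; I expect this to be the real content of the argument, everything else being routine.

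First I would dispose of the trivial case $I=\KT$, where $\KT/I=0$ and there is nothing to prove. For $I$ a non-zero proper left ideal, Lemma \ref{cyclic} shows that $i_{*}(\KT/I)$ is a finite-dimensional $K$-vector space, say of dimension $n\ge 1$. Fix a $K$-basis $e_{1},\dots,e_{n}$ of $i_{*}(\KT/I)$ and let $||\ ||_{\max}$ denote the associated coordinate norm, $||\sum_{j}c_{j}e_{j}||_{\max}=\max_{j}|c_{j}|$, which is visibly a $K$-vector space norm.

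Next I would verify that the ultrametric function $||\ ||_{\pi,I}$ of Lemma \ref{norm} is in fact a $K$-vector space norm on $\KT/I$. Directly from the definition of $||\ ||_{\pi}$ one reads off $||i(c)||_{\pi}=|c|$ for $c\in K$, and then Lemma \ref{norm} applied to $P=i(c)$ and to $P=i(c^{-1})$ yields $||i(c)\cdot x||_{\pi,I}=|c|\,||x||_{\pi,I}$ for $c\in K^{\times}$ and $x\in\KT/I$; together with the ultrametric inequality already recorded in Lemma \ref{norm}, this makes $||\ ||_{\pi,I}$ a $K$-seminorm. It is definite: if $||\overline{P}||_{\pi,I}=\inf\{||P+R||_{\pi};R\in I\}=0$, then $P$ lies in the $||\ ||_{\pi}$-closure $cl_{\pi}I$ of $I$, which equals $I$ by hypothesis, so $\overline{P}=0$. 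Hence $||\ ||_{\pi,I}$ is a genuine norm on the finite-dimensional $K$-vector space $\KT/I$.

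Finally, by \cite[2.3.3/5]{bgr} the norms $||\ ||_{\pi,I}$ and $||\ ||_{\max}$ on $\KT/I$ are equivalent, so there is a constant $C_{0}>0$ with $||x||_{\max}\le C_{0}||x||_{\pi,I}$ for all $x\in\KT/I$. Writing $x=\sum_{j}c_{j}e_{j}$ and using the $K$-linearity of $\chi$, we get $|\chi(x)|\le\bigl(\max_{j}|c_{j}|\bigr)\bigl(\max_{j}|\chi(e_{j})|\bigr)=||x||_{\max}\cdot\max_{j}|\chi(e_{j})|\le C_{0}\bigl(\max_{j}|\chi(e_{j})|\bigr)||x||_{\pi,I}$, so $C:=C_{0}\max_{j}|\chi(e_{j})|$ has the required property. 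The main obstacle, as noted, is purely the definiteness of $||\ ||_{\pi,I}$: for a non-closed $I$ the conclusion is false, since a $K$-linear functional can be non-zero on a non-zero class of seminorm zero.
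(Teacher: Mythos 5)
Your proof is correct and follows essentially the same route as the paper's: both reduce to the equivalence of norms on the finite-dimensional $K$-vector space $i_*(\KT/I)$ over the complete field $K$ (BGR 2.3.3), applied to $||\ ||_{\pi,I}$ and a coordinate norm relative to a fixed basis. If anything, your write-up is slightly more careful, since you explicitly verify the $K$-homogeneity and, crucially, the definiteness of $||\ ||_{\pi,I}$ from the hypothesis $\cl I=I$, points the paper leaves implicit by referring to $||\ ||_{\pi,I}$ merely as a semi-norm before invoking norm equivalence.
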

\begin{proof}
We regard $||\ ||_{\pi},||\ ||_{\pi,I}$ as ultrametric functions on $i_*\KT,i_*(\KT/I)$ respectively. Then $||\ ||_{\pi}$ is a norm on $i_*\KT$ by definition and $||\ ||_{\pi,I}$ is a semi-norm on $i_*(\KT/I)$ as in the previous lemma. 
%We may regard $||\ ||_{\pi}$ as a norm on $i_*\KT$ by definition. Then $||\ ||_{\pi,I}$ is regarded as the quotient semi-norm on $i_*(\KT/I)=i_*\KT/i_*I$. Then $||\ ||_{\pi,I}$ is a norm by assumption. 
Since $K$ is complete, $K$ is weakly cartesian (\cite{bgr}). Since $i_*(\KT/I)$ is of finite dimension, $||\ ||_{\pi,I}$ is equivalent to any norm on $i_*(\KT/I)$. We choose a basis $e_1,\dots,e_m$ of $i_*(\KT/I)$, and define the norm $|\ |'$ on $i_*(\KT/I)$ by $|c_1e_1+\dots+c_me_m|'=max_i|c_i|$. Then there exists $C'\in \R$ such that $|\ |'\le C'||\ ||_{\pi,I}$. We set $C:=C' max |\chi(e_i)|$. For $x\in i_*(\KT/I)$, we write $x=c_1e_1+\dots+c_me_m$. Then $|\chi(x)|=|\chi(c_1e_1+\dots+c_me_m)|=|c_1\chi(e_1)+\dots+c_m\chi(e_m)|\le max_i|c_i||\chi(e_i)|\le \max\{|c_i|\}\max\{|\chi(e_i)|\}=|x|' max\{|\chi(e_i)|\}\le C'||x||_{\pi,I}\max\{|\chi(e_i)|\}\le C||x||_{\pi,I}$.
\end{proof}

\begin{lemma}\label{closed}%[label:closed]
Let $I$ be a left ideal of $\KT$. Assume $cl_{\pi}I=I$. Then $dim \H(\KT/I,f_{\rK,bd*}N_{\pi})=dim \KT/I$.
\end{lemma}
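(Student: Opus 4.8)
The plan is to exhibit a $K$-linear isomorphism between $\H(\KT/I, f_{\rK,bd*}N_{\pi})$ and $\H_K(i_*(\KT/I), K)$, so that the dimension count follows from $\dim_K i_*(\KT/I) = \dim \KT/I$, which is finite by Lemma~\ref{cyclic}(3) since $I$ is a nonzero left ideal, hence principal with a monic generator. One direction is essentially the map $\Psi$ from Lemma~\ref{dual lemma}(4): given $\chi \in \H_K(i_*(\KT/I), K)$, the formula $x \mapsto (\chi(\tfrac{1}{i!}T^i \cdot x))_i$ produces an element of $\H(\KT/I, f_{0*}L_0)$, and I must check that its image actually lands in the submodule $f_{\rK,bd*}N_{\pi} = B_{\pi}$. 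This is where Lemma~\ref{BGR} enters: it gives a constant $C$ with $|\chi(x)| \le C\|x\|_{\pi,I}$, and combined with Lemma~\ref{norm} ($\|\tfrac{1}{i!}T^i \cdot x\|_{\pi,I} \le \|\tfrac{1}{i!}T^i\|_\pi \|x\|_{\pi,I}$) and the bound $\|\tfrac{1}{i!}T^i\|_\pi = \pi_0/\pi_i$ coming straight from the definition of $\|\ \|_\pi$, one gets $|\chi(\tfrac{1}{i!}T^i \cdot x)| \pi_i \le C \pi_0 \|x\|_{\pi,I}$, uniformly in $i$. Hence the sequence lies in $B_\pi$, and by Lemma~\ref{stable lemma} (or Lemma~\ref{ST1}) it is a $\KT$-morphism into $f_{\rK,bd*}N_\pi$.

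**Key steps in order.** First, reduce to finite dimension: $I \neq 0$ is principal, generated by a monic $P = T^i + \sum_{j<i} q_j T^j$, so $\dim_K i_*(\KT/I) = i < +\infty$ by Lemma~\ref{cyclic}(3), and $\dim \KT/I$ (as a length) equals this by Lemma~\ref{cyclic}(4). Second, recall that $\Phi$ and $\Psi$ of Lemma~\ref{abelian natural isom} give mutually inverse isomorphisms $\H(\KT/I, f_{0*}L_0) \xrightarrow{\sim} \H_K(i_*(\KT/I), K)$, so $\dim \H(\KT/I, f_{0*}L_0) = \dim_K i_*(\KT/I) = \dim \KT/I$. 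Third, show that under $\Psi$ the subspace $\H_K(i_*(\KT/I), K)$ corresponds exactly to $\H(\KT/I, f_{\rK,bd*}N_\pi) \subset \H(\KT/I, f_{0*}L_0)$: the inclusion $\Psi(\H_K(\dots)) \subseteq \H(\KT/I, f_{\rK,bd*}N_\pi)$ is the estimate above using Lemmas~\ref{BGR}, \ref{norm}, \ref{norm property}; the reverse inclusion is automatic since any morphism $\KT/I \to f_{\rK,bd*}N_\pi$ composes with $f_{\rK,bd*}N_\pi \hookrightarrow f_{0*}L_0$ to give an element of $\H(\KT/I, f_{0*}L_0)$, and applying $\Phi$ (which is just post-composition with $\theta:(a_i) \mapsto a_0$) lands it in $\H_K(i_*(\KT/I), K)$ — and $\Psi$ recovers it. Finally, conclude $\dim \H(\KT/I, f_{\rK,bd*}N_\pi) = \dim \KT/I$.

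**Main obstacle.** The crux is verifying that $\Psi_{\KT/I}$ restricted to $\H_K(i_*(\KT/I),K)$ is a \emph{surjection} onto $\H(\KT/I, f_{\rK,bd*}N_\pi)$, i.e.\ that the correspondence is not merely an injection of spaces but an equality — equivalently, that every $\KT$-morphism $\KT/I \to f_{\rK,bd*}N_\pi$ arises from a $K$-functional via the $\Psi$-formula. This follows formally once one knows $\Phi$ and $\Psi$ are inverse on the ambient $\H(-, f_{0*}L_0)$ and that the composite $\H(\KT/I, f_{\rK,bd*}N_\pi) \hookrightarrow \H(\KT/I, f_{0*}L_0) \xrightarrow{\Phi} \H_K(i_*(\KT/I), K) \xrightarrow{\Psi} \H(\KT/I, f_{0*}L_0)$ is the identity and has image back inside $\H(\KT/I, f_{\rK,bd*}N_\pi)$ by the estimate. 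So the only genuinely analytic input is the growth bound, and the hypothesis $cl_\pi I = I$ is used precisely to make $\|\ \|_{\pi,I}$ a norm (not merely a seminorm) on the finite-dimensional space $i_*(\KT/I)$, so that Lemma~\ref{BGR}'s boundedness of $\chi$ against it is meaningful and applies to \emph{every} $\chi$.
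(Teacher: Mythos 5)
Your proof is correct and takes essentially the same approach as the paper: both go through $\Phi$/$\Psi$ to reduce to a $K$-linear functional $\chi$, apply Lemma~\ref{BGR} to bound $|\chi|$ against $\|\cdot\|_{\pi,I}$, and then use $\|\frac{1}{i!}T^i\|_\pi = \pi_0/\pi_i$ together with Lemma~\ref{norm} to see the resulting sequence lies in $B_\pi$, from which the dimension equality follows via $\dim\H(\KT/I,f_{0*}L_0)=\dim\KT/I$. Your ``reverse inclusion'' paragraph is a harmless redundancy — once $\Psi(\H_K(i_*(\KT/I),K))\subseteq\H(\KT/I,f_{\rK,bd*}N_\pi)$ is known and $\Psi$ is a bijection onto the ambient $\H(\KT/I,f_{0*}L_0)$, the equality of the two subspaces is immediate.
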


\begin{proof}
We set $n=dim \KT/I$. 
%We endow the underlying abelian group of $\KT/I$ with the topology defined by the quotient semi-norm $||\ ||_{\pi,I}$ in a way as above. 
We prove that for $s\in\H(\KT/I,f_{0*}L_0)$ and $x\in\KT/I$, $s(x)\in f_{\rK,bd*}N_{\pi}$. We apply Lemma \ref{BGR} to the morphism $\chi:=\Phi_{\KT/I}(s)\in \H(i_*(\KT/I),K)$ in $Mod(K)$. Then there exists $C$ such that $|\chi(x)|\le C|x|_{\pi,I}$ for $x\in \KT/I$. Let $x\in \KT/I$. We have $s(x)=\Psi_{\KT/I}(\chi)(x)=(\chi(\frac{T^i}{i!}\cdot x))_i$. For $i\in \N$, $|\chi(\frac{1}{i!}T^i\cdot x)|\pi_i\le C ||\frac{1}{i!}T^i\cdot x||_{\pi,I}\pi_i\le C ||\frac{1}{i!}T^i||_{\pi}|x|_{\pi,I}\pi_i=C (\pi_0/\pi_i) |x|_{\pi,I}\pi_i=C \pi_0 |x|_{\pi,I}$ by Lemma \ref{norm property}. Hence $sup_i |\chi(\frac{T^i}{i!}\cdot x)|\pi_i\le C \pi_0 |x|_{\pi,I}<+\infty$. Therefore $s(x)\in f_{r*}N_{\pi}$. Hence the obvious injection $\H(\KT/I,f_{\rK,bd*}N_{\pi})\to \H(\KT/I,f_{0*}L_0)$ is a surjection. Therefore $dim \H(\KT/I,f_{\rK,bd*}N_{\pi})=dim \H(\KT/I,f_{0*}L_0)=dim \KT/I$ by Lemma \ref{duality}.
\end{proof}

\begin{proposition}\label{Fpi}%[label:Fpi]
Let $I$ be a non-zero left ideal of $\KT$, $cl_{\pi}I$ as in Corollary \ref{ideal}. We regard $F_{\pi}(\KT/I),cl_{\pi}I/I$ as subobjects of $\KT/I$ in an obvious way. Then $F_{\pi}(\KT/I)=cl_{\pi}I/I$ as subobjects of $\KT/I$.
\end{proposition}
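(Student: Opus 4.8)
The plan is to prove the two inclusions $\cl I/I \subset F_\pi(\KT/I)$ and $F_\pi(\KT/I) \subset \cl I/I$ separately, using the explicit description of $\Fpip$ from Corollary \ref{description pi} together with the norm estimates of Lemma \ref{norm property}, Lemma \ref{norm}, and the finiteness/closedness input of Lemma \ref{closed} and Lemma \ref{BGR}. Throughout I will work with $M = \KT/I$, which lies in $\Cf$ since $i_*M$ is finite dimensional by Lemma \ref{cyclic}, and I will identify $DM$-elements with morphisms $M \to f_{0*}L_0$ and use the pairing notation $\langle s,x\rangle$.

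\emph{First inclusion.} Let $P \in \cl I$ and let $\bar P = P+I \in M$; I want $\bar P \in \Fpip M$, i.e. by Corollary \ref{description pi} that $s(\bar P) \in f_{\rK,bd*}N_\pi$ for every $s \in DM$. Since $P \in \cl I$, there is a sequence $P_n \in I$ with $\|P_n - P\|_\pi \to 0$, so $\bar P = \lim (P_n + I)$ in the $\|\ \|_{\pi,I}$-topology on $M$ (using Lemma \ref{norm} to see the quotient seminorm is compatible), and in fact $\bar P$ is a limit of $0$ since $P_n + I = I = 0$ in $M$ — more precisely $\|\bar P\|_{\pi,I} = \inf_{R\in I}\|P+R\|_\pi = 0$. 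Now for $s \in DM$, the argument in the proof of Lemma \ref{closed} shows $s(x) \in f_{\rK,bd*}N_\pi$ for all $x$, but more sharply: applying Lemma \ref{BGR} to $\chi = \Phi_M(s)$ gives $C$ with $|\chi(x)| \le C\|x\|_{\pi,I}$, hence $|\langle s,\bar P\rangle_i|\pi_i = |\chi(\tfrac{1}{i!}T^i\cdot \bar P)|\pi_i \le C\,\|\tfrac{1}{i!}T^i\|_\pi\,\|\bar P\|_{\pi,I}\,\pi_i = C\pi_0\|\bar P\|_{\pi,I} = 0$. So $s(\bar P) = 0 \in f_{\rK,bd*}N_\pi$, giving $\bar P \in \Fpip M$. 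Here I need $\cl_\pi I = I$ is \emph{not} assumed, but $\cl I$ is still a left ideal by Corollary \ref{ideal}, and $\|\ \|_{\pi,I}$ vanishes exactly on $\cl I/I$; this is the step to be careful about — I should verify $\|P+I\|_{\pi,I}=0 \iff P \in \cl I$ directly from the definition of closure.

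\emph{Second inclusion.} Conversely let $\bar P \in \Fpip M$; I want $P \in \cl I$, equivalently $\|\bar P\|_{\pi,I} = 0$. Consider the closed ideal $\cl I$ (Corollary \ref{ideal}); replacing $I$ by $\cl I$, note $M' := \KT/\cl I$ satisfies $\cl_\pi(\cl I) = \cl I$ and there is a surjection $q : M \to M'$. By Lemma \ref{closed} applied to $\cl I$, $\dim \H(M', f_{\rK,bd*}N_\pi) = \dim M'$, so by Proposition \ref{dimension formula}(5) (with $M_\lambda = f_{\rK,bd*}N_\pi$, i.e. the functor $\Fpip$) we get $\dim \Fpip M' = \dim \H(D M', f_{\rK,bd*}N_\pi)$, and a dimension count using Lemma \ref{duality} (which says $D$ preserves dimension and $\H(DM',f_{\rK,bd*}N_\pi) \hookrightarrow \H(DM',f_{0*}L_0)= DDM'$ has full dimension $\dim M'$) forces $\Fpip M' = M'$, i.e. every element of $M'$ is in $\Fpip$. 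That alone does not immediately locate $\Fpip M$ inside $M$; instead I use that $\Fpip$ is a subfunctor of $\mathrm{id}$ compatible with the surjection $q$, so $q(\Fpip M) \subset \Fpip M' = M'$ carries no information, and I should argue the other way: for $s \in DM$ with $s$ killing $q$ — equivalently $s$ factoring through... — hmm. The cleaner route: show directly that if $\bar P \in \Fpip M$ then for every $\chi \in \H(i_*M, K)$ with $\chi(\cl I/I) = 0$ one has... Actually the decisive observation is that $\|\bar P\|_{\pi,I} = \sup\{ |\chi(\bar P)| : \chi \in \H(i_*M,K),\ |\chi(x)|\le \|x\|_{\pi,I}\ \forall x\}$ by duality of finite-dimensional normed spaces over the (weakly cartesian) field $K$; each such $\chi$ gives via $\Psi_M$ an $s \in DM$ with $\langle s,\bar P\rangle_0 = \chi(\bar P)$ and $\langle s,\bar P\rangle \in f_{\rK,bd*}N_\pi$ (by $\bar P \in \Fpip M$ and Corollary \ref{description pi}), whence $|\chi(\bar P)|\pi_0 = |\langle s,\bar P\rangle_0|\pi_0 \le |\langle s,\bar P\rangle|_\pi < \infty$; and scaling $\chi$ shows this sup is finite only if $\bar P$ lies in the closed subspace where $\|\ \|_{\pi,I}$-bounded functionals are bounded, i.e. $\|\bar P\|_{\pi,I}=0$. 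This gives $P \in \cl I$.

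\emph{Main obstacle.} The routine norm estimates (Lemma \ref{norm property}(2)–(6), Lemma \ref{norm}) and the identification $(D_0)_+ \cong D_+$ will go through mechanically. The genuinely delicate point is the second inclusion: transferring the $\Fpip$-membership condition "$s(\bar P) \in f_{\rK,bd*}N_\pi$ for all $s$" — a statement about \emph{all} coordinates $\langle s,\bar P\rangle_i$ — into the single-coordinate seminorm bound $\|\bar P\|_{\pi,I} = 0$. I expect to need the weak-cartesian property of $K$ (as in Lemma \ref{BGR}) to pass between norms on the finite-dimensional space $i_*M$ and its dual, and to use Lemma \ref{norm property}(5) that $\|\tfrac{1}{i!}T^i\|_\pi = \pi_0/\pi_i$ to control the growth of $|\langle s,\bar P\rangle_i|\pi_i$ in terms of $\|\bar P\|_{\pi,I}$ uniformly in $i$; getting the constants to cancel so that finiteness of the $|\ |_\pi$-norm for \emph{every} admissible $s$ forces the seminorm to vanish is the crux, and I will likely phrase it via the completeness of $f_{\rK,bd*}N_\pi$ and a Banach-space closed-graph / bounded-functional argument rather than an explicit estimate.
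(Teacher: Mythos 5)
\noindent Your proposal addresses the wrong functor. In the paper's macros, $F_{\pi}$ (subscript) is \texttt{\textbackslash Fpim}, the $F_-$-type functor: by Proposition \ref{dimension formula}(1) it is $F_{\pi}M=\bigcap_{s\in\H(M,f_{\rK,bd*}N_{\pi})}\ker s$. You instead prove membership in $F^{\pi}=\texttt{\textbackslash Fpip}$, the $F_+$-type functor, and invoke Corollary \ref{description pi}, which describes $F^{\pi}M$, not $F_{\pi}M$. These are different subobjects (e.g.\ $F_{\pi}M=M$ is equivalent to $\H(M,f_{\rK,bd*}N_{\pi})=0$, whereas $F^{\pi}M=M$ is a solvability-type statement), so establishing $F^{\pi}(\KT/I)=\cl I/I$ would not prove the proposition.

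Moreover, even taken on its own terms your first inclusion is not correct: you apply Lemma \ref{BGR} to $M=\KT/I$, but that lemma requires $\cl_{\pi}I=I$, which is exactly what is \emph{not} assumed here (indeed, the whole point is the case $\cl_{\pi}I\supsetneq I$). Without that hypothesis the quotient seminorm $\|\ \|_{\pi,I}$ has a nontrivial null space $\cl_{\pi}I/I$, and a general $K$-linear $\chi$ is not bounded by it. Your estimate then yields $\langle s,\bar P\rangle=0$ for \emph{every} $s\in DM$, which would force $\bar P=0$ (since $f_{0*}L_0$ is an injective cogenerator), i.e.\ $P\in I$ --- this is false in general for $P\in\cl_{\pi}I\setminus I$, so the step cannot be right. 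Your second inclusion is, as you acknowledge, left at the level of a sketch that does not close.

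The paper's argument is both shorter and avoids these issues. For $\cl_{\pi}I/I\subset F_{\pi}(\KT/I)$: given $s\in\H(\KT/I,f_{\rK,bd*}N_{\pi})$, the lift $s':\KT\to f_{\rK,bd*}N_{\pi}$ satisfies $|s'(P)|_{\pi}=|P\cdot s'(1)|_{\pi}\le\|P\|_{\pi}|s'(1)|_{\pi}$ (Lemma \ref{norm property}), so $s'$ is continuous; since $f_{\rK,bd*}N_{\pi}$ is Hausdorff and $s'(I)=0$, also $s'(\cl_{\pi}I)=0$, and the inclusion follows from $F_{\pi}M=\bigcap\ker s$. For the reverse, no topology is needed: by Proposition \ref{dimension formula}(5) and Lemma \ref{closed},
\[
\dim F_{\pi}(\KT/I)=\dim\KT/I-\dim\H(\KT/I,f_{\rK,bd*}N_{\pi})\le\dim\KT/I-\dim\H(\KT/\cl_{\pi}I,f_{\rK,bd*}N_{\pi})=\dim\cl_{\pi}I/I.
\]
Combining the two gives equality. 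I would encourage you to first sort out which of $F_{\pi}$ and $F^{\pi}$ is in play, and then notice that the reverse inclusion is purely a dimension count --- no dual-norm or bounded-functional argument is required.
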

\begin{proof}
We prove $cl_{\pi}I/I\subset F_{\pi}(\KT/I)$...(1). Let $s\in \H(\KT/I,f_{\rK,bd*}N_{\pi})$. Let $s':\KT\to f_{\rK,bd*}N_{\pi}$ be the composition of the canonical surjection $\KT\to\KT/I$ followed by $s$. Then $|s'(P)|_{\pi}=|P\cdot s'(1)|_{\pi}\le ||P||_{\pi}|s'(1)|_{\pi}$ by Lemma \ref{norm property}. We endow the underlying sets of $\KT,f_{\rK,bd*}N_{\pi}$ with the topology associated to the norm $||\ ||_{\pi},|\ |_{\pi}$ respectively. Then $s'$ is continuous. Since $f_{\rK,bd*}N_{\pi}$ is Hausdorff and $s'(I)=0$, $s'(cl_{\pi}I)=0$. Hence $s(cl_{\pi}I/I)=0$. Therefore $cl_{\pi}I/I\subset F_{\pi}(\KT/I)$ by Proposition \ref{dimension formula}.

By Lemma \ref{closed} and Proposition \ref{dimension formula}, we have $dim F_{\pi}(\KT/I)=dim \KT/I-dim \H(\KT/I,f_{\rK,bd*}N_{\pi})\le dim \KT/I-dim \H(\KT/cl_{\pi}I,f_{\rK,bd*}N_{\pi})=dim \KT/I-dim \KT/cl_{\pi}I=dim cl_{\pi}I/I$. Together with (1), we obtain the assertion.
\end{proof}

%\subsection{An analogue of Dwork's transfer theorem}

%\input{46}%11%moved

\subsection{An analogue of Dwork's transfer theorem}

\begin{lemma}\label{simple}%[label:simple]
Let $r\in (0,+\infty)$. Let $I$ be any principal ideal of $\KXrz$ such that $\partial(I) \subset I$. Then either $I=0$ or $I=\KXrz$. A similar assertion holds when we replace $\KXrz$ by $\KXr$ or $\KXrp$.
%Let $I$ be any principal ideal of $R$ such that $\partial(I) \subset I$. Then either $I=0$ or $I=R$.
\end{lemma}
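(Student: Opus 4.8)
The plan is to prove the statement for $\KXrz$ first, since the cases of $\KXr$ and $\KXrp$ will follow formally from it by the relations $\KXr=\cap_{s<r}K[[X/s]]_0$ and $\KXrp=\cup_{s>r}K[[X/s]]_0$ together with $\partial_X$-stability of each intersectand/unionand. So fix $r\in(0,+\infty)$, let $I=(a)\KXrz$ be a non-zero principal ideal with $\partial_X(I)\subset I$, and write the generator $a=(a_i)\in\KXrz$, which we may assume non-zero. The goal is to show $a\in\KXrz^\times$, hence $I=\KXrz$.

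First I would reduce to the case where $a$ has non-zero constant term. Since $a\neq 0$, let $n$ be minimal with $a_n\neq 0$; then $a=X^n b$ with $b=(b_i)$, $b_0=a_n\neq 0$, and $b\in\KXrz$ (the shift only decreases the Gauss norm). If $n>0$, I would derive a contradiction: from $\partial_X(a)\in I=a\KXrz$ we get $\partial_X(a)=a c$ for some $c\in\KXrz$; substituting $a=X^n b$ and using the Leibniz rule $\partial_X(X^n b)=nX^{n-1}b+X^n\partial_X(b)$ gives $nX^{n-1}b+X^n\partial_X(b)=X^n b c$, so $n X^{n-1} b = X^n(bc-\partial_X(b))$. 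Comparing the coefficient of $X^{n-1}$ on both sides (the right-hand side has no $X^{n-1}$ term since $n\geq 1$) forces $nb_0=0$; as $K$ has characteristic $0$ and $b_0\neq 0$ this is absurd. Hence $n=0$ and $a$ has non-zero constant term $a_0$.

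Now the heart of the argument: I must show that $a_0\neq 0$ already implies $a\in\KXrz^\times$ given the differential condition. The relation $\partial_X(a)=ac$ for some $c\in\KXrz$ is a first-order linear ODE for $a$, and I would solve it recursively: writing $c=(c_i)$, the coefficient identity $(i+1)a_{i+1}=\sum_{j+k=i}a_j c_k$ determines all $a_{i+1}$ from $a_0,\dots,a_i$ and $c$, with $a_{i+1}=\tfrac{1}{i+1}(a_0 c_i + \text{lower})$; since $a_0\neq 0$ this shows $a$ is, up to the unit scalar $a_0$, the unique ``exponential-type'' solution $a_0\exp(\int c)$ of the equation. To conclude $a^{-1}\in\KXrz$, the cleanest route is: the inverse $a^{-1}$ exists in $\KX$ because $a_0\neq 0$ (the constant term is invertible in $K$ and formal inversion proceeds recursively), and $a^{-1}$ satisfies $\partial_X(a^{-1})=-a^{-2}\partial_X(a)=-a^{-1}c$, a linear ODE with coefficient $-c\in\KXrz$; applying Lemma \ref{kx}(3)-type growth control—or more directly, since $\KXrz=\{(x_i):\sup_i|x_i|r^i<\infty\}$ is $\cap_{s\le r}K[[X/s]]_0$ relations are not needed—I would show the recursion for the coefficients of $a^{-1}$ propagates the bound $\sup_i|x_i|r^i<\infty$. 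Concretely, if $|c_i|r^i\le C$ for all $i$ and $|a_0^{-1}|=:B$, an induction shows $|(a^{-1})_i|r^i\le B\cdot\max(1,C)^i\cdot(\text{harmonic factors})$, which is finite but possibly growing; here I must be more careful.

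The main obstacle is precisely this last point: the naive recursion bound on the coefficients of $a^{-1}$ (or of $a$ itself as the exponential of an antiderivative of $c$) can blow up, because dividing by $i+1$ helps but the convolution with $c$ can accumulate. The right fix is to observe that $a\in\KXrz$ is \emph{given}, so I only need $a^{-1}\in\KXrz$, and for that I would invoke Lemma \ref{kx}(1), which states $\KXrz^\times=\KXr^\times$: it therefore suffices to prove $a^{-1}\in\KXr$, i.e. that the radius of convergence of $a^{-1}$ is $\ge r$. By the non-archimedean Hadamard formula recalled in the text, $a\in\KXr$ (which holds since $a\in\KXrz\subset\KXr$) combined with $a_0\neq 0$ gives $a\in\KXr^\times=\KXrz^\times\subset\KX^\times$ — but wait, that is only $\KXr^\times$, which is what Lemma \ref{kx}(1) identifies with $\KXrz^\times$. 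So in fact the cleanest proof is: $a\in\KXrz$, $a_0\neq 0$; I claim $a\in\KXr^\times$. Indeed $a^{-1}\in\KX$ has some radius of convergence $\rho$; the functional equation $\partial_X(a^{-1})=-c a^{-1}$ with $c\in\KXrz\subset\KXr$ shows, by the standard fact that solutions of linear ODEs with coefficients convergent on a disc of radius $r$ converge at least on that disc (a formal power series computation with the Hadamard formula, or Lemma \ref{kx}(4)), that $\rho\ge r$, hence $a^{-1}\in\KXr$, hence $a^{-1}\in\KXr^\times=\KXrz^\times\subset\KXrz$ by Lemma \ref{kx}(1). Therefore $I=\KXrz$. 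For $\KXr$ and $\KXrp$, a $\partial_X$-stable principal ideal $I$ lies over $\partial_X$-stable principal ideals of each $K[[X/s]]_0$; applying the $\KXrz$-case at every $s$ shows the generator is a unit at every $s$, hence a unit in $\KXr$ respectively $\KXrp$, finishing the proof.
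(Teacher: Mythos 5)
Your reduction to the case where the generator $a=(a_i)\in\KXrz$ has $a_0\neq 0$ is correct (the coefficient-comparison argument using $\mathrm{char}\,K=0$ is valid), and the formal deductions of the $\KXr$ and $\KXrp$ cases from the $K[[X/s]]_0$-case at each radius $s$ are also fine. The gap is in the core step for $\KXrz$: you need to show $a^{-1}\in\KXr$ given $a\in\KXrz$, $a_0\neq 0$, and $c:=\partial_X(a)/a\in\KXrz$, and you do so by invoking ``the standard fact that solutions of linear ODEs with coefficients convergent on a disc of radius $r$ converge at least on that disc.'' This fact is \emph{false} in the non-archimedean setting when $p(K)>0$. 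The equation $\partial_X(y)=y$ has entire coefficient $c=1$, yet the solution $y=\sum_i X^i/i!$ has radius of convergence exactly $\omega(K)=|p|^{1/(p-1)}<1$, so $y\notin\KXr$ for any $r>\omega(K)$. The reference to Lemma \ref{kx}(4) does not repair this: that lemma controls $|\partial^i/i!|_{op,K}$ for the derivation on $K$, not the growth of solutions of $\partial_X$-equations. You correctly flagged the blow-up in the naive recursion bound for the coefficients of $a^{-1}$ (the factors $1/(i+1)$ do not save you, since $|i+1|$ can be tiny along subsequences), but the proposed ``fix'' is not a fix --- it asserts a transfer-of-convergence principle that simply does not hold in positive residue characteristic.

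The paper avoids this issue entirely. It does not attempt a direct coefficient estimate on $a^{-1}$; instead it reduces the case of $\KXr$ to the Tate algebras $\Tat$ for $t<r$ via the identity $\KXr=\cap_{t<r}\Tat$, and invokes the known $\partial_X$-simplicity of those rings (\cite[Lemma 9.11]{pde}), which rests on the Weierstrass/Newton-polygon structure theory of Tate algebras --- that is, on the fact that if $a$ has a zero of positive order in the closed disc of radius $t$, then $\partial_X(a)/a$ acquires a pole there, contradicting $\partial_X(a)/a\in\Tat$. It then obtains $\KXrz$ from Lemma \ref{kx}(1) ($\KXrz^\times=\KXr^\times$) and $\KXrp$ by choosing $t>r$ with $x,\partial_X(x)\in\KXt$. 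Some input from the structure theory of affinoid algebras (or an equivalent Newton-polygon argument) appears to be unavoidable here; your proposal has no substitute for it, so the proof does not close.
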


\begin{proof}
An analogous property for $\Tar$ is known, where $\Tar$ denotes the subring of $\KX$ consisting of $(a_i)\in K_+^{\N}$ such that $|a_i|r^i\to 0\ (i\to+\infty)$ equipped with the derivation induced by $\partial_X$ (\cite[Lemma 9.11]{pde}). In the case of $R=\KXr$, let $x$ denote a generator of $I$. Since $\KXr=\cap_{t\in (0,r)}\Tat$ and $\partial_X(\Tat\cdot x)\subset \Tat\cdot x$ for $t\in (0,r)$, we have $x\in \cap_{t\in (0,r)}(\Tat^{\times})=\KXr^{\times}$. In the case of $\KXrz$, by applying the previous case to the ideal $\KXr\cdot x$, $x\in \KXr^{\times}=K[[X/r]]_0^{\times}$. In the case of $\KXrp$, we choose $t\in (r,+\infty)$ such that $x\in\KXt$ and $\partial_X(x)\in \KXt$. Then $\partial_X(\KXt\cdot x)\subset \KXt\cdot x$, which implies $x\in \KXt^{\times}\subset\KXrp^{\times}$.
\end{proof}

We recall results of \cite[6.6]{chr} in the following special case. Let $r\in (0,\rK],m\in\N_{>0}$. We define the ring homomoprhism $\kappa_{r,bd}:K\to\KXrz;c\mapsto cX^0$. For $G=(g_{ij})\in M_m(\KXrz)$, we define the complex $C_{r,bd}(G)$ in $Mod(K)$ concentrated at degree $0$ and $1$ by 
$$
\xymatrix{
\dots\ar[r]&0\ar[r]&(\kappa_{r,bd*}\KXrz)^m\ar[r]^{(\partial_X-G)\cdot}&(\kappa_{r,bd*}\KXrz)^m\ar[r]&0\ar[r]&\dots,
}
$$
where $(\partial_X-G)\cdot (x_i)=(\partial_X(x_i)-\sum_{j=1}^mg_{ij}x_j)$; note that $\partial_X$ defines an endomorphism on $\kappa_{r,bd*}\KXrz$. We give similar definitions for $\KXr,\KXrp$.

\begin{lemma}\label{robba calculation}%[label:robba calculation]
Let notation be as above.

\begin{enumerate}
\item Let $G',G\in M_m(\KXrz),U\in GL_m(\KXrz)$ such that $\partial_X(U)+UG=G'U$. Then $C_{r,bd}(G)\cong C_{r,bd}(G')$. 
\item Let $0_m$ denote the zero matrix in $M_m(\KXrz)$. Then $dim H^0(C_{r,bd}(0_m))=m$ and $dim H^1(C_{r,bd}(0_m))=+\infty$ if $p(K)>0$ and $dim H^1(C_{r,bd}(0_m))=0$ if $p(K)=0$.
\item Let $M\in\Cf,G_1$ denote the matrix of the action of $T$ on $M$ for a given basis. Then there exists an isomorphism $i_*\Ex^j(M,f_{r,bd*}L_{r,bd})\cong H^j(C_{r,bd}(g_{r,bd}(G_1))$ in $Mod(K)$ for $j=0,1$.

Similar assertion for $\KXr,\KXrp$ hold except that in part 2, the result should be replaced by $dim H^0(C_{r}(0_m))=dim H^0(C_{r+}(0_m))=m$ and $dim H^1(C_r(0_m))=dim H^1(C_{r+}(0_m))=0$ regardless of $p(K)$.
\end{enumerate}
\end{lemma}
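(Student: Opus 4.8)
The strategy is to reduce each of the three parts to the corresponding statement proved by Christol in \cite[6.6]{chr}, which is stated for the ring $\Tas$ of overconvergent power series; the subtlety is that we work with the subrings $\KXrz$, $\KXr$, $\KXrp$ of $\KX$ rather than with $\Tas$, so we must transfer the cohomological computations along the inclusions of these rings. For part 1, the map $U\in GL_m(\KXrz)$ intertwining $G$ and $G'$ via $\partial_X(U)+UG=G'U$ induces, entrywise, an isomorphism of complexes $C_{r,bd}(G)\to C_{r,bd}(G')$: on $H^0$ it sends $(x_i)$ with $\partial_X(x_i)=\sum g_{ij}x_j$ to $U(x_i)$, and a direct check using the intertwining relation shows $(\partial_X-G')(U(x_i))=U(\partial_X-G)(x_i)$, so the square commutes; the same formula works on the degree-$1$ term. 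The only point to verify is that multiplication by $U$ and by $U^{-1}$ preserves $(\kappa_{r,bd*}\KXrz)^m$, which holds because $U,U^{-1}\in GL_m(\KXrz)$. The arguments for $\KXr$ and $\KXrp$ are identical.

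For part 2, the complex $C_{r,bd}(0_m)$ is the $m$-fold direct sum of the two-term complex $[\KXrz\xrightarrow{\partial_X}\KXrz]$, so it suffices to treat $m=1$. The kernel of $\partial_X$ on $\KXrz$ (and on $\KXr$, $\KXrp$) is $K$, giving $\dim H^0=1$, hence $\dim H^0(C_{r,bd}(0_m))=m$ in all cases. For $H^1$ we must decide when $\partial_X$ is surjective: for $(a_i)\mapsto((i+1)a_{i+1})$, solving $\partial_X(b_i)=(a_i)$ formally forces $b_{i}=a_{i-1}/i$, and the obstruction is exactly whether this preserves the relevant growth condition. When $p(K)=0$ the denominators $i$ are units of bounded norm, so $\partial_X$ is surjective on $\KXr$ and $\KXrp$ (and on $\KXrz$), giving $\dim H^1=0$; this is where the ``$\KXr,\KXrp$'' clause of the statement comes in. When $p(K)>0$, division by $i$ with $p\mid i$ destroys the $r$-Gauss boundedness (since $|i|$ can be as small as $\omega(K)$), and one exhibits explicitly an infinite-dimensional family of classes in $H^1(C_{r,bd}(0_m))$ not hit by $\partial_X$ on $\KXrz$ — for instance using the elements $X^{p^k}$ — so $\dim H^1=+\infty$. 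The overconvergent ring $\KXr$ (resp.\ $\KXrp$) behaves better because an element of $\cap_{t<r}\KXt$ (resp.\ $\cup_{t>r}\KXt$) allows one to trade a factor of $|i|^{-1}$ against a slightly smaller (resp.\ larger) radius, which is the content of the corresponding statement in \cite{chr}.

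For part 3, one computes $i_*\Ex^j(M,f_{r,bd*}L_{r,bd})$ from the free resolution $0\to\KT\to\KT\to M\to 0$ of Lemma \ref{cyclic}(5) — or more invariantly, by picking a basis $e_1,\dots,e_m$ of $i_*M$ and using the standard projective resolution $0\to (\KT)^m\xrightarrow{T-G_1}(\KT)^m\to M\to 0$ of a differential module by its twisted-polynomial presentation. Applying $\H(-,f_{r,bd*}L_{r,bd})$ and identifying $\H((\KT)^m,f_{r,bd*}L_{r,bd})$ with $(f_{r,bd*}L_{r,bd})^m=(\kappa_{r,bd*}\KXrz)^m$ via evaluation at the basis, the transpose of $T-G_1$ acts, after the twist through $g_{r,bd}$ and using $\rhos(T)$ computed in the excerpt, as the operator $\partial_X-g_{r,bd}(G_1)$ up to sign and the usual transpose/anti-involution bookkeeping; thus the $\H$-complex is isomorphic to $C_{r,bd}(g_{r,bd}(G_1))$, and taking cohomology gives the claim. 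The analogues for $\KXr$ and $\KXrp$ follow verbatim with $L_{r,bd}$ replaced by $L_r$, $L_{r+}$.

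\medskip

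The main obstacle I expect is part 2 in the case $p(K)>0$: pinning down that $H^1(C_{r,bd}(0_m))$ is genuinely infinite-dimensional requires producing an explicit infinite linearly independent family of non-solvable classes and checking the $r$-Gauss growth obstruction quantitatively, rather than merely invoking non-surjectivity of $\partial_X$; this is precisely the point where the difference between the ``bounded'' ring $\KXrz$ and the overconvergent rings $\KXr,\KXrp$ must be exploited carefully, and where one genuinely needs the results of \cite[6.6]{chr} rather than a formal argument. The transfer step in part 3 is bookkeeping but one must be careful with the anti-homomorphism nature of the action (the ring $(G_+,\cdot)$ carries the $op$ structure, cf.\ the earlier definitions) so that the sign in $\partial_X-G$ comes out right.
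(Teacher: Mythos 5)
Your proposal follows the paper's proof essentially step for step: part 1 by left multiplication by $U$, part 2 by reducing to the scalar case and invoking Christol (the paper cites \cite[Proposition 15.1]{chr} for the infinite-dimensionality of $H^1(C_{r,bd}(0_1))$ when $p(K)>0$, while you sketch the mechanism with $X^{p^k}$; both are fine, and your explanation of why overconvergence rescues $H^1=0$ for $\KXr,\KXrp$ is the correct one), and part 3 via the two-term free resolution $\tilde{\KT^m}\xrightarrow{\cdot(T-GT^0)}\tilde{\KT^m}\to M$, applying $\H(-,f_{r,bd*}L_{r,bd})$, identifying the result with $C_{r,bd}(g_{r,bd}(G_1))$, and then using \cite[Theorem~8.1]{ce}.

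One small inaccuracy worth correcting in part 3: the action of $T$ that appears after identifying $\H(\tilde{\KT^m},f_{r,bd*}L_{r,bd})$ with $(\KXrz)^m$ is the \emph{outer} left $\KT$-module structure on $f_{r,bd*}L_{r,bd}$ coming from $f_{r,bd}$ (i.e.\ $T\mapsto\partial_X$, $(a_i)\mapsto((i+1)a_{i+1})$), not $\rhos(T)=(\partial(a_i)-(i+1)a_{i+1})$, which is the inner action used to make $f_{0*}L_0$ an object of $\CC$. Precomposition with right multiplication by $T-GT^0$ on the row-vector module $\tilde{\KT^m}$ gives directly $s\mapsto\bigl(\partial_X(s(e_j))-\sum_k g_{r,bd}(g_{jk})s(e_k)\bigr)_j$, so there is no transpose and no sign adjustment to track once one uses row vectors as in the paper's proof. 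Your conclusion is right, but the stated mechanism would lead the reader astray if taken literally.
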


\begin{proof}
1. The morphism $(\kappa_{r,bd*}\KXrz)^m\to (\kappa_{r,bd*}\KXrz)^m$ defined by the left multiplication by $U$ induces the desired isomorphism.

2. The assertion for $H^0$ is obvious. The assertion for $H^1$ is due to \cite[Proposition 15.1]{chr} in the case of $\KXrz$ and obvious in the other cases.
% the assertion is obvious.

3. We recall the construction of \cite[6.6]{chr}. We consider the projective resolution of $M$ given by
$$
\xymatrix{
\dots&\ar[r]&0\ar[r]&\tilde{\KT^m}\ar[r]^{\cdot (T-GT^0)}&\tilde{\KT^m}\ar[r]&M,
}
$$
where $\tilde{\KT^m}$ denote the left $\KT$-module given by the row vectors of length $m$ with entries in $\KT$, $\cdot (T-GT^0)$ denotes the right multiplication by the matrix $T-GT^0$, i.e., $(P_1,\dots,P_m)\cdot (T-GT^0)=(P_1\cdot T-P_1g_{11}-P_2g_{21}-\dots,\dots)$ and the last morphism sends the $j$-th fundamental vector $e_j$ of $\tilde{\KT^m}$ to the $j$-th basis of $M$. By applying the endofunctor $\H(-,f_{r,bd*}L_{r,bd})$ on $\C$, we obtain the complex $C$ in $\C$ concentrated at degree $0$ and $1$ $C:\dots\to 0\to \H(\tilde{\KT^m},f_{r,bd*}L_{r,bd})\to \H(\tilde{\KT^m},f_{r,bd*}L_{r,bd})\to 0\to \dots$. We have the isomorphism $\alpha:\H(\tilde{\KT^m},f_{r,bd*}L_{r,bd})_+\to (\KXrz^m)_+;s\mapsto (s(e_j))$. By the definition of $\kappa_{r,bd}$, this isomorphism extends to an isomorphism $i_*\H(\tilde{\KT^m},f_{r,bd*}L_{r,bd})\to (\kappa_{r,bd*}\KXrz)^m$. We can see that as a complex in $\A$, $C$ is isomorphic to $C_{r,bd}(g_{r,bd}(G_1))$ via $\alpha$. Hence the complex $i_*C$ obtained by applying $i_*$ to each term of $C$ is isomorphic to $C_{r,bd}(g_{r,bd}(G_1))$. Since the bifunctor $\Ex^i$ is naturally isomorphic to the $i$-th partial derived functor of $\H$ with only the first variable active (\cite[Theorem 8.1]{ce}), $i_*\Ex^1(M,f_{r,bd*}L_{r,bd})$ is isomorphic to $H^1(C)$ in $\C$, which implies the assertion.
\end{proof}

\begin{proposition}\label{trivial}%[label:trivial]
Let $M\in \Cf,r\in (0,\rK]$.

\begin{enumerate}
\item There exists an isomorphism $f_{r,bd}^*\H(M,f_{r,bd*}L_{r,bd})\to L_{r,bd}^{dim \H(M,f_{r,bd*}L_{r,bd})}$ in $\CKXrzT$.

\item \TTFAE
\begin{enumerate}
\item We have $dim \H(M,f_{r,bd*}L_{r,bd})=dim M$.

\item There exists an isomorphism $f_{r,bd}^*M\to L_{r,bd}^{dim M}$ in $\CKXrzT$.

\end{enumerate}

\end{enumerate}

In both parts 2 and 3, a similar assertion holds when we replace $f_{r,bd},L_{r,bd},\CKXrzT$ by $f_{r},L_{r},\CKXrT$ respectively or $f_{r+},L_{r+},\CKXrpT$ respectively.
\end{proposition}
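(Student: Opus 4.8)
The plan is to prove part 1 first and then deduce part 2, working throughout via the complex description from Lemma~\ref{robba calculation}. Let me write $r_0\in(0,r)$ as provided by Lemma~\ref{limit lemma}(1), so that $\H(M,f_{r*}L_r)\cong\H(M,f_{t,bd*}L_{t,bd})$ for $t\in[r_0,r)$; this reduces all statements for $f_r$ to the corresponding statements for $f_{t,bd}$, and the statement for $f_{r+}$ reduces similarly via Lemma~\ref{limit lemma}(2). So it suffices to treat $f_{r,bd}$, with the caveat that in the $\KXr$ and $\KXrp$ cases the vanishing $H^1=0$ holds regardless of $p(K)$ (Lemma~\ref{robba calculation}(2), last sentence).

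For part~1, fix a basis of $M$ and let $G_1$ be the matrix of $T$. By Lemma~\ref{robba calculation}(3), $i_*\H(M,f_{r,bd*}L_{r,bd})\cong H^0(C_{r,bd}(g_{r,bd}(G_1)))$, i.e., the space of horizontal sections $(x_i)\in\KXrz^m$ with $\partial_X(x)=g_{r,bd}(G_1)x$. Set $d=\dim\H(M,f_{r,bd*}L_{r,bd})$. The key point is that horizontal vectors are units in the appropriate sense: if $x\in\KXrz^m$ satisfies $\partial_X x = Gx$ with $G=g_{r,bd}(G_1)$, then the $\KXrz$-submodule (or rather the ideal generated by a suitable coordinate / Plücker-type coordinate) is $\partial_X$-stable, and Lemma~\ref{simple} forces it to be $0$ or all of $\KXrz$. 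More precisely: choose a maximal set of $\R$-linearly independent — in fact $\KXrz$-linearly independent — horizontal solutions $v_1,\dots,v_d$, assemble them into a matrix $U\in M_{m\times d}(\KXrz)$, and argue that $U$ has a left inverse over $\KXrz$; this is where Lemma~\ref{simple} enters, applied to the ideals generated by the $d\times d$ minors of $U$, each of which is killed by $\partial_X$ since $\partial_X(\det(\text{minor}))$ expands via $\partial_X U = GU$ into a combination of minors — wait, one must be careful: the full minors satisfy $\partial_X(\text{minor}) = (\mathrm{tr}\,G_{\text{sub}})\cdot(\text{minor})$ only for the appropriately chosen square submatrices, and the ideal they generate is $\partial_X$-stable, hence is $0$ or $\KXrz$; it is nonzero because the $v_j$ are independent, so some $d\times d$ minor is a unit (Lemma~\ref{kx}(1)), and thus $U$ splits off a free rank-$d$ direct summand of $(\kappa_{r,bd*}\KXrz)^m$. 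On that summand the connection $\partial_X - G$ becomes $\partial_X - 0_d$ after the change of basis given by $U$ (using Lemma~\ref{robba calculation}(1)), which is exactly $L_{r,bd}^d$. This yields the desired isomorphism $f_{r,bd}^*\H(M,f_{r,bd*}L_{r,bd})\to L_{r,bd}^d$; functoriality and $\KXrz$-linearity of all the maps involved (the isomorphism $\alpha$ of Lemma~\ref{robba calculation}(3) is $\partial_X$-equivariant) promote this to an isomorphism in $\CKXrzT$.

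For part~2, the implication (a)$\Rightarrow$(b) is the special case $d=\dim M$ of part~1, since then the rank-$d$ free summand is all of $f_{r,bd}^*M$. Conversely, if $f_{r,bd}^*M\cong L_{r,bd}^{\dim M}$ in $\CKXrzT$, then $\H(M,f_{r,bd*}L_{r,bd})\cong\H(f_{r,bd}^*M,L_{r,bd})$ by the adjunction $(f_{r,bd}^*,f_{r,bd*})$, and $\H_{\CKXrzT}(L_{r,bd},L_{r,bd})$ has $K$-dimension exactly $1$: indeed $L_{r,bd}=L(\KXrz,\partial_X)$ fits in $0\to\KXrzT\xrightarrow{\cdot T}\KXrzT\to L_{r,bd}\to 0$, so $\H(L_{r,bd},L_{r,bd})$ is the kernel of $T\cdot$ acting on $L_{r,bd}$, i.e., the horizontal elements of $\KXrz$, which by Lemma~\ref{simple} (applied to $\KXrz\cdot a$ for $a$ horizontal, $a\neq 0$, noting $\partial_X a = 0$ makes the ideal $\partial_X$-stable) are just $K$; hence $\dim\H(M,f_{r,bd*}L_{r,bd})=\dim M$. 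The $\KXr$ and $\KXrp$ cases are identical after the reductions of the first paragraph.

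I expect the main obstacle to be the bookkeeping in part~1: verifying that the minors (or the appropriate exterior-power coordinates) of a horizontal solution matrix span a $\partial_X$-stable ideal, and then packaging the resulting splitting as an isomorphism of objects of $\CKXrzT$ rather than merely of $K$-vector spaces. Care is needed because $\KXrz$ is not a field, so "maximal independent set of solutions" must be taken over $\KXrz$, and the $\partial_X$-equivariance of every identification (the complex isomorphism $\alpha$, the adjunction, the change of basis $U$) must be tracked explicitly; none of it is deep, but it is where a careless argument would break.
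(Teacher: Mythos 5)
Your argument for part~1 does not actually produce the claimed isomorphism $f_{r,bd}^*\H(M,f_{r,bd*}L_{r,bd})\to L_{r,bd}^d$, because the object you trivialize is the wrong one. What your construction gives (granting the Pl\"ucker step) is a rank-$d$ trivial direct summand of $(\kappa_{r,bd*}\KXrz)^m$ carrying the connection $\partial_X-G$, namely $\KXrz v_1+\dots+\KXrz v_d$ -- this is the ambient module of the complex $C_{r,bd}(G)$ of Lemma~\ref{robba calculation}. But $f_{r,bd}^*\H(M,f_{r,bd*}L_{r,bd})$ has $\KXrz$-basis $1\otimes v_1,\dots,1\otimes v_d$, and its $T$-action is $\partial_X$ plus $g_{r,bd}(G')$, where $G'$ is the matrix of $\rhos(T)$ on $\H(M,f_{r,bd*}L_{r,bd})$; this $G'$ is not zero. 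The evident map $a\otimes v_i\mapsto av_i$ is $\KXrz$-linear but not $T$-equivariant, and the isomorphism $\alpha$ of Lemma~\ref{robba calculation}(3) is only $K$-linear (the paper explicitly says ``as a complex in $\A$''), so nothing identifies these two differential module structures. Showing that $f_{r,bd}^*\H(M,f_{r,bd*}L_{r,bd})$ is trivial is precisely what part~1 asserts, so as written your final step assumes the conclusion.

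The Pl\"ucker step also has gaps on its own terms. When $d<m$, the identity $\partial_X(\mathrm{minor})=(\mathrm{tr}\,G_{\mathrm{sub}})(\mathrm{minor})$ is false: $\partial_X$ of a $d\times d$ minor of a solution matrix is a $\KXrz$-linear combination of \emph{all} the $d\times d$ minors (the compound-matrix relation), so the span of the minors is $\partial_X$-stable but no individual minor ideal need be. The resulting ideal has $\binom{m}{d}$ generators and is not obviously principal, while Lemma~\ref{simple} applies only to principal ideals; moreover $\KXrz$ is not a local ring, so even knowing the ideal is all of $\KXrz$ would give a unit \emph{combination} of minors, not a unit minor. (Lemma~\ref{kx}(1), which you cite here, says $\KXrz^\times=\KXr^\times$ and is not relevant.) The paper sidesteps all of this by first replacing $M$ by $M/F_{\pi(r)}M$ via Proposition~\ref{dimension formula}, which reduces to $d=m$ so that only the determinant is needed; it then builds the target morphism by trivializing formally over $\KX$ (the isomorphism $\beta'_0\colon f_0^*\H(M,f_{0*}L_0)\to L_0^m$), writing its components as $c_M(x_i)$ via the biduality $c$, and invoking Lemma~\ref{switch r} to see those components converge in $L_{r,bd}$. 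That duality-transfer step is the real engine of the proof and is absent from your sketch. One positive: your proof of part~2 (b)$\Rightarrow$(a) via the adjunction and the computation $\H(L_{r,bd},L_{r,bd})\cong K$ is a clean alternative to the paper's argument through $H^0(C_{r,bd})$; but your (a)$\Rightarrow$(b) inherits the part~1 gap.
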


\begin{proof}
We give a proof for $\KXrz$. For the other cases, a similar proof works.

1. We set $m=dim M$. We may assume that $dim \H(M,f_{r,bd*}L_{r,bd})=m$ by Proposition \ref{dimension formula} after replacing $M$ by $M/F_{\pi(r)}M$. Hence the obvious morphism $\H(M,f_{r,bd*}L_{r,bd})\to \H(M,f_{0*}L_0)$ is an isomorphism by comparing dimension. Since $i_{0*}f_0^*\H(M,f_{0*}L_{0})$ is a finite $\KX$-module, it is known that there exists an isomorphism $\beta'_0:f_0^*\H(M,f_{0*}L_{0})\to L_0^m$ in $\CKXT$. Let $\beta_0:\H(M,f_{0*}L_{0})\to f_{0*}(L_0^m)$ denote the morphism corresponding to $\beta_0$ under the adjunction isomorphism. Let $\beta_0^{(i)}:\H(M,f_{0*}L_{0})\to f_{0*}L_{0}$ be the composition of $\beta$ followed by the $i$-th projection. Then there exists a unique $x_i\in M$ such that $\beta_0^{(i)}=c_M(x_i)$ as $c_M$ is an isomorphism. Hence $\beta_0$ coincides with the morphism $s\mapsto (c_M(x_1)(s),\dots,c_M(x_m)(s))$. We consider the composition of the obvious morphism $\H(M,f_{r,bd*}L_{r,bd})\to \H(M,f_{0*}L_0)$ followed by $\beta_0$. By Lem \ref{switch r}, the morphism factors through $f_{r,bd*}(L_{r,bd}^m)$. Let $\beta_{r,bd}:\H(M,f_{r,bd*}L_{r,bd})\to f_{r,bd*}(L_{r,bd}^m)$ denote the morphism obtained in this way. Let $\beta'_{r,bd}:f_{r,bd}^*\H(M,f_{r,bd*}L_{r,bd})\to L_{r,bd}^m$ denote the morphism corresponding to $\beta_{r,bd}$ under the adjunction isomorphism. We consider the matrix $G_{r,bd}$ of the action $T$ on $f_{r,bd}^*\H(M,f_{r,bd*}L_{r,bd})$ and the matrix $X_{r,bd}$ (resp. $X_0$) of $\beta'_{r,bd}$ (resp. $\beta'_0$) with respect to the following basis: we choose an arbitrary basis $e_1,\dots,e_m$ of $\H(M,f_{r,bd*}L_{r,bd})$ and consider the $1\otimes e_i$'s as a basis of $f_{r,bd}^*\H(M,f_{r,bd*}L_{r,bd})$ (resp. $f_{0}^*\H(M,f_{0*}L_{0})$); we consider the standard basis of $L_{r,bd}^m$ (resp. $L_{0}^m$). Then we have $\partial_X(X_{r,bd})=G_{r,bd}X_{r,bd}$ and $X_{r,bd}=X_0$. We have $det(X_0)\neq 0$ by assumption. Hence we have $det (X_{r,bd})\neq 0$. We have $\partial_X(det (X_{r,bd}))=tr (G_{r,bd}) det (X_{r,bd})$ by calculating the determinant of $\partial_X(X_{r,bd})X'_{r,bd}=G_{r,bd}X_{r,bd}X'_{r,bd}$, where $X'_{r,bd}$ denotes adjugate matrix of $X_{r,bd}$. Hence the ideal $\KXr\cdot det(X_{r,bd})$ of $\KXr$ is non-zero and satisfies $\partial_X(\KXr\cdot det(X_{r,bd}))\subset \KXr\cdot det(X_{r,bd})$. Therefore $det (X_{r,bd})\in (\KXr)^{\times}$ by Lemma \ref{simple}, which implies that $\beta_{r,bd}$ is an isomorphism.

2. Let $m=dim M$.

(a)$\Rightarrow$(b) Assume (a) holds. By assumption, the obvious morphism $\H(M,f_{r,bd*}L_{r,bd})\to DM$ is an isomorphism by comparing dimension. By part 1, $f_{r,bd}^*DM\cong f_{r,bd}^*\H(M,f_{r,bd*}L_{r,bd})\cong L_{r,bd}^m$. Hence $f_{r,bd}^*D_0M\cong L_{r,bd}^m$. Let $G_1$ be a matrix of the action of $T$ on $M$ for a fixed basis of $M$. The matrix of the action of $T$ on $D_0M$ is given by $g_{r,bd}(-{}^tG_1)$, where the matrix is taken with respect to the dual basis of the fixed basis of $M$. The existence of the above isomorphism implies that there exists a matrix $X\in GL_m(\KXrz)$ such that $\partial_X(X)=g_{r,bd}(-{}^tG_1))X$. Then $\partial({}^tX^{-1})=g_{r,bd}(G_1){}^tX^{-1}$. Hence ${}^tX^{-1}$ defines an isomorphism $f_{r,bd}^*M\to L_{r,bd}^m$.

(b)$\Rightarrow$(a) Assume (b) holds. Let notation be as in Lemma \ref{robba calculation}. By assumption, there exists a matrix $U\in GL_n(\KXrz)$ such that $\partial(U)=g_{r,bd}(G_1) U$. As before, we have $dim \H(M,f_{r,bd*}L_{r,bd})=dim \Ex^0(M,f_{r,bd*}L_{r,bd})=dim H^0(C_{r,bd}(G_1))=dim H^0(C_{r,bd}(0_{dim M}))=m$.
\end{proof}

\begin{proposition}\label{DTT}%[label:DTT]
For $r\in (0,r(K,\partial)]$ and $M\in \Cf$, \TFAE
\begin{enumerate}
\item We have $dim \H(M,f_{r*}L_r)=dim M$.
\item There exists an isomorphism $f_r^*M\to L_r^{dim M}$ in $\CKXrT$.
\item We have $\supp m(M)\subset [r,r(K,\partial)]$.
\item The obvious morphism $\H(M,f_{r*}L_r)\to \H(M,f_{0*}L_0)$ in $\C$ is an isomorphism.
\item Let $e_1,\dots,e_m$ be any elements of $M$ which forms a basis of $i_*M$. Let $(G_k)$ denote the family of matrices where $G_k$ for $k\in\N$ denotes the matrix of the multiplication by $T^k$ on $M$ with respect to the $e_i$'s. Then $\omega/limsup_{i\in\N_{>0}}|G_i|^{1/i}\ge r$.
\item We have $F_{(0,r)}M=0$.
\end{enumerate}
\end{proposition}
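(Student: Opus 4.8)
The plan is to treat (1) as the pivot: I would deduce (2), (4) and (6) from it using results already established, prove $(1)\Leftrightarrow(5)$ by an explicit computation inside $f_{0*}L_0$, and finally obtain $(1)\Leftrightarrow(3)$ by reduction to the irreducible case. For $(1)\Leftrightarrow(4)$: since $f_{r*}L_r$ satisfies (ST) by Lemma \ref{ST2}, the canonical map $f_{r*}L_r\to f_{0*}L_0$ is a monomorphism in $\CC$, so $\H(M,f_{r*}L_r)\to\H(M,f_{0*}L_0)=DM$ is a monomorphism by left exactness; as $\dim DM=\dim M$ by Lemma \ref{duality}, the source has dimension $\dim M$ precisely when this map is an isomorphism. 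For $(1)\Leftrightarrow(6)$: by construction $F_{(0,r)}=F_-$ for $M_\lambda=f_{r*}L_r$, so Proposition \ref{dimension formula}(5) gives $\dim F_{(0,r)}M+\dim\H(M,f_{r*}L_r)=\dim M$. And $(1)\Leftrightarrow(2)$ is precisely Proposition \ref{trivial}(2) in the variant for $f_r,L_r,\CKXrT$.

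For $(1)\Leftrightarrow(5)$ I would argue as follows. Since $e_1,\dots,e_m$ generate $M$ over $\KT$ and $f_{r*}L_r$ is a $\KT$-submodule of $f_{0*}L_0$, a morphism $s\in DM=\H(M,f_{0*}L_0)$ belongs to $\H(M,f_{r*}L_r)$ if and only if $s(e_j)\in f_{r*}L_r$ for every $j$. Writing $s=\Psi_M(\chi)$ with $\chi\in D_0M$ (Lemma \ref{natural isom}), Lemma \ref{dual lemma} gives $\langle s,e_j\rangle_i=\chi(\frac{1}{i!}T^i\cdot e_j)=\sum_h\frac{g_{i,hj}}{i!}\chi(e_h)$. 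Because the underlying abelian group of $f_{r*}L_r$ is $\KXr=K\{X/r\}$, the non-archimedean Hadamard formula says $s(e_j)\in f_{r*}L_r$ iff $\limsup_i|\langle s,e_j\rangle_i|^{1/i}\le 1/r$. Letting $\chi$ run over the dual basis of $e_1,\dots,e_m$ (permissible since $\chi\mapsto(\chi(e_h))_h$ identifies $D_0M$ with $K^m$ and $\Psi_M$ is bijective), and using $\lim_i|1/i!|^{1/i}=\omega(K)^{-1}$ together with the fact that a finite maximum commutes with $\limsup$, one finds that (1), i.e. $\H(M,f_{r*}L_r)=DM$, holds iff $\limsup_i|g_{i,hj}|^{1/i}\le\omega(K)/r$ for all $h,j$, that is iff $\omega(K)/\limsup_i|G_i|^{1/i}\ge r$, which is (5).

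For $(1)\Leftrightarrow(3)$ I would first observe that (1) is stable under subquotients and extensions: applying the left exact functor $\H(-,f_{r*}L_r)$ and the exact functor $\H(-,f_{0*}L_0)$ (exact since $f_{0*}L_0$ is injective, Corollary \ref{injective cogenerator}) to a short exact sequence and comparing dimensions shows that (1) holds for $M$ iff it holds for both the sub and the quotient, hence, by induction, iff it holds for every Jordan--Hölder factor of $M$. For an irreducible $N$, a standard estimate --- bounding $T^i\cdot(\sum_j i(c_j)e_j)$ termwise by $\max_{0\le k\le i}|\partial^{i-k}|_{op,K}\,|G_k|$, together with $|T^i\cdot|_{op}\ge|G_i|$ for a suitable basis norm, with $|T\cdot|_{sp,N}\ge|\partial|_{sp,K}$ (the extrinsic radii lie in $(0,\rK]$), and with $|T\cdot|_{sp,N}=\lim_i|T^i\cdot|_{op}^{1/i}$ by submultiplicativity --- yields $|T\cdot|_{sp,N}=\max(|\partial|_{sp,K},\limsup_i|G_i|^{1/i})$. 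Since $|\partial|_{sp,K}=\omega(K)/\rK\le\omega(K)/r$, condition (5) for $N$ is then equivalent to $\omega(K)/|T\cdot|_{sp,N}\ge r$, i.e. to $\supp m(N)=\{\omega(K)/|T\cdot|_{sp,N}\}\subseteq[r,\rK]$, which is (3) for $N$; combining with $(1)\Leftrightarrow(5)$ and $\supp m(M)=\bigcup_j\supp m(M_j)$ over the Jordan--Hölder factors yields $(1)\Leftrightarrow(3)$. I do not expect a serious obstacle here: the genuinely analytic input (the Dwork--Robba-type trivialisation of the solution matrix) has already been absorbed into Proposition \ref{trivial}, and the rest is the dimension bookkeeping for $\H(M,f_{r*}L_r)$ plus elementary Gauss-norm estimates --- the one step needing care is the identification $|T\cdot|_{sp,N}=\max(|\partial|_{sp,K},\limsup_i|G_i|^{1/i})$ and the reduction to Jordan--Hölder factors.
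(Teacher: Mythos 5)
Your handling of $(1)\Leftrightarrow(2)$, $(1)\Leftrightarrow(4)$ and $(1)\Leftrightarrow(6)$ coincides with the paper: these are read off from Proposition \ref{trivial}, from $\dim DM=\dim M$, and from Proposition \ref{dimension formula}(5) respectively. Where you depart from the paper is in bringing (3) and (5) into the circle: the paper simply cites \cite[Proposition 1.2.14]{kx} for $(2)\Leftrightarrow(3)$ and \cite[Lemma 6.2.5]{pde} for $(3)\Leftrightarrow(5)$, whereas you rederive these internally. Your $(1)\Leftrightarrow(5)$ argument --- evaluating a morphism $s\in DM$ at the basis vectors $e_j$, using that $f_{r*}L_r$ is a $\KT$-submodule so that membership of $s$ in $\H(M,f_{r*}L_r)$ is detected on generators, then applying the non-archimedean Hadamard formula to $\langle s,e_j\rangle_i=\frac{1}{i!}\sum_h g_{i,hj}\chi(e_h)$ and letting $\chi$ run over a dual basis --- is correct, and is a clean self-contained replacement for the cited lemma. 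The appeal to $\lim_i|1/i!|^{1/i}=\omega(K)^{-1}$ and the exchange of a finite max with $\limsup$ are both sound.

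There is, however, a genuine gap in the $(1)\Leftrightarrow(3)$ step. You claim that applying the left-exact functor $\H(-,f_{r*}L_r)$ and the exact functor $\H(-,f_{0*}L_0)$ to a short exact sequence $0\to M'\to M\to M''\to 0$ and ``comparing dimensions'' shows (1) holds for $M$ iff it holds for both $M'$ and $M''$. The ``only if'' direction is fine. The ``if'' direction is not a dimension count: from $0\to\H(M'',f_{r*}L_r)\to\H(M,f_{r*}L_r)\to\H(M',f_{r*}L_r)$ one only gets $\dim\H(M,f_{r*}L_r)\ge\dim M''$, and to reach $\dim M'+\dim M''$ one must know that the restriction map to $\H(M',f_{r*}L_r)$ is surjective, i.e.\ that the connecting map to $\Ext^1(M'',f_{r*}L_r)$ vanishes. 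In the paper that vanishing is only proved afterwards (Lemma \ref{solvable lemma}) \emph{using} Proposition \ref{DTT}, so it cannot be quoted here. The gap is repairable within your own argument: once (1) holds for $M''$, Proposition \ref{trivial} gives an isomorphism $f_r^*M''\cong L_r^{\dim M''}$, and Lemma \ref{robba calculation} (in its $\KXr$ variant) then gives $i_*\Ext^1(M'',f_{r*}L_r)\cong H^1(C_r(0_{\dim M''}))=0$, which supplies the needed surjectivity. Note also that your estimate $|T\cdot|_{sp}=\max(|\partial|_{sp,K},\limsup_i|G_i|^{1/i})$ does not require $N$ to be irreducible --- it holds for any $M\in\Cf$ with the sup norm in a chosen basis --- but even granting this you still need \emph{some} compatibility of spectral norms (or of condition (1)) with short exact sequences to pass between $(5)$ for $M$ and $(3)$, which is stated via Jordan--H\"older factors; the $\Ext^1$-vanishing above is exactly what closes that loop.
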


\begin{proof}
The equivalences 2$\Leftrightarrow$3 and 3$\Leftrightarrow$5 are due to \cite[Proposition 1.2.14]{kx} and \cite[Lemma 6.2.5]{pde} respectively.

The equivalence 1$\Leftrightarrow$2 is a part of Proposition \ref{trivial}.

The equivalence 1$\Leftrightarrow$4 follows from $dim DM=dim M$.

The equivalence 1$\Leftrightarrow$6 follows by Proposition \ref{dimension formula}.
\end{proof}

\section{A decomposition theorem in $\Cf$ when $p(K)>0$}%{The spectral decomposition theorem when $p(K)>0$}

We prove a decomposition theorem in $\Cf$ when $p(K)>0$, which is regarded as a stronger form of Theorem \ref{main theorem}. Besides an analogue of Dwork transfer theorem, the point is to prove that $f_{r*}L_r$ for $r\in (0,\rK]$ is an injective object in $\C$, where we use calculations of Christol developed in \cite{chr}. 

%\subsection{The injectivities of $f_{r*}L_r$ and $f_{r+*}L_{r+}$ when $p(K)>0$}

%\input{51}%12

\subsection{The injectivities of $f_{r*}L_r$ and $f_{r+*}L_{r+}$ when $p(K)>0$}

\begin{lemma}\label{solvable lemma}%[label:solvable lemma]
Let $r\in (0,\rK]$. Assume $F_{(0,r)}M=0$. Then $\Ex^1(M,f_{r*}L_r)=0$.
%Assume $F_{(0,r)}M=0$. Then $\Ex^1(f_r^*M,L_r)=0$ and $\Ex^1(M,f_{r*}L_r)=0$.
\end{lemma}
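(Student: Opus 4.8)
The plan is to compute $\Ex^1(M,f_{r*}L_r)$ through the Christol-type complexes $C_r(\cdot)$ of Lemma \ref{robba calculation}, exploiting the triviality of $f_r^*M$ that the hypothesis forces. We may assume $M\neq 0$, since for $M=0$ there is nothing to prove.

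First I would invoke Proposition \ref{DTT}: the equivalence of its conditions (6) and (2) turns the hypothesis $\Fzrh M=0$ into an isomorphism $f_r^*M\to L_r^m$ in $\CKXrT$, where $m=\dim M>0$. I would then pass to the matrix formalism of \S\ref{def}. Fix a basis $e_1,\dots,e_m$ of $i_*M$ and let $G_1\in M_m(K)$ be the matrix of the action of $T$ on $M$. On the basis $1\otimes e_1,\dots,1\otimes e_m$ of $f_r^*M$ the operator $T$ acts through $g_r(G_1)\in M_m(\KXr)$, whereas on the standard basis of $L_r^m$ it acts through the zero matrix $0_m$: indeed $L_r=\MM\KXr$ with $T$ acting as the derivation $\partial_X$, so $T\cdot 1=0$. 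Hence the given isomorphism is encoded by a matrix $U\in\mathrm{GL}_m(\KXr)$ with $\partial_X(U)=g_r(G_1)\,U$, which is exactly the hypothesis of Lemma \ref{robba calculation}(1) in its $\KXr$-version, with $G=0_m$ and $G'=g_r(G_1)$. It follows that $C_r(0_m)\cong C_r(g_r(G_1))$.

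Finally I would collect the consequences. Lemma \ref{robba calculation}(2) in its $\KXr$-version gives $H^1(C_r(0_m))=0$ regardless of $p(K)$, and Lemma \ref{robba calculation}(3) in its $\KXr$-version identifies $i_*\Ex^1(M,f_{r*}L_r)$ with $H^1(C_r(g_r(G_1)))$. Chaining these isomorphisms yields $i_*\Ex^1(M,f_{r*}L_r)=0$; since restriction of scalars along $i\colon K\to\KT$ is faithful and hence detects the zero module, we conclude $\Ex^1(M,f_{r*}L_r)=0$.

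Since the argument is merely a chain of already-established facts, there is no real obstacle. The only step requiring care is the matrix bookkeeping of the second paragraph — verifying that $T$ acts on $L_r^m$ by $0_m$, and keeping straight the roles of $G,G'$ and the sign in the relation $\partial_X(U)+UG=G'U$ — together with the trivial but necessary separate handling of $M=0$.
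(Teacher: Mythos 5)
Your proposal is correct and follows essentially the same route as the paper's proof: invoke Proposition \ref{DTT} to translate $\Fzrh M=0$ into an isomorphism $f_r^*M\cong L_r^{\dim M}$, read off from it the gauge-transformation matrix $U\in\mathrm{GL}_m(\KXr)$ with $\partial_X(U)=g_r(G_1)U$, and then chain the three parts of Lemma \ref{robba calculation} (in their $\KXr$-form) to compute $i_*\Ex^1(M,f_{r*}L_r)\cong H^1(C_r(g_r(G_1)))\cong H^1(C_r(0_m))=0$. The only cosmetic differences are that you spell out the matrix bookkeeping and the faithfulness of $i_*$ a bit more explicitly, and that you handle $M=0$ separately, which the paper's argument absorbs without comment.
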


\begin{proof}
Let $G_1$ denote the matrix of the action of $T$ on $M$ with respect to a given basis. By Propositions \ref{dimension formula} and \ref{DTT}, there exists an isomorphism $f^*_rM\cong L_r^m$ with $m=dim M$. This implies that there exists $U\in GL_m(\KXr)$ such that $\partial(U)=f_r(G_1)U$. By Lemma \ref{robba calculation}, we have $i_*\Ex^1(M,f_{r*}L_r)\cong H^1(C_r(f_r(G_1)))\cong H^1(C_r(0_m))=0$.
\end{proof}

\begin{corollary}\label{solvable}%[label:solvable]
Let $r\in (0,\rK]$. $\Ex^1(M/F_{(0,r)}M,f_{r*}L_r)=0,\Ex^0(F_{(0,r)}M,f_{r*}L_r)=0,\H(F_{(0,r)}M,f_{r*}L_r)=0$.
\end{corollary}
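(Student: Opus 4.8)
The plan is to deduce all the assertions (noting that $\Ex^0=\H$, so the last two coincide, and that throughout $M$ is an object of $\Cf$, whence so is every subquotient of $M$ appearing below, $\Cf$ being an abelian subcategory of $\C$) from Lemma~\ref{solvable lemma} together with the formal properties of the functor $\Fzrh$ recorded in Proposition~\ref{dimension formula}. First I would dispose of the $\Ex^1$ statement: since $\Fzrh$ is by construction the functor $\Fm$ attached to $M_\lambda=f_{r*}L_r$, part 7 of Proposition~\ref{dimension formula} gives $\Fzrh(M/\Fzrh M)=0$, so Lemma~\ref{solvable lemma} applied to the object $M/\Fzrh M$ of $\Cf$ yields $\Ex^1(M/\Fzrh M,f_{r*}L_r)=0$.

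Next I would feed this into the contravariant long exact $\Ex$-sequence of $\C$ associated with the short exact sequence
$$0\to \Fzrh M\xrightarrow{\ I_{-,M}\ }M\xrightarrow{\ J_{-,M}\ }M/\Fzrh M\to 0,$$
whose relevant portion is
$$\H(M,f_{r*}L_r)\xrightarrow{\ \H(I_{-,M},f_{r*}L_r)\ }\H(\Fzrh M,f_{r*}L_r)\xrightarrow{\ \partial\ }\Ex^1(M/\Fzrh M,f_{r*}L_r).$$
By part 6 of Proposition~\ref{dimension formula} the left-hand map is zero, so $\partial$ is a monomorphism; its codomain vanishes by the previous step; hence $\H(\Fzrh M,f_{r*}L_r)=0$, which is also the statement for $\Ex^0(\Fzrh M,f_{r*}L_r)$.

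The only point needing care is the bootstrap in the first step: Lemma~\ref{solvable lemma} is formulated under the hypothesis $\Fzrh M=0$, and its use here rests on the fact that the quotient $M/\Fzrh M$ fulfils exactly that hypothesis by part 7 of Proposition~\ref{dimension formula}. After that everything is formal homological algebra, so I do not expect a real obstacle.
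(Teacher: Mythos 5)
Your proposal is correct and, as far as the paper's terse proof allows one to judge, it follows exactly the intended route: apply Lemma~\ref{solvable lemma} to $M/\Fzrh M$ (licensed by $\Fzrh(M/\Fzrh M)=0$ from Proposition~\ref{dimension formula} part~7) to get the $\Ex^1$ vanishing, then combine the long exact $\Ex$-sequence of $0\to\Fzrh M\to M\to M/\Fzrh M\to 0$ with $\H(I_{-,M},f_{r*}L_r)=0$ from Proposition~\ref{dimension formula} part~6 to obtain $\H(\Fzrh M,f_{r*}L_r)=\Ex^0(\Fzrh M,f_{r*}L_r)=0$. This is a faithful expansion of the paper's one-line proof.
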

\begin{proof}
The assertion follows from Proposition \ref{dimension formula} and Lemma \ref{solvable lemma}.
\end{proof}

\begin{lemma}\label{key lemma}%[label:key lemma]
Assume $p(K)>0$. Let $r\in (0,\rK]$. \TFAE

(a) We have $\H(M,f_{r,bd*}L_{r,bd})=0$.

(b) We have $\Ex^1(M,f_{r,bd*}L_{r,bd})=0$.

%(c) $\Ex^1(f_{r,bd}^*M,L_{r,bd})=0$.

(c) We have $F_{\pi(r)}M=M$

%(a) $F_{\pi(r)}M=M$.

%(b) $\Ex^1(f_{r,bd}^*M,L_{r,bd})=0$.
\end{lemma}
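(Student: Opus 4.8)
The plan is to establish the two equivalences (a)$\Leftrightarrow$(c) and (a)$\Leftrightarrow$(b) separately, the second one being the substantial part. The equivalence (a)$\Leftrightarrow$(c) is essentially formal: by Lemma~\ref{ST2}(1) we have $f_{\rK,bd*}N_{\pi(r)}=f_{r,bd*}L_{r,bd}$, so $F_{\pi(r)}$ is the functor $\Fm$ attached to the subobject $f_{r,bd*}L_{r,bd}$ of $f_{0*}L_0$, and Proposition~\ref{dimension formula}(1) gives $F_{\pi(r)}M=\bigcap_{s\in\H(M,f_{r,bd*}L_{r,bd})}\ker s$, which equals $M$ exactly when every such $s$ vanishes; alternatively one may argue with the dimension identity of Proposition~\ref{dimension formula}(5). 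For (b)$\Rightarrow$(a) I would argue by contraposition: if $\H(M,f_{r,bd*}L_{r,bd})\ne 0$, set $N=M/F_{\pi(r)}M\ne 0$; by Proposition~\ref{dimension formula}(7), $\dim\H(N,f_{r,bd*}L_{r,bd})=\dim N\ge 1$, hence $f_{r,bd}^*N\cong L_{r,bd}^{\dim N}$ by Proposition~\ref{trivial}(2), and therefore $\dim i_*\Ex^1(N,f_{r,bd*}L_{r,bd})=\dim H^1(C_{r,bd}(0_{\dim N}))=+\infty$ by Lemma~\ref{robba calculation}(1),(3) together with the hypothesis $p(K)>0$. Applying $\H(-,f_{r,bd*}L_{r,bd})$ to $0\to F_{\pi(r)}M\to M\to N\to 0$ (and using that a finite $\KT$-module has a projective resolution of length one by Lemma~\ref{cyclic}(5), so $\Ex^{\ge 2}=0$), the kernel of $\Ex^1(N,f_{r,bd*}L_{r,bd})\to\Ex^1(M,f_{r,bd*}L_{r,bd})$ is a quotient of the finite-dimensional module $\H(F_{\pi(r)}M,f_{r,bd*}L_{r,bd})$; hence $\Ex^1(M,f_{r,bd*}L_{r,bd})$ is infinite-dimensional, contradicting (b).

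For (a)$\Rightarrow$(b), assume $\H(M,f_{r,bd*}L_{r,bd})=0$ and write $M'=F_{(0,r)}M$, $M''=M/M'$. Then $F_{(0,r)}M''=0$ by Proposition~\ref{dimension formula}(7) applied to $M_\lambda=f_{r*}L_r$; by Corollary~\ref{solvable}, $\H(M',f_{r*}L_r)=0$, whence $\H(M',f_{r,bd*}L_{r,bd})=0$ since $f_{r,bd*}L_{r,bd}$ is a subobject of $f_{r*}L_r$; and $\H(M'',f_{r,bd*}L_{r,bd})\hookrightarrow\H(M,f_{r,bd*}L_{r,bd})=0$. Using $\Ex^{\ge 2}=0$ and these three vanishings, the long exact sequence of $0\to M'\to M\to M''\to 0$ collapses to $0\to\Ex^1(M'',f_{r,bd*}L_{r,bd})\to\Ex^1(M,f_{r,bd*}L_{r,bd})\to\Ex^1(M',f_{r,bd*}L_{r,bd})\to 0$, so it suffices to show both outer terms vanish. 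By Lemma~\ref{robba calculation}(3) this means proving $H^1(C_{r,bd}(g_{r,bd}(G_1)))=0$ in two regimes: for $M''$, which by Proposition~\ref{DTT} trivializes over $\KXr$ but has no nonzero bounded horizontal section, and for $M'=F_{(0,r)}M$, whose subsidiary radii all lie in $(0,r)$, so that $f_{r,bd}^*M'$ is purely irregular at the boundary. Both vanishings are to be read off from Christol's cohomological computations in \cite{chr} that underlie Lemma~\ref{robba calculation}: the only obstruction to acyclicity of $C_{r,bd}$ comes from constituents already trivializing over $\KXrz$, and the hypothesis $\H(M,f_{r,bd*}L_{r,bd})=0$ rules those out.

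The main obstacle is exactly this implication (a)$\Rightarrow$(b). The infinite-dimensionality $\dim H^1(C_{r,bd}(0_m))=+\infty$ for $p(K)>0$ (Lemma~\ref{robba calculation}(2)) shows that, in sharp contrast with the situation for $f_{r*}L_r$ in Lemma~\ref{solvable lemma}, the functor $\Ex^1(-,f_{r,bd*}L_{r,bd})$ is far from exact, so one has to verify carefully that the $\KT$-module condition $\H(M,f_{r,bd*}L_{r,bd})=0$ precisely excludes the ``bounded-trivial'' constituents of the $\KXrz$-module $f_{r,bd}^*M$. Concretely, this amounts to tracking, through Lemma~\ref{robba calculation}, the relation between $\H(-,f_{r,bd*}L_{r,bd})$, $\Ex^1(-,f_{r,bd*}L_{r,bd})$ and the kernel and cokernel of $\partial_X-g_{r,bd}(G_1)$ on $(\KXrz)^{\dim M}$, and then invoking Christol's index results for solvable and for irregular differential modules over $\KXrz$; separating these two regimes cleanly, and handling the transpose/duality bookkeeping between the $\H$-side and the complex $C_{r,bd}$, is where the argument requires care.
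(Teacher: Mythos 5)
Your handling of (a)$\Leftrightarrow$(c) via Proposition~\ref{dimension formula}(5), and of (b)$\Rightarrow$(a) by contraposition using the quotient $N=M/F_{\pi(r)}M$, Proposition~\ref{trivial}, Lemma~\ref{robba calculation}, and the finite-dimensionality of $\H(F_{\pi(r)}M,f_{r,bd*}L_{r,bd})$, all agree with the paper's proof (the paper's citation of Proposition~\ref{DTT} for (a)$\Leftrightarrow$(c) appears to be an error; your use of the dimension formula is the intended argument).

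The gap is in (a)$\Rightarrow$(b). Your decomposition of $M$ into $M'=F_{(0,r)}M$ and $M''=M/M'$ and the reduction to showing $\Ex^1(M',f_{r,bd*}L_{r,bd})=0$ and $\Ex^1(M'',f_{r,bd*}L_{r,bd})=0$ is fine as far as it goes, but you then appeal to unstated ``Christol cohomological computations'' to dispatch both vanishings, and you acknowledge yourself that this is where the argument stalls. The difficulty is real: Lemma~\ref{robba calculation}(1)--(3) give the isomorphism between $\Ex^1(-,f_{r,bd*}L_{r,bd})$ and $H^1(C_{r,bd}(\cdot))$, but the paper contains no result of the form ``if $f_r^*N$ is trivial but $\H(N,f_{r,bd*}L_{r,bd})=0$, then $H^1(C_{r,bd}(\cdot))=0$'' nor ``if $\supp m(N)\subset(0,r)$ then $H^1(C_{r,bd}(\cdot))=0$,'' and these are not trivial consequences of what Lemma~\ref{robba calculation} records. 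You cannot simply read them off. Moreover the splitting into a solvable part $M''$ and an irregular part $M'$ is not needed: the hypothesis $\H(M,f_{r,bd*}L_{r,bd})=0$ already applies to $M$ itself.

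The paper's actual argument for (a)$\Rightarrow$(b) is an entirely different idea, which you are missing and which is the crux of the lemma. Write $M=\KT/\KT\cdot P$ (every $M\in\Cf$ is cyclic by Lemma~\ref{cyclic}). By Proposition~\ref{Fpi}, condition (c) is exactly $\cl I=\KT$ for $I=\KT\cdot P$, so there is $Q\in\KT$ with $||Q\cdot P-1||_{\pi(r)}<1$. Then the Neumann series $\sum_{n\ge 0}(1-Q\cdot P)^n\cdot x$ converges in $(f_{r,bd*}L_{r,bd},|\,|_{\pi(r)})$ by submultiplicativity (Lemma~\ref{norm property}) and completeness, so $Q\cdot P$ acts surjectively on $f_{r,bd*}L_{r,bd}$; this gives $\Ex^1(\KT/\KT\cdot(Q\cdot P),f_{r,bd*}L_{r,bd})=0$. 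From the short exact sequence $0\to\KT\cdot P/\KT\cdot(QP)\to\KT/\KT\cdot(QP)\to M\to 0$ and $\Ex^{\ge 2}=0$, one first gets $\Ex^1(\KT\cdot P/\KT\cdot(QP),f_{r,bd*}L_{r,bd})=0$, then feeds this back through the already-proved implication (b)$\Rightarrow$(a) applied to $\KT\cdot P/\KT\cdot(QP)$ to get $\H(\KT\cdot P/\KT\cdot(QP),f_{r,bd*}L_{r,bd})=0$, and finally the long exact sequence forces $\Ex^1(M,f_{r,bd*}L_{r,bd})=0$. This operator-norm/closure argument (\S\ref{pi}, Proposition~\ref{Fpi}, Lemma~\ref{norm property}) is the mechanism your proposal lacks; the Christol-style index computations you gesture at are not what the paper uses here and are not available in the form your argument would require.
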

\begin{proof}
The equivalence between conditions (a) and (c) proved in Proposition \ref{DTT}.

Assume (a) does not hold. We have $dim \Ex^0(F_{\pi(r)}M,f_{r,bd*}L_{r,bd})=dim \H(F_{\pi(r)}M,f_{r,bd*}L_{r,bd})\le dim F_{\pi(r)}M<+\infty$. We set $n=dim M/F_{\pi(r)}M$. By assumption and Proposition \ref{dimension formula}, $n\ge 1$ and $F_{\pi(r)}(M/F_{\pi(r)}M)=0$. By Proposition \ref{DTT}, $f_{r,bd}^*(M/F_{\pi(r)}M)\cong L_{r,bd}^n$. By a similar argument as in the proof of Lemma \ref{solvable lemma}, $dim \Ex^1(M/F_{\pi(r)}M,f_{r,bd*}L_{r,bd})=dim H^1(C_{r,bd}(0_n))=+\infty$. Hence $dim \Ex^1(M,f_{r,bd*}L_{r,bd})=+\infty$.

(a)$\Leftrightarrow$(b)

Assume (a) holds. We may assume $M=\KT/I$ with $I$ a non-zero left ideal of $\KT$. By assumption and Proposition \ref{dimension formula}, $dim F_{\pi(r)}M=dim M$. We fix $P\in \KT$ such that $I=\KT\cdot P$. By $cl_{\pi(r)}I=\KT$, there exists $Q\in \KT$ such that $||Q\cdot P-1||_{\pi(r)}<1$. Note that $Q\neq 0$. We define $M',M''\in C$ by $M'=\KT/\KT\cdot (Q\cdot P),M''=\KT\cdot P/\KT\cdot (Q\cdot P)$. Then $M',M''\in \Cf$ and we have an isomorphism $\KT/\KT\cdot Q\cong M'$ induced by $\KT\to M';1\mapsto P$. Moreover, we have an exact sequence $E:0\to M''\to M'\to M\to 0$. For $x\in f_{r,bd*}L_{r,bd}$, $\sum_{n=0}^{+\infty}(1-Q\cdot P)^n\cdot x$ converges in $f_{r,bd*}L_{r,bd}$ with respect to the topology defined by $|\ |_{\pi(r)}$ since $|(1-Q\cdot P)^n\cdot x|_{\pi(r)}\le ||(1-Q\cdot P)^n||^n_{\pi(r)}|x|_{\pi(r)}$ by Lemma \ref{norm property}. Hence $x=(Q\cdot P)\cdot \sum_{n=0}^{+\infty}(1-Q\cdot P)^n\cdot x$ for $x\in f_{r,bd*}L_{r,bd}$, which implies that the multiplication map $(Q\cdot P)\cdot:(f_{r,bd*}L_{r,bd})_+\to (f_{r,bd*}L_{r,bd})_+$ is surjective. Therefore we have $\Ex^1(M',f_{r,bd*}L_{r,bd})=0$. We apply the functor $\H(-,f_{r,bd*}L_{r,bd})$ to $E$. Then $\Ex^1(M'',f_{r,bd*}L_{r,bd})=0$. By (b)$\Rightarrow$(a), which is proved above, we have $\Ex^0(M'',f_{r,bd*}L_{r,bd})\cong \H(M'',f_{r,bd*}L_{r,bd})=0$. Hence $\Ex^1(M,f_{r,bd*}L_{r,bd})=0$.
\end{proof}

\begin{lemma}\label{non solvable}%[label:non solvable]
Let $r\in (0,\rK]$. Assume $p(K)>0$ and $F_{(0,r)}M=M$. Then $\Ex^1(M,f_{r*}L_r)=0$.
\end{lemma}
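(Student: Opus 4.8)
\emph{Sketch of proof.} The plan is to reduce to the cyclic case and then transfer a bijectivity statement from a cofinal family of bounded subrings to $f_{r*}L_r$, the transfer at the level of bounded rings being supplied by the characteristic-$p$ result Lemma \ref{key lemma}. We may assume $M\ne 0$. By Lemma \ref{cyclic} we may write $M=\KT/\KT P$ with $P\ne 0$; applying $\H(-,N)$ to the free resolution $0\to\KT\xrightarrow{\cdot P}\KT\to M\to 0$ identifies, for every $N\in\C$, the group $\H(M,N)$ with the kernel and $\Ex^1(M,N)$ with the cokernel of the endomorphism $\rho(P)$ of $N$ given by the action of $P$. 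Thus it suffices to prove that $\rho(P)$ acts bijectively on $f_{r*}L_r$.

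By Proposition \ref{dimension formula}, the hypothesis $\Fzrh M=M$ means exactly $\H(M,f_{r*}L_r)=0$. By Lemma \ref{limit lemma} there is $r_0\in(0,r)$ such that $\H(M,f_{r*}L_r)\xrightarrow{\ \sim\ }\H(M,f_{t,bd*}L_{t,bd})$ for every $t\in[r_0,r)$; hence $\H(M,f_{t,bd*}L_{t,bd})=0$ for all such $t$. Since $0<t<r\le\rK$ and $p(K)>0$, Lemma \ref{key lemma} gives $\Ex^1(M,f_{t,bd*}L_{t,bd})=0$ as well, so $\rho(P)$ is bijective on each $f_{t,bd*}L_{t,bd}$ with $t\in[r_0,r)$, its kernel being $\H(M,f_{t,bd*}L_{t,bd})$ and its cokernel $\Ex^1(M,f_{t,bd*}L_{t,bd})$.

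It remains to pass to the limit. As subobjects of $f_{0*}L_0$ one has $f_{r*}L_r=\bigcap_{t\in[r_0,r)}f_{t,bd*}L_{t,bd}$ (the terms with parameter below $r_0$ being redundant), and for $r_0\le t'\le t<r$ the inclusion $f_{t,bd*}L_{t,bd}\hookrightarrow f_{t',bd*}L_{t',bd}$ is a morphism in $\C$, hence commutes with $\rho(P)$. Injectivity of $\rho(P)$ on $f_{r*}L_r$ is then clear. For surjectivity, given $z\in f_{r*}L_r$ we solve $\rho(P)(w_t)=z$ uniquely inside $f_{t,bd*}L_{t,bd}$ for each $t\in[r_0,r)$; uniqueness and the inclusions force $w_t=w_{t'}$ whenever $r_0\le t'\le t<r$, so all the $w_t$ equal a single $w$, which therefore lies in $\bigcap_t f_{t,bd*}L_{t,bd}=f_{r*}L_r$ and satisfies $\rho(P)(w)=z$. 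Hence $\rho(P)$ is bijective on $f_{r*}L_r$, i.e.\ $\Ex^1(M,f_{r*}L_r)=0$.

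The crux is the second paragraph. There is no useful functorial comparison between $\Ex^1(M,f_{r*}L_r)$ and $\Ex^1(M,f_{t,bd*}L_{t,bd})$: the natural map runs from the former to the latter, and bounding its kernel would require controlling $\H(M,-)$ on the quotient $f_{t,bd*}L_{t,bd}/f_{r*}L_r$, which we cannot do directly. One is therefore forced to establish the stronger fact that $\rho(P)$ is bijective on the entire cofinal family of bounded rings, and this is exactly the point at which Lemma \ref{key lemma} — and with it the characteristic-$p$ input of Christol — is unavoidable; the remaining ingredients (the cyclic reduction, the reformulation via $\rho(P)$, and the limiting argument) are purely formal.
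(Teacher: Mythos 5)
Your proposal is correct and follows essentially the same route as the paper: reduce to the cyclic case $M\cong\KT/\KT\cdot P$, identify $\H(M,-)$ and $\Ex^1(M,-)$ with the kernel and cokernel of the action of $P$, use Lemma \ref{limit lemma} to find a cofinal family $[r_0,r)$ with $\H(M,f_{t,bd*}L_{t,bd})=0$, invoke Lemma \ref{key lemma} for the vanishing of $\Ex^1$ on the bounded rings, and pass to the limit via the intersection $f_{r*}L_r=\cap_t f_{t,bd*}L_{t,bd}$. Your write-up merely spells out the uniqueness argument in the limit step and the remark on why a direct functorial comparison of $\Ex^1$ would not suffice, both of which the paper leaves implicit.
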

\begin{proof}
We have $\H(M,f_{r*}L_{r})=0$ by assumption. By Lemma \ref{limit lemma}, there exists $r_0\in (0,r)$ such that $\H(M,f_{t,bd*}L_{t,bd})=0$ for $t\in [r_0,r)$. In the following, let $t\in [r_0,r)$. By Lemma \ref{key lemma}, $\Ex^1(M,f_{t,bd*}L_{t,bd})=0$ for $t\in [r_0,r)$. We fix an isomorphism $M\cong\KT/\KT\cdot P$ fro $P\in\KT$. Then, for $M'\in\C$, $\Ex^1(M,M')$ is isomorphic to the cokernel of the morphism $P\cdot:M'_+\to M'_+$. Since we have $\H(M,f_{t,bd*}L_{t,bd})=0$, the morphism $P\cdot:(f_{t,bd*}L_{t,bd})_+\to (f_{t,bd*}L_{t,bd})_+$ is an isomorphism. By taking limit, $P\cdot:(f_{r*}L_r)_+\to (f_{r*}L_r)_+$ is an isomorphism, which implies the assertion.
\end{proof}

\begin{proposition}\label{vanishing}%[label:vanishing]
Let $r\in (0,\rK]$. Assume $p(K)>0$. For an arbitrary $M\in\Cf$, $\Ex^1(M,f_{r*}L_r)=0$.
\end{proposition}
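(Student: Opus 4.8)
The plan is to peel the submodule $\Fzrh M$ off of $M$ and deal with the sub and the quotient separately. Consider the short exact sequence in $\Cf$
$$0\to \Fzrh M\to M\to M/\Fzrh M\to 0 ,$$
and apply the right derived functors of the left exact contravariant functor $\H(-,f_{r*}L_r)$ on $\C$ --- these exist since the category of left $\KT$-modules has enough projectives, and all three terms lie in $\Cf$. This produces the exact sequence
$$\Ex^1(M/\Fzrh M,f_{r*}L_r)\to \Ex^1(M,f_{r*}L_r)\to \Ex^1(\Fzrh M,f_{r*}L_r),$$
so it suffices to show that the two outer terms vanish.

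The left term vanishes by the first assertion of Corollary \ref{solvable}, which holds with no restriction on $p(K)$. For the right term, the key point is that $\Fzrh$ is idempotent on the submodules it cuts out. Write $N=\Fzrh M$. By the last assertion of Corollary \ref{solvable} we have $\H(N,f_{r*}L_r)=0$; since $\Fzrh$ is $\Fm$ for $M_\lambda=f_{r*}L_r$, Proposition \ref{dimension formula}, part 1, gives $\Fzrh N=\cap_{s\in\H(N,f_{r*}L_r)}\ker s=N$. Hence $N$ satisfies the hypothesis $F_{(0,r)}N=N$ of Lemma \ref{non solvable}, and here --- and only here --- we invoke $p(K)>0$ to conclude $\Ex^1(\Fzrh M,f_{r*}L_r)=\Ex^1(N,f_{r*}L_r)=0$. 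Feeding the two vanishings into the exact sequence above yields $\Ex^1(M,f_{r*}L_r)=0$.

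I expect no real obstacle in this assembly: the substance is already contained in Lemma \ref{solvable lemma} for the ``solvable'' piece $M/\Fzrh M$ (built on the Dwork-transfer-type Propositions \ref{trivial} and \ref{DTT}) and in Lemma \ref{non solvable} for the ``non-solvable'' piece $\Fzrh M$ (built on Christol's cohomological computations packaged in Lemma \ref{robba calculation} and the approximation Lemma \ref{limit lemma}), together with the dimension bookkeeping of Proposition \ref{dimension formula}. The only things worth spelling out are the idempotency $\Fzrh\circ\Fzrh=\Fzrh$ extracted above and the routine homological algebra yielding the displayed $\Ex$-exact sequence.
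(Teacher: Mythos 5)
Your argument is correct and is essentially identical to the paper's (which is stated in a single, more compressed sentence: it establishes $F_{(0,r)}F_{(0,r)}M=F_{(0,r)}M$ from Corollary \ref{solvable} and Proposition \ref{dimension formula}, invokes Lemma \ref{non solvable} for $\Ex^1(F_{(0,r)}M,f_{r*}L_r)=0$, and then finishes with Corollary \ref{solvable} and the long exact sequence). You have simply spelled out the same steps — the $\Ex$-exact sequence from $0\to\Fzrh M\to M\to M/\Fzrh M\to 0$, and the idempotency of $\Fzrh$ on its image via $\H(\Fzrh M,f_{r*}L_r)=0$ — in more detail.
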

\begin{proof}
Since we have $F_{(0,r)}F_{(0,r)}M=F_{(0,r)}M$ by Proposition \ref{dimension formula} and Corollary \ref{solvable}, $\Ex^1(F_{(0,r)}M,f_{r*}L_r)=0$ by Lemma \ref{non solvable}. Hence Corollary  \ref{solvable} implies the assertion. 
\end{proof}

\begin{theorem}\label{injectivity}%[label:injectivity]
Let $r\in (0,r(K,\partial)]$. Assume $p(K)>0$. We regard $f_{r*}L_r$ as an object of $\C$ by forgetting the structure of the left $\KT$-object. Then $f_{r*}L_r$ is an injective object in $\C$.
\end{theorem}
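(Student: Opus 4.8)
The plan is to deduce injectivity of $f_{r*}L_r$ in $\C$ from Baer's criterion together with the vanishing statement already established in Proposition \ref{vanishing}. Recall that an object $E$ of $\C$ is injective if and only if, for every left ideal $I$ of $\KT$, every morphism $I\to E$ in $\C$ extends along the inclusion $I\hookrightarrow\KT$; equivalently, $\Ex^1(\KT/I,E)=0$ for all left ideals $I\subseteq\KT$. Baer's criterion is valid over an arbitrary, possibly non-commutative, ring, and $\KT$ is a genuine ring, so this applies here. Accordingly, I would fix an arbitrary left ideal $I$ of $\KT$ and reduce the theorem to the single vanishing $\Ex^1(\KT/I,f_{r*}L_r)=0$.

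To carry this out, observe first that by Lemma \ref{cyclic}(2) the ideal $I$ is principal, say $I=\KT\cdot P$. If $I=0$, then $\KT/I=\KT$ is free, hence a projective object of $\C$, and $\Ex^1(\KT,f_{r*}L_r)=0$ trivially. If $I\neq 0$, then $P\neq 0$, and Lemma \ref{cyclic}(3) (or part (4)) shows that $i_*(\KT/I)$ is a finite-dimensional $K$-vector space, i.e. $\KT/I\in\Cf$. Since $p(K)>0$ and $r\in(0,\rK]$ by hypothesis, Proposition \ref{vanishing} then gives $\Ex^1(\KT/I,f_{r*}L_r)=0$. As $I$ was an arbitrary left ideal, Baer's criterion yields that $f_{r*}L_r$ is injective in $\C$.

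Essentially all of the mathematical substance of this theorem has already been absorbed into Proposition \ref{vanishing}, which in turn rests on the analogue of Dwork's transfer theorem (Propositions \ref{trivial} and \ref{DTT}) and on Christol's vanishing of $H^1$ for the complexes $C_r(0_m)$ when $p(K)>0$ (Lemma \ref{robba calculation}(2)); there is no genuinely hard step remaining in the present statement. The only points that require a moment's attention are that Baer's criterion is being invoked in the possibly non-commutative module category $\C$ — which is legitimate — and that the left division theorem for $\KT$ reduces the test to the cyclic modules $\KT/I$, these being exactly the finite-dimensional objects of $\C$ once $I\neq 0$, so that Proposition \ref{vanishing} is applicable in every case.
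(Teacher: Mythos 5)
Your proof is correct and follows essentially the same route as the paper: reduce to $\Ex^1(\KT/I,f_{r*}L_r)=0$ for all left ideals $I$ via Baer's criterion, handle $I=0$ by projectivity of $\KT$, and apply Proposition \ref{vanishing} when $I\neq 0$. You spell out the needed intermediate observation (that $\KT/I$ is finite dimensional when $I\neq 0$, via Lemma \ref{cyclic}) which the paper leaves implicit, but the argument is the same.
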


\begin{proof}
For any left ideal $I$ of $\KT$, we have $\Ex^1(\KT/I,f_{r*}L_r)=0$ by Proposition \ref{vanishing} in the case of $I\neq 0$ and by the projectivity of $\KT$ in the case of $I=0$.
\end{proof}

\begin{corollary}\label{injectivity p}%[label:injectivity p]
Let $r\in (0,r(K,\partial))$. Assume $p(K)>0$. We regard $f_{r+*}L_{r+}$ as an object of $\C$ by forgetting the structure of the left $\KT$-object. Then $f_{r+*}L_{r+}$ is an injective object in $\C$.
\end{corollary}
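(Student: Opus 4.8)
The plan is to deduce the injectivity of $f_{r+*}L_{r+}$ from that of $f_{r*}L_r$ (Theorem \ref{injectivity}) by exploiting the expression $f_{r+*}L_{r+}=\cup_{t\in (r,\rK)}f_{t*}L_t$ as a filtered union of submodules inside $f_{0*}L_0$, together with Baer's criterion. First I would recall that to prove injectivity of an object in $\C$ it suffices, by Baer's criterion (and the fact that every left ideal of $\KT$ is principal by Lemma \ref{cyclic}), to check that $\Ex^1(\KT/I,f_{r+*}L_{r+})=0$ for every nonzero left ideal $I=\KT\cdot P$, equivalently that the right multiplication map $P\cdot:(f_{r+*}L_{r+})_+\to (f_{r+*}L_{r+})_+$ is surjective (it is automatically injective since $\KT$ is a domain and $f_{r+*}L_{r+}\subset f_{0*}L_0$, and $D$ is faithful exact). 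So the whole statement reduces to: for every nonzero $P\in\KT$ and every $x\in f_{r+*}L_{r+}$, the equation $P\cdot y=x$ has a solution $y\in f_{r+*}L_{r+}$.

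The key step is to pass to a single $f_{t*}L_t$. Fix $P\neq 0$ and $x\in f_{r+*}L_{r+}$. By the description $f_{r+*}L_{r+}=\cup_{t\in(r,\rK)}f_{t*}L_t$, there exists $t\in(r,\rK)$ with $x\in f_{t*}L_t$. Now apply Proposition \ref{vanishing} with this $t$ (note $t\in(0,\rK]$, so it is in range, and $p(K)>0$ is assumed): for $M=\KT/\KT\cdot P\in\Cf$ we get $\Ex^1(M,f_{t*}L_t)=0$, i.e. $P\cdot:(f_{t*}L_t)_+\to(f_{t*}L_t)_+$ is surjective. Hence there is $y\in f_{t*}L_t\subset f_{r+*}L_{r+}$ with $P\cdot y=x$. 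This proves surjectivity of $P\cdot$ on $(f_{r+*}L_{r+})_+$, hence $\Ex^1(\KT/I,f_{r+*}L_{r+})=0$ for all nonzero left ideals $I$, and for $I=0$ the vanishing is the projectivity of $\KT$. By Baer's criterion, $f_{r+*}L_{r+}$ is injective in $\C$.

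Alternatively, and perhaps more cleanly, one can observe directly that $\Ex^1(M,-)$ for $M=\KT/\KT\cdot P$ of finite length commutes with the filtered colimit $f_{r+*}L_{r+}=\mathrm{colim}_{t\to r+}f_{t*}L_t$ — because $\KT$ is noetherian so $M$ admits a resolution by finite free modules, over which $\H(-,-)$ commutes with filtered colimits in the second variable — and then $\Ex^1(M,f_{r+*}L_{r+})=\mathrm{colim}_{t}\Ex^1(M,f_{t*}L_t)=0$ by Proposition \ref{vanishing}; combined with the $I=0$ case this again gives the claim via Baer. I expect the main (minor) obstacle to be the bookkeeping around the colimit: one must check that the transition maps $f_{t*}L_t\hookrightarrow f_{t'*}L_{t'}$ for $r<t'<t$ are morphisms in $\C$ inducing the filtered system, that the colimit of the underlying abelian groups really is $(f_{r+*}L_{r+})_+$ (this is built into the Convention after Lemma \ref{kx}), and that $\H(M,-)$ on $\C$ genuinely commutes with this filtered colimit — which for finitely presented $M$ follows from the finite free resolution together with $\H(\KT^n,\mathrm{colim}\,N_j)=\mathrm{colim}\,\H(\KT^n,N_j)$. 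None of this is deep; the substantive input is entirely Proposition \ref{vanishing}, which has already been established.
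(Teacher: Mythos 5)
Your proof is correct and is essentially the same as the paper's: the paper cites the isomorphism $f_{r+*}L_{r+}\cong\mathrm{colim}_{s\in(r,\rK]}f_{s*}L_s$ in $\C$ and leaves implicit the standard fact that a filtered colimit of injective modules over a left Noetherian ring (here $\KT$, a left PID by Lemma \ref{cyclic}) is injective. Your Baer-criterion argument, reducing to surjectivity of $P\cdot$ on $(f_{r+*}L_{r+})_+$ via Proposition \ref{vanishing} applied to some $f_{t*}L_t$ containing a given element, is precisely the concrete unwinding of that principle.
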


\begin{proof}
It follows from the existence of an isomorphism $f_{r+}L_{r+}\cong colim\{f_{s*}L_s\}_{s\in (r,r(K,\partial)]}$ in $\C$.
\end{proof}

%\input{36}%13

%\subsection{Statement and proof of the decomposition theorem when $p(K)>0$}%{A p

%\input{52}%14
\subsection{Statement and proof of the decomposition theorem when $p(K)>0$}

Throughout this subsection, we assume $p(K)>0$.

\noindent Convention. In the rest of the paper except \S \ref{decJ}, for an endofunctor $F$ on $\C$ equipped with a natural transformation $F\to id_{\C}$ which is a pointwise monomorphism, we denote by $F^f$ the endofunctor on $\Cf$ induced by $F$. By definition, the functor $F^f$ is equipped with a natural transformation $F^f\to id_{\Cf}$ which is a pointwise monomorphism.

\begin{proposition}\label{exact functor}%[label:exact functor]
%Assume $p(K)>0$.
\begin{enumerate}
\item For $r\in (0,r(K,\partial)]$, the functor $\H(-,f_{r*}L_r)$ is exact.
\item For $r\in (0,r(K,\partial))$, the functor $\H(-,f_{r+*}L_{r+})$ is exact.
\item For $r\in (0,r(K,\partial)]$, the functors $F^f_{(0,r)},F^f_{[r,r(K,\partial)]}$ are exact.
\item For $r\in (0,r(K,\partial))$, the functor $F^f_{(0,r]}$ is exact.
\end{enumerate}
\end{proposition}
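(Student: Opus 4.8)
The plan is to obtain all four statements formally, using the injectivity results of the previous subsection (Theorem~\ref{injectivity} and Corollary~\ref{injectivity p}), the exactness of $D$ (Lemma~\ref{duality}), and the descriptions of $\Fm$ and $\Fp$ furnished by Proposition~\ref{dimension formula}.

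Parts~1 and 2 are immediate: for any object $N$ of $\C$ the functor $\H(-,N)$ is left exact, and it is exact as soon as $N$ is injective; so Part~1 follows from Theorem~\ref{injectivity} and Part~2 from Corollary~\ref{injectivity p}. In particular, for $M_\lambda$ equal to $f_{r*}L_r$ or to $f_{r+*}L_{r+}$, the functor $\H(-,M_\lambda)$ is an exact contravariant endofunctor of $\Cf$, since it preserves $\Cf$ (one has $\dim\H(M,M_\lambda)\le\dim M$ by Part~5 of Proposition~\ref{dimension formula}); recall also that $D$ is an exact contravariant endofunctor of $\Cf$, as it preserves dimension.

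For Parts~3 and 4, fix $M_\lambda=f_{r*}L_r$ (for Part~3) or $M_\lambda=f_{r+*}L_{r+}$ (for Part~4); this $M_\lambda$ satisfies (ST) by Lemma~\ref{ST2}, and by construction $\Fr=\Fp$, $\Fzrh=\Fm$ for the first choice, while $\Fzr=\Fm$ for the second. By Parts~3 and 4 of Proposition~\ref{dimension formula}, for $M\in\Cf$ the natural morphisms $c_{+,M}\colon\Fp M\to\H(DM,M_\lambda)$ and $c_{-,M}\colon\Fm M\to D\bigl(DM/\H(M,M_\lambda)\bigr)$ are isomorphisms. Hence $\Fr^f\cong\H(D(-),M_\lambda)$ is a composite of two exact contravariant functors, so it is exact; this settles the $\Fr^f$ part of Part~3. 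For $\Fzrh^f$ (the remaining part of Part~3) and $\Fzr^f$ (Part~4), I would consider the quotient endofunctor $Q$ of $\Cf$ given by $QM:=DM/\H(M,M_\lambda)$, which is the cokernel of the pointwise monomorphism $\alpha_1\colon\H(-,M_\lambda)\to D$ of \S\ref{fil}. Thus there is a short exact sequence of functors $0\to\H(-,M_\lambda)\to D\to Q\to0$ whose first two terms are exact, and a diagram chase (the $3\times 3$ lemma) shows that $Q$ is exact as well. Therefore $\Fzrh^f\cong D\circ Q$, respectively $\Fzr^f\cong D\circ Q$, is a composite of exact contravariant functors, hence exact.

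All the genuine content lies upstream, in the injectivity of $f_{r*}L_r$ and $f_{r+*}L_{r+}$ for $p(K)>0$ --- which itself rests on Christol's computations and the vanishing of $\Ex^1$ proved just before. Given that, the present proposition is purely formal; the only points requiring care are correctly tracking the variances and invoking the elementary fact that the cokernel of a pointwise-monic natural transformation between exact functors is again exact.
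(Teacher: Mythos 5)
Your proof is correct and takes essentially the same route as the paper: parts 1 and 2 from injectivity (Theorem~\ref{injectivity}, Corollary~\ref{injectivity p}), then parts 3 and 4 from the exactness of $D$ (Lemma~\ref{duality}) and the pullback descriptions $\Fp\cong\H(D(-),M_\lambda)$, $\Fm\cong D(D(-)/\H(-,M_\lambda))$ of Proposition~\ref{dimension formula}. The only thing you make explicit that the paper leaves implicit is the snake-lemma argument showing that the cokernel functor $Q=D(-)/\H(-,M_\lambda)$ is exact; that is indeed the right way to finish.
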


\begin{proof}
Parts 1, 2 follow from Theorem \ref{injectivity}, Corollary \ref{injectivity p} respectively. Parts 3, 4 are consequences of parts 1, 2 and Lemma \ref{duality} and Proposition \ref{dimension formula}.
\end{proof}

\begin{corollary}\label{cDDT}%[label:cDDT]
%Assume $p(K)>0$.
\begin{enumerate}
\item For $r\in (0,r(K,\partial)]$ and $M\in\Cf$, \TFAE
\begin{enumerate}
\item $\supp m(M)\subset (0,r)$.
\item $\H(M,f_{r*}L_r)=0$.
\end{enumerate}
\item For $r\in (0,r(K,\partial))$ and $M\in\Cf$, \TFAE
\begin{enumerate}
\item $\supp m(M)\subset (0,r]$.
\item $\H(M,f_{r+*}L_{r+})=0$.
\end{enumerate}

\end{enumerate}
\end{corollary}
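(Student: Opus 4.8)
The plan is to reduce both equivalences to the single exact dimension count
$$
\dim\H(M,f_{r*}L_r)=\sum_{s\in[r,r(K,\partial)]}m(M)(s)\qquad\text{for all }M\in\Cf,\ r\in(0,r(K,\partial)],
$$
and then to handle the ``$r+$'' version by passing to a nearby radius.

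First I would prove the displayed formula by d\'evissage. Both sides are additive on $\Cf$: the right-hand side by Lemma \ref{multiplicity property}(2), and the left-hand side because $\H(-,f_{r*}L_r)$ is exact (Proposition \ref{exact functor}(1), which uses $p(K)>0$ via the injectivity of $f_{r*}L_r$) and every $\H(M,f_{r*}L_r)$ embeds in $DM=\H(M,f_{0*}L_0)$, hence is finite-dimensional by Lemma \ref{duality}; thus a short exact sequence $0\to M'\to M\to M''\to 0$ yields, by contravariance, a short exact sequence $0\to\H(M'',f_{r*}L_r)\to\H(M,f_{r*}L_r)\to\H(M',f_{r*}L_r)\to 0$. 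Since objects of $\Cf$ have finite length (Lemma \ref{cyclic}(4)), it suffices to verify the formula for irreducible $M$, say with $\supp m(M)=\{s_0\}$ and $m(M)(s_0)=\dim M$. If $s_0\ge r$ then $\supp m(M)\subset[r,r(K,\partial)]$, and Proposition \ref{DTT} (1$\Leftrightarrow$3) gives $\dim\H(M,f_{r*}L_r)=\dim M$, which is exactly the right-hand side. If $s_0<r$ then $\supp m(M)\not\subset[r,r(K,\partial)]$, so Proposition \ref{DTT} (3$\Leftrightarrow$6) shows $F_{(0,r)}M\neq 0$; as $F_{(0,r)}M$ is a subobject of the irreducible $M$ (the natural transformation $F_{(0,r)}\to id_{\C}$ is a pointwise monomorphism), $F_{(0,r)}M=M$, and Proposition \ref{dimension formula}(5) forces $\dim\H(M,f_{r*}L_r)=\dim M-\dim F_{(0,r)}M=0$, matching the right-hand side since $s_0<r$.

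Granting the formula, Part 1 is immediate: $\H(M,f_{r*}L_r)=0$ iff its dimension vanishes iff $\sum_{s\in[r,r(K,\partial)]}m(M)(s)=0$ iff $\supp m(M)\cap[r,r(K,\partial)]=\emptyset$ iff $\supp m(M)\subset(0,r)$. For Part 2 I would use Lemma \ref{limit lemma}(2) to pick $r_0\in(r,r(K,\partial))$ with $\H(M,f_{t*}L_t)\xrightarrow{\ \sim\ }\H(M,f_{r+*}L_{r+})$ for all $t\in(r,r_0]$, and, since $\supp m(M)$ is finite (Lemma \ref{multiplicity property}(1)), shrink $r_0$ so that $(r,r_0]\cap\supp m(M)=\emptyset$ as well. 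Then for any such $t$ the displayed formula gives
$$
\dim\H(M,f_{r+*}L_{r+})=\dim\H(M,f_{t*}L_t)=\sum_{s\in[t,r(K,\partial)]}m(M)(s)=\sum_{s\in(r,r(K,\partial)]}m(M)(s),
$$
so $\H(M,f_{r+*}L_{r+})=0$ iff $\supp m(M)\cap(r,r(K,\partial)]=\emptyset$ iff $\supp m(M)\subset(0,r]$.

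The main obstacle is the dimension count of the first step: it depends on having the exactness of $\H(-,f_{r*}L_r)$ --- equivalently, the injectivity of $f_{r*}L_r$, which is genuinely used here and holds only because $p(K)>0$ --- and on reading off the precise content of Proposition \ref{DTT} in the two boundary regimes $s_0\ge r$ and $s_0<r$. Once that formula is in hand, Parts 1 and 2 are formal; the only point needing care in Part 2 is replacing $f_{r+*}L_{r+}$ by a nearby $f_{t*}L_t$, which Lemma \ref{limit lemma} together with the finiteness of $\supp m(M)$ renders harmless.
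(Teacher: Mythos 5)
Your proof is correct and follows essentially the same route as the paper: both arguments use the exactness of $\H(-,f_{r*}L_r)$ from Proposition \ref{exact functor} (hence the injectivity of $f_{r*}L_r$, which is where $p(K)>0$ enters) to reduce to Jordan--H\"older factors, then invoke Proposition \ref{DTT} together with the dimension identity from Proposition \ref{dimension formula} to settle the irreducible case, and finally pass to the ``$r+$'' version via Lemma \ref{limit lemma}. The only cosmetic difference is that you package the reduction as an explicit dimension formula $\dim\H(M,f_{r*}L_r)=\sum_{s\in[r,r(K,\partial)]}m(M)(s)$ before deducing both parts, whereas the paper argues directly in terms of non-vanishing on a Jordan--H\"older factor.
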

\begin{proof}
1. The negation of (b), i.e., $\H(M,f_{r*}L_r)\neq 0$ is equivalent to the condition that there exists a Jordan-H\"older factor $M^f$ of $M$ such that $\H(M^f,f_{r*}L_r)\neq 0$ by Proposition \ref{exact functor}; note that if this is the case then $dim \H(M^f,f_{r*}L_r)=dim M^f$ by Proposition \ref{dimension formula} and \ref{DTT}. Hence the last condition is equivalent to the one that there exists a Jordan-Holder factor $M^f$ of $M$ such that $\supp m(M)\not\subset (0,r)$ by Proposition \ref{DTT}.

2. By part 1, condition (a) is equivalent to the condition that for all $t\in (r,r(K,\partial)]$, $\H(M,f_{t*}L_t)=0$, which is equivalent to condition (b) by Lemma \ref{limit lemma}.
\end{proof}

\begin{proposition}\label{support}%[label:support]
Let $M\in \Cf$ and $r\in (0,r(K,\partial)]$.
\begin{enumerate}
\item We have $\supp m(\H(M,f_{r*}L_r))\subset [r,r(K,\partial)]$.
\item We have $\supp m(\Fr M)\subset [r,r(K,\partial)]$.
\item We have $\supp m(\Fzrh M)\subset (0,r)$.
\item Assume $r\neq r(K,\partial)$. $\H(\Fzr M,f_{r+*}L_{r+})=0$.
\item Assume $r\neq r(K,\partial)$. $\supp m(\Fzr M)\subset (0,r]$.
\end{enumerate}
\end{proposition}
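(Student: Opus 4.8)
The plan is to deduce all five assertions from the transfer/triviality results established earlier (Propositions \ref{trivial} and \ref{DTT}), together with the identifications, made just before Lemma \ref{limit lemma}, of $\Fr$, $\Fzrh$, $\Fzr$ with the functors $\Fp$ (resp.\ $\Fm$, $\Fm$) attached to the subobjects $f_{r*}L_r$ (resp.\ $f_{r*}L_r$, $f_{r+*}L_{r+}$) of $f_{0*}L_0$.

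The heart of the matter is part 1. I would set $N:=\H(M,f_{r*}L_r)$; this is a finite object of $\C$, being a subobject of $DM=\H(M,f_{0*}L_0)$ (so $\dim N\le\dim M$), and it carries a $\KT$-action because $f_{r*}L_r$ is an object of $\CC$ (condition (ST), Lemma \ref{ST2}). The $K\{X/r\}$-version of Proposition \ref{trivial}(1) provides an isomorphism $f_r^*N\cong L_r^{\dim N}$ in $\CKXrT$, and then the equivalence $2\Leftrightarrow 3$ of Proposition \ref{DTT}, applied with $N$ in place of $M$, yields $\supp m(N)\subset[r,r(K,\partial)]$, which is exactly part 1. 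Everything else is bookkeeping on top of this.

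Part 2 is part 1 applied to $DM$: since $\Fr$ is $\Fp$ for the block $f_{r*}L_r$, Proposition \ref{dimension formula}(4) supplies an isomorphism $\Fr M\cong\H(DM,f_{r*}L_r)$ in $\C$, and as $m(-)$ is an isomorphism invariant and $DM\in\Cf$ (Lemma \ref{duality}), part 1 gives $\supp m(\Fr M)=\supp m(\H(DM,f_{r*}L_r))\subset[r,r(K,\partial)]$. For part 3, $\Fzrh$ is $\Fm$ for $f_{r*}L_r$, so $\H(\Fzrh M,f_{r*}L_r)=0$ by Corollary \ref{solvable} (equivalently: $I_{-,M}\colon\Fzrh M\to M$ is a monomorphism, hence $\H(I_{-,M},f_{r*}L_r)$ is an epimorphism by the exactness of $\H(-,f_{r*}L_r)$ of Proposition \ref{exact functor}(1), while Proposition \ref{dimension formula}(6) says it is $0$); now the equivalence $(a)\Leftrightarrow(b)$ of Corollary \ref{cDDT}(1) turns this vanishing into $\supp m(\Fzrh M)\subset(0,r)$. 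Part 4 is the same argument with $f_{r+*}L_{r+}$ in place of $f_{r*}L_r$, legitimate since $r\neq r(K,\partial)$: $\H(I_{-,M},f_{r+*}L_{r+})$ is simultaneously an epimorphism, by exactness of $\H(-,f_{r+*}L_{r+})$ (Proposition \ref{exact functor}(2)), and zero, by Proposition \ref{dimension formula}(6), so $\H(\Fzr M,f_{r+*}L_{r+})=0$. Finally part 5 follows at once from part 4 and the equivalence $(a)\Leftrightarrow(b)$ of Corollary \ref{cDDT}(2).

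I do not expect a genuine obstacle internal to this proposition: its weight lies in the machinery already in place — Proposition \ref{trivial} (the Dwork-transfer input) for part 1, and the injectivity of $f_{r*}L_r$ and $f_{r+*}L_{r+}$, hence the standing hypothesis $p(K)>0$, behind the exactness used in parts 3--5. The one point demanding care is staying in the right category throughout: that $N=\H(M,f_{r*}L_r)$ is genuinely a $\KT$-module, so that $m(-)$ and Propositions \ref{DTT} and \ref{trivial} apply to it, and that $c_{+,M}$ from Proposition \ref{dimension formula}(3) is an isomorphism in $\C$ and not merely in $Mod(K)$ — both guaranteed by the cited statements.
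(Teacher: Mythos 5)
Your proof is correct and follows essentially the same route as the paper. For part 1 the paper re-does the replacement $M\mapsto M/\Fzrh M$ (via Proposition \ref{dimension formula}) and then cites Proposition \ref{DTT}, whereas you cite Proposition \ref{trivial}(1) directly — but that proposition's own proof already performs exactly this replacement, so your step is simply a cleaner packaging of the same argument; parts 2--5 match the paper's reasoning (Proposition \ref{dimension formula}, Corollary \ref{solvable}, Proposition \ref{exact functor}, Corollary \ref{cDDT}) point for point.
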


\begin{proof}
1. We may assume $dim \H(M,f_{r*}L_r)=dim M$ after replacing $M$ by $M/\Fzrh M$ by Proposition \ref{dimension formula}. Hence the assertion follows from Proposition \ref{DTT}.

2. The assertion follows from $\Fr M\cong \H(DM,f_{r*}L_r)$ and part 1.

3. The assertion follows from Corollary \ref{solvable} and \ref{cDDT}.

4. It follows from Propositions \ref{dimension formula} and \ref{exact functor}.

5. The assertion follows from Corollary \ref{cDDT} and part 4.

\end{proof}

%Let $r\in (0,r(K,\partial)]$. We define the endofunctor $\Frr$ on $\C$ by $\Frr=\Fzr\cap \Fr$ for $r\in (0,r(K,\partial))$; we define already $\Frr$ for $r=r(K,\partial)$. We define the endofunctor $F^f_{[r,r]}$ on $\Cf$ as the one induced by $\Frr$.

\begin{lemma}\label{SDT lemma}%[label:SDT lemma]
Let $r\in (0,r(K,\partial)]$.
\begin{enumerate}
\item For $r\neq r(K,\partial)$, we have $\Frr=\Fzr \Fr$.
\item The functor $F^f_{[r,r]}$ is exact.
\item For $M\in \Cf$, we have $dim F^f_{[r,r]}M\ge m(M)(r)$.
\end{enumerate}
%1. For $r\neq r(K,\partial)$, we have $\Frr=\Fzr \Fr$.

%2. The functor $F^f_{[r,r]}$ is exact.

%3. For $M\in \Cf$, we have $dim F^f_{[r,r]}M\ge m(M)(r)$.
\end{lemma}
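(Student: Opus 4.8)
The plan is to derive all three parts from two facts: the description of the functors $\Fzrh$, $\Fzr$, $\Fr$ in terms of largest subobjects with prescribed $\supp m$, and the exactness statements of Proposition~\ref{exact functor} together with the additivity of $m$ (Lemma~\ref{multiplicity property} part 2). For Part 1 (so $r\neq\rK$) I would prove $\Fzr(\Fr M)=\Fzr M\cap\Fr M=\Frr M$ for every $M\in\C$. Both $\Fzr(\Fr M)$ and $\Frr M$ are subobjects of $N:=\Fr M$, so it is enough to compare, for $x\in N$, the condition that $t(x)=0$ for all $t\in\H(M,f_{r+*}L_{r+})$ (which says $x\in\Fzr M$, by Proposition~\ref{dimension formula} part 1) with the condition that $s(x)=0$ for all $s\in\H(N,f_{r+*}L_{r+})$ (which says $x\in\Fzr N$, again by Proposition~\ref{dimension formula} part 1). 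Restricting morphisms along the monomorphism $N\hookrightarrow M$ gives one implication; the converse uses that $f_{r+*}L_{r+}$ is injective in $\C$ (Corollary~\ref{injectivity p}, valid since $p(K)>0$), so that every $s$ is the restriction of some $t$. For $r=\rK$ there is nothing to prove, $\Frr$ being read as $\Fr$ in that case.

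Part 2 is then formal: on $\Cf$, Part 1 exhibits $\Frr^f$ as the composite $\Fzr^f\circ\Fr^f$ of exact functors (Proposition~\ref{exact functor} parts 3 and 4), hence exact; for $r=\rK$, $\Frr^f=\Fr^f$ is exact by Proposition~\ref{exact functor} part 3.

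For Part 3 I would first record three dimension formulas valid for $N\in\Cf$: $\dim\Fzrh N=\sum_{s<r}m(N)(s)$; $\dim\Fzr N=\sum_{s\le r}m(N)(s)$ (when $r\neq\rK$); and $\dim\Fr N=\sum_{s\ge r}m(N)(s)$. The first two follow by additivity: their left-hand sides are additive on $\Cf$ because $\Fzrh^f$ and $\Fzr^f$ are exact, their right-hand sides are additive by Lemma~\ref{multiplicity property} part 2, and on an irreducible object both sides coincide once one knows that $\Fzrh N$ (resp.\ $\Fzr N$) is the largest subobject of $N$ with $\supp m$ contained in $(0,r)$ (resp.\ $(0,r]$); this in turn follows from Proposition~\ref{support} parts 3 and 5 (the support has the right shape), Corollary~\ref{cDDT} (a subobject with such support admits no nonzero morphism to $f_{r*}L_r$, resp.\ to $f_{r+*}L_{r+}$), and Proposition~\ref{dimension formula} part 1. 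The third formula follows from $\dim\Fr N=\dim\H(DN,f_{r*}L_r)=\dim DN-\dim\Fzrh(DN)$ (two applications of Proposition~\ref{dimension formula} part 5), together with $\dim DN=\dim N$ and $m(DN)=m(N)$ (Lemma~\ref{duality} part 2 and Lemma~\ref{multiplicity property} part 4) and the first formula.

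Finally, fix $M\in\Cf$. If $r<\rK$, Part 1 and the second dimension formula applied to $N:=\Fr M$, using $\supp m(\Fr M)\subset[r,\rK]$ (Proposition~\ref{support} part 2), give $\dim\Frr M=\sum_{s\le r}m(\Fr M)(s)=m(\Fr M)(r)$; it then remains to show $m(\Fr M)(r)=m(M)(r)$. The third dimension formula applied to $M$ gives $\sum_{s\ge r}m(\Fr M)(s)=\dim\Fr M=\sum_{s\ge r}m(M)(s)$, while additivity of $m$ along $0\to\Fr M\to M\to M/\Fr M\to 0$ gives $m(\Fr M)(s)\le m(M)(s)$ for each $s$; equal sums of non-negative terms with term-by-term $\le$ force termwise equality, in particular at $s=r$, so $\dim\Frr M=m(M)(r)$ --- actually an equality, which is stronger than the asserted bound. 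For $r=\rK$ the third formula gives $\dim\Frr M=\dim\Fr M=m(M)(\rK)$ directly. The conceptual step is Part 1, but it is short once the injectivity of $f_{r+*}L_{r+}$ is available; the point that needs care is the final bookkeeping in Part 3 --- that passing from $M$ to $\Fr M$ and then applying $\Fzr$ retains all of the radius-$r$ multiplicity --- for which the support constraints of Proposition~\ref{support} and the ``equal totals force equal terms'' argument are exactly what is needed.
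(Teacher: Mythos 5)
Your proofs of Parts 1 and 2 match the paper's almost verbatim: the key inclusion in Part 1 is exactly the injectivity of $f_{r+*}L_{r+}$ from Corollary~\ref{injectivity p}, used to extend a morphism on $\Fr M$ to all of $M$, and Part 2 is the same formal consequence. For Part 3, however, you take a genuinely different route. The paper uses Part 2 and additivity of $m(-)$ to reduce immediately to the case of an irreducible $M$ with $\supp m(M)=\{s\}$, then handles $s\neq r$ trivially and $s=r$ by showing $\Fzr M=M$ via Corollary~\ref{cDDT} and $\Fr M=M$ via Propositions~\ref{dimension formula} and \ref{DTT}. You instead first prove three dimension formulas, $\dim\Fzrh N=\sum_{s<r}m(N)(s)$, $\dim\Fzr N=\sum_{s\le r}m(N)(s)$, $\dim\Fr N=\sum_{s\ge r}m(N)(s)$ (via additivity plus the irreducible case, and the duality $\dim\Fr N=\dim\H(DN,f_{r*}L_r)$), and then apply them to $N=\Fr M$ together with the support constraint of Proposition~\ref{support} and a ``equal totals force equal terms'' argument. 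Your argument is somewhat longer and proves more than asked --- you get the equality $\dim F^f_{[r,r]}M=m(M)(r)$ rather than only $\geq$ --- which the paper only obtains later in Theorem~\ref{SDT}. Both routes rest on the same toolkit (exactness from the injectivity results, Proposition~\ref{dimension formula}, Corollary~\ref{cDDT}, Proposition~\ref{support}), so the difference is organizational: the paper proves the minimum needed at this point, deferring the dimension formulas, while you front-load them. Your proof is correct.
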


\begin{proof}
1. Let $M\in \C$. We have $\Fzr \Fr M\subset \Fzr M\cap \Fr M$. We prove the converse. Let $x\in \Fzr M\cap \Fr M$. Let $s\in \H(\Fr M,f_{r+*}L_{r+})$. By Corollary \ref{injectivity p}, $s$ extends to a morphism $s^f:M\to f_{r+*}L_{r+}$. By $x\in \Fzr M$, we have $s^f(x)=0$. Hence $s(x)=0$. Therefore $x\in \Fzr\Fr M$.

2. It follows from part 1 and Proposition \ref{exact functor}.

3. By part 2 and the additivity of $m(-)$, we may assume $M$ is irreducible. We have $\supp m(M)=\{s\}$ for $s\in (0,r(K,\partial)]$. In the case of $s\neq r$, the assertion holds by $m(M)(r)=0$. In the case of $s=r$, we have $\H(M,f_{r+*}L_{r+})=0$ by Corollary \ref{cDDT}. Hence $\Fzr M=M$ by Proposition \ref{dimension formula}. We have $dim \Fr M=dim \H(DM,f_{r*}L_r)=dim DM=dim M$ by $\supp m(DM)=\supp m(M)=\{r\}$ and Propositions \ref{dimension formula} and \ref{DTT}. Hence $\Fr M=M$. Thus $F^f_{[r,r]}M=M$. Hence $dim F^f_{[r,r]}M=dim M=m(M)(r)$.
\end{proof}

\begin{theorem}\label{SDT}%[label:SDT]
\begin{enumerate}
\item For $r\in (0,r(K,\partial)]$, we have $\Frs=F^f_{[r,r]}$.
\item The natural transformation $I_{sp}:\oplus_{r\in (0,r(K,\partial)]} F_{sp,[r,r]}\to id_{\Cf}$ is a natural isomorphism.
\end{enumerate}
%1. For $r\in (0,r(K,\partial)]$, we have $\Frs=F^f_{[r,r]}$.

%2. The natural transformation $I_{sp}:\oplus_{r\in (0,r(K,\partial)]} F_{sp,[r,r]}\to id_{\Cf}$ is a natural isomorphism.
\end{theorem}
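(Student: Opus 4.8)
The plan is to deduce Theorem \ref{SDT} from the machinery already assembled, using the characterization of $I_{sp}$ provided by Lemma \ref{multiplicity lemma} together with the dimension inequality in Lemma \ref{SDT lemma}(3). First I would prove part 1. By definition $\Frs M$ is the maximum submodule $M'$ of $M$ with $\supp m(M')\subset [r,r]=\{r\}$. By Proposition \ref{support}(2),(5), the submodule $\Frr M=\Fzr M\cap\Fr M$ (for $r\neq\rK$; for $r=\rK$ one uses $\Fr M$ alone, noting $\supp m(\Fr M)\subset[\rK,\rK]$ directly) satisfies $\supp m(\Frr M)\subset (0,r]\cap[r,\rK]=\{r\}$, so $\Frr M\subset\Frs M$. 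For the reverse inclusion, let $M'\subset M$ with $\supp m(M')\subset\{r\}$; then $\supp m(M')\subset(0,r]$ forces $\H(M',f_{r+*}L_{r+})=0$ by Corollary \ref{cDDT}, hence $M'=\Fzr M'$ by Proposition \ref{dimension formula}, and $\supp m(M')=\supp m(D M')\subset[r,\rK]$ forces $\dim\H(DM',f_{r*}L_r)=\dim DM'$ by Proposition \ref{DTT}, hence $\dim\Fr M'=\dim M'$, i.e.\ $M'=\Fr M'$. Thus $M'\subset\Fzr M\cap\Fr M=\Frr M$, and since $\Frr=F^f_{[r,r]}$ by Lemma \ref{SDT lemma}(1), we get $\Frs M=F^f_{[r,r]}M$ as submodules of $M$, and the identification of the functorial structures is immediate.

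For part 2, apply Lemma \ref{multiplicity lemma}(2) to the partition $X=\{[r,r]\}_{r\in(0,\rK]}$: the natural transformation $I_{sp}=I(X)$ is a pointwise monomorphism by Lemma \ref{multiplicity lemma}(1), and $I(X)_M$ is an isomorphism for a given $M\in\Cf$ if and only if $\dim F_{sp,[r,r]}M\ge\sum_{s\in[r,r]}m(M)(s)=m(M)(r)$ for every $r$. By part 1, $\dim F_{sp,[r,r]}M=\dim F^f_{[r,r]}M$, and Lemma \ref{SDT lemma}(3) gives exactly $\dim F^f_{[r,r]}M\ge m(M)(r)$. Hence $I_{sp,M}$ is an isomorphism for every $M\in\Cf$, i.e.\ $I_{sp}$ is a natural isomorphism.

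I do not expect a serious obstacle: essentially all the content has been front-loaded into Proposition \ref{support}, Lemma \ref{SDT lemma}, and the abstract multiplicity bookkeeping of Lemma \ref{multiplicity lemma}. The one point requiring a little care is the boundary case $r=\rK$, where $\Fzr$ and $f_{r+*}L_{r+}$ are not defined; there one should observe directly that $\Fr M$ already has $\supp m(\Fr M)\subset[\rK,\rK]$ (Proposition \ref{support}(2) with $r=\rK$), so $F^f_{[\rK,\rK]}=\Fr$ plays the role of $\Frr$, and Lemma \ref{SDT lemma}(3) still applies verbatim. The other mild subtlety is keeping straight that the inclusions $\Frr M\subset\Frs M$ and $\Frs M\subset\Frr M$ are genuinely inclusions of the \emph{same} subobject of $M$ and not merely an abstract isomorphism, which is why I phrase everything in terms of submodules of a fixed $M$ throughout.
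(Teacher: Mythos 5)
Your argument is correct and rests on the same two pillars as the paper's proof: the support bounds of Proposition \ref{support} and the dimension inequality of Lemma \ref{SDT lemma}(3). The only real divergence is how the inclusion $\Frs M\subset F^f_{[r,r]}M$ is obtained. The paper never proves it directly: it shows $F^f_{[r,r]}M\subset\Frs M$ and then extracts $\dim\Frs M=\dim F^f_{[r,r]}M$ from the dimension chain $\dim M\ge\sum_t\dim\Frt M\ge\sum_t\dim F^f_{[t,t]}M\ge\sum_t m(M)(t)=\dim M$, so that equality of the subobjects follows only after the counting. You instead establish the reverse inclusion intrinsically (via Corollary \ref{cDDT} to get $M'\subset\Fzr M$ and Propositions \ref{dimension formula}, \ref{DTT} to get $M'\subset\Fr M$), making part 1 self-contained, and then obtain part 2 by citing Lemma \ref{multiplicity lemma}(2) abstractly where the paper unfolds that lemma's dimension bookkeeping inline. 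Both routes are sound; yours is a bit more modular, the paper's a bit more compact. Your explicit handling of the boundary case $r=\rK$ (where $\Fzr$ and $f_{r+*}L_{r+}$ are not defined and one should take $F^f_{[\rK,\rK]}=\Fr$) is a point the paper glosses over, and you are right to flag it. One minor citation slip: the identity $\Frr=F^f_{[r,r]}$ on $\Cf$ is the paper's notational convention for restricting an endofunctor of $\C$ to $\Cf$, not the content of Lemma \ref{SDT lemma}(1), which states $\Frr=\Fzr\Fr$.
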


\begin{proof}
We have $\supp m(F^f_{[r,r]}M)\subset \supp m(F^f_{(0,r]}M)\cap \supp m(F^f_{[r,r(K,\partial)]}M)=(0,r]\cap [r,r(K,\partial)]=\{r\}$ by Proposition \ref{support}. Hence $F^f_{[r,r]}M\subset \Frs M$. By Lemma \ref{SDT lemma}, $dim M\ge\sum_{t\in (0,r(K,\partial)]}dim F_{sp,[t,t]}M\ge \sum_{t\in (0,r(K,\partial)]}dim F^f_{[t,t]}M\ge \sum_{t\in (0,r(K,\partial)]}m(M)(t)=dim M$, which implies that $I_{sp,M}$ is an isomorphism. The inequalities also imply $dim \Frs M=dim F^f_{[r,r]}M$.
\end{proof}

\begin{corollary}%[label:exact]
For $r\in (0,r(K,\partial)]$, the functor $F^f_{[r,r]}$ is exact.
%For $r\in (0,r(K,\partial)]$, the functors $\Frs$ and $F^f_{[r,r]}$ are exact.
\end{corollary}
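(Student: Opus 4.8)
The plan is to obtain this as a formal consequence of the decomposition theorem just established. By Theorem \ref{SDT}, part 1, we have $F^f_{[r,r]}=\Frs$, so it suffices to show that $\Frs$ is exact; and by Theorem \ref{SDT}, part 2, the natural transformation $I_{sp}\colon\bigoplus_{t\in(0,\rK]}\Frt\to id_{\Cf}$ is a natural isomorphism, so that for each $M\in\Cf$ the map $I_{sp,M}$ identifies $M$ with the direct sum $\bigoplus_{t\in(0,\rK]}\Frt M$, which is finite by Lemma \ref{multiplicity property}.

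First I would record that, under these identifications, an arbitrary morphism $\alpha\colon M'\to M$ in $\Cf$ corresponds to $\bigoplus_{t\in(0,\rK]}\Frt\alpha$: this is merely the naturality square for $I_{sp}$ together with the fact that its horizontal arrows $I_{sp,M'}$ and $I_{sp,M}$ are isomorphisms. Hence a short exact sequence $0\to M'\xrightarrow{\alpha}M\xrightarrow{\beta}M''\to 0$ in $\Cf$ is carried, after applying $I_{sp,M'}$, $I_{sp,M}$ and $I_{sp,M''}$, to the finite direct sum over $t\in(0,\rK]$ of the sequences $0\to\Frt M'\xrightarrow{\Frt\alpha}\Frt M\xrightarrow{\Frt\beta}\Frt M''\to 0$. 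Then I would invoke the elementary fact that homology commutes with finite direct sums in an abelian category, so that a finite direct sum of complexes is exact if and only if each of its summands is exact; applying this, the exactness of the original sequence forces the exactness of the summand indexed by $t=r$, and since $\Frs=F^f_{[r,r]}$ this is precisely the asserted exactness.

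I do not expect any genuine obstacle here: once Theorem \ref{SDT} is in hand the argument is purely formal, all the real work (the injectivity of $f_{r*}L_r$, the transfer-theorem input, and the dimension count making $I_{sp}$ an isomorphism) having been carried out already. The only point deserving a word is the boundary value $r=r(K,\partial)$, which is however already covered by the range of Theorem \ref{SDT}; alternatively, for that value one may appeal directly to the exactness of $F^f_{[\rK,\rK]}$ furnished by Proposition \ref{exact functor}, part 3.
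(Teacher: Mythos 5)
Your argument is correct, but it is worth noting that the paper has in fact already established this exactness statement \emph{before} Theorem~\ref{SDT}, as part~2 of Lemma~\ref{SDT lemma}, and by a different mechanism: there the proof writes $F^f_{[r,r]}=F^f_{(0,r]}\circ F^f_{[r,r(K,\partial)]}$ (for $r\neq r(K,\partial)$, using Corollary~\ref{injectivity p} to show the two orders agree) as a composition of the exact functors from Proposition~\ref{exact functor}, and handles $r=r(K,\partial)$ directly via that proposition. That computation is used as input to Lemma~\ref{SDT lemma} part~3 and hence to Theorem~\ref{SDT} itself; the Corollary then restates the exactness in the ``sp'' guise afforded by Theorem~\ref{SDT} part~1. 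Your route --- invoke Theorem~\ref{SDT} to write $id_{\Cf}\cong\bigoplus_t F_{sp,[t,t]}$, observe that a finite direct sum of complexes is exact iff each summand is, and identify $F_{sp,[r,r]}$ with $F^f_{[r,r]}$ --- is a clean, purely formal derivation that works once the theorem is available, and it has the cosmetic advantage of avoiding any case split at $r=r(K,\partial)$. Both are valid; yours trades the hands-on use of the injectivity of $f_{r+*}L_{r+}$ for the global decomposition, which is acceptable here because Theorem~\ref{SDT} is proved by an independent dimension count once Lemma~\ref{SDT lemma} is in place, so there is no circularity in re-deriving exactness from it.
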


\section{A decomposition theorem in $\Cf$ when $p(K)=0$}%{The spectral decomposition theorem when $p(K)=0$}

We prove a decomposition theorem in $\Cf$ when $p(K)=0$, which is regarded as a stronger form of Theorem \ref{main theorem}. The reader note that Theorem \ref{main theorem} is proved in \cite{good formalI}. Our result is to give a description of the direct summands appearing in the decomposition.

Throughout this section, we assume $p(K)=0$.

\begin{theorem}\label{SDTz}%[label:SDTz]
The obvious natural transformation $I_{sp}:\oplus_{r\in (0,r(K,\partial)]}\Frs\to id_{\Cf}$ is a natural isomorphism.
\end{theorem}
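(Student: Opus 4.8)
The plan is to reduce the statement, via Lemma~\ref{multiplicity lemma}(2), to the single numerical inequality $\dim \Frs M \ge m(M)(r)$ for every $M \in \Cf$ and every $r \in (0,r(K,\partial)]$, and then to meet this bound by importing the decomposition into subsidiary radii that is already available in characteristic zero. Concretely, by Lemma~\ref{multiplicity lemma}(2) it suffices to exhibit, for each such $r$, a $\KT$-submodule $M_r \subseteq M$ with $\supp m(M_r) \subseteq \{r\}$ and $\dim M_r \ge m(M)(r)$: since $\Frs M$ is by construction the \emph{maximal} submodule of $M$ lying in $S(M;\{r\})$, such an $M_r$ automatically satisfies $M_r \subseteq \Frs M$, hence $\dim \Frs M \ge \dim M_r \ge m(M)(r)$, and then Lemma~\ref{multiplicity lemma}(2) gives that $I_{sp,M}$ is an isomorphism, i.e.\ that $I_{sp}$ is a natural isomorphism.

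The submodules $M_r$ are produced by prior work. Under the equivalence of $\Cf$ with the category of finite differential modules over $(K,\partial)$ given by $\V,\MM$, the case $\#J=1$ of Theorem~\ref{main theorem} for $p(K)=0$ --- proved by Kedlaya in his work on good formal structures \cite{good formalI}, and, after renormalizing radii by $r(K,\partial)$, essentially contained in \cite[Theorem~1.5.6]{kx} --- furnishes a direct sum decomposition $M \cong \bigoplus_{r \in (0,r(K,\partial)]} M_r$ in $\Cf$ in which only finitely many $M_r$ are nonzero and every Jordan--H\"older factor of $M_r$ has extrinsic subsidiary $\partial$-generic radius of convergence equal to $r$; equivalently $\supp m(M_r) \subseteq \{r\}$. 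Viewing each $M_r$ as a submodule of $M$ through the decomposition, additivity of $m(-)$ on $\Cf$ (Lemma~\ref{multiplicity property}(2)) gives $m(M) = \sum_{r} m(M_r)$, and since $m(M_r)$ is supported on $\{r\}$ we obtain $m(M)(r) = m(M_r)(r) = \sum_{r'} m(M_r)(r') = \dim M_r$, the last equality by Lemma~\ref{multiplicity property}(3). Thus each $M_r$ realizes the bound required in the first paragraph, and the proof is complete.

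The only substantive ingredient is the characteristic-zero decomposition theorem of \cite{good formalI}; everything else is formal manipulation of the multiplicity function $m(-)$ and of the maximality defining $\Frs$. This is also exactly where the argument must depart from the proof of Theorem~\ref{SDT}: for $p(K)>0$ the decomposition was constructed from scratch out of the injectivity of $f_{r*}L_r$ (Theorem~\ref{injectivity}), which rested in turn on Christol's infinite-dimensionality statement in Lemma~\ref{robba calculation}(2); for $p(K)=0$ that statement is false, and in fact $f_{r*}L_r$ fails to be injective in $\C$ already for $r<r(K,\partial)$ (indeed $\Ex^1(M,f_{r*}L_r)$ can be nonzero for a rank one $M$ whose subsidiary radius is smaller than $r$), so there is no self-contained route to the decomposition and one has to take it as input. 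The contribution of the present section in the case $p(K)=0$ is therefore not the existence of the decomposition but the intrinsic description of its direct summands as the subobjects cut out by the functors $\Frs$.
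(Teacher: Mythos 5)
Your proof is correct and takes essentially the same approach as the paper, whose own proof of Theorem~\ref{SDTz} is simply a citation to \cite[Proposition 1.6.3]{good formalI} --- the same Kedlaya result you invoke as the substantive input. You additionally spell out, via Lemma~\ref{multiplicity lemma}(2) and the maximality defining $\Frs$, the bridging argument (that any decomposition into pieces with one-point support must coincide with the $\Frs$-decomposition) which the paper leaves implicit in the citation.
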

\begin{proof}
See \cite[Proposition 1.6.3]{good formalI}
\end{proof}

\begin{corollary}\label{exact lemmaz}%[label:exact lemmaz]
For $r\in (0,r(K,\partial)]$, the functor $F^f_{sp,[r,r]}$ is exact.
\end{corollary}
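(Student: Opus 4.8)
The plan is to derive this purely formally from Theorem \ref{SDTz}, which exhibits $id_{\Cf}$ as the direct sum $\oplus_{t\in (0,\rK]}F_{sp,[t,t]}$ (finite on each object by Lemma \ref{multiplicity property}) via the natural isomorphism $I_{sp}$. A subfunctor that splits off the exact functor $id_{\Cf}$ in this way is automatically exact, and I would just spell this out.

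Concretely, fix a short exact sequence $0\to M'\to M\to M''\to 0$ in $\Cf$. First, since the $r$-th component of $I_{sp}$ is a pointwise monomorphism, $F^f_{sp,[r,r]}$ is a subfunctor of $id_{\Cf}$, so the induced map $F^f_{sp,[r,r]}M'\to F^f_{sp,[r,r]}M$ is already a monomorphism; it remains to check exactness in the middle and surjectivity of $F^f_{sp,[r,r]}M\to F^f_{sp,[r,r]}M''$. For this, apply each $F^f_{sp,[t,t]}$ for $t\in (0,\rK]$ to the sequence and form the direct sum of the resulting three-term complexes — a legitimate finite direct sum, since for each of $M',M,M''$ only finitely many summands are nonzero by Lemma \ref{multiplicity property}. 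By naturality of $I_{sp}$, this direct-sum complex $0\to \oplus_t F_{sp,[t,t]}M'\to \oplus_t F_{sp,[t,t]}M\to \oplus_t F_{sp,[t,t]}M''\to 0$ is carried by $I_{sp}$ isomorphically onto $0\to M'\to M\to M''\to 0$, hence is exact. Since homology commutes with finite direct sums in an abelian category, each summand complex $0\to F^f_{sp,[r,r]}M'\to F^f_{sp,[r,r]}M\to F^f_{sp,[r,r]}M''\to 0$ is itself exact. Therefore $F^f_{sp,[r,r]}$ is exact.

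There is essentially no obstacle here: all the real content sits in Theorem \ref{SDTz} (and behind it \cite{good formalI}), and the argument above is only the standard observation that a direct summand of the identity functor inherits exactness. One could alternatively reproduce the route used in the case $p(K)>0$ — via the identity $\Frr=\Fzr\Fr$ together with exactness of $\H(-,f_{r*}L_r)$ and $\H(-,f_{r+*}L_{r+})$ — but invoking Theorem \ref{SDTz} directly is shorter.
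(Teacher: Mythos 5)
Your argument is correct and coincides with what the paper leaves implicit: Corollary \ref{exact lemmaz} is stated without proof immediately after Theorem \ref{SDTz}, and the intended reasoning is precisely the formal observation you spell out — each $F_{sp,[r,r]}$ splits off $id_{\Cf}$ via the natural isomorphism $I_{sp}$ (with only finitely many nonzero summands on each object, by Lemma \ref{multiplicity property}), so it inherits exactness. One small correction to your closing aside: the alternative route via $\Frr=\Fzr\Fr$ together with exactness of $\H(-,f_{r*}L_r)$ and $\H(-,f_{r+*}L_{r+})$ is not actually available in the $p(K)=0$ case as the paper develops it. Lemma \ref{SDT lemma} part 1 and Proposition \ref{exact functor} rest on the injectivity results (Theorem \ref{injectivity}, Corollary \ref{injectivity p}), which are proved only under $p(K)>0$; and the $p(K)=0$ analogue, Corollary \ref{exactz}, is itself derived downstream from the very corollary you are proving (via Theorem \ref{comparisonz}), so that route would be circular here.
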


\begin{corollary}\label{supportz}%[label:supportz]
\begin{enumerate}
\item For $r\in (0,r(K,\partial)]$ and $M\in\Cf$, \TFAE
\begin{enumerate}
\item We have $\supp m(M)\subset (0,r)$.
\item We have $\H(M,f_{r*}L_r)=0$.
\end{enumerate}
\item For $r\in (0,r(K,\partial))$ and $M\in\Cf$, \TFAE
\begin{enumerate}
\item We have $\supp m(M)\subset (0,r]$.
\item We have $\H(M,f_{r+*}L_{r+})=0$.
\end{enumerate}

\end{enumerate}
%1. For $r\in (0,r(K,\partial)]$ and $M\in\Cf$, TFAE

%(a) $\supp m(M)\subset (0,r)$.

%(b) $\H(M,f_{r*}L_r)=0$.

%2. For $r\in (0,r(K,\partial))$ and $M\in\Cf$, TFAE

%(a) $\supp m(M)\subset (0,r]$.

%(b) $\H(M,f_{r+*}L_{r+})=0$.
\end{corollary}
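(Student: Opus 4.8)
The plan is to follow the pattern of Corollary \ref{cDDT}, with one adjustment: the injectivity of $f_{r*}L_r$ in $\C$ (Theorem \ref{injectivity}) is unavailable when $p(K)=0$, so in place of the exactness of $\H(-,f_{r*}L_r)$ I will feed in the decomposition of Theorem \ref{SDTz}. Writing $M_{\lambda}=f_{r*}L_r$, so that $F_-=\Fzrh$, Proposition \ref{dimension formula}(1) and (5) give, for any $M\in\Cf$, that $\H(M,f_{r*}L_r)=0$ if and only if $\Fzrh M=M$. Hence part 1 reduces to the equivalence $\Fzrh M=M\iff\supp m(M)\subset(0,r)$.

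First I would prove $\supp m(M)\subset(0,r)\Rightarrow\Fzrh M=M$. Put $N:=M/\Fzrh M$. By Proposition \ref{dimension formula}(7), $\Fzrh N=0$, so the equivalence of conditions 3 and 6 in Proposition \ref{DTT} (applied to $N$) yields $\supp m(N)\subset[r,\rK]$. On the other hand $N$ is a quotient of $M$, so additivity of $m(-)$ gives $\supp m(N)\subset\supp m(M)\subset(0,r)$. Therefore $\supp m(N)=\emptyset$, whence $\dim N=0$ by Lemma \ref{multiplicity property}(3); that is, $\Fzrh M=M$, and so $\H(M,f_{r*}L_r)=0$. Note that this implication uses no decomposition theorem, only Propositions \ref{dimension formula} and \ref{DTT}.

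For the converse I would argue by contraposition. Suppose $\supp m(M)\not\subset(0,r)$, so $m(M)(t_0)>0$ for some $t_0\in[r,\rK]$. By Theorem \ref{SDTz} we have $M\cong\oplus_{t\in(0,\rK]}F_{sp,[t,t]}M$, and since $\supp m(F_{sp,[t,t]}M)\subseteq\{t\}$, additivity of $m(-)$ and comparison of multiplicities at $t_0$ give $m(F_{sp,[t_0,t_0]}M)(t_0)=m(M)(t_0)>0$; thus $N:=F_{sp,[t_0,t_0]}M$ is a nonzero direct summand of $M$ with $\supp m(N)=\{t_0\}\subset[r,\rK]$. By the equivalence of conditions 1 and 3 in Proposition \ref{DTT}, $\dim\H(N,f_{r*}L_r)=\dim N>0$; as $N$ is a direct summand of $M$, $\H(N,f_{r*}L_r)$ is a direct summand of $\H(M,f_{r*}L_r)$, so $\H(M,f_{r*}L_r)\neq0$. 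This finishes part 1.

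Finally, part 2 would be deduced from part 1 by the same device used to pass from Corollary \ref{cDDT}(1) to Corollary \ref{cDDT}(2): by part 1, condition (a) is equivalent to $\H(M,f_{t*}L_t)=0$ for every $t\in(r,\rK]$; since $f_{t'*}L_{t'}\subseteq f_{t*}L_t$ whenever $r<t\le t'$ (functions converging on a larger disk converge on a smaller one), and since by Lemma \ref{limit lemma}(2) one has $\H(M,f_{t*}L_t)\cong\H(M,f_{r+*}L_{r+})$ for all $t$ in a right neighbourhood of $r$, this last condition is in turn equivalent to $\H(M,f_{r+*}L_{r+})=0$. I expect the only genuine obstacle to be the one already highlighted: because $\H(-,f_{r*}L_r)$ need not be exact when $p(K)=0$, the implication $(b)\Rightarrow(a)$ cannot be treated by d\'evissage along a composition series and must instead be routed through the external decomposition Theorem \ref{SDTz}; everything else is bookkeeping with the support function $m(-)$.
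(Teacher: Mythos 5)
Your proof is correct and follows essentially the same route as the paper: the paper also first uses Proposition \ref{dimension formula} and Proposition \ref{DTT} directly for the implication $\supp m(M)\subset(0,r)\Rightarrow\H(M,f_{r*}L_r)=0$, then invokes the decomposition of Theorem \ref{SDTz} (together with the fact that $\H(-,f_{r*}L_r)$ commutes with finite direct sums) to isolate a nonzero summand with one-point support in the converse direction, and finally deduces part 2 from part 1 via Lemma \ref{limit lemma} exactly as you describe. The only cosmetic difference is that the paper reduces to $\#\supp m(M)=1$ at the outset, whereas you apply the decomposition only where it is needed.
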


\begin{proof}
1. We may assume $M\neq 0$. Since $m(-)$ is additive and $\H(-,f_{r*}L_r)$ commutes with finite direct sum, we may assume $\# \supp m(M)=1$ by Cor. \ref{SDTz}.

(a)$\Rightarrow$(b) Assume $\H(M,f_{r*}L_r)\neq 0$. We have $dim \H(M,f_{r*}L_r)=dim \H(M/\Fzrh M,f_{r*}L_r)=dim M/\Fzrh M>0$ by Proposition \ref{dimension formula}. Hence $\supp m(M/\Fzrh M)\subset [r,r(K,\partial)]$ by Proposition \ref{DTT}. Therefore $\supp m(M)$ contains an element of $[r,r(K,\partial)]$.

(b)$\Rightarrow$(a) Assume $\supp m(M)\not\subset (0,r)$. We write $\supp m(M)=\{s\}$ for $s\in [r,\rK]$. By Proposition \ref{DTT}, $dim \H(M,f_{r*}L_r)=dim M>0$.

2. We can prove as in the proof of Corollary \ref{cDDT}.
\end{proof}

\begin{proposition}\label{supportz2}%[label:supportz2]
Let $r\in (0,\rK]$ and $M\in \Cf$.

1. We have $\supp m(\H(M,f_{r*}L_r))\subset [r,r(K,\partial)]$.

2. We have $\supp m(\Fr M)\subset [r,r(K,\partial)]$.

3. We have $\supp m(\Fzrh M)\subset (0,r)$.
\end{proposition}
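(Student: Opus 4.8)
The plan is to follow the proof of Proposition~\ref{support}, which establishes the identical statement under the assumption $p(K)>0$; its three parts use only two facts special to that section, so the first thing I would do is check that both survive here. The delicate one is Corollary~\ref{solvable} (which gives, among other things, $\H(\Fzrh M,f_{r*}L_r)=0$): its proof passes through Lemma~\ref{solvable lemma}, and that lemma rests only on the isomorphism $f_r^*M\cong L_r^{\dim M}$ furnished by Proposition~\ref{DTT} together with the assertion recorded in Lemma~\ref{robba calculation} that $\dim H^1(C_r(0_m))=0$ \emph{regardless of} $p(K)$; neither ingredient uses $p(K)>0$, so Corollary~\ref{solvable} is available in the present setting. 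The other fact, Corollary~\ref{cDDT}, is simply replaced by its characteristic-zero counterpart Corollary~\ref{supportz}.

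Granting this, part 1 goes as follows. First I would replace $M$ by $N:=M/\Fzrh M$: by parts 6 and 7 of Proposition~\ref{dimension formula} the canonical surjection induces an isomorphism $\H(N,f_{r*}L_r)\to\H(M,f_{r*}L_r)$ and one has $\dim\H(N,f_{r*}L_r)=\dim N$, so Proposition~\ref{DTT} applies to $N$ and yields both $\supp m(N)\subset[r,\rK]$ and the fact that the obvious morphism $\H(N,f_{r*}L_r)\to\H(N,f_{0*}L_0)=DN$ is an isomorphism. Since $m(DN)=m(N)$ by part 4 of Lemma~\ref{multiplicity property}, this gives $\supp m(\H(M,f_{r*}L_r))=\supp m(N)\subset[r,\rK]$, which is part 1. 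For part 2, parts 3 and 4 of Proposition~\ref{dimension formula} provide an isomorphism $\Fr M\cong\H(DM,f_{r*}L_r)$, and $DM\in\Cf$ because $D$ preserves dimension (Lemma~\ref{duality}), so part 1 applied to $DM$ gives $\supp m(\Fr M)\subset[r,\rK]$. For part 3, Corollary~\ref{solvable} gives $\H(\Fzrh M,f_{r*}L_r)=0$, and then part 1 of Corollary~\ref{supportz} yields $\supp m(\Fzrh M)\subset(0,r)$.

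The only genuine content is the verification in the first paragraph, namely that the chain Corollary~\ref{solvable} $\leftarrow$ Lemma~\ref{solvable lemma} $\leftarrow$ Lemma~\ref{robba calculation} nowhere invokes $p(K)>0$; this is routine but should be spelled out. Should one prefer to avoid citing anything from the $p(K)>0$ section, the vanishing $\H(\Fzrh M,f_{r*}L_r)=0$ can be reproved directly: apply $\H(-,f_{r*}L_r)$ to $0\to\Fzrh M\to M\to M/\Fzrh M\to 0$, note by part 6 of Proposition~\ref{dimension formula} that $\H(M,f_{r*}L_r)\to\H(\Fzrh M,f_{r*}L_r)$ is the zero map, so $\H(\Fzrh M,f_{r*}L_r)$ embeds into $\Ex^1(M/\Fzrh M,f_{r*}L_r)$, which vanishes by Lemma~\ref{solvable lemma} since $\Fzrh(M/\Fzrh M)=0$ (part 7 of Proposition~\ref{dimension formula}).
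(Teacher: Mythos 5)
Your proof is correct and follows the same route as the paper: parts 1 and 2 are the argument from Proposition~\ref{support} verbatim, and part 3 cites Corollary~\ref{solvable} together with the characteristic-zero replacement Corollary~\ref{supportz}. The careful verification that the chain Corollary~\ref{solvable} $\leftarrow$ Lemma~\ref{solvable lemma} $\leftarrow$ Lemma~\ref{robba calculation} nowhere uses $p(K)>0$ is exactly the point the paper leaves implicit, and your alternative direct reproof of $\H(\Fzrh M,f_{r*}L_r)=0$ via the long exact sequence is a sound (if unnecessary) safeguard.
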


\begin{proof}
Parts 1,2 are proved by a similar way as in the proof of Proposition \ref{support}.

3. The assertion follows from Corollary \ref{solvable} and Proposition \ref{supportz}.
\end{proof}

\begin{theorem}\label{comparisonz}%[label:comparisonz]
For all $r\in (0,r(K,\partial)]$, we have $\Frs=F^f_{[r,r]}$.
\end{theorem}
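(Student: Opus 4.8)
The plan is to bootstrap from the decomposition of Theorem~\ref{SDTz}; we cannot simply rerun the proof of Theorem~\ref{SDT}, since that argument used the injectivity of $f_{r*}L_r$ and $f_{r+*}L_{r+}$ (Theorem~\ref{injectivity}, Corollary~\ref{injectivity p}), available only for $p(K)>0$. Fix $M\in\Cf$ and $r\in(0,\rK]$, with the convention $F_{(0,\rK]}=\mathrm{id}_{\C}$, so that $\Frr=\Fr$ when $r=\rK$ and $\Frr=\Fzr\cap\Fr$ when $r<\rK$. The first point is that $\Frr$, being a subfunctor of $\mathrm{id}_{\C}$, commutes with finite direct sums: naturality of the inclusion $\Frr\to\mathrm{id}_{\C}$ with respect to the canonical injections of a finite direct sum gives $\bigoplus_t\Frr N_t\subseteq\Frr\bigl(\bigoplus_tN_t\bigr)$, and naturality with respect to the canonical projections gives the reverse inclusion. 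Applying this to the decomposition $M\cong\bigoplus_{t\in(0,\rK]}F_{sp,[t,t]}M$ of Theorem~\ref{SDTz}, the theorem reduces to computing $\Frr N$ for $N\in\Cf$ with $\supp m(N)\subseteq\{t\}$, and showing $\Frr N=N$ when $t=r$ and $\Frr N=0$ otherwise.

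This computation splits into three cases, each an application of Propositions~\ref{dimension formula}, \ref{DTT}, \ref{supportz2}, Lemma~\ref{limit lemma}, Corollary~\ref{supportz}, and the identity $m(DN)=m(N)$ (Lemmas~\ref{multiplicity property}, \ref{duality}). If $t<r$: since $\Fr N$ is a submodule of $N$ we have $\supp m(\Fr N)\subseteq\{t\}$, while $\supp m(\Fr N)\subseteq[r,\rK]$ by Proposition~\ref{supportz2}; these are disjoint, so $\Fr N=0$ and hence $\Frr N=0$. If $t=r<\rK$: Corollary~\ref{supportz} gives $\H(N,f_{r+*}L_{r+})=0$, so $\Fzr N=N$ by part~1 of Proposition~\ref{dimension formula}; moreover $\supp m(DN)=\{r\}\subseteq[r,\rK]$ gives $\dim\H(DN,f_{r*}L_r)=\dim DN$ by Proposition~\ref{DTT}, so $\dim\Fr N=\dim N$ by part~5 of Proposition~\ref{dimension formula}, whence $\Fr N=N$; thus $\Frr N=\Fzr N\cap\Fr N=N$ (when $t=r=\rK$ one needs only $\Fr N=N$). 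If $t>r$ (so $r<\rK$): by Lemma~\ref{limit lemma} there is $s\in(r,t)$ with $\H(N,f_{s*}L_s)\cong\H(N,f_{r+*}L_{r+})$, and $\supp m(N)=\{t\}\subseteq[s,\rK]$ gives $\dim\H(N,f_{s*}L_s)=\dim N$ by Proposition~\ref{DTT}; hence $\dim\H(N,f_{r+*}L_{r+})=\dim N$, so $\Fzr N=0$ by part~5 of Proposition~\ref{dimension formula}, and $\Frr N=0$. Combining the three cases with the additivity from the first paragraph yields $\Frr M=\bigoplus_t\Frr(F_{sp,[t,t]}M)=F_{sp,[r,r]}M=\Frs M$, as desired.

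The only genuine content is the case $t>r$. In the setting $p(K)>0$ one proves $\H(\Fzr M,f_{r+*}L_{r+})=0$ directly (Proposition~\ref{support}), using injectivity of $f_{r+*}L_{r+}$ to extend a hypothetical nonzero morphism off $\Fzr M$; here, lacking that injectivity, one instead uses Theorem~\ref{SDTz} to isolate the ``radius larger than $r$'' part of $M$ and then the dimension formula together with Dwork's transfer theorem to see that $\Fzr$ annihilates it. Everything else is bookkeeping with $\supp m(-)$, the only mild subtlety being the convention $F_{(0,\rK]}=\mathrm{id}_{\C}$ at the endpoint $r=\rK$.
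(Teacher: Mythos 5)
Your proof is correct, and it takes a genuinely different route from the paper's. The paper argues both inclusions $\Frr M\subset\Frs M$ and $\Frs M\subset\Frr M$ directly for arbitrary $M$: the first is a support computation, showing $\supp m(\Frr M)\subset\{r\}$ by combining $\Fzr M\subset\bigcap_{t\in(r,\rK]}F_{(0,t)}M$ with Proposition~\ref{supportz2}; the second is obtained by showing $\Frs M\subset\Fr M$ (via the equivalence in Proposition~\ref{DTT} applied to $\Frs M$) and, when $r<\rK$, $\Frs M\subset\Fzr M$ (via Corollary~\ref{supportz}, which gives $\H(\Frs M,f_{r+*}L_{r+})=0$). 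You instead exploit that $\Frr$, being a subfunctor of $\mathrm{id}_{\C}$, commutes with finite direct sums, then invoke the decomposition of Theorem~\ref{SDTz} to reduce to $N$ with $\supp m(N)$ a singleton and run three casewise computations. Both proofs rest ultimately on Theorem~\ref{SDTz} — the paper through Corollary~\ref{supportz} and Proposition~\ref{supportz2}, you explicitly — and both avoid the injectivity of $f_{r*}L_r$ which is unavailable when $p(K)=0$. Your approach is slightly longer but more modular, and it makes the value of $\Frr$ on each isotypic summand explicit; the paper's is more streamlined. One small remark: your commutation-with-direct-sums observation is a reusable general fact (any subfunctor of the identity on an additive category has this property), and the three-case analysis is clean, so this is a perfectly good alternative proof.
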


\begin{proof}
We prove $\Frr M\subset \Frs M$. In the case of $r=\rK$, the assertion follows from Proposition \ref{supportz2}. We assume $r\neq\rK$. 
%We set $F_{(0,\rK]}M=M$ so that the following proof works in the case of $r=\rK$. 
We have $\Fzr M\subset \cap_{t\in (r,r(K,\partial)]}F_{(0,t)}M$ as we have an obvious monomorphism $f_{t*}L_t\to f_{r+*}L_{r+}$ for $t\in (r,\rK]$. Hence we have $\supp m(\Frr M)\subset \supp m(\Fzr M)\cap \supp m(\Fr M)\subset \cap_{t\in (r,r(K,\partial)]} \supp m(F_{(0,t)} M)\cap \supp m(\Fr M)\subset \cap_{t\in (r,r(K,\partial)]} (0,t)\cap [r,r(K,\partial)]=\{r\}$ by Proposition \ref{supportz2}.

We prove $\Frs M\subset \Frr M$. By Proposition \ref{DTT}, the obvious morphism $\H(\Frs M,f_{r*}L_r)\to\H(\Frs M,f_{0*}L_0)$ is an isomorphism. Hence, for $s\in DM$, $s(\Frs M)\subset f_{r*}L_r$. Therefore $\Frs M\subset \Fr M$. Assume $r\neq \rK$. By Proposition \ref{supportz}, $\H(\Frs M,f_{r+*}L_{r+})=0$. Hence, for $s\in \H(M,f_{r+*}L_{r+})=0$, $s(\Frs M)=0$. Therefore $\Frs M\subset \Fzr M$. Thus $\Frs M\subset\Frr M$.
\end{proof}

\begin{corollary}\label{exactz}%[label:exactz]
The functor $\Fr$ for $r\in (0,r(K,\partial)]$ is exact.
\end{corollary}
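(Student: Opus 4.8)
The plan is to avoid showing directly that $\Ex^1(M,f_{r*}L_r)=0$ for $M\in\Cf$ (the route used when $p(K)>0$, where it rested on the injectivity of $f_{r*}L_r$), and instead to identify $\Fr$, restricted to $\Cf$, with the finite direct sum $\bigoplus_{t\in[r,\rK]}F_{sp,[t,t]}$ of the already-exact functors of Corollary~\ref{exact lemmaz}. First I would record that $F_{[t,\rK]}M\subseteq\Fr M$ for all $t\ge r$ and $M\in\Cf$: since $f_{r*}L_r=\cap_{s\in(0,r)}f_{s,bd*}L_{s,bd}$ and $f_{t*}L_t=\cap_{s\in(0,t)}f_{s,bd*}L_{s,bd}$ with $(0,r)\subseteq(0,t)$, we have $f_{t*}L_t\subseteq f_{r*}L_r$ inside $f_{0*}L_0$, and then the descriptions $F_{[t,\rK]}M=\{x\in M:\forall s\in DM,\ s(x)\in f_{t*}L_t\}$ and the analogous one for $r$ from Corollary~\ref{description r} give the inclusion. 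Combining with Theorem~\ref{comparisonz}, which gives $F_{sp,[t,t]}=F^f_{[t,t]}\subseteq F_{[t,\rK]}$ (for $t<\rK$ because $F^f_{[t,t]}=F_{(0,t]}\cap F_{[t,\rK]}$, and with equality at $t=\rK$), we get $F_{sp,[t,t]}M\subseteq\Fr M$ for all $t\ge r$.

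Next I would bring in the support estimates. By Proposition~\ref{supportz2}, $\supp m(\Fr M)\subseteq[r,\rK]$, whereas $\supp m(F_{sp,[t,t]}M)\subseteq\{t\}$; so by additivity of $m(-)$ (Lemma~\ref{multiplicity property}), $\Fr M\cap F_{sp,[t,t]}M=0$ whenever $t<r$. Using the decomposition $M=\bigoplus_{t\in(0,\rK]}F_{sp,[t,t]}M$ of Theorem~\ref{SDTz}, any $x=\sum_t x_t\in\Fr M$ with $x_t\in F_{sp,[t,t]}M$ satisfies $\sum_{t<r}x_t=x-\sum_{t\ge r}x_t\in\Fr M\cap\bigl(\bigoplus_{t<r}F_{sp,[t,t]}M\bigr)=0$, whence $x_t=0$ for $t<r$; together with the previous paragraph this yields $\Fr M=\bigoplus_{t\in[r,\rK]}F_{sp,[t,t]}M$. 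This is precisely the sub-sum of the canonical splitting of Theorem~\ref{SDTz} over the indices $t\ge r$, hence natural in $M$, so restricted to $\Cf$ we get an isomorphism of functors $\Fr\cong\bigoplus_{t\in[r,\rK]}F^f_{sp,[t,t]}$.

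Finally, for each fixed $M\in\Cf$ only finitely many $F_{sp,[t,t]}M$ are nonzero (Lemma~\ref{multiplicity property}), and each $F^f_{sp,[t,t]}$ is exact by Corollary~\ref{exact lemmaz}; hence for a short exact sequence in $\Cf$ the induced sequence of finite direct sums $\bigoplus_{t\ge r}F^f_{sp,[t,t]}$ is exact, that is, $\Fr$ is exact. The one place demanding care is the bookkeeping in the second paragraph: one must check that the internal direct-sum decomposition used is the canonical one of Theorem~\ref{SDTz}, so that applying $\Fr$ to a short exact sequence genuinely returns the direct sum over $t\ge r$ of the short exact sequences obtained by applying the $F^f_{sp,[t,t]}$; everything else is formal. (As a byproduct one also obtains, as in Theorem~\ref{injectivity}, that $f_{r*}L_r$ is injective in $\C$ when $p(K)=0$.)
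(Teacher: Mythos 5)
Your main argument is correct and is essentially the (unwritten) proof the paper intends: from Corollary~\ref{description r} and the inclusions $f_{t*}L_t\subseteq f_{r*}L_r$ for $t\ge r$ you get $F_{sp,[t,t]}M\subseteq F_{[t,\rK]}M\subseteq\Fr M$; combined with $\supp m(\Fr M)\subseteq[r,\rK]$ from Proposition~\ref{supportz2} and the decomposition of Theorem~\ref{SDTz}, this identifies $\Fr$ on $\Cf$ with the canonical sub-sum $\bigoplus_{t\ge r}F^f_{sp,[t,t]}$, and exactness follows from Corollary~\ref{exact lemmaz} and the finiteness of the decomposition. The naturality point you flag in the last paragraph is the right thing to check, and it holds because the decomposition of Theorem~\ref{SDTz} is a natural isomorphism.

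The parenthetical ``byproduct'' at the end, however, is not justified. Exactness of $\H(-,f_{r*}L_r)$ on $\Cf$ (equivalently of $\Fr$, via the duality $D$) says only that a morphism $M'\to f_{r*}L_r$ from a \emph{finite} submodule $M'$ of a finite $M$ extends to $M$; injectivity of $f_{r*}L_r$ in $\C$ is the Baer criterion, which requires extending morphisms $I\to f_{r*}L_r$ from a left ideal $I=\KT\cdot P$ to all of $\KT$, i.e.\ surjectivity of $P\cdot$ on $(f_{r*}L_r)_+$, i.e.\ $\Ex^1(\KT/I,f_{r*}L_r)=0$. In the $p(K)>0$ case this is obtained via Proposition~\ref{vanishing}, whose proof runs through Lemmas~\ref{key lemma} and~\ref{non solvable}, both of which use $p(K)>0$ in an essential way. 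Nothing in your argument, nor in the $p(K)=0$ machinery of this section, gives that $\Ex^1$ vanishing, so you should either drop the remark or supply a separate argument for it. This does not affect the proof of the corollary itself, which is sound.
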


\section{Base change property of the function $m(-)$ and a rationality result}

We prove two properties of the function $m(-)$. One is a base change property and another concerns the values $r^{m(M)(r)}$ for $M\in\Cf$. The results of this section is not used in the proof of Theorem \ref{multiple SDT}.

%\subsection{A base change property of the function $m$}

%\input{71}%16

\subsection{A base change property of the function $m$}

Let $K'$ be a complete non-archimedean valuation field of characteristic $0$ with non-trivial valuation equipped with a bounded non-zero derivation, which is denoted by $\partial$ for simplicity. Let $\sigma:K\to K'$ be a ring homomorphism preserving the valuations and commuting the derivations. Note that we have $p(K')=p(K),\omega(K')=\omega(K),\rKp\le \rK$, where $r(K',\partial)$ defined as before. Let $i':K\to\KpT$ denote the ring homomorphism as before and $\tau:\KT\to \KpT$ the ring homomorphism corresponding to the data $(\KpT,i'\circ \sigma,T)$ in the sense of Lemma \ref{universality}. We repeat the basic constructions as before after replacing $K$ by $K'$. In this way, we obtain $g'_0:K\to K'[[X]],f'_0:K\to K'[[X]]\langle T\rangle,L'_r=L_r(K'\{X/r\},\partial),N'_{\pi}$ and so on. We denote the functors $\Frs,\Fr$ and so on for $K'$ by $\Frs,\Fr$ and so on for simplicity. Note that if $M\in \Cf$ then $\tau^*M\in Mod^f(\KpT)$ by Lemma \ref{base change}.

\begin{lemma}\label{bc inequality}%[label:bc inequality]
Let $M\in \Cf$.
\begin{enumerate}
\item  For $\pi$ satisfying condition (C) for $K'$, we have $dim \H(\tau^*M,f'_{\rKp,bd*}N'_{\pi})\le dim \H(M,f_{\rK,bd*}N_{\pi})$.
\item For $r\in (0,r(K',\partial)]$, $dim \H(\tau^*M,f'_{r*}L'_r)\le dim \H(M,f_{r*}L_r)$.
\end{enumerate}
\end{lemma}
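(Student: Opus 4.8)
The plan is to reduce both statements to a single linear‑algebra fact comparing convergence of power series over $K$ with convergence over $K'$, exploiting that $\sigma$ is isometric and commutes with the derivations.

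First I would fix a basis $e_1,\dots,e_m$ of $i_*M$ and let $G_i\in M_m(K)$ be the matrix of the action of $T^i$ on $M$ with respect to it. Since $\tau(T)=T$ and $\tau\circ i=i'\circ\sigma$, the elements $1\otimes e_1,\dots,1\otimes e_m$ form a basis of $i'_*\tau^*M$, and because $\sigma\circ\partial=\partial\circ\sigma$ (so that $g'_0\circ\sigma$ is the entrywise application of $\sigma$ composed with $g_0$) the matrix of $T^i$ on $\tau^*M$ with respect to this basis is $\sigma(G_i)$, the entrywise image of $G_i$. Using Lemma \ref{abelian natural isom} and the explicit formula $\Psi_M(\chi)(x)=(\chi(\tfrac{1}{i!}T^i\cdot x))_i$ of Lemma \ref{dual lemma}, the monomorphism $f_{\rK,bd*}N_\pi\hookrightarrow f_{0*}L_0$ identifies $\H(M,f_{\rK,bd*}N_\pi)$ with the $K$-subspace
\[
U:=\Bigl\{\,v\in K^m\ :\ \sup_{i\in\N,\,h}\ \bigl|\tfrac{1}{i!}({}^{t}G_i\,v)_h\bigr|\,\pi_i<+\infty\,\Bigr\}\subseteq K^m
\]
(it suffices to test the convergence condition on the basis vectors and use the ultrametric inequality), and similarly $\H(\tau^*M,f'_{\rKp,bd*}N'_\pi)$ with $U':=\{v'\in(K')^m:\sup_{i,h}|\tfrac1{i!}({}^{t}\sigma(G_i)\,v')_h|\,\pi_i<+\infty\}$; here I use the $K'$-analogues of the constructions of Section 3, available since $K'$ satisfies the same hypotheses. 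In particular $\dim_K\H(M,f_{\rK,bd*}N_\pi)=\dim_K U$ and $\dim_{K'}\H(\tau^*M,f'_{\rKp,bd*}N'_\pi)=\dim_{K'}U'$, and $N_\pi$, $f_{\rK,bd*}N_\pi$ are legitimate because $\pi$ satisfies condition (C) for $K$ as well ($\sup\pi_{i+1}/\pi_i\le\rKp\le\rK$).

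Then the content of Part 1 is the inclusion $U'\subseteq K'\cdot\sigma(U)$ (the $K'$-span of the image of $U$), which gives $\dim_{K'}U'\le\dim_{K'}(K'\otimes_K U)=\dim_K U$, proving Part 1. To prove the inclusion, take $v'\in U'$ and let $V'\subseteq K'$ be the $\sigma(K)$-subspace spanned by the $m$ coordinates of $v'$. As $\sigma(K)\cong K$ is complete, $V'$ is a finite-dimensional normed space over a complete non-archimedean field, hence admits a basis $\beta_1,\dots,\beta_e$ which is $C$-orthogonal for some $C\in(0,1]$, i.e.\ $|\sum_l\beta_l a_l|\ge C\max_l|\beta_l||a_l|$ for all $a_l\in\sigma(K)$ (the ``weakly cartesian'' property used already, via \cite{bgr}, in Lemma \ref{BGR}). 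Writing $v'=\sum_{l=1}^{e}\beta_l\,\sigma(u_l)$ with $u_l\in K^m$, one has ${}^{t}\sigma(G_i)\,v'=\sum_l\beta_l\,\sigma({}^{t}G_i\,u_l)$, so $C$-orthogonality together with the isometry of $\sigma$ yields, for every $i$, $h$ and every $l$,
\[
\bigl|\tfrac1{i!}({}^{t}G_i\,u_l)_h\bigr|\,\pi_i\ \le\ \frac{1}{C\,|\beta_l|}\,\bigl|\tfrac1{i!}({}^{t}\sigma(G_i)\,v')_h\bigr|\,\pi_i\ \le\ \frac{\Gamma}{C\,|\beta_l|},
\]
where $\Gamma:=\sup_{i,h}|\tfrac1{i!}({}^{t}\sigma(G_i)\,v')_h|\,\pi_i<+\infty$. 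Hence each $u_l\in U$, so $v'\in K'\cdot\sigma(U)$. Finally, Part 2 follows from Part 1: by Lemma \ref{limit lemma} there is $t_0\in(0,r)$ with $\H(M,f_{r*}L_r)=\H(M,f_{t_0,bd*}L_{t_0,bd})$ and $\H(\tau^*M,f'_{r*}L'_r)=\H(\tau^*M,f'_{t_0,bd*}L'_{t_0,bd})$ simultaneously (take the larger of the two radii it produces); since $f_{t_0,bd*}L_{t_0,bd}=f_{\rK,bd*}N_{\pi(t_0)}$ by Lemma \ref{ST2} and $\pi(t_0)$ satisfies condition (C) for $K'$ because $t_0<r\le\rKp$, applying Part 1 with $\pi=\pi(t_0)$ gives the inequality.

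The step I expect to be the main obstacle is the orthogonalization: over a general complete (not necessarily spherically complete or discretely valued) field a finite-dimensional normed space need not have a strictly orthogonal basis, only a $C$-orthogonal one, and one must verify — as in the displayed estimate above — that this is harmless because $\pi$-boundedness (and, in Part 2, the radius of convergence) is insensitive to multiplicative constants. Everything else is formal: the identification of the Hom spaces with $U$ and $U'$ through $\Phi,\Psi$, and the $\sigma$-equivariance $G_i\mapsto\sigma(G_i)$ coming from $\sigma\partial=\partial\sigma$ and $\tau(T)=T$.
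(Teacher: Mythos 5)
Your argument is correct, and it is genuinely different from the paper's. The paper first reduces to the cyclic case $M\cong\KT/\KT\cdot P$ (Lemma~\ref{cyclic}) and then uses the machinery of \S\ref{pi}: by Propositions~\ref{dimension formula} and~\ref{Fpi}, $\dim\H(M,f_{\rK,bd*}N_\pi)$ equals the degree of the monic generator $Q$ of $\cl(\KT\cdot P)$, and likewise over $K'$ with $Q'$ generating $\cl(\KpT\cdot\tau(P))$; since $\|\cdot\|_\pi$ on $\KT$ is $\tau$-invariant one gets $\tau(\cl(\KT\cdot P))\subset \cl(\KpT\cdot\tau(P))$, hence $\tau(Q)\in\KpT\cdot Q'$ and $\deg Q\ge\deg Q'$. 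You instead unwind the isomorphism $\Phi/\Psi$ to describe $\H(M,f_{\rK,bd*}N_\pi)$ as an explicit subspace $U\subset K^m$ cut out by a $\pi$-boundedness condition on $(\tfrac1{i!}G_i\,v)$, observe (via $\sigma\partial=\partial\sigma$, $\tau(T)=T$) that the corresponding condition for $\tau^*M$ is the same with $\sigma(G_i)$ in place of $G_i$, and prove the containment $U'\subseteq K'\cdot\sigma(U)$ by taking a $C$-orthogonal $\sigma(K)$-basis of the span of the coordinates of $v'$ and using the isometry of $\sigma$. Both proofs invoke the existence of $C$-orthogonal bases (in the paper it hides inside Lemma~\ref{BGR}, which feeds Lemma~\ref{closed} and Proposition~\ref{Fpi}); the paper's route is shorter once that machinery is in place and packages the comparison as a divisibility of twisted polynomials, while yours bypasses the cyclic-vector reduction entirely and works coordinate by coordinate, at the cost of the explicit reidentification of the Hom spaces and the small verification (which you correctly note relies on condition~(C)) that boundedness can be tested on a basis. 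Your handling of Part~2 via Lemma~\ref{limit lemma} is exactly what the paper does, and your observation that $\pi(t_0)$ satisfies condition~(C) for $K'$ because $t_0<r\le\rKp$ is the necessary bookkeeping to make the reduction to Part~1 legitimate.
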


\begin{proof}
1. We may assume $M$ is of the form $\KT/\KT\cdot P$ with $P\in \KT$. We have an isomorphism $\tau^*M\cong \KpT/\KpT\cdot P$. Let $Q,Q'$ denote the monic generator of $cl_{\pi}(\KT\cdot P),cl_{\pi}(\KpT\cdot \tau(P))$ respectively. Then $dim \H(M,f_{\rK,bd*}N_{\pi})=deg(Q),dim \H(\tau^*M,f'_{\rKp,bd*}N'_{\pi})=deg(Q')$ by the result of \S \ref{pi}, where $deg(X)$ for $X\in \KT$ denotes the degrees of $X$ regarded as the usual polynomial in $K[T]$. Since the ultrametric functions $||||_{\pi}$ on $\KT$ and $\KpT$ are invariant under $\tau:\KT\to \KpT$, $\tau(cl_{\pi}(\KT\cdot P))\subset cl_{\pi}\tau(\KT\cdot P)\subset cl_{\pi}(\KpT\cdot \tau(P))$. Hence $\tau(Q)\in \KpT\cdot Q'$. Therefore $deg(\tau(Q))=deg(Q)\ge deg(Q')$.

2. It follows from Lemma \ref{limit lemma} and part 1.
\end{proof}

\begin{lemma}\label{dual invariance}%[label:dual invariance]
Let $M\in\Cf$. For $r\in (0,\rK]$, we have $dim \H(DM,f_{r*}L_r)=dim \H(M,f_{r*}L_r)$.
\end{lemma}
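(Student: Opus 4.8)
The plan is to prove the general identity $\dim\H(N,f_{r*}L_r)=\sum_{t\in[r,\rK]}m(N)(t)$ for every $N\in\Cf$ and every $r\in(0,\rK]$, and then to apply it to both $N=M$ and $N=DM$: since $m(DM)=m(D_0M)=m(M)$ by part~4 of Lemma~\ref{multiplicity property} together with the natural isomorphism $D\cong D_0$ of Lemma~\ref{natural isom}, the two sums coincide and the lemma follows.

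To prove the identity, I first record two facts about an arbitrary $N\in\Cf$: (i) if $\supp m(N)\subset[r,\rK]$ then $\dim\H(N,f_{r*}L_r)=\dim N$, by the equivalence of conditions~1 and~3 in Proposition~\ref{DTT}; and (ii) if $\supp m(N)\subset(0,r)$ then $\H(N,f_{r*}L_r)=0$, by Corollary~\ref{cDDT} when $p(K)>0$ and by Corollary~\ref{supportz} when $p(K)=0$. Now apply the decomposition theorem (Theorem~\ref{SDT} if $p(K)>0$, Theorem~\ref{SDTz} if $p(K)=0$) to get a finite direct sum decomposition $N\cong\bigoplus_{t\in(0,\rK]}\Frt N$ in which $\supp m(\Frt N)\subset\{t\}$; by parts~2 and~3 of Lemma~\ref{multiplicity property} this gives $\dim\Frt N=m(N)(t)$. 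Since $\H(-,f_{r*}L_r)$ is additive, facts~(i) and~(ii) now yield $\dim\H(N,f_{r*}L_r)=\sum_{t\in(0,\rK]}\dim\H(\Frt N,f_{r*}L_r)=\sum_{t\in[r,\rK]}\dim\Frt N=\sum_{t\in[r,\rK]}m(N)(t)$, as required.

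I do not expect any step to be a real obstacle; the only point needing care is that the decomposition theorem and the vanishing criterion (ii) are stated separately in the $p(K)>0$ and $p(K)=0$ sections, so the proof formally splits into these two (otherwise identical) cases. For the record, one can also avoid the decomposition theorem altogether: by part~5 of Proposition~\ref{dimension formula} one has $\dim\H(DM,f_{r*}L_r)=\dim\Fr M$ and $\dim\H(M,f_{r*}L_r)=\dim M-\dim\Fzrh M$, so it suffices to show that the canonical surjection $M\to M/\Fzrh M$ restricts to an isomorphism $\Fr M\cong M/\Fzrh M$. This follows by applying the exact functor $\Fr$ (Proposition~\ref{exact functor} if $p(K)>0$, Corollary~\ref{exactz} if $p(K)=0$) to $0\to\Fzrh M\to M\to M/\Fzrh M\to 0$, using $\Fr\Fzrh M=0$ (the supports are disjoint, by Proposition~\ref{support} resp.\ Proposition~\ref{supportz2}) and $\Fr(M/\Fzrh M)=M/\Fzrh M$ (because $\supp m(M/\Fzrh M)\subset[r,\rK]$, by part~7 of Proposition~\ref{dimension formula} and Proposition~\ref{DTT}).
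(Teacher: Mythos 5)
Your main argument matches the paper's proof: both establish the identity $\dim\H(N,f_{r*}L_r)=\sum_{t\in[r,\rK]}m(N)(t)$ for $N\in\Cf$ by decomposing $N$ via Theorems~\ref{SDT} and~\ref{SDTz} together with Propositions~\ref{DTT} and~\ref{dimension formula}, and then apply it to $M$ and to $DM$ using $m(DM)=m(M)$ (note the paper has typos in this proof, writing $\sum_{t\in(r,\rK]}$, then $\sum_{t\in[r,r]}$, and $m(DM)=m(D)$, where $[r,\rK]$ and $m(M)$ are intended). Your alternate route, applying the exact functor $\Fr$ to $0\to\Fzrh M\to M\to M/\Fzrh M\to 0$ and invoking part~5 of Proposition~\ref{dimension formula}, is also correct and somewhat more economical, though it is not the route the paper takes.
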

\begin{proof}
By Theorem \ref{SDT} and Corollary \ref{SDTz} and Proposition \ref{dimension formula}, we have $dim \H(M,f_{r*}L_r)=dim \H(\oplus_{t\in (0,\rK]}F_{sp,[t,t]}M,f_{r*}L_r)=\sum_{t\in (0,\rK]}dim \H(F_{sp,[t,t]}M,f_{r*}L_r)=\sum_{t\in (r,\rK]}dim \H(F_{sp,[t,t]}M,f_{0*}L_0)=\sum_{t\in [r,r]}m(M)(t)$. Hence $dim \H(M,f_{r*}L_r)=\sum_{t\in [r,r]}m(M)(t)$. Since we also have a similar equality for $DM$, the assertion follows from $m(DM)=m(D)$.
\end{proof}

\begin{proposition}\label{bc invariance}%[label:bc invariance]
Let $M\in \Cf$. For $r\in (0,r(K',\partial)]$, $m(\tau^*M)(r)=m(M)(r)$ if $r\neq r(K',\partial)$ and $m(\tau^*M)(r)=\sum_{r\in [r(K',\partial),r(K,\partial)]}m(M)(r)$ if $r=r(K',\partial)$.
\end{proposition}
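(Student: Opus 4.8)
The plan is to deduce the statement from the single identity
\[
\dim \H(\tau^*M,f'_{r*}L'_r)=\dim \H(M,f_{r*}L_r)\qquad\text{for all }r\in(0,\rKp],
\]
and then to read off the values of $m(\tau^*M)$ by an elementary differencing argument on finitely supported functions.

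First I would prove the identity. The inequality ``$\le$'' is precisely Lemma \ref{bc inequality}(2). For ``$\ge$'' I would realize both Hom-groups as kernels of one and the same matrix operator. Fix a basis $e_1,\dots,e_m$ of $i_*M$ and let $G_1\in M_m(K)$ be the matrix of the action of $T$. By Lemma \ref{robba calculation}(3) (for $\KXr$ in place of $\KXrz$) we have $i_*\H(M,f_{r*}L_r)\cong H^0(C_r(g_r(G_1)))=\ker\bigl(\partial_X-g_r(G_1)\colon \KXr^m\to\KXr^m\bigr)$. Since $\sigma$ commutes with the derivations, $g'_0\circ\sigma$ equals the composition of $g_0$ with the coefficientwise inclusion $\KX\hookrightarrow K'[[X]]$; tracing this through the universal property of $\tau$ in Lemma \ref{universality} shows that the matrix of $T$ on $\tau^*M$ with respect to $1\otimes e_1,\dots,1\otimes e_m$ is $\sigma(G_1)$, and that $g'_r(\sigma(G_1))$ is just $g_r(G_1)\in M_m(\KXr)$ viewed inside $M_m(\KpXr)$ via the isometric coefficientwise inclusion $\iota\colon\KXr\hookrightarrow\KpXr$. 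Hence $\H(\tau^*M,f'_{r*}L'_r)$ becomes, as a $K'$-vector space, $\ker\bigl(\partial_X-\iota(g_r(G_1))\colon\KpXr^m\to\KpXr^m\bigr)$, the \emph{same} operator over the larger ring. A finite family of elements of $\KXr^m$ that is $K$-linearly independent stays $K'$-linearly independent (a nonvanishing $d\times d$ minor of its coefficient matrix remains nonvanishing over $K'$), so applying $\iota$ to a $K$-basis of the first kernel produces $K'$-linearly independent solutions of the second; thus $\dim_{K'}\ge\dim_K$. Combined with Lemma \ref{bc inequality}(2), this gives the identity.

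Next I would translate both sides into values of $m$. Applying $\H(-,f_{r*}L_r)$ to the decomposition $M\cong\bigoplus_{t\in(0,\rK]}F_{sp,[t,t]}M$ of Theorems \ref{SDT}/\ref{SDTz}, and using Proposition \ref{DTT} (so that the summand $F_{sp,[t,t]}M$ contributes $m(M)(t)$ when $t\ge r$) together with Corollaries \ref{cDDT}/\ref{supportz} (so that it contributes $0$ when $t<r$), one gets $\dim\H(M,f_{r*}L_r)=\sum_{u\in[r,\rK]}m(M)(u)$, and likewise over $K'$ (note $p(K')=p(K)$) $\dim\H(\tau^*M,f'_{r*}L'_r)=\sum_{u\in[r,\rKp]}m(\tau^*M)(u)$. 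Hence
\[
\sum_{u\in[r,\rKp]}m(\tau^*M)(u)=\sum_{u\in[r,\rK]}m(M)(u)\qquad\text{for all }r\in(0,\rKp].
\]
Taking $r=\rKp$, where the left-hand sum is the single term $m(\tau^*M)(\rKp)$, yields $m(\tau^*M)(\rKp)=\sum_{u\in[\rKp,\rK]}m(M)(u)$. For $r<\rKp$, since $\supp m(M)\cup\supp m(\tau^*M)$ is finite (Lemma \ref{multiplicity property}), I would choose $r_1\in(r,\rKp]$ with no point of this set in $(r,r_1)$; subtracting the displayed identity at $r_1$ from the one at $r$ then gives $m(\tau^*M)(r)=m(M)(r)$.

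I expect the main obstacle to be the first step, specifically making rigorous that the operator computing $\H(\tau^*M,f'_{r*}L'_r)$ is literally the base change to $K'$ of the operator computing $\H(M,f_{r*}L_r)$: this requires carefully tracking the commutativity $g'_0\circ\sigma=(\text{coefficientwise }\sigma)\circ g_0$ through the constructions of $f_r$, $f'_r$ and of the complexes $C_r(-)$ from Lemma \ref{robba calculation}. Once that is in place, the rank/minor argument and the bookkeeping with finitely supported multiplicity functions are routine.
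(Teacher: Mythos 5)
Your proposal is correct and closely parallels the paper's, differing mainly in how the key inequality $\dim\H(\tau^*M,f'_{r*}L'_r)\ge\dim\H(M,f_{r*}L_r)$ is established (the reverse inequality is Lemma \ref{bc inequality}(2) in both). The paper obtains it indirectly: it rewrites the two dimensions as $\dim F_{[r,\rKp]}\tau^*M$ and $\dim\Fr M$ using Proposition \ref{dimension formula}, Lemma \ref{dual invariance}, and the decomposition Theorems \ref{SDT}/\ref{comparisonz}, trivializes $f_r^*\Fr M\cong L_r^{n}$ via Proposition \ref{DTT}, base changes that trivialization along $\tau_r$ (using $f'_r\circ\tau=\tau_r\circ f_r$), and applies Proposition \ref{DTT} over $K'$ to deduce $\supp m(\tau^*\Fr M)\subset[r,\rKp]$, hence $\dim\Fr M\le\dim F_{[r,\rKp]}\tau^*M$. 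You instead work directly with the Robba complex of Lemma \ref{robba calculation}(3): both Hom-groups are kernels of $\partial_X-g_r(G_1)$, first over $\KXr$ and then over $\KpXr$ via the isometric coefficientwise embedding $\iota$, and $K$-linear independence of solutions is transported to $K'$-linear independence by the nonvanishing-minor observation. Your route is more elementary for this step in that it does not invoke Proposition \ref{DTT} or Theorems \ref{SDT}/\ref{comparisonz}, although you still rely on them when converting $\dim\H(-,f_{r*}L_r)$ into the cumulative multiplicity $\sum_{u\ge r}m(-)(u)$ (this is exactly the content of Lemma \ref{dual invariance}). The commutativity $g'_r\circ\sigma=\iota\circ g_r$ and the fact that $\iota$ intertwines the two $\partial_X$'s, which you correctly flag as the step needing care, follow directly from $\sigma$ commuting with the derivations and preserving the valuations. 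Both arguments arrive at the same intermediate identity $\sum_{u\in[r,\rKp]}m(\tau^*M)(u)=\sum_{u\in[r,\rK]}m(M)(u)$, after which your differencing argument, using finiteness of the supports from Lemma \ref{multiplicity property}, is equivalent to the paper's direct bookkeeping.
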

\begin{proof}
By Theorems \ref{SDT} and \ref{comparisonz} and Corollary \ref{SDTz}, we have $m(M)(r)=dim \Frs M=dim F_{sp,[r.\rK]} M-lim_{s\to r+0}dim F_{sp,[s,r(K,\partial)]}M =dim \Fr M-lim_{s\to r+0}dim F_{[s,r(K,\partial)]}M$ for $r\in (r,r(K',\partial))$ and $\sum_{r\in [r(K',\partial),\rK]}m(M)(r)=dim M-\sum_{r\in (0,r(K',\partial))}m(M)(r)$  for $r\in (r,r(K',\partial))$. Similarly, $m(\tau^*M)(r)=dim F_{[r,r(K',\partial)]}\tau^*M-lim_{s\to r+0}dim F_{[s,r(K',\partial)]}\tau^*M$ for $r\in (r,r(K',\partial))$, and $m(\tau^*M)(r(K',\partial))=dim M-\sum_{r\in (0,r(K',\partial))}m(\tau^*M)(r)$ for $r\in (r,\rKp)$. Hence we have only to prove  $dim \Fr M=dim F_{[r,r(K',\partial)]}\tau^*M$.

Let $\tau_r:\KXrT\to \KpXrT$ be the ring homomorphism associated to the ring homomorphism $\KXr\to \KpXr;(a_i)\mapsto (\sigma(a_i))$. By $f'_r\circ\tau=\tau_r\circ f_r$ and Proposition \ref{DTT} and Theorems \ref{SDT} and \ref{comparisonz}, $(f'_r)^*\tau^* \Fr M\cong\tau_r^*f_r^* \Fr M\cong\tau_r^*(L_r^n)\cong (\tau_r^*L_r)^n\cong (L'_r)^n$ with $n=dim \Fr M$. By Proposition \ref{DTT}, $\supp m(\tau^* \Fr M)\subset [r,\rKp]$. Hence $dim \Fr M=dim \tau^*\Fr M\le dim F_{sp,[r,r(K',\partial)]} \tau^*M=dim F_{[r,r(K',\partial)]} \tau^*M$ by Theorems \ref{SDT} and \ref{comparisonz}. By Proposition \ref{dimension formula} and Lemma \ref{bc inequality} and Lemma \ref{dual invariance}, $dim \Frp \tau^*M=dim \H(D\tau^*M,f'_{r*}L'_r)=dim \H(\tau^*M,f'_{r*}L'_r)\le dim \H(M,f_{r*}L_r)=dim \H(DM,f_{r*}L_r)=dim \Fr M$. Hence $dim \Fr M=dim F_{[r,r(K',\partial)]}\tau^*M$.

\end{proof}

%\subsection{A rationality result}

%\input{72}%17

\subsection{A rationality result}\label{bc}

In this subsection, we regard $\R_{>0}$ as a subgroup of $\R^{\times}$. Moreover we endow $\R_{>0}$ with the unique structure of $\Q$-module. For a non-empty subset $G$ of $\R_{>0}$, we denote $G^{\Z}$ (resp. $G^{\Q}$) the subgroup (resp. the $\Q$-submodule) of $\R_{>0}$ generated by $G$.

%\subsection{Lem:embed. to rational c.n.d.f.}

For $r\in (0,+\infty)$, we define the complete non-archimedean valuation field $K(X)_s$ of characteristic $0$ as the completion of the rational function field $K(X)$ with respect to the $s$-Gauss valuation $||_s$. By abuse of notation, let $\partial_X$ denote the derivation on $K(X)_s$ defined as the unique extension of the derivation $K[X]\to K[X];\sum_ia_iX^i\mapsto \sum_i(i+1)a_{i+1}X^i$. Note that $K(X)_s$ is of rational type in the sense of \cite{kx} and $r(K(X)_s,\partial_X)=s$ and $|K(X)_s^{\times}|_s=|K(X)^{\times}|_s=|K^{\times}|\{s\}^{\Z}$.

\begin{lemma}\label{embedding}%[label:embedding]
For $s\in (0,r(K,\partial))$, there exists a ring homomorphism $g_{s,\eta}:K\to K(X)_s$ satisfying the following conditions.

1. The ring homomorphism $g_{s,\eta}$ commutes with the $\partial$ and $\partial_X$.

2. The ring homomorphism $g_{s,\eta}$ is isometric.
\end{lemma}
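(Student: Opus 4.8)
The plan is to take $g_{s,\eta}$ to be the Kedlaya--Xiao map $g_0\colon K\to\KX$, $g_0(c)=(\partial^i(c)/i!)_i$, corestricted to a power series subring of $K(X)_s$; both required properties will then be transcriptions of properties of $g_0$ already recorded in Lemma~\ref{kx}, transported along the inclusion of that power series ring into $K(X)_s$.

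Concretely, I would proceed in the following steps. First, set up the target: $K\langle X/s\rangle$, the ring of $(a_i)\in K_+^{\N}$ with $|a_i|s^i\to 0$, is the completion of $K[X]$ with respect to the $s$-Gauss valuation, hence a subring of $K(X)_s$ on which the $s$-Gauss valuation of $K(X)_s$ restricts to $|\ |_s$, and on which the derivation $\partial_X$ of $K(X)_s$ restricts to the derivation $\partial_X$ of $\KX$ given by $\partial_X((a_i))=((i+1)a_{i+1})$ (both being the unique continuous extension of $d/dX$ on $K[X]$). Second, observe that for $c\in K$, Lemma~\ref{kx}(2) says $(\partial^i(c)/i!)_i$ is bounded at radius $\rK$; since $s<\rK$ this upgrades to $|\partial^i(c)/i!|\,s^i\to 0$, so $g_0(c)\in K\langle X/s\rangle$. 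Thus $g_0$ corestricts to a ring homomorphism $K\to K\langle X/s\rangle$, and I define $g_{s,\eta}$ to be its composition with $K\langle X/s\rangle\hookrightarrow K(X)_s$, a ring homomorphism since $g_0$ is. Condition~1 is then the term-by-term identity $\partial_X\big((\partial^i(c)/i!)_i\big)=(\partial^{i+1}(c)/i!)_i=g_0(\partial(c))$ inside $\KX$ (this is already noted in the paper right after the definition of $g_0$), read off through the identification of the two copies of $\partial_X$ on $K\langle X/s\rangle$. Condition~2 is Lemma~\ref{kx}(3) applied with $r=s\in(0,\rK]$: $|g_{s,\eta}(c)|_s=|(\partial^i(c)/i!)_i|_s=|c|$, using the identification of the $s$-Gauss valuation of $K(X)_s$ with $|\ |_s$ on $K\langle X/s\rangle$.

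The only delicate point — and the reason the statement is restricted to $s\in(0,\rK)$ rather than allowing $s=\rK$ — is the convergence step: $g_0(c)$ is only guaranteed to be \emph{bounded}, not to tend to $0$, at radius exactly $\rK$, so it need not lie in $K(X)_{\rK}$; the strict inequality $s<\rK$ is precisely what converts ``bounded at $\rK$'' into ``tending to $0$ at $s$'', placing $g_0(c)$ in $K\langle X/s\rangle\subseteq K(X)_s$. Apart from that, the argument is a routine transcription of facts about $g_0$ established earlier in the paper, together with the standard identification of Gauss valuations and of $\partial_X$ on the Tate algebra $K\langle X/s\rangle$ sitting inside $K(X)_s$.
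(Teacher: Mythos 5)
Your proof is correct and takes essentially the same approach as the paper: both define $g_{s,\eta}$ by $c\mapsto\sum_i\frac{\partial^i(c)}{i!}X^i$ and derive convergence in $K(X)_s$ from the strict inequality $s<\rK$ together with Lemma~\ref{kx}, then read off parts 1 and 2 directly. The only cosmetic difference is that the paper invokes the operator-norm bound $|\partial^i/i!|_{op,K}\le 1/\rK^i$ (Lemma~\ref{kx}(4)) to get the explicit estimate $|\partial^i(c)/i!|s^i\le(s/\rK)^i|c|$, while you route through the boundedness statement $g_0(c)\in\KXrKz$ (Lemma~\ref{kx}(2)); the two give the same conclusion.
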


%\subsection{Proof of Lem}

\begin{proof}
For $c\in K,i\in\N$, we have $|\partial^i(c)/i!|s^i\le |c|s^i/r(K,\partial)^i=(s/r(K,\partial))^i|c|$ by Lemma \ref{kx}. By using this inequality, we can define the ring homomorphism $g_{s,\eta}:K\to K(X)_s$ by $g_{s,\eta}(c)=\sum_i\frac{\partial^i(c)}{i!}X^i$. Part 1 holds obviously and part 2 follows from Lemma \ref{kx} with $r=s$.
\end{proof}

%\subsection{Proof of Prop}

\begin{proposition}\label{rationality}%[label:rationality]
Let $M\in \Cf$. If $r\in (0,r(K,\partial))$ satisfies $m(M)(r)\neq 0$, then $r\in |K^{\times}|^{\Q}$ if $p(K)>0$, $r\in |K^{\times}|$ if $p(K)=0$.
\end{proposition}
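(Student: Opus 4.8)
The plan is to reduce the statement to the case where the base field is of rational type, where a rationality result for the subsidiary radii is available as a consequence of the theory of Christol--Dwork (\cite{cd}), and then to remove the auxiliary parameter introduced by the reduction.

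First I would set up the reduction. Suppose $r\in(0,r(K,\partial))$ satisfies $m(M)(r)\neq 0$. Since $r<r(K,\partial)$, the interval $(r,r(K,\partial))$ is non-empty; fix $s$ in it. By Lemma~\ref{embedding}, $\sigma:=g_{s,\eta}\colon K\to K':=K(X)_s$ is an isometric ring homomorphism commuting with $\partial$ and $\partial_X$, which is exactly the type of map treated in the base change constructions above. Let $\tau\colon\KT\to\KpT$ be the induced homomorphism, so that $\tau^*M\in Mod^f(\KpT)$ by Lemma~\ref{base change}. Since $r(K',\partial_X)=s$ and $r\neq s$, Proposition~\ref{bc invariance} gives $m(\tau^*M)(r)=m(M)(r)\neq 0$. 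As $K'$ is of rational type in the sense of \cite{kx}, the theory of Christol--Dwork applies to $\tau^*M$, and I would invoke it to obtain $r\in|K'^\times|_s^{\mathbb{Q}}$ when $p(K)>0$ and $r\in|K'^\times|_s$ when $p(K)=0$; here $p(K')=p(K)$ and $|K'^\times|_s=|K^\times|\{s\}^{\mathbb{Z}}$.

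Next I would eliminate the dependence on $s$. If $p(K)>0$: the divisible hull $|K^\times|^{\mathbb{Q}}$ of the non-trivial subgroup $|K^\times|\subset\mathbb{R}_{>0}$ is dense in $\mathbb{R}_{>0}$, so I would choose $s\in|K^\times|^{\mathbb{Q}}\cap(r,r(K,\partial))$; then $\{s\}^{\mathbb{Q}}\subset|K^\times|^{\mathbb{Q}}$, hence $r\in(|K^\times|\{s\}^{\mathbb{Z}})^{\mathbb{Q}}=|K^\times|^{\mathbb{Q}}$. If $p(K)=0$, I must obtain $r\in|K^\times|$, and I would split according to whether $|K^\times|$ is dense or discrete in $\mathbb{R}_{>0}$ (the only two possibilities for a non-trivial subgroup). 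In the dense case, pick $s\in|K^\times|\cap(r,r(K,\partial))$, so that $|K'^\times|_s=|K^\times|$ and the Christol--Dwork input already yields $r\in|K^\times|$. In the discrete case, write $|K^\times|=\{\theta^n;n\in\mathbb{Z}\}$; applying the reduction for every $s\in(r,r(K,\partial))$ with $s\neq 1$ gives integers $a(s),n(s)$ with $r=\theta^{a(s)}s^{n(s)}$, whence $n(s)\log s\in\log r+\mathbb{Z}\log\theta$. Were $n(s)\neq 0$ for every such $s$, then $\log s$ would lie in the countable set $\bigcup_{k\in\mathbb{Z}\setminus\{0\}}\frac{1}{k}(\log r+\mathbb{Z}\log\theta)$, contradicting the uncountability of $(r,r(K,\partial))\setminus\{1\}$; hence $n(s)=0$ for some $s$, and $r=\theta^{a(s)}\in|K^\times|$.

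I expect the last step to be the crux. The reduction to rational type is essentially formal given Lemma~\ref{embedding} and Proposition~\ref{bc invariance}, and the Christol--Dwork rationality is imported as a black box; but that input only pins $r$ down modulo the spurious generator $s$, so the real work is to absorb $s$ — which is why the argument must use the freedom in choosing $s$, and, in the discrete case with $p(K)=0$, a cardinality argument.
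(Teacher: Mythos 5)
Your overall strategy coincides with the paper's: embed $K$ into $K(X)_s$ via $g_{s,\eta}$ for an auxiliary $s\in(r,r(K,\partial))$, invoke the Christol--Dwork rationality via \cite[Thm.~1.4.21]{kx} on the base change $\tau^*M$ after applying Proposition~\ref{bc invariance}, and then exploit the freedom in $s$ to eliminate the spurious factor $\{s\}^{\Z}$. The differences are tactical, in the last step. For $p(K)>0$ you simply pick $s\in|K^\times|^{\Q}\cap(r,r(K,\partial))$ so that $\{s\}^{\Q}\subset|K^\times|^{\Q}$; this avoids the paper's dense/discrete case split and its two-parameter linear-independence argument entirely, and is slightly slicker. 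For $p(K)=0$ you still split on dense/discrete but replace the paper's choice of two parameters $s_1,s_2$ multiplicatively independent of the value group (and the resulting intersection) by a countability argument; both accomplish the same thing. One discrepancy deserves a check: you state the Christol--Dwork input as $r\in|K(X)_s^\times|^\lambda$, whereas the paper cites it in the form $(r/s)^{m(f_{s,\eta}^*M)(r)}\in|K(X)_s^\times|^\lambda$. For $p(K)>0$ these are interchangeable by $\Q$-divisibility, but for $p(K)=0$ the exponent-$m$ version would make your discrete-case chase terminate at $r^{m(M)(r)}\in|K^\times|$ rather than $r\in|K^\times|$; you should confirm that the cited theorem controls each subsidiary radius individually (rather than only the Newton-polygon partial products) in the residue characteristic zero case, since your argument (and, it would seem, the paper's Case 1) depends on that.
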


\begin{proof}
Let $s\in (r,r(K,\partial))$ be arbitrary. Let $g_{s,\eta}:K\to K(X)_s$ be a ring homomorphism as in Lemma \ref{embedding}. Let $f_{s,\eta}:\KT\to K(X)_s\langle T\rangle$ be the ring hom. associated to $g_{s,\eta}$. Let $\lambda$ denote $\Q$ when $p(K)>0$ and (empty) when $p(K)=0$. Then we have $(r/s)^{m(f_{s,\eta}^*M)(r)}\in |K(X)^{\times}_s|^{\lambda}=(|K^{\times}|\{s\}^{\Z})^{\lambda}$ by applying \cite[Thm.1.4.21]{kx} to the finite differential module $Vf_{s,\eta}^*M$ over $K(X)_s$. Hence $r^{m(M)(r)}=r^{m(f^*_{s,\eta}M)(r)}\in (|K^{\times}|\{s\}^{\Z})^{\lambda}$...(1) by Proposition \ref{bc invariance}. As the valuation of $K$ is non-trivial, the subgroup $|K^{\times}|$ of $\R_{>0}$ is either of finite free of rank $1$ or dense in $\R_{>0}$, where $\R_{>0}$ is regarded as a topological abelian group with respect to Euclidean topology.

Case 1: $|K^{\times}|$ is of finite free of rank $1$.

We choose $s_0\in |K^{\times}|$ such that $|K^{\times}|=\{s_0\}^{\Z}$. We choose $s_1\in (r,r(K,\partial))\setminus \{s_0\}^{\Q}$ and $s_2\in (r,r(K,\partial))\setminus \{s_0,s_1\}^{\Q}$. Then $s_0,s_1,s_2$ is linearly independent in $\R_{>0}$. By applying (1) to $s=s_1,s_2$, we have $r^{m(M)(r)}\in \{s_0\}^{\lambda}\{s_1\}^{\Q}\cap \{s_0\}^{\lambda}\{s_2\}^{\Q}=\{s_0\}^{\lambda}=|K^{\times}|^{\lambda}$.

Case 2: $|K^{\times}|$ is dense in $\R_{>0}$.

We choose $s\in (r,r(K,\partial))\cap |K^{\times}|$. By (1), we have $r^{m(M)(r)}\in |K^{\times}|^{\lambda}$.
\end{proof}

\section{A decomposition theorem in $\CJf$}\label{decJ}

In this section, for a differential field $(K,\partial_J)$ as in the introduction, we recall the definition of the ring $\KTJ$, which is an analogue of $\KT$, and recall basic facts on the category $\CJf$, including the fact that $\CJf$ is isomorphic to the category of differential modules over $(K,\partial_J)$. Then we prove a decomposition theorem in $\CJf$ corresponding to Theorem \ref{main theorem}. The point of the proof is that we can lift the previous decomposition theorem to $\CJf$ by using the idea of internal $\H$.

%\subsection{Basic definition}

%\input{81}

\subsection{Basic definition}

Let $R$ be a commutative ring equipped with a family $\{\partial_j\}_{j\in J}$ of commuting derivations indexed by a non-empty set $J$. We use the multi-index notation, for example, we denote by $\partial_J$ the family $\{\partial_j\}_{j\in J}$. A differential module over $R$ is an $R$-module $V$ equipped with a family of commuting differential operators relative to $\partial_j$ for $j\in J$.

We repeat a similar construction as in \S \ref{def} in the above setup. Let $\N^{(J)}$ denote the monoid of maps $J\to\N;j\mapsto n_j$ such that $n_j=0$ for all but finitely many $j\in J$. We denote the map $(j\mapsto n_j)$ by $n_J=(n_j)$. Let $0_J$ denote the element of $\N^{(J)}$ given by the map $(j\mapsto 0)$. Let $e_j$ denote the element $n_J$ of $\N^{(J)}$ defined by $n_j=1$ and $n_{j'}=0$ for $j\neq j'$. Let $G(R,\partial_J)$ be the differential module over $R$ given by the free $R$-module $R^{(\N^{(J)})}$ equipped with the differential operators relative to $\partial_j$ for $j\in J$ defined by $\partial_j((q_{n_J}))=(\partial_j(q_{n_J})+q_{n_J-e_j})$ for $j\in J$, where we set $q_{n_J-e_j}=0$ if $n_j=0$. We denote by $e$ the element of $G(R,\partial_J)$ given by $e(0_J)=1$ and $e(n_J)=0$ for $n_J\neq 0_J$. We denote by $T_j$ for $j\in J$ the element of $G(R,\partial_J)$ given by $T_j(e_j)=1$ and $T_j(n_J)=0$ for $n_J\neq e_j$. The functor $\H(G(R,\partial_J),-)$ represents the forgetful functor $()_+:\DJJ\to \A$. Precisely speaking, we have an isomorphism $\H(G(R,\partial_J),V)\to V_+;s\mapsto s(e)$ for $V\in \DJJ$, whose inverse is given by  $V_+\to \H(G(R,\partial_J),V);x\mapsto ((q_{n_J})\mapsto \sum_{n_J}q_{n_J}\partial^{n_J}(x))$. Thus $G(R,\partial_J)$ is a projective generator of $\DJJ$. We define the ring $\RTJ$ as the abelian group $R^{(\N^{(J)})}_+$ equipped with the unique multiplication such that the isomorphism $\H(G(R,\partial_J),G(R,\partial_J))\to R^{(\N^{(J)})}_+$ extends to an isomorphism $\H(G(R,\partial_J),G(R,\partial_J))^{op}\to \RTJ$ of rings. As in \S \ref{def}, the category $\CRTJ$ of left $\RTJ$-modules is isomorphic to the category of differential modules $\DJJ$ over $R$, where we can define the isomorphism in an obvious way. Since we can easily translate results on $\CRTJ$ in terms of $\DJJ$ as in \S \ref{def}, we will study $\CRTJ$. As in \S \ref{def}, by writing elements of $\RTJ$ as $\sum q_{n_J}T_J^{n_J}$, we can calculate the multiplication in $\RTJ$ by using the relation $(qT_j)\cdot (q'T_{j'})=q\partial_j(q')T_{j'}+qq'T_jT_{j'},T_j\cdot T_{j'}=T_{j'}\cdot T_{j}$ and $qT_J^{0_J}\cdot q'T_J^{0_J}=q'T_J^{0_J}\cdot qT_J^{0_J}=(qq')T_J^{0_J}$ for $j,j'\in J,q,q'\in R$.

Let $i_J:R\to \RTJ;q\mapsto qT_J^0$ be the ring homomorphism.

%\subsection{universality of $\RT$}

\begin{lemma}\label{multiple universality}%[label:multiple universality]
Let $R$ be as above. We consider the data $(U,\mu,\{u_j\}_{j\in J})$ where $U$ is a ring, $\mu:R\to U$ is a ring homomorphism, $u_j\in U$ for $j\in J$ such that $u_ju_{j'}=u_{j'}u_{j}$ for $j,j'\in J$ and $u_j\cdot\mu(r)=\mu(r)\cdot u_j+\mu (\partial_j(r))$ for $r\in R$ and $j\in J$. Then there exists a unique ring homomorphism $f:\RTJ\to U$ such that $\mu=f\circ i_J$ and $f(T_j)=u_j$ for $j\in J$. Moreover, we have $f((q_{n_J}))=\sum_{n_J}\mu(q_{n_J})u_J^{n_J}$ for $(q_{n_J})\in \RTJ$.

The ring homomorphism $f:\RTJ\to U$ is called the ring homomorphism corresponding to the data $(U,\mu,\{u_j\}_{j\in J})$.
\end{lemma}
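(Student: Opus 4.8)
The plan is to imitate closely the proof of Lemma \ref{universality}, only keeping track of the extra derivations. Uniqueness is the easy half: the subset $i_J(R)\cup\{T_j;j\in J\}$ generates $\RTJ$ as a ring, because any element $\sum q_{n_J}T_J^{n_J}$ of $\RTJ$ can be written using $i_J$, the $T_j$'s, and the multiplication rules recalled just before the statement; so any ring homomorphism $f$ with $\mu=f\circ i_J$ and $f(T_j)=u_j$ is forced to send $(q_{n_J})$ to $\sum_{n_J}\mu(q_{n_J})u_J^{n_J}$, which in particular proves both the uniqueness and the displayed formula for $f$.

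For existence, I would recover $f$ from the module-theoretic description of $\RTJ$, exactly as in Lemma \ref{universality}. First form the object of $\DJJ$ given by $\mu_*U$ equipped with the differential operators relative to $\partial_j$ defined by left multiplication by $u_j$; the compatibility hypotheses $u_j u_{j'}=u_{j'}u_j$ and $u_j\mu(r)=\mu(r)u_j+\mu(\partial_j(r))$ are precisely what is needed for these operators to commute with each other and to satisfy the Leibniz rule over $(R,\partial_J)$, so this is a well-defined object of $\DJJ$. Let $\rho:\RTJ\to\E_{\A}(U_+)$ be its structure morphism, and let $\tau:U\to\E_{\A}(U_+)$ be the left regular representation $\tau(x)(y)=x\cdot y$; then $\tau$ is injective because $\tau(x)(1)=x$. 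By construction of the differential module structure one checks $\tau\circ\mu=\rho\circ i_J$ and $\rho(T_j)=\tau(u_j)$ for all $j\in J$. Since $i_J(R)\cup\{T_j;j\in J\}$ generates $\RTJ$, it follows that $\rho(\RTJ)\subset\tau(U)$, so there is a unique ring homomorphism $f:\RTJ\to U$ with $\rho=\tau\circ f$; unwinding $\tau\circ\mu=\rho\circ i_J=\tau\circ f\circ i_J$ and $\tau(u_j)=\rho(T_j)=\tau(f(T_j))$ and using the injectivity of $\tau$ gives $\mu=f\circ i_J$ and $f(T_j)=u_j$, as required.

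The only point that is genuinely different from the single-derivation case, and the step I expect to be the main (mild) obstacle, is verifying that $\mu_*U$ with the operators $u_j\cdot(-)$ is an object of $\DJJ$ rather than just of $\DJJ$ for one $j$ at a time: one must confirm the operators pairwise commute and each satisfies the twisted Leibniz rule, and that these are exactly encoded by the two hypotheses on the $u_j$. Everything else — the generation statement, the identity $\RTJ\cong\H(G(R,\partial_J),G(R,\partial_J))^{op}$, and the translation between $\CRTJ$ and $\DJJ$ — has already been recorded above, so once the differential module is in place the argument is formal and parallels Lemma \ref{universality} verbatim.
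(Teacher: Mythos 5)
Your proof is correct and follows exactly the paper's approach: the paper's proof is a one-line remark that "a similar proof as in the proof of Lemma \ref{universality} works, where we define the object $MU'\in\DJJ$ by $\mu_*U$ equipped with the differential operators relative to $\partial_j$ for $j\in J$ given by the left multiplication by $u_j$." You have spelled out the details (generation for uniqueness, the injective left regular representation $\tau$, the containment $\rho(\RTJ)\subset\tau(U)$) and correctly identified that the only genuinely new check is the commutativity and twisted Leibniz conditions making $\mu_*U$ an object of $\DJJ$.
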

\begin{proof}
A similar proof as in the proof of Lemma \ref{universality} works, where we define the object $MU'\in\DJJ$ by $\mu_*U$ equipped with the differential operators relative to $\partial_j$ for $j\in J$ given by the left multiplication by $u_j\in U$.
\end{proof}

Let $\xi:R\to \E(R_+)$ be the ring homomorphism given by the multiplication of $R$. We define the left $\RTJ$-module $L(R,\partial_J)$ by $R_+\in\A$ equipped with the ring homomorphism $\RTJ\to \E(R_+)$ corresponding to the data $(\E(R_+),\xi,\{\partial_j\}_{j\in J})$.

%\subsection{More constructions}

%\input{82}

\subsection{The functor $\FrJs$}

We consider the case where $R$ is a complete non-archimedean valuation field $K$ of characteristic $0$ with non-trivial valuation $|\ |$. 
%Let $p(K)$ denote the characteristic of the residue field of $K$. We set $\omega(K)=|p(K)|^{1/(p(K)-1)}$ if $p(K)>0$ and $\omega(K)=1$ if $p(K)=0$. 
We assume that the derivations $\partial_j$ are non-zero and bounded, i.e., the action $\partial_j$ on $K$ has a finite operator norm. We define $p(K)$ and $\omega(K)$ as before. Let $r(K,\partial_j)$ denote the ratio $\omega(K)/|\partial_j|_{sp,K}$. We define the dimension $dim M$ of a left $\KTJ$-module $M$ as that of $i_{J*}M$. We say that $M$ is of finite dimension if $dim M<+\infty$. We denote the category of finite dimensional left $\KTJ$-modules by $\CJf$. The category is an abelian full subcategory of $\CJ$.

Let $(0,r(K,\partial_J)]$ denote the set $\Pi_{j\in J}(0,r(K,\partial_j)]\subset \R^J$. Let $r_J\in (0,r(K,\partial_J)]$. Let $M\in\CJf$. As in \S \ref{def}, there exists a maximum submodule $N$ of $M$ such that $\supp m(h_{j*}N)\subset \{r_j\}$ for all $j\in J$, which we denote by $\FrJs M$. 
For a morphism $\alpha:M'\to M$ in $\CJf$, we can define the morphism $\FrJs\alpha:\FrJs M'\to\FrJs M$ by $\alpha(\FrJs M')\subset \FrJs M$. Then $\FrJs$ is an endofunctor on $\CJf$, which is equipped with the obvious natural transformation $\FrJs\to id_{\CJf}$.
%%%%%%%%%%%%%%%%%%%%%%%
Note that in the case of $\# J>1$, the definition does not immediately implies that for $M\in\CJf$, we have $\FrJs M=0$ for all but finitely many $r_J\in (0,\rKJ]$.

%\subsection{Statement and proof of the decomposition theorem}

%\input{83}

\subsection{Statement and proof of the decomposition theorem}

\begin{definition}
\begin{enumerate}
\item Let $j\in J$. We endow $\KX$ with the family of derivations $\partial_{j,J}=\{\partial_{j,j'}\}_{j'\in J}$ defined by $\partial_{j,j}((a_i))=\partial_X((a_i))=((i+1)a_{i+1})$ and $\partial_{j,j'}((a_i))=(\partial_{j'}(a_i))$ for $j'\in J$ such that $j'\neq j$. Then $(\KX,\partial_{j,J})$ is a differential ring. We denote by $\KXTJ$ the ring of twisted polynomials associated to the differential ring $(\KX,\partial_{j,J})$. Let $L_{j,0}$ denote the left $\KXTJ$-module $L(\KX,\partial_{j,J})$. We define the ring homomorphism $g_{j,0}:K\to \KX$ by $x\mapsto (\partial_j^i(x)/i!)$. We define the ring homomorphism $f_{j,0}:\KTJ\to \KXTJ$ as the one corresponding to the data $(\KX,g_{j,0},\{T_j\}_{j\in J})$. We obtain the left $\KTJ$-modules $f_{j,0*}L_{j,0}$.

\item Let $j\in J$ and $r\in (0,r(K,\partial_j)]$ (resp. $r\in (0,r(K,\partial_j))$). We endow $\KXr$ (resp. $\KXrp$) with the family of derivations $\partial_{j,J}=\{\partial_{j,j'}\}_{j'\in J}$ defined by $\partial_{j,j}((a_i))=\partial_X((a_i))=((i+1)a_{i+1})$ and $\partial_{j,j'}((a_i))=(\partial_{j'}(a_i))$ for $j'\in J$ such that $j'\neq j$. Then $(\KXr,\partial_{j,J})$ is a differential ring. We denote by $\KXrTJ$ (resp. $\KXrpTJ$) the ring of twisted polynomials associated to the differential ring $(\KXr,\partial_{j,J})$ (resp. $(\KXrp,\partial_{j,J})$). Let $L_{j,r}$ (resp. $L_{j,r+}$) denote the left $\KXrTJ$-module $L(\KXr,\partial_{j,J})$ (resp. the left $\KXrpTJ$-module $L(\KXrp,\partial_{j,J})$). We define the ring homomorphism $g_{j,r}:K\to \KXr$ (resp. $g_{j,r+}:K\to \KXrp$) by $x\mapsto (\partial_j^i(x)/i!)$. We define the ring homomorphisms $f_{j,r+}:\KTJ\to \KXrpTJ,f_{j,r}:\KTJ\to \KXrTJ$  as the one corresponding to the data $(\KXr,g_{j,r},\{T_j\}_{j\in J})$ (resp. $(\KXrp,g_{j,r+},\{T_j\}_{j\in J})$). We obtain the left $\KTJ$-modules $f_{j,r+*}L_{j,r+},f_{j,r*}L_{j,r}$.

\item For $j\in J$, we denote the ring of twisted polynomials associated to $(K,\partial_j)$ by $\KTj$. For $r\in (0,+\infty)$, we repeat the construction as before for the differential ring $(\KX,\partial_X)$ (resp.$(\KXr,\partial_X),(\KXrp,\partial_X)$). We denote the ring of twisted polynomials associated to $(\KX,\partial_X)$ (resp. $(\KXr,\partial_X),(\KXrp,\partial_X)$) by $\KXTT$ (resp. $\KXrTT,\KXrpTT$).
 
\item Let $j\in J$ and $r\in (0,r(K,\partial_j)]$ (and, without mentioning, assume $r\in (0,r(K,\partial_j))$ when the construction involves $\KXrpTj$ as before). We define the ring homomorphisms $\f_{j,0}:\KTj\to \KXTT,\f_{j,r}:\KTj\to \KXrTj,\f_{j,r+}:\KTj\to \KXrpTj$ associated to $g_{j,0},g_{j,r},g_{j,r+}$ respectively by using the universality of $\KTj$. 

Let $j\in J$ and $r\in (0,r(K,\partial_j)]$. We define the ring homomorphism $h_j:\KTj\to\KTJ$ (resp. $h_{j,r}:\KXrTT\to\KXrTJ,h_{j,r+}:\KXrpTT\to\KXrpTJ$) as the one corresponding to the data $(\KTJ,i_J,T_j)$ (resp. $(\KXrTJ,\KXr\to \KXrTJ;x \mapsto xT^0_{j,j},T_{j,j}),(\KXrpTJ,\KXrp\to \KXrpTJ;x \mapsto xT^0_{j,j},T_{j,j})$).

Let $\L_0$ (resp. $\L_{r},\L_{r+}$) denote the left $\KXTT$-module $L(\KXTT,\partial_{X})$ (resp. the left $\KXrTj$-module $L(\KXr,\partial_{X})$, the left $\KXrpTj$-module $L(\KXr,\partial_{X})$). We obtain the left $\KTj$-modules $\f_{j,0}\L_0,\f_{j,r+*}\L_{r+},\f_{j,r*}\L_{r}$.

We have the following commutative diagram by construction. We have similar commutative diagrams when $\KXrTj$ is replaced by $\KXrpTj,\KXTT$.

$$
\xymatrix{
K\ar[r]^{i_j}\ar[d]^{=}&\KTj\ar[r]^{\f_{j,r}}\ar[d]^{h_j}&\KXrTj\ar[d]^{h_{j,r}}\\
K\ar[r]^{i_J}&\KTJ\ar[r]^{f_{j,r}}&\KXrTJ
}
$$
\end{enumerate}
\end{definition}

\begin{lemma}\label{internal}%[label:internal]
Fix $j\in J$ and $M',M\in \CJ$. For $j'\in J,j'\neq j$ and $s\in \H(h_{j*}M',h_{j*}M)$, we define the morphism $\Delta_{j'}(s):M'_+\to M_+$ in $\A$ by $\Delta_{j'}(s)(x)=T_{j'}\cdot (s(x))-s(T_{j'}\cdot x)$ for $x\in M'$. Then $\Delta_{j'}(s)$ is a morphism $h_{j*}M'\to h_{j*}M$ in $\Cj$.
\end{lemma}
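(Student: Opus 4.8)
The plan is a direct verification. Since the ring $\KTj$ is generated by $i_j(K)\cup\{\mathbf{T}_j\}$ as a ring, and since $h_j$ is the ring homomorphism corresponding to the data $(\KTJ,i_J,T_j)$, the $\KTj$-module structure on $h_{j*}N$ for $N\in\CJ$ is determined by the action of $K$ through $i_J$ and the action of $T_j$ on $N$. Hence, to prove that the (clearly additive) map $\Delta_{j'}(s)$ defines a morphism in $\Cj$, it suffices to check that $\Delta_{j'}(s)$ is $K$-linear and commutes with the action of $T_j$ on $M'$ and $M$. The hypothesis $s\in\H(h_{j*}M',h_{j*}M)$ says exactly that $s$ is additive, $K$-linear (via $i_J$), and satisfies $s(T_j\cdot x)=T_j\cdot s(x)$ for $x\in M'$.

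First I would record the two relations in $\KTJ$ coming from its multiplication rule (recalled at the beginning of \S\ref{decJ}): $T_{j'}\cdot i_J(c)=i_J(c)\cdot T_{j'}+i_J(\partial_{j'}(c))$ for $c\in K$, and $T_j\cdot T_{j'}=T_{j'}\cdot T_j$ (the latter encoding $\partial_j\partial_{j'}=\partial_{j'}\partial_j$). For $K$-linearity I would expand $\Delta_{j'}(s)(i_J(c)\cdot x)=T_{j'}\cdot(s(i_J(c)\cdot x))-s(T_{j'}\cdot(i_J(c)\cdot x))$, apply $s(i_J(c)\cdot x)=i_J(c)\cdot s(x)$, rewrite $T_{j'}\cdot(i_J(c)\cdot x)$ via the first relation above on both $M'$ and $M$, and use the $K$-linearity of $s$ once more; the two contributions of the form $i_J(\partial_{j'}(c))\cdot s(x)$ cancel and one is left with $i_J(c)\cdot\Delta_{j'}(s)(x)$. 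For commutation with $T_j$ I would expand $\Delta_{j'}(s)(T_j\cdot x)=T_{j'}\cdot(s(T_j\cdot x))-s(T_{j'}\cdot(T_j\cdot x))$, use $s(T_j\cdot y)=T_j\cdot s(y)$ for $y=x$ and for $y=T_{j'}\cdot x$, together with $T_{j'}\cdot T_j=T_j\cdot T_{j'}$; both terms then acquire a common left factor $T_j$, giving $T_j\cdot\Delta_{j'}(s)(x)$.

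There is essentially no obstacle; the only point requiring care is moving $T_{j'}$ past $i_J(K)$ and past $T_j$ correctly, which is where commutativity of the family $\{\partial_j\}_{j\in J}$ enters in an essential way. (The hypothesis $j'\neq j$ is immaterial for the argument: for $j'=j$ the map $\Delta_j(s)$ is identically $0$ since $s$ commutes with $T_j$.) Combining the two computations, $\Delta_{j'}(s)$ is additive, $K$-linear, and commutes with $T_j$, hence defines a morphism $h_{j*}M'\to h_{j*}M$ in $\Cj$, as asserted.
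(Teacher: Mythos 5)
Your proof is correct and is essentially the paper's proof: the paper likewise reduces to checking $\Delta_{j'}(s)(c\cdot x)=c\cdot\Delta_{j'}(s)(x)$ for $c$ in the generating set $i_j(K)\cup\{\mathbf{T}_j\}$ and then invokes a straightforward calculation, which is exactly the computation you spell out using the relations $T_{j'}\cdot i_J(c)=i_J(c)\cdot T_{j'}+i_J(\partial_{j'}(c))$ and $T_j\cdot T_{j'}=T_{j'}\cdot T_j$.
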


\begin{proof}
Since the ring $\KTj$ is generated by the subset $i_j(K)\cup\{\mathbf{T}_{j}\}$ as a ring, we have only to prove that for $x\in h_{j*}M'$, $\Delta_{j'}(s)(c\cdot x)=c\cdot (\Delta_{j'}(s)(x))$ for $c\in i_j(K)\cup\{\mathbf{T}_{j}\}$. This follows from straightforward calculation.
\end{proof}

By Lemma \ref{internal}, we obtain the family of endomorphisms $\Delta_{j'}\in \E_{\A}(\H(h_{j*}M',h_{j*}M))$ for $j'\in J,j'\neq j$.

%\subsection{definition of the functor}

\begin{definition}
\begin{enumerate}
\item For $M\in \CJf,j\in J,r\in (0,r(K,\partial_j)]$, we define the subset $\Fjrr M$ of $M$ by $\Fjrr M=\{x\in M;\forall s\in \H(h_{j*}M,h_{j*}f_{j,r+*}L_{j,r+}),s(x)=0,\forall s\in \H(h_{j*}M,h_{j*}f_{j,0*}L_{j,0}),s(x)\in h_{j*}f_{j,r*}L_{j,r})\}$. 
\item For $j\in J,r\in (0,r(K,\partial_j)]$, we define the endofunctor $\mathbf{F}^f_{j,[r,r]}:\Cjf\to \Cjf$ by repeating the construction of the endofunctor $F^f_{[r,r]}$ for the differential field $(K,\partial_j)$. For $M\in \Cj$, $\mathbf{F}^f_{j,[r,r]} M=\{x\in M;\forall s\in \H(M,\f_{j,r+*}\L_{r+}),s(x)=0,\forall s\in \H(M,\f_{j,0*}\L_{0}),s(x)\in \f_{j,r*}\L_{r})\}$.
\end{enumerate}
\end{definition}

\begin{lemma}\label{restriction}%[label:restriction]
\begin{enumerate}
\item For $j\in J,r\in (0,r(K,\partial_j)]$, $h_{j*}f_{j,r+*}L_{j,r+}=\f_{j,r+*}\L_{j,r+},h_{j*}f_{j,r*}L_{j,r}=\f_{j,r*}\L_{r},h_{j*}f_{j,0*}L_{j,0}=\f_{j,0*}\L_{0}$.
\item For $j\in J,r\in (0,r(K,\partial_j)]$ and $M\in \CJf$, $\Fjrr M$ is a submodule of $M$.
\item For $j\in J,r\in (0,r(K,\partial_j)]$ and $\alpha:M'\to M$ a morphism in $\CJf$, we have $\alpha(\Fjrr M')\subset \Fjrr M$. Consequently, $\alpha$ induces a morphism $\Fjrr M'\to\Fjrr M$, which is denoted by $\Fjrr \alpha$.
\item For $j\in J,r\in (0,r(K,\partial_j)]$, the function $\Fjrr$ forms an endofunctor on $\CJf$.
\end{enumerate}

\end{lemma}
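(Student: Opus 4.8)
The plan is to deduce all four parts from the single-derivation theory developed in the preceding sections, applied to the differential field $(K,\partial_j)$, together with Lemma \ref{internal}.

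First I would prove part 1. Since the three identities have the same shape I would treat $h_{j*}f_{j,r*}L_{j,r}=\f_{j,r*}\L_r$ in detail and remark that the cases of $\L_{r+}$ and $\L_0$ are identical. The commutative square displayed at the end of the definition block gives $f_{j,r}\circ h_j=h_{j,r}\circ\f_{j,r}$, and functoriality of restriction of scalars then yields an equality of functors $h_{j*}\circ f_{j,r*}=\f_{j,r*}\circ h_{j,r*}$ from left $\KXrTJ$-modules to $\Cj$; applying this to $L_{j,r}$ reduces the claim to the identity $h_{j,r*}L_{j,r}=\L_r$. This is a direct unwinding of the definitions: $L_{j,r}=L(\KXr,\partial_{j,J})$ has underlying group $\KXr_+$ with $\KXr$ acting by multiplication and the twisted variable $T_{j,j}$ of $\KXrTJ$ acting by $\partial_{j,j}=\partial_X$, and since $h_{j,r}$ sends $\KXr$ into $\KXrTJ$ by $x\mapsto xT^0_{j,j}$ and the twisted variable $\mathbf{T}$ of $\KXrTj$ to $T_{j,j}$, the restriction $h_{j,r*}L_{j,r}$ is $\KXr_+$ with $\KXr$ acting by multiplication and $\mathbf{T}$ acting by $\partial_X$, which is exactly $L(\KXr,\partial_X)=\L_r$.

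Granting part 1, for $M\in\CJf$ the set $\Fjrr M$ is literally $\mathbf{F}^f_{j,[r,r]}(h_{j*}M)$: indeed $h_{j*}M\in\Cjf$ because $i_{j*}h_{j*}M=i_{J*}M$ is finite dimensional, and the defining conditions agree after the substitutions $h_{j*}f_{j,r+*}L_{j,r+}=\f_{j,r+*}\L_{r+}$, $h_{j*}f_{j,0*}L_{j,0}=\f_{j,0*}\L_0$, $h_{j*}f_{j,r*}L_{j,r}=\f_{j,r*}\L_r$ of part 1. In particular $\Fjrr M$ is a $\KTj$-submodule of $h_{j*}M$, hence a subgroup stable under $i_J(K)=h_j(i_j(K))$ and under $T_j=h_j(\mathbf{T})$. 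As $\KTJ$ is generated as a ring by $i_J(K)\cup\{T_j\}_{j\in J}$, part 2 will follow once stability under $T_{j'}$ is established for each $j'\in J\setminus\{j\}$; this is the heart of the matter and is where Lemma \ref{internal} enters. Fixing such a $j'$ and $x\in\Fjrr M$, for $s\in\H(h_{j*}M,h_{j*}f_{j,r+*}L_{j,r+})$ Lemma \ref{internal} supplies a morphism $\Delta_{j'}(s)$ of the same type, and feeding both $s$ and $\Delta_{j'}(s)$ into the first defining condition gives $s(x)=\Delta_{j'}(s)(x)=0$, whence $s(T_{j'}\cdot x)=T_{j'}\cdot s(x)-\Delta_{j'}(s)(x)=0$; likewise, for $s\in\H(h_{j*}M,h_{j*}f_{j,0*}L_{j,0})$ the second defining condition gives $s(x),\Delta_{j'}(s)(x)\in h_{j*}f_{j,r*}L_{j,r}$, and since $f_{j,r*}L_{j,r}$ is a $\KTJ$-submodule of $f_{j,0*}L_{j,0}$ (clear from the constructions, as in the single-derivation case) it is $T_{j'}$-stable, so $s(T_{j'}\cdot x)=T_{j'}\cdot s(x)-\Delta_{j'}(s)(x)\in h_{j*}f_{j,r*}L_{j,r}$. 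Hence $T_{j'}\cdot x\in\Fjrr M$.

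Part 3 is then immediate: if $\alpha:M'\to M$ is a morphism in $\CJf$ then $h_{j*}\alpha$ is a morphism in $\Cjf$, and for $x'\in\Fjrr M'=\mathbf{F}^f_{j,[r,r]}(h_{j*}M')$ and any test morphism $s$ of the relevant type the composite $s\circ h_{j*}\alpha$ is again of that type, so the conditions defining $\Fjrr M'$ force $\alpha(x')\in\Fjrr M$ (equivalently, invoke functoriality of $\mathbf{F}^f_{j,[r,r]}$). Part 4 is formal bookkeeping from parts 2 and 3, with compatibility with identities and composition inherited from $\mathbf{F}^f_{j,[r,r]}$ through $h_{j*}$. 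The main obstacle is the $T_{j'}$-stability step in part 2; everything else is routine, and that step is precisely what Lemma \ref{internal} is built to handle, so I anticipate no further difficulty.
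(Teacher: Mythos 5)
Your proposal is correct and follows essentially the same route as the paper: reduce to $h_{j,r*}L_{j,r}=\L_r$ (and its companions) for part 1, identify $\Fjrr M$ with $\mathbf{F}^f_{j,[r,r]}(h_{j*}M)$ as a subset of $M$, use Lemma \ref{internal} and the operators $\Delta_{j'}$ to get $T_{j'}$-stability for $j'\neq j$, and then deduce parts 3 and 4 from the functoriality of $\mathbf{F}^f_{j,[r,r]}h_{j*}$. You are slightly more explicit than the paper in spelling out that $f_{j,r*}L_{j,r}$ is a $\KTJ$-submodule of $f_{j,0*}L_{j,0}$ (hence $T_{j'}$-stable), a point the paper leaves implicit in the phrase ``by an argument as above,'' but this is a refinement, not a different method.
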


\begin{proof}
1. It follows from $h_{j,r*}L_{j,r}=\L_r,h_{j,r+*}L_{j,r+}=\L_{r+},h_{j,0*}L_{j,0}=\L_0$.

%Parts 0,1 follow by definition.

%1. It follows by definition.

2. The ring $\KTJ$ is generated by the subset $i_J(K)\cup\{T_j\}_{j\in J}$ as a ring. By part1, we have $\Fjrr M=\{x\in M;\forall s\in \H(h_{j*}M,\f_{j,r+*}\L_{j,r+}),s(x)=0,\forall s\in \H(h_{j*}M,\f_{j,0*}\L_{j,0}),s(x)\in \f_{j,r*}\L_{j,r}\}=\mathbf{F}^f_{j,[r,r]}h_{j*}M$ as subsets of $M$. 
Since $\mathbf{F}^f_{j,[r,r]}h_{j*}M$ is stable under the action of $\KTj$ on $h_{j*}M$ and $i_J(K)\cup\{T_j\}\subset h_j(\KTj)$, $\Fjrr M$ is stable under the action of $i_J(K)\cup\{T_j\}\subset \KTJ$ on $M$. 
%Since $\mathbf{F}_{j,[r,r]}h_{j*}M$ is a submodule of $h_{j*}M$ and $i_J(K)\cup\{T_j\}\subset h_j(\KTj)$, $\Fjrr M$ is stable under the action of the subset $i(K)\cup\{T_j\}$ of $\KTJ$. 
Let $j'\in J,j'\neq j$ and $x\in \Fjrr M$. We prove $T_{j'}\cdot x\in \Fjrr M$. Let $s\in \H(h_{j*}M,h_{j*}f_{j,r+*}L_{j,r+})$. By Lemma \ref{internal}, $\Delta_{j'}(s)\in\H(h_{j*}M,h_{j*}f_{j,r+*}L_{j,r+})$. Hence $\Delta_{j'}(s)(x)=0$. By the definition of $\Delta_{j'}(s)$ and $x\in \Fjrr M$, we have $s(T_{j'}\cdot x)=0$. Let $s\in \H(h_{j*}M,h_{j*}f_{j,0*}L_{j,0})$. By an argument as above, $\Delta_{j'}(s)(x)\in h_{j*}f_{j,r*}L_{j,r}$ and $s(T_{j'}\cdot x)\in h_{j*}f_{j,r*}L_{j,r}$. Hence $T_{j'}\cdot x\in \Fjrr M$.

3. Since $\mathbf{F}^f_{j,[r,r]}h_{j*}$ is a functor, we have $\mathbf{F}^f_{j,[r,r]}h_{j*}\alpha(\mathbf{F}^f_{j,[r,r]}h_{j*}M')\subset \mathbf{F}^f_{j,[r,r]}h_{j*}M$. We obtain the assertion by using part 1 and applying the forgetful functor $\Cj\to\A$.

4. This is easily seen by part 3.
\end{proof}

%\subsection{Proof of Thm.}

\begin{lemma}\label{lift}%[label:lift]
\begin{enumerate}
\item For $j\in J,r\in (0,r(K,\partial_j)]$, $h_{j*}\Fjrr=\mathbf{F}^f_{j,[r,r]} h_{j*}$.
\item For $j\in J,r\in (0,r(K,\partial_j)]$, the functor $\Fjrr$ is exact.
\item For $j\in J$, the obvious natural transformation $I_{j,sp}:\oplus_{r\in (0,r(K,\partial_j)]}\Fjrr\to id_{\CJf}$ is a natural isomorphism.
\item Assume $M\in \CJf$ is irreducible. Then $\supp m(h_{j*}M)$ for $j\in J$ is a one-point set, say, $\{r_j\}$ with $r_j\in (0,r(K,\partial_j)]$. Furthermore we have $\FrJs M=M$ and $F_{sp,[r'_J,r'_J]} M=0$ if $r'_J\neq r_J$.
\end{enumerate}

\end{lemma}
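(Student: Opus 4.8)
The plan is to reduce everything to facts already established for a single derivation, transported along the ring homomorphism $h_j:\KTj\to\KTJ$ and its companion push-out $h_{j*}$. First I would prove part~1. By Lemma~\ref{restriction}~(2) we already know, as \emph{subsets} of $M$, that $\Fjrr M=\mathbf{F}^f_{j,[r,r]}h_{j*}M$; it remains to check this identification is compatible with the $\KTj$-module structures, i.e.\ that the left $\KTj$-action on $h_{j*}\Fjrr M$ (restricted from $h_{j*}M$) agrees with the action on $\mathbf{F}^f_{j,[r,r]}h_{j*}M$. This is immediate from the definition of $h_j$ and $h_{j*}$: the $\KTj$-action on $h_{j*}M$ is by definition the $\KTJ$-action composed with $h_j$, and $\mathbf{F}^f_{j,[r,r]}h_{j*}M$ is by construction a $\KTj$-submodule of $h_{j*}M$. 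Naturality in $M$ follows since both sides are subfunctors of $h_{j*}$, using Lemma~\ref{restriction}~(3). Part~2 then follows formally: $h_{j*}$ is exact and faithful (it is a push-out along a ring homomorphism, hence exact, and faithful since the underlying abelian group is unchanged), $\mathbf{F}^f_{j,[r,r]}$ is exact on $\Cjf$ by Theorem~\ref{SDT}~(1) together with Corollary following Theorem~\ref{SDT} (resp.\ Corollary~\ref{exactz} when $p(K)=0$), so $h_{j*}\Fjrr=\mathbf{F}^f_{j,[r,r]}h_{j*}$ is exact; since $h_{j*}$ reflects exactness (being faithful and exact between abelian categories), $\Fjrr$ is exact on $\CJf$.

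For part~3, I would argue that $I_{j,sp}$ is a pointwise isomorphism for each $M\in\CJf$. Applying $h_{j*}$ and using part~1 turns $I_{j,sp,M}$ into $I_{sp,h_{j*}M}:\bigoplus_{r}\mathbf{F}^f_{j,[r,r]}h_{j*}M\to h_{j*}M$ for the differential field $(K,\partial_j)$, which is a natural isomorphism by Theorem~\ref{SDT}~(2) (resp.\ Theorem~\ref{SDTz} when $p(K)=0$). In particular only finitely many summands are nonzero. Since $h_{j*}$ is faithful and exact, a morphism in $\CJf$ is an isomorphism iff its image under $h_{j*}$ is; hence $I_{j,sp,M}$ is an isomorphism. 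One subtlety to record: the direct sum on the left of $I_{j,sp}$ makes sense in $\CJf$ precisely because, by the single-variable result applied to $h_{j*}M$, $\Fjrr M=0$ for all but finitely many $r$; I would state this as an immediate consequence of part~1 before forming the sum. That settles part~3 and, incidentally, the finiteness remark flagged at the end of \S\ref{decJ} for the single-index sums (the genuine $J$-tuple finiteness is handled later).

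For part~4, suppose $M\in\CJf$ is irreducible. Fix $j\in J$. Then $h_{j*}M\in\Cjf$ need not be irreducible, but its subsidiary radii $\supp m(h_{j*}M)$ must be a single point: if $\supp m(h_{j*}M)$ had two distinct elements $r\neq r'$, then by part~3 (applied with index $j$) $M=\Fjrr M\oplus\bigl(\bigoplus_{s\neq r}F_{j,[s,s]}M\bigr)$ with both pieces nonzero (the first because $\dim\Fjrr M\ge m(h_{j*}M)(r)>0$ by part~1 and Lemma~\ref{SDT lemma}~(3), the second likewise for $s=r'$), contradicting irreducibility of $M$. So $\supp m(h_{j*}M)=\{r_j\}$ for a unique $r_j\in(0,r(K,\partial_j)]$. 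Doing this for every $j\in J$ and unwinding the definition of $\FrJs$ gives $M=\FrJs M$ with $r_J=(r_j)_j$, since $M$ itself is the maximum submodule $N$ with $\supp m(h_{j'*}N)\subset\{r_{j'}\}$ for all $j'$. Finally, for $r'_J\neq r_J$ choose $j$ with $r'_j\neq r_j$; any submodule $N$ with $\supp m(h_{j*}N)\subset\{r'_j\}$ satisfies, by part~1, $N\subset\Fjrr[r'_j][r'_j]M$, but $h_{j*}\Fjrr[r'_j][r'_j]M=\mathbf{F}^f_{j,[r'_j,r'_j]}h_{j*}M=0$ since $r'_j\notin\supp m(h_{j*}M)$ (using Lemma~\ref{multiplicity lemma} / Theorem~\ref{SDT} for the single-variable fact that $\dim\mathbf{F}^f_{j,[s,s]}(\cdot)=m(\cdot)(s)$); hence $N=0$ and $F_{sp,[r'_J,r'_J]}M=0$.

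The main obstacle I anticipate is purely bookkeeping rather than conceptual: making sure the identification in part~1 is an equality of objects of $\Cjf$ and not merely of underlying modules, and that the auxiliary derivations $\Delta_{j'}$ used in Lemma~\ref{restriction} do not resurface — they are what forces $\Fjrr M$ to be a $\KTJ$-submodule (not just a $\KTj$-submodule), but for parts~1--4 one only ever applies $h_{j*}$ and works inside $\Cjf$, so they play no further role. Everything else is a faithful-exact-functor transport argument from the single-derivation theory.
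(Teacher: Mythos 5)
Your proposal is correct and follows essentially the same route as the paper: reduce each part to the single-derivation statements by transporting along the faithful exact functor $h_{j*}$ via the identification $h_{j*}\Fjrr=\mathbf{F}^f_{j,[r,r]}h_{j*}$ from Lemma~\ref{restriction}, then invoke Theorems~\ref{SDT}/\ref{SDTz}/\ref{comparisonz}. The only differences are cosmetic: you expand the terse ``the rest is obvious'' in part~4 and reverse the order in which $r_j$ and the singleton support are deduced, and your appeal to ``part~1'' for $N\subset F_{j,[r'_j,r'_j]}M$ really uses the maximality characterization of $\mathbf{F}^f_{sp,[r'_j,r'_j]}$ in $\Cjf$ rather than part~1 literally, but the argument goes through.
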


\begin{proof}
Part 1 follows from Lemma \ref{restriction}. Part 2 follows from the faithfulness and exactness of $h_{j*}$, the exactness of $\mathbf{F}^f_{j,[r,r]}$ and part1. To prove part 3, we may replace $\Fjrr,I_{j,sp}$ by $h_{j*}F_{j,[r,r]},h_{j*}I_{j,sp}$ respectively. By part 1, part 3 are reduced to the exactness of $\mathbf{F}^f_{j,[r,r]}$ and our previous decomposition theorem for $\Cj$. Let $M\in\Cf$ be irreducible. For $j\in J$, there exists a unique $r_j\in (0,r(K,\partial_j)]$ such that $M=\Fjrj M$ by part 3. By part 1, $\supp m(h_{j*}M)\subset \{r_j\}$. The rest of assertion is obvious.
q\end{proof}

\begin{lemma}\label{multiple exact}%[label:multiple exact]
\begin{enumerate}
\item  For $r_J\in (0,r(K,\partial_J)]$ and $j\in J$, we have $\FrJs \Fjrj=\FrJs$.
\item For $r_J\in (0,r(K,\partial_J)]$, the functor $\FrJs$ is exact.
\end{enumerate}
\end{lemma}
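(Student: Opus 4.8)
The plan is to derive both parts from three ingredients already available: the maximality characterisations of $\FrJs$ and $\Fjrj$, the exactness of $\Fjrj$ from Lemma~\ref{lift}, and the fact that every object of $\CJf$ has finite length (its submodules being $K$-subspaces of strictly smaller dimension). The last point is what makes a possibly infinite index set $J$ manageable.

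First I would record a maximality property: for $M\in\CJf$ and $j\in J$, the submodule $\Fjrj M$ is the \emph{largest} $\KTJ$-submodule $N$ of $M$ with $\supp m(h_{j*}N)\subseteq\{r_j\}$. Indeed, by Lemma~\ref{restriction} (and the computation in its proof) together with Lemma~\ref{lift}(1), the underlying set of $\Fjrj M$ is $\mathbf{F}^f_{j,[r_j,r_j]}h_{j*}M$, which by Theorems~\ref{SDT} and~\ref{comparisonz} equals $F_{sp,[r_j,r_j]}h_{j*}M$, i.e.\ the maximal $\KTj$-submodule of $h_{j*}M$ supported in $\{r_j\}$; since $\Fjrj M$ is moreover $\KTJ$-stable, any $\KTJ$-submodule $N$ of $M$ with $\supp m(h_{j*}N)\subseteq\{r_j\}$ has $h_{j*}N\subseteq F_{sp,[r_j,r_j]}h_{j*}M=\Fjrj M$, hence $N\subseteq\Fjrj M$. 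Part~1 is then formal: $\FrJs M$ is a submodule of $M$ with $\supp m(h_{j*}(\FrJs M))\subseteq\{r_j\}$, so $\FrJs M\subseteq\Fjrj M$; being a submodule of $\Fjrj M$ satisfying the defining conditions of $\FrJs$, it lies in $\FrJs(\Fjrj M)$; conversely $\FrJs(\Fjrj M)$ is a submodule of $\Fjrj M\subseteq M$ satisfying those conditions, hence lies in $\FrJs M$. As $\FrJs\Fjrj$ and $\FrJs$ are both subfunctors of $id_{\CJf}$, this pointwise equality of subobjects upgrades to an equality of functors.

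For part~2, the same maximality argument gives $\FrJs M=\bigcap_{j\in J}\Fjrj M$, and iterating it shows that the composite of the $\Fjrj$ over a finite set $j\in S$ (in any order) is the functor $M\mapsto\bigcap_{j\in S}\Fjrj M$. Because $M\in\CJf$ has finite length, the family $\{\bigcap_{j\in S}\Fjrj M\colon S\subseteq J\text{ finite}\}$ of submodules of $M$ has a minimal element, and one checks it equals $\FrJs M$; hence there is a finite $S_M\subseteq J$ with $\FrJs M=\bigcap_{j\in S_M}\Fjrj M$, and the same holds with $S_M$ replaced by any finite $S_0\supseteq S_M$. Given a short exact sequence $0\to M'\to M\to M''\to 0$ in $\CJf$, choose a finite $S_0\subseteq J$ containing $S_{M'}$, $S_M$ and $S_{M''}$; on each of the three terms the subfunctor $\FrJs$ then agrees with the composite $G$ of the functors $\Fjrj$ for $j\in S_0$, and $G$ is exact by Lemma~\ref{lift}(2) as a finite composite of exact endofunctors of $\CJf$. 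Therefore $0\to\FrJs M'\to\FrJs M\to\FrJs M''\to 0$ is exact, and since the sequence was arbitrary, $\FrJs$ is exact.

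The main obstacle — and the reason part~2 is not a one-line consequence of part~1 — is exactly that $J$ may be infinite: one cannot literally compose $\Fjrj$ over all of $J$, so one must cut down to a finite $S_0$, and this has to be done uniformly over the three terms of the exact sequence, which is where the finite length of objects of $\CJf$ enters. The only verification I would leave implicit is that the maps induced on the subobjects coincide for $\FrJs$ and for $G$ (both being restrictions of $M'\to M$ and $M\to M''$), so that exactness of $G$ on the given sequence transfers to $\FrJs$.
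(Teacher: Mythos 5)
Your argument is correct, and part~1 is essentially the paper's own argument (both hinge on $h_{j*}\FrJs M\subset \mathbf{F}^f_{j,[r_j,r_j]}h_{j*}M=h_{j*}\Fjrj M$ via Theorems~\ref{SDT}, \ref{comparisonz} and Lemma~\ref{lift}, then maximality). Part~2, however, takes a genuinely different route. The paper proves exactness by induction on the total dimension $n=\sum_i\dim M^{(i)}$ of a short exact sequence $E$: if $\Fjrj M^{(i)}\neq M^{(i)}$ for some $i,j$, apply $\Fjrj$ to $E$ (exact by Lemma~\ref{lift}), invoke the induction hypothesis on the strictly smaller sequence $\Fjrj E$, and conclude via part~1 since $\FrJs\Fjrj=\FrJs$; otherwise $\FrJs$ is the identity on all three terms. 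You instead use the Artinian property of objects of $\CJf$ to show that $\FrJs M=\bigcap_{j\in J}\Fjrj M$ is already attained by a finite subintersection $\bigcap_{j\in S_M}\Fjrj M$, pick one finite $S_0$ uniform over the three terms of $E$, and identify $\FrJs$ there with a finite composite of the exact functors $\Fjrj$, $j\in S_0$. Both arguments do the same underlying work of taming the possibly infinite $J$, but yours makes the finiteness explicit and yields the useful identity $\FrJs M=\bigcap_{j\in S_0}\Fjrj M$ as a byproduct, whereas the paper's induction keeps that finiteness implicit in the recursion. The one step you leave tacit and could flag: the family $\{\bigcap_{j\in S}\Fjrj M : S\subset J\text{ finite}\}$ is downward directed, so the Artinian-minimal element is a minimum and coincides with $\bigcap_{j\in J}\Fjrj M=\FrJs M$.
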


\begin{proof}
1. Let $M\in\CJf$. We have $\FrJs \Fjrj M\subset \FrJs M$ by applying $\FrJs$ to $\Fjrj M\subset M$. We have $h_{j*}\FrJs M\subset \mathbf{F}^f_{j,[r_j,r_j]} h_{j*}M=h_{j*}\Fjrj M$ by Theorems \ref{SDT} and \ref{comparisonz} and Lemma \ref{lift}. Hence $\FrJs M\subset \Fjrj M$ by applying the forgetful functor $\Cj\to Set$. We obtain $\FrJs M\subset \FrJs \Fjrj M$ by applying $\FrJs$ to both sides.

2. For $E:0\to M^{(1)}\to M^{(2)}\to M^{(3)}\to 0$ an exact sequence in $\CJf$, we prove by induction on the sum $n$ of dimension of the $M^{(i)}$'s. For a covariant endofunctor $F$ on $\CJf$ and a diagram  $D$ on $\CJf$, we denote by $FD$ the diagram obtained by applying $F$ to $D$ in an obvious way. In the base case $n=0$, we have nothing to prove. In the induction step, if there exists $j\in J$ such that $\Fjrj M^{(i)}\neq M^{(i)}$ for some $i$. We choose such a $j$. The sequence $E'=\FrJs E$ is exact by Lemma \ref{lift}. Moreover the sequence $E''=\Fjrj E'$ is exact by the induction hypothesis. By part 1, $\FrJs E$ is exact. If $\Fjrj M^{(i)}=M^{(i)}$ for all $i$ and $j\in J$ then $\FrJs M^{(i)}=M^{(i)}$ for all $i$ by definition. Hence the assertion is true in the induction step.
\end{proof}

\begin{theorem}\label{multiple SDT}%[label:multiple SDT]
\begin{enumerate}
\item For $M\in\CJf$, we have $\FrJs M=0$ for all but finitely many $r_J\in (0,r(K,\partial_J)]$.
\item The obvious natural transformation $I_{sp}:\oplus_{r_J\in (0,r(K,\partial_J)]}\FrJs\to id_{\CJf}$ is a natural isomorphism.
\end{enumerate}

\end{theorem}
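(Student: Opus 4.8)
The plan is to argue by induction on dimension, relying on the exactness of $\FrJs$ established in Lemma \ref{multiple exact}(2) and on the explicit description of $\FrJs$ on irreducible modules in Lemma \ref{lift}(4). Part 1 must be proved first, so that $\oplus_{r_J\in(0,\rKJ]}\FrJs$ is a legitimate endofunctor on $\CJf$ and $I_{sp}$ a legitimate natural transformation.

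First I would prove part 1. Fix $M\in\CJf$ and a composition series $0=M_0\subsetneq M_1\subsetneq\cdots\subsetneq M_n=M$ in $\CJf$. For each $r_J\in(0,\rKJ]$, applying the exact functor $\FrJs$ to the exact sequences $0\to M_{k-1}\to M_k\to M_k/M_{k-1}\to 0$ produces exact sequences $0\to\FrJs M_{k-1}\to\FrJs M_k\to\FrJs(M_k/M_{k-1})\to 0$. Since each $M_k/M_{k-1}$ is irreducible, Lemma \ref{lift}(4) shows that $\FrJs(M_k/M_{k-1})\neq 0$ for exactly one $r_J$. An easy induction on $k$ then shows that $\FrJs M_k=0$ for all but finitely many $r_J$, and the case $k=n$ is part 1.

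Next I would prove part 2 by induction on $\dim M$. The case $M=0$ is trivial, and if $M$ is irreducible then Lemma \ref{lift}(4) gives a unique $r_J$ with $\FrJs M=M$ and all other summands zero, so $I_{sp,M}$ is an isomorphism (indeed the identity). If $M\neq 0$ is not irreducible, choose a proper irreducible submodule $M'\subseteq M$ and put $M''=M/M'$, so that $\dim M',\dim M''<\dim M$. Applying $\oplus_{r_J}\FrJs$ to $0\to M'\to M\to M''\to 0$ yields an exact sequence of direct sums, since each $\FrJs$ is exact, only finitely many summands are nonzero (part 1), and finite direct sums are exact in the abelian category $\CJf$. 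By functoriality this sits in a commutative diagram with exact rows over $0\to M'\to M\to M''\to 0$, whose vertical arrows are $I_{sp,M'},I_{sp,M},I_{sp,M''}$; the outer two are isomorphisms by the inductive hypothesis, so by the five lemma $I_{sp,M}$ is an isomorphism.

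The genuinely hard part of the multi-derivation decomposition --- transporting the single-derivation theorems (Theorems \ref{SDT} and \ref{comparisonz}) to $\CJf$ and proving that $\FrJs$ is exact --- has already been carried out in Lemmas \ref{internal}--\ref{multiple exact} via restriction of scalars along the maps $h_j$. The only points requiring care in the present argument are the order of proof (part 1 before part 2, so the direct sum is defined) and the verification that the transition morphisms appearing in the induction are precisely the components of $I_{sp}$, which is immediate from the functoriality of the natural transformation $\FrJs\to id_{\CJf}$.
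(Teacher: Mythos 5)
Your argument is correct and is essentially the same devissage the paper uses: the paper's proof consists of the observation that objects of $\Cjf$ have finite length, then reduces both parts to the irreducible case (Lemma \ref{lift}(4)) via the exactness of $\FrJs$ (Lemma \ref{multiple exact}(2)), which is precisely the composition-series induction you carry out in detail. The only point worth making explicit in your write-up is the finite-length fact itself (the paper notes that $M$ is Artinian over $\KTJ$, hence of finite length), which you rely on when invoking a composition series.
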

\begin{proof}
Note that any object $M$ in $\CJf$ is of finite length as $M$ is an Artinian left $\KTJ$-module. By Lemma \ref{multiple exact}, we can reduce both parts to part 4 of Lemma \ref{lift}.
\end{proof}

\begin{definition}
We define the function $m:(0,r(K,\partial_J)]\to\N$ by $m(M)(r_J)=dim \FrJs M$ for $r_J\in (0,r(K,\partial_J)]$. We also define the support of $m(M)$ by $\supp m(M):=\{r_J\in (0,r(K,\partial_J)];m(M)(r_J)\neq 0\}$.
\end{definition}

\begin{corollary}\label{multiple rationality}%[label:multiple rationality]
\begin{enumerate}
\item The set $\supp m(M)$ is a finite set with $\#\supp m(M)\le dim M$.
\item The function $M\mapsto m(M)$ is additive on $\CJf$.
\item We have $\sum_{r_J\in (0,r(K,\partial_J)]}m(M)(r_J)=dim M$.
\item For $j\in J,r\in (0,r(K,\partial_j)]$, there exists a natural isomorphism $I_{j,r}:\oplus_{r_J:r_j=r}\FrJs\to \Fjrr$.
\item For $j\in J,r\in (0,r(K,\partial_j)]$, we have $\sum_{r_J\in (0,r(K,\partial_J)]:r_j=r}m(M)(r_J)=m(h_{j*}M)(r)$.
\item Let notation be as in \S \ref{fil}. If $m(M)(r_J)\neq 0$, then, for $j\in J$ such that $r<r(K,\partial_j)$, we have $r^{\sum_{r_J\in (0,\rKJ]:r_j=r}m(M)(r_J)}\in |K^{\times}|^{\lambda}$, where $\lambda=\Q$ when $p(K)>0$ and $\lambda=(empty)$ when $p(K)=0$.
\end{enumerate}
\end{corollary}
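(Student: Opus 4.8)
The plan is to read off all six assertions from the decomposition Theorem \ref{multiple SDT}, the exactness Lemma \ref{multiple exact}, and the compatibility statements Lemma \ref{restriction} and Lemma \ref{lift}, together with the single-derivation theory of Theorems \ref{SDT} and \ref{comparisonz} and Proposition \ref{rationality}. Parts 1--3 are formal consequences of the isomorphism $M\cong\oplus_{r_J\in(0,\rKJ]}\FrJs M$ and the exactness of $\FrJs$. For part 3 the sum formula $\sum_{r_J}m(M)(r_J)=\sum_{r_J}\dim\FrJs M=\dim M$ is immediate; part 1 follows since every nonzero summand contributes at least $1$ to $\dim M$; and part 2 follows because $M\mapsto\dim\FrJs M$ is additive on short exact sequences by exactness of $\FrJs$.

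For part 4 I would fix $j\in J$ and $r\in(0,r(K,\partial_j)]$ and apply Theorem \ref{multiple SDT} to the object $N:=\Fjrr M$, which lies in $\CJf$ by Lemma \ref{restriction}, to get $N\cong\oplus_{r_J}F_{sp,[r_J,r_J]}N$. The content is to identify the summands: $F_{sp,[r_J,r_J]}N=0$ whenever $r_j\neq r$, and $F_{sp,[r_J,r_J]}N=\FrJs M$ whenever $r_j=r$. For the vanishing, Lemma \ref{lift} gives $h_{j*}N=\mathbf{F}^f_{j,[r,r]}h_{j*}M$, which equals $F^f_{sp,[r,r]}h_{j*}M$ by Theorems \ref{SDT} and \ref{comparisonz} for $(K,\partial_j)$ and hence has $\supp m\subset\{r\}$ by definition of $F_{sp,[r,r]}$; as a submodule of $h_{j*}N$, $h_{j*}F_{sp,[r_J,r_J]}N$ then has $\supp m\subset\{r\}\cap\{r_j\}$, which is empty when $r_j\neq r$, so $\dim h_{j*}F_{sp,[r_J,r_J]}N=0$ by the single-derivation sum formula and $F_{sp,[r_J,r_J]}N=0$ by faithfulness of restriction of scalars. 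For the identification when $r_j=r$, Lemma \ref{multiple exact} gives $\FrJs M=\FrJs\Fjrj M\subset\Fjrr M=N$, so $\FrJs M$ is a submodule of $N$ with the defining support property and is therefore contained in the maximal such submodule $F_{sp,[r_J,r_J]}N$; conversely $F_{sp,[r_J,r_J]}N$ is a submodule of $M$ with that property, hence contained in $\FrJs M$. Summing gives $\Fjrr M\cong\oplus_{r_J:\,r_j=r}\FrJs M$, and naturality of the resulting $I_{j,r}$ is inherited from $I_{sp}$ of Theorem \ref{multiple SDT} together with functoriality of $\Fjrr$.

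Part 5 is obtained by taking dimensions in part 4: the left-hand side equals $\dim\Fjrr M=\dim h_{j*}\Fjrr M$, and by Lemma \ref{lift} and Theorems \ref{SDT} and \ref{comparisonz} this is $\dim F^f_{sp,[r,r]}h_{j*}M=m(h_{j*}M)(r)$, where I use that $h_{j*}$ preserves dimension because $i_{j*}h_{j*}=i_{J*}$ on underlying $K$-modules by the defining commutative diagram. For part 6: if $m(M)(r_J)\neq 0$ then part 5 gives $m(h_{j*}M)(r_j)\geq m(M)(r_J)>0$, so, since $r_j<r(K,\partial_j)$, Proposition \ref{rationality} applied to the finite $\KTj$-module $h_{j*}M$ over $(K,\partial_j)$ yields $r_j\in|K^{\times}|^{\lambda}$; as $|K^{\times}|^{\lambda}$ is a subgroup of $\R_{>0}$ (and a $\Q$-submodule when $p(K)>0$), raising $r_j$ to the power $m(h_{j*}M)(r_j)=\sum_{r_J':\,r_j'=r_j}m(M)(r_J')$ stays inside $|K^{\times}|^{\lambda}$, which is the claim.

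The only step with genuine content is part 4: recognizing that decomposing a module first with respect to $\partial_j$ (producing the single-derivation slice $\mathbf{F}^f_{j,[r,r]}h_{j*}M$) and then with respect to the remaining derivations recovers precisely the multi-index pieces $\FrJs M$. This hinges on the compatibility $\FrJs=\FrJs\Fjrj$ of Lemma \ref{multiple exact} and on matching the two maximality characterizations of $F_{sp,[r_J,r_J]}$, one inside $M$ and one inside $\Fjrr M$; once part 4 is in hand, parts 5 and 6 are routine dimension counts and an invocation of Proposition \ref{rationality}.
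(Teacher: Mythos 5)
Your proposal is correct, and the conclusions in parts 1--3 and 5--6 are derived essentially as in the paper, from Lemma~\ref{multiple exact}, Theorem~\ref{multiple SDT}, Lemma~\ref{lift}, Theorems~\ref{SDT} and~\ref{comparisonz}, and Proposition~\ref{rationality}. The only genuine difference is in part 4, where you repackage the argument: instead of the paper's route, which establishes the inclusion $\oplus_{r_J:\,r_j=r}h_{j*}\FrJs M\subset \mathbf{F}^f_{j,[r,r]}h_{j*}M$ (via Theorems~\ref{SDT}, \ref{comparisonz}) and then closes the gap by the global dimension count $\sum_r\sum_{r_J:\,r_j=r}\dim\FrJs M=\dim M=\sum_r\dim\Fjrr M$, you apply Theorem~\ref{multiple SDT} directly to $N=\Fjrr M$ and identify its pieces $F_{sp,[r_J,r_J]}N$ one by one, showing they vanish when $r_j\neq r$ and coincide with $\FrJs M$ when $r_j=r$. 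Both routes rest on the same lemmas; your version is slightly more local (it identifies each summand rather than balancing total dimensions) and makes the role of the two maximality characterizations of $F_{sp}$ explicit, while the paper's count is shorter to state once the support inclusion is known. Either way the content is the same, and the remaining parts follow by the same dimension and support bookkeeping as in the paper.
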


\begin{proof}
Parts 1,2,3 are consequences of Lemma \ref{multiple exact} and Theorem \ref{multiple SDT}. We prove part 4. We fix $j$. Let $r\in (0,r(K,\partial_j)]$ be arbitrary. As subobjects of $h_{j*}M$, we have $\oplus_{r_J:r_j=r}h_{j*}\FrJs M\subset \mathbf{F}^f_{j,[r,r]}h_{j*}M$ by Theorems \ref{SDT} and \ref{comparisonz}. Hence $\sum_{r_J:r_j=r}dim \FrJs M\le dim \Fjrr M$ and $\oplus_{r_J:r_j=r}\FrJs M\subset \Fjrr M$ as subobjects of $M$ by Lemma \ref{lift}. Since we have $\sum_{r\in (0,r(K,\partial_j)]}\sum_{r_J:r_j=r}dim \FrJs M=\sum_{r\in (0,r(K,\partial_j)]}dim \Fjrr M=dim M$ by Theorem \ref{multiple SDT} and Lemma \ref{lift}, $\oplus_{r_J:r_j=r}\FrJs M=\Fjrr M$. Part 5 is a consequence of part 4 and Lemma \ref{lift}. Part 6 follows from part 5 and Proposition \ref{rationality}.
\end{proof}


\begin{thebibliography}{}


\bibitem{bgr}
S. Bosch, U. Guntzer, R. Remmert, Non-Archimedean analysis, A systematic approach to rigid analytic geometry, Grundlehren Math. Wiss., 261, Springer-Verlag, Berlin, 1984. xii+436 pp.
\bibitem{bourbaki}
N. Bourbaki, Algebra I. Chapters 1--3.(English summary), Translated from the French. Reprint of the 1989 English translation, Elem. Math., Springer-Verlag, Berlin, 1998. xxiv+709 pp.
\bibitem{ce}
H. Cartan, S. Eilenberg, Homological algebra, Princeton University Press, Princeton, NJ, 1956. xv+390 pp.
\bibitem{chr}
G. Christol, Le th\'eor\`eme de Turritin $p$-aidque, unpublished.
\bibitem{cd}
G. Christol, B. Dwork, Effective $p$-adic bounds at regular singular points, Duke Math. J. 62 (1991), 689--720.
\bibitem{dr}
B. Dwork, P. Robba, On ordinary linear $p$-adic differential equations, Trans. Amer. Math. Soc. 231 (1977), 1--46.
\bibitem{good formalI}
K. Kedlaya, Good formal structures for flat meromorphic connections, I: Surfraces, Duke Math. J. 154 (2010), 343--418; erratum, ibid. 161 (2012), 733-734.
\bibitem{good formalII}
K. Kedlaya, Good formal structures for flat meromorphic connections, II: excellent schemes, J. Amer. Math. Soc.24(2011), no.1, 183--229.
\bibitem{ked}
K. Kedlaya, Swan conductors for $p$-adic differential modules, I: A local construction, Algebra and Number Theory 1 (2007), 269--300.
\bibitem{pde}
K. Kedlaya, $p$-adic differential equations second edition, Cambridge Studies in Advanced Mathematics, 125. Cambridge University Press, Cambridge,  (2022). xviii+380 pp.
\bibitem{kx}
K. Kedlaya-L. Xiao, Differential modules on $p$-adic polyannuli. J. Inst. Math. Jussieu 9 (2010), no.1, 155--201.
\bibitem{mit}
B. Mitchell, Theory of categories, Pure Appl. Math., Vol. XVII, Academic Press, New York-London, 1965. xi+273 pp.
\bibitem{pp}
J. Poineau, A. Pulita, The convergence Newton polygon of a p-adic differential equation III : Global decomposition, preprint.
\bibitem{robba}
P. Robba, On the index of p-adic differential operators. I, Ann. of Math.(2)101(1975), 280--316.
\bibitem{robert}
A.M. Robert, A Course in $p$-adic Analysis, Graduate Texts in Math. 198, Springer-Verlag, New York, 2000.
\bibitem{xiao}
L. Xiao, On ramification filtrations and $p$-adic differential equations,I:the equal characteristic case, Algebra and Number Theory 4 (2010),969--1027.
\end{thebibliography}
\end{document}